\documentclass[11pt]{article}
\usepackage[T1]{fontenc}
\usepackage[in]{fullpage}
\usepackage{url}            
\usepackage{booktabs}       
\usepackage{amsfonts}       
\usepackage{pifont}
\usepackage{subcaption}
\usepackage{endnotes}
\usepackage{mathrsfs}
\usepackage{float}

\usepackage[table,xcdraw]{xcolor}
\definecolor{airforceblue}{rgb}{0.36, 0.54, 0.66}
\definecolor{amber}{rgb}{1.0, 0.75, 0.0}
\definecolor{burntorange}{rgb}{0.8, 0.33, 0.0}
\definecolor{dimgray}{rgb}{0.41, 0.41, 0.41}
\usepackage{mathtools}

\usepackage{tikz}
\usetikzlibrary{fit}
\tikzset{%
	highlight/.style={rectangle,blend mode = multiply,draw=blue!90!black,thick,rounded corners = 0.3 mm,inner sep=0.5pt}
}

\usepackage{microtype}
\usepackage{graphicx}
\usepackage{booktabs} 

\usepackage{amsmath,amssymb,amsthm}
\usepackage[ruled,linesnumbered]{algorithm2e}
\usepackage{bbold}
\usepackage[sans]{dsfont}
\usepackage{pifont} 
\def\O#1{\text{\ding{\the\numexpr#1+171}}}

\usepackage{thmtools}
\usepackage{thm-restate}
\usepackage{multirow}
\usepackage{tabularx}
\usepackage{textcomp}

\usepackage[shortlabels]{enumitem}

\usepackage{thmtools}

\declaretheoremstyle[parent=section]{definitionwithend}
\declaretheorem[style=definitionwithend]{corollary}
\declaretheorem[style=definitionwithend]{condition}
\declaretheorem[style=definitionwithend]{theorem}

\declaretheorem[style=definitionwithend]{definition}
\declaretheorem[style=definitionwithend]{assumption}
\declaretheorem[style=definitionwithend]{example}
\declaretheorem[style=definitionwithend]{remark}

\declaretheorem[style=definitionwithend]{lemma}

\usepackage{tikz}
\usetikzlibrary{shadows}
\usetikzlibrary{arrows,positioning} 

\usepackage{contour}
\usepackage[normalem]{ulem}
\usepackage{float}

\contourlength{0.8pt}

\numberwithin{equation}{section}

\usepackage{bm}

\usepackage{wrapfig}

\usepackage[dvipsnames]{xcolor}
\definecolor{cmtblue}{HTML}{1F4E79}
\definecolor{cmtpurple}{HTML}{6A1B9A}
\definecolor{cmtorange}{HTML}{B26A00}
\definecolor{cmtred}{HTML}{B00020}

\definecolor{amethyst}{rgb}{0.6, 0.4, 0.8}
\newcount\Comments
\newcommand{\kibitz}[2]{\ifnum\Comments=1{\textcolor{#1}{\textsf{\footnotesize #2}}}\fi}

\usepackage{authblk}

\newcommand{\bz}{\mathbf 0}

\newcommand{\R}{\mathbb R}

\newcommand{\eps}{\epsilon}

\DeclareMathOperator{\prox}{prox}
\DeclareMathOperator{\proj}{proj}

\DeclareMathOperator{\dist}{dist}

\DeclareMathOperator*{\argmin}{argmin}
\DeclareMathOperator*{\argmax}{argmax}
\DeclareMathOperator{\FOAM}{FOAM}

\def\O#1{\text{\ding{\the\numexpr#1+171}}} 

\usepackage{threeparttable}
\allowdisplaybreaks[4] 

\def\L{{\mathbb{L}}}

\def\R{{\mathbb{R}}}

\def\cD{{\cal D}}

\def\cI{{\cal I}}

\def\cO{{\cal O}}

\def \cR{{\cal \cR}}
\def\cS{{\cal S}}

\def\cX{{\cal X}}
\def\cY{{\cal Y}}

\def\x{\bm{x}}
\def\y{\bm{y}}
\def\z{\bm{z}}
\def\bz{{\mathbf 0}}

\def\1{{\mathbf 1}}
\def\prox{{\rm{prox}}}

\def\dist{{\operatorname{dist}}}
\def\bx{\bm{x}}
\def\by{\bm{y}}

\definecolor{clemson-orange}{RGB}{234,106,32}
\definecolor{chicago-maroon}{RGB}{128,0,0}
\definecolor{northwestern-purple}{RGB}{82,0,99}
\definecolor{cornell-red}{RGB}{179,27,27}
\definecolor{sauder-green}{RGB}{171,180,0}
\definecolor{harveymudd-gold}{RGB}{178,139,51}

\usepackage[colorlinks,citecolor=northwestern-purple,urlcolor=chicago-maroon,linkcolor=chicago-maroon]{hyperref}
\usepackage[nameinlink]{cleveref}
\crefname{assumption}{Assumption}{Assumptions}
\crefname{lemma}{Lemma}{Lemmas}
\crefname{theorem}{Theorem}{Theorems}
\crefname{corollary}{Corollary}{Corollaries}
\crefname{proposition}{Proposition}{Propositions}
\crefname{condition}{Condition}{Conditions}
\crefname{claim}{Claim}{Claims}
\crefname{procedure}{Procedure}{Procedures}
\crefname{algorithm}{Algorithm}{Algorithms}
\crefname{figure}{Figure}{Figures}
\crefname{remark}{Remark}{Remarks}
\crefname{section}{Section}{Sections}
\crefname{procedure}{Procedure}{Procedures}
\crefname{example}{Example}{Examples}
\crefname{definition}{Definition}{Definitions}
\crefname{table}{Table}{Tables}
\crefname{equation}{}{}
\crefname{enumi}{}{}
\crefname{conjecture}{Conjecture}{Conjectures}
\crefname{step}{Step}{Steps}
\crefname{footnote}{Footnote}{Footnotes}
\crefname{appendix}{Appendix}{Appendices}
\Crefname{appendix}{Appendix}{Appendices}
\AtBeginEnvironment{appendices}{\crefalias{section}{appendix}}
\AtBeginEnvironment{appendices}{\crefalias{subsection}{appendix}}

\definecolor{bittersweet}{rgb}{1.0, 0.44, 0.37}

\usepackage{todonotes}
\usepackage{natbib}
\usepackage{tcolorbox}
\newtcbox{\alertinline}[1][red]
  {on line, arc = 0pt, outer arc = 0pt,
    colback = #1!20!white, colframe = #1!50!black,
    boxsep = 0pt, left = 1pt, right = 1pt, top = 2pt, bottom = 2pt,
    boxrule = 0pt, bottomrule = 1pt, toprule = 1pt}
\usepackage{newtxtext}
\newtcolorbox{textbox}[1]{
    sharp corners,
    boxsep=0mm,
    toptitle=2mm,
    lefttitle=0mm,
    colframe=black!3,
    colback=black!3,
    title={\rule[-2pt]{4.5pt}{10pt}\hspace*{1.5mm}#1},
    fonttitle=\bfseries\itshape\sffamily,
    coltitle=black,
    halign=flush left,
}
\usepackage[titletoc]{appendix}
\usepackage{lipsum}

\usepackage{setspace}

\usepackage{makecell} 

\def\O#1{\text{\ding{\the\numexpr#1+171}}}
\usepackage{authblk} 
\begin{document}
\title{{
\bf Smoothing Meets Perturbation: Unified and Tight Analysis for Nonconvex-Concave Minimax Optimization}}
\author{
Jiajin Li\thanks{Sauder School of Business, University of British Columbia, Vancouver BC, Canada. \texttt{jiajin.li@sauder.ubc.ca}}, \quad 
Mahesh Nagarajan\thanks{Sauder School of Business, University of British Columbia, Vancouver BC, Canada. \texttt{mahesh.nagarajan@sauder.ubc.ca}}, \quad 
Siyu Pan\thanks{Sauder School of Business, University of British Columbia, Vancouver BC, Canada. \texttt{siyu.pan@sauder.ubc.ca}}, \quad 
Nanxi Zhang\thanks{Ivey Business School, University of Western Ontario, London ON, Canada. \texttt{nzhang@ivey.ca}} \thanks{Authors are listed in alphabetical order.}
}
\date{\today}

\maketitle
\vspace{-5mm}

\begin{abstract}This paper studies smooth nonconvex-concave minimax optimization and two acceleration mechanisms for single-loop first-order methods: dual perturbation and smoothing. Although both techniques improve convergence guarantees, their relative advantages remain unclear due to the distinction between game stationarity (GS) and optimization stationarity (OS). We provide a tight characterization of their iteration complexities under both notions. We show that smoothing accelerates convergence to both GS and OS, whereas dual perturbation improves the rate only for GS and does not accelerate OS. Matching lower bounds based on hard instances establish the tightness of these rates. Motivated by this separation, we propose \emph{Perturbed Smoothed GDA}, a single-loop method combining both techniques. It improves the complexity for GS over existing single-loop methods while preserving the state-of-the-art rate for OS, and further admits asymptotic convergence to 0-GS, which is not available for vanilla \emph{Smoothed GDA}.

\end{abstract}

\section{Introduction}\label{sec:intro}
In this paper, we consider the smooth nonconvex–concave (NC-C) minimax problem   
\begin{equation}
\min_{\x \in \mathcal{X}} \max_{\y \in \mathcal{Y}} f(\x,\y), \label{eq:prob}
\end{equation}  
where $f:\R^n\times \R^d\rightarrow\R$ is $\ell$-smooth with respect to both $\x$ and $\y$,
$\cX\subseteq \R^n$ is a nonempty, closed convex set, and $\cY \subseteq \R^d$ is a nonempty, compact, convex set. 
Problem \cref{eq:prob} has received increasing attention due to its extensive applications in machine learning and operations research, including training generative adversarial networks \citep{arjovsky2017wasserstein,goodfellow2020generative}, robust
training of deep neural networks~\citep{sinha2018certifying}, multi-agent reinforcement learning \citep{omidshafiei2017deep,dai2018sbeed} and game theory (e.g., finding first-order Nash equilibria (FNE), see \citep{nouiehed2019solving,ostrovskii2021efficient}).

Motivated by the computational efficiency and the needs of large-scale learning, we primarily focus on single-loop first-order methods. 
The simplest algorithm of this class is gradient descent–ascent (\emph{GDA}).
In \emph{GDA}, each iteration consists of a gradient‐descent step on the minimization variable $\x$ and a gradient‐ascent step on the maximization variable $\y$.
Although vanilla \emph{GDA} may fail to converge in the general smooth NC-C setting, \citet{lin2020gradient} show that a two‐timescale variant (\emph{TS-GDA})—with a substantially larger step‐size for ascent than for descent—restores convergence guarantees. However, this guarantee comes with a slow convergence rate: TS-GDA requires $\cO(\eps^{-6})$ iterations to reach both $\eps$-game stationary ($\epsilon$-GS) and $\eps$-optimization stationary ($\epsilon$-OS) points (cf. \citep[Definition A.5]{lin2020near}, \citep[Definition 3.1]{li2025nonsmooth}). 
The slow convergence of \emph{TS-GDA} stems from the fact that it  essentially behaves as an inexact subgradient method applied to the nonsmooth value function $\Phi(\x)\coloneqq
\max_{\y\in\mathcal Y} f(\x,\y)+\iota_{\cX}(\x)$, where $\iota_{\cX}(\cdot)$ is an indicator function that takes value $0$ if $\x\in\cX$  and $+\infty$ otherwise.

To improve the convergence rate, two acceleration techniques have recently been proposed to overcome this nonsmooth barrier. 
The first is \emph{Smoothing via the Moreau envelope}: The Moreau envelope of $\Phi$ provides a smooth surrogate with Lipschitz-continuous gradients~\citep{davis2019stochastic}.
Building on this idea, \citet{zhang2020single} propose the \emph{Smoothed GDA} method, which replaces the original objective with a regularized surrogate $f(\x,\y)+\frac{r}{2}\|\x-\z\|^2$ ($r>\ell$), where $\z$ serves as an approximate proximal point of $\x$ with respect to the value function $\Phi$. Rather than solving the proximal subproblem exactly, \citet{zhang2020single} perform a single \emph{GDA} update on $(\x,\y,\z)$ in each iteration, leading to an improved iteration complexity of $\mathcal{O}(\epsilon^{-4})$ for finding an $\epsilon$-GS point~\citep{zhang2020single}.
A recent work \citep{li2025nonsmooth} further shows  that \emph{Smoothed GDA} also attains an $\epsilon$-OS point in $\mathcal{O}(\epsilon^{-4})$ iterations.

The second acceleration technique is \emph{dual perturbation}.
This approach adds a small quadratic regularizer to the dual objective, thereby making it strongly concave. 
Such a perturbation is closely related to the regularization technique of \citep{nesterov2013gradient}.
By the envelope theorem, this modification also yields a smooth value function $ \max_{\y\in\cY} f(\cdot,\y)-\frac{\eps}{2}\|\y\|^2$. Moreover, due to the compactness of $\cY$, any $\epsilon$-GS of the perturbed problem is also a $\Theta(\epsilon)$-GS point of the original problem.
Applying \emph{GDA} to this perturbed formulation yields the \emph{Perturbed GDA} algorithm, which attains an iteration complexity of $\mathcal{O}(\epsilon^{-4})$ for finding an $\epsilon$-GS solution~\citep{xu2023unified,lu2020hybrid,xu2026riemannianalternatingdescentascent}.

However, in the existing literature, direct comparisons of iteration complexities between dual perturbation and smoothing can be misleading, as the literature often fails to clearly distinguish between two distinct notions of stationarity: 
game stationarity and optimization stationarity.
This oversight has fostered the inaccurate impression that both smoothing and dual perturbation yield the same $\mathcal{O}(\epsilon^{-4})$ rate for smooth problems under single-loop first-order methods. 
In reality, \citet{li2025nonsmooth} clarifies that \emph{Smoothed GDA} achieves both $\epsilon$-GS and $\epsilon$-OS within $\mathcal{O}(\epsilon^{-4})$ iterations, and the dual-perturbation schemes developed in \citep{xu2023unified,lu2020hybrid} establish an $\mathcal{O}(\epsilon^{-4})$ rate only for $\epsilon$-GS. To the best of our knowledge, the iteration complexity of \emph{Perturbed GDA} for attaining an $\epsilon$-OS point has not been characterized in the existing literature.

\vskip 5pt
\noindent\textbf{Our contributions.}
This distinction between GS and OS is more than a notational nuance; GS and OS impose intrinsically different complexity requirements, as already evidenced by their different complexity in the multi-loop setting. 
\citet{lin2020near} show that multi-loop first-order methods attain an $\epsilon$-GS solution with iteration complexity $\mathcal{O}(\epsilon^{-2.5}\log^2(\tfrac{1}{\epsilon}))$, whereas attaining an $\epsilon$-OS solution requires $\mathcal{O}(\epsilon^{-3}\log^2(\tfrac{1}{\epsilon}))$ iterations. 
This disparity in complexity shows that OS and GS are distinct for smooth NC-C minimax problems,
which raises the following question for single-loop first-order methods:
\begin{enumerate}[label=(Q\arabic*)]
    \item \bf For single-loop first-order algorithms, is there a complexity gap between  the iteration complexity of finding an $\epsilon$-GS and $\epsilon$-OS? \label{q:gs-os-gap}
\end{enumerate}
In this paper, we provide a precise characterization addressing \cref{q:gs-os-gap}. 
We first show that for \emph{Perturbed GDA}, although it achieves an iteration complexity of $\cO(\epsilon^{-4})$ for computing an $\epsilon$-GS, its complexity degrades to $\cO(\epsilon^{-6})$ for attaining an $\epsilon$-OS. We further prove that these bounds are tight by constructing matching hard instances on which the algorithm indeed requires this many iterations. This establishes a tight separation between GS and OS for \emph{Perturbed GDA}.
 In contrast, \emph{Smoothed GDA} achieves the same iteration complexity under the two stationarity notions, attaining both $\epsilon$-GS and $\epsilon$-OS within $\cO(\epsilon^{-4})$ iterations. We also establish matching lower bounds, certifying that these rates are tight.




These results lead to the following conceptual picture of dual perturbation and smoothing: 
Dual perturbation achieves an accelerated rate under the GS notion,  but its complexity deteriorates  under the OS notion.  In contrast, smoothing attains the same accelerated rate for both GS and OS stationarity.  This contrast implies that the two techniques accelerate the game stationarity through distinct mechanisms, and raises the possibility of a synergy effect of the two techniques. 

This motivates the second question we tackle in this paper. 

\begin{enumerate}[label=(Q2)]
    \item {\bf 
 Can combining dual perturbation and smoothing improve the iteration complexity beyond either technique alone?
    }\label{q2}
\end{enumerate}


To answer \cref{q2}, we introduce \emph{Perturbed Smoothed GDA}, a single-loop algorithm that uses both smoothing and dual perturbation techniques.
Our analysis shows that this algorithm achieves an $\epsilon$-OS in $\mathcal{O}(\epsilon^{-4})$ iterations, and this rate is tight under our assumptions. 
This result confirms the earlier insight that dual perturbation does not improve the iteration complexity to $\epsilon$-OS as the iteration complexity remains identical to that of \emph{Smoothed GDA}.
On the other hand, our \emph{Perturbed Smoothed GDA}  achieves an $\epsilon$-GS in $\mathcal{O}(\epsilon^{-3})$ iterations, a strict improvement over both \emph{Smoothed GDA} and \emph{Perturbed GDA}, which each requires $\mathcal{O}(\epsilon^{-4})$ iterations. This result gives a positive answer to \cref{q2} and shows that smoothing and perturbation can be synergistic in accelerating convergence to game stationarity.

\begin{table}[H]
  \centering
  \small
  \begin{tabular}{l  cc  cc}
  \toprule
  \multirow{2}{*}{Single Loop Algorithms} 
    & \multicolumn{2}{c}{Optimization Stationarity} 
    & \multicolumn{2}{c}{Game Stationarity} \\ 
  \cmidrule(lr){2-3} \cmidrule(lr){4-5}
    & Upper Bound & Lower Bound 
    & Upper Bound & Lower Bound \\
  \midrule
  \makecell[l]{\emph{TS-GDA}\\ \scriptsize\citep{lin2020gradient,boct2020alternating,mahdavinia2022tight}}
    & \(\cO\bigl(\tfrac{\ell^3L^2D_{\cY}^2\Delta_\Phi}{\epsilon^6}\bigr)\) 
    & \(\Omega\bigl(\tfrac{\ell^3L^2D_{\cY}^2\Delta_\Phi}{\epsilon^6}\bigr)\)
    & \(\cO\bigl(\tfrac{\ell^3L^2D_{\cY}^2\Delta_\Phi}{\epsilon^6}\bigr)\) 
    & \(\Omega\bigl(\tfrac{\ell^3L^2D_{\cY}^2\Delta_\Phi}{\epsilon^6}\bigr)\)  \\[2pt]  

  \makecell[l]{\emph{Smoothed GDA} \\ \scriptsize\citep{zhang2020single,li2025nonsmooth}}
    & \(\cO\bigl(\tfrac{ \ell^3D_{\cY}^{2}\Delta_{\Psi_2}}{\epsilon^{4}}\bigr)\) 
    & \cellcolor{gray!15}{\(\Omega\bigl(\tfrac{\ell^3D_{\cY}^{2}\Delta_{\Psi_2}}{\epsilon^{4}}\bigr)\)}
    & \(\cO\bigl(\tfrac{\ell^{3}D_{\cY}^{2}\Delta_{\Psi_2}}{\epsilon^{4}}\bigr)\)
    & \cellcolor{gray!15}{\(\Omega \bigl(\tfrac{\ell^{3}D_{\cY}^{2}\Delta_{\Psi_2}}{\epsilon^{4}}\bigr)\)}\\[2pt]

  \makecell[l]{\emph{Perturbed GDA}\\ \scriptsize\citep{xu2023unified,lu2020hybrid}}
    & \cellcolor{gray!15}{\(\cO\bigl(\tfrac{\ell^5D_{\cY}^{4}\Delta_{\Psi_1}}{\epsilon^{6}}\bigr)\)}
    & \cellcolor{gray!15}{\(\Omega\bigl(\tfrac{\ell^5D_{\cY}^{4}\Delta_{\Psi_1}}{\epsilon^{6}}\bigr)\)} 
    & \(\cO\bigl(\tfrac{\ell^{3}D_{\cY}^{2}\Delta_{\Psi_1}}{\epsilon^{4}}\bigr)\) 
    & \cellcolor{gray!15}{\(\Omega \bigl(\tfrac{\ell^{3}D_{\cY}^{2}\Delta_{\Psi_1}}{\epsilon^{4}}\bigr)\)}\\[2pt]

  \rowcolor{gray!15}
  \makecell[l]{\textbf{\emph{Perturbed Smoothed GDA}}\\ \scriptsize\textit{(this work)}}
    & \(\cO\bigl(\tfrac{\ell^3D_{\cY}^{2}\Delta_{\Psi_2}}{\epsilon^{4}}\bigr)\) 
    & \(\Omega\bigl(\tfrac{\ell^3D_{\cY}^{2}\Delta_{\Psi_2}}{\epsilon^{4}}\bigr)\) 
    & \(\cO\bigl(\tfrac{\ell^{2}D_{\cY}\Delta_{\Psi_2}}{\epsilon^{3}}\bigr)\) 
    & \(\Omega\bigl(\tfrac{\ell^{2}D_{\cY}\Delta_{\Psi_2}}{\epsilon^{3}}\bigr)\) \\
    
  \bottomrule 
  \end{tabular}
  \caption{Gray‐shaded entries denote results newly developed in this paper. Here, $L$ is the Lipschitz constant of $f$; $D_{\cY}$ stands for the diameter of $\cY$;  {$\Delta_\Phi$, $\Delta_{\Psi_1}$, and $\Delta_{\Psi_2}$ measure the initialization of the algorithm, and their formal definitions are given in \cref{def:initial_gap}.}}
  \label{tab:combined}
\end{table}
\vspace{-2mm}

This iteration complexity difference can be explained by the structural relation between $\epsilon$-GS and $\epsilon$-OS. It is known that $\epsilon$-OS implies $\Theta(\epsilon)$-GS, but not conversely, see, e.g., \citep[Theorem 7.1]{li2025nonsmooth}; thus, OS is a strictly stronger criterion for convergence. Correspondingly, smoothing and dual perturbation act on different components of the problem. Smoothing modifies the  value function and directly targets OS, which automatically secures GS guarantees. In contrast, dual perturbation regularizes the inner maximization objective and primarily facilitates the convergence to GS, but the lack of a reverse implication ($\epsilon$-\text{GS} $\not\Rightarrow $$\epsilon$-\text{OS}) prevents this benefit from extending to OS.
Therefore, the two techniques operate through distinct yet complementary mechanisms: Smoothing promotes convergence to OS (and by extension GS), whereas dual perturbation mainly accelerates convergence to GS by rendering the inner maximization (dual) objective strongly concave. When combined, these effects yield a strictly better complexity for finding an $\epsilon$-GS point than either technique alone.

 Beyond the improvement in iteration complexity, the perturbation component in \emph{Perturbed Smoothed GDA} also enables us to establish an asymptotic convergence result, i.e., we show that \emph{Perturbed Smoothed GDA} converges to 0-GS when its iteration number goes to infinity.
Such an asymptotic convergence result is not available for  \emph{Smoothed GDA} under the current analysis.

Importantly, the smoothing-perturbation synergy is structural and is not restricted to single-loop methods. 
Building on the same principle, we propose a double-loop first-order algorithm, \emph{Perturbed Smoothed FOAM}, that combines dual perturbation and smoothing within a multi-loop framework. 
This combination transforms the original NC-C problem into a sequence of strongly convex-strongly concave (SC-SC) subproblems, thereby enabling the use of optimal first-order methods within each iteration. 
In particular, we employ \emph{FOAM} \citep{kovalev2022first} as an efficient inner solver, although the improvement does not rely on a specific choice of solver.
Our algorithm achieves iteration complexities $\cO(\epsilon^{-2.5}\log(\frac{1}{\epsilon}))$ for $\epsilon$-GS and $\cO(\epsilon^{-3}\log(\tfrac{1}{\epsilon}))$ for $\epsilon$-OS. 
In comparison, most existing multi-loop methods either attain weaker guarantees or rely on more involved triple-loop constructions to obtain their rates \citep{nouiehed2019solving,thekumparampil2019efficient,lin2020near,kong2021accelerated,zhao2024primal}. 
A recent double-loop method proposed in \citep{lu2026solving} achieves the rate $\cO(\epsilon^{-2.5}\log(\tfrac{1}{\epsilon}))$ for $\epsilon$-GS, but does not provide a corresponding guarantee for $\epsilon$-OS. 
Thus, \emph{Perturbed Smoothed FOAM} matches the best-known complexity for $\epsilon$-GS while improving the state-of-the-art complexity for $\epsilon$-OS among first-order methods~\citep{zhang2025avoid}.

Finally, on the technical side, we develop a unified Lyapunov-function-based framework that subsumes the analysis of existing single-loop methods, including \emph{Smoothed GDA} and \emph{Perturbed GDA}, as well as our proposed \emph{Perturbed Smoothed GDA}. 
The analysis for \emph{Perturbed Smoothed GDA} avoids the cumbersome two-stage arguments for \emph{Smoothed GDA} used in prior works such as \citep{zhang2020single,li2025nonsmooth}. 
 As a result, we are able  to prove an asymptotic convergence result for  \emph{Perturbed Smoothed GDA}.
Furthermore, the framework extends naturally to the double-loop setting and applies to our proposed \emph{Perturbed Smoothed FOAM}.
Overall, this yields a streamlined proof structure that not only simplifies the analysis but explicitly disentangles the distinct  roles of smoothing and perturbation in the convergence dynamics.


\vskip 5pt
\noindent\textbf{Notation.} 
We use bold lowercase letters (e.g., $\x,\y$) to denote vectors, and calligraphic uppercase letters (e.g., $\cX,\cY$) to denote sets. For a closed and convex set $\cX \subseteq \mathbb{R}^n$, 
the indicator function $\iota_{\cX}:\mathbb{R}^n \to \{0,+\infty\}$ is defined by $\iota_{\cX}(\x) = 0$ if $\x \in \cX$ and $+\infty$ otherwise. 
We write $\cD_{\cS}\coloneqq\sup_{\x,\y \in \cS}\|\x-\y\|$ for the diameter of $\cS$, and $\proj_{\cX}(\z)$ for the projection of a point $\z$ onto $\cX$. Given $r>0$, the proximal operator of a function (if well-defined) $f:\mathbb{R}^n \to \mathbb{R}$ at a point $\z$ is defined as $\prox_{r\cdot f}(\z)\coloneqq\argmin_{\x\in \cX} f(\x)+\tfrac{1}{2r}\|\x-\z\|^2$. We use $\dist(\x, \mathcal{S}):=\inf_{\z\in\mathcal{S}}\|\x-\z\|$ to denote the distance from $\x$ to set $\mathcal{S}$. For a positive integer $T$, we use the shorthand notation $[T] := \{1,\ldots,T\}$ to denote the index set of the first $T$ positive integers. Throughout this paper, $\epsilon \in (0,1)$ denotes a sufficiently small positive number. We write $a_k \searrow 0$ to denote that $a_k$ is a non-increasing sequence converging to $0$. 
\vskip 5pt
\noindent\textbf{Structure of the paper.}
The paper is organized as follows. 
In \Cref{sec:pre}, we introduce the preliminaries, including assumptions, stationarity notions, and the two acceleration tools: dual perturbation and smoothing. 
The main body of the paper focuses on single-loop methods: In \Cref{sec:main}, we present the main results, including the proposed \emph{Perturbed Smoothed GDA}  together with all iteration-complexity bounds and their tight analyses. 
In \Cref{sec:framework}, we develop a unified convergence analysis framework for  these single-loop algorithms. 
In \Cref{sec:analysis}, we provide a tight analysis that sharpens these iteration complexities. 
In \Cref{sec:double}, we further show that the two acceleration techniques and the unified analysis from \Cref{sec:framework} also apply to a double-loop setting, resulting in a new algorithm, \emph{Perturbed Smoothed FOAM},  improving the best known iteration complexities. 
Finally, \Cref{sec:close} concludes the paper.

\section{Preliminaries}\label{sec:pre}
In this section, we collect the main assumptions and definitions used in the analysis. We begin with a smoothness assumption on the objective function $f$ and then describe the properties required of the dual function. These assumptions are imposed throughout the paper.
\begin{assumption}[$\ell$-smooth]
    \label{l:smooth}
	The function $f$ is continuously differentiable and there exists a positive constant $\ell>0$ such that for all $\x,\x' \in \mathcal{X}$ and $\y,\y'\in \mathcal{Y}$, we have
		\[
		\begin{aligned}
			&\|\nabla_{\x}f(\x,\y)-\nabla_{\x}f(\x',\y')\| \leq \ell(\|\x-\x'\|+\|\y-\y'\|),~\mbox{and}\\
			&\|\nabla_{\y}f(\x,\y)-\nabla_{\y}f(\x',\y')\| \leq \ell(\|
   \x-\x'\|+\|\y-\y'\|).
		\end{aligned}
		\]
\end{assumption} 
\begin{assumption}\label{ass:bound}
For each $\x \in \cX$, the dual function $f(\x,\cdot)$ is concave. The dual domain  $\cY\subseteq \R^d$ is convex and bounded with diameter $D_{\cY}>0$. Without loss of generality (WLOG), we assume that $\textrm{\textbf{0}} \in \cY$.	
\end{assumption} 
Our goal   
is to find approximate stationary points of problem \cref{eq:prob}.
We introduce two standard stationarity measures~(cf. \citep[Definition 3.1]{li2025nonsmooth}, \citep[Definition A.5]{lin2020near}) that are used throughout the rest of the paper. 
\begin{definition}[Stationarity points]
\label{defi:primal-dual} 
Let $\epsilon>0$ be given.  
\begin{enumerate}[label=(\roman*)] 
\item The point $\x\in\mathcal{X}$ is an $\epsilon$-optimization-stationary point {\rm($\epsilon$-OS)} of problem \cref{eq:prob} if 
\[\left\|\prox_{\frac{1}{2\ell} \Phi}(\x)-\x\right\|\leq  \frac{\epsilon}{2\ell},\]
where $\Phi:\R^n \rightarrow \R \cup \{+\infty\}$ is given by $\Phi(\x) := \max_{\y' \in \cY} f(\x,\y') + \iota_{\cX}(\x)$.

 
\item  The pair $(\x,\y)\in \cX \times \cY$ is an $\epsilon$-game-stationary point {\rm ($\epsilon$-GS)} of problem \cref{eq:prob} if 
			\[
\dist(\bz,\nabla_{\x}f(\x,\y)+\partial\iota_\cX(\x))\leq \epsilon \quad \text{and} \quad \dist(\bz,-\nabla_{\y}f(\x,\y)+\partial\iota_\cY(\y))\leq \epsilon.
			\]
\end{enumerate}
\end{definition} 
Next, we introduce two  acceleration techniques for \emph{TS-GDA}  developed in~\citep{lin2025two}. 
\subsection{Dual Perturbation}\label{sec:prelim:perturb}
The idea of dual perturbation is to introduce a small quadratic regularizer of the dual variable to the objective function. This yields the following perturbed, nonconvex-strongly-concave (NC-SC) minimax problem
\begin{equation}
    \min_{\x \in \mathcal{X}}    \max_{\y \in \mathcal{Y}} f_t(\x,\y), ~\mbox{where}~ f_t(\x,\y)= f(\x,\y) -\frac{r_{\y}^t}{2}\|\y\|^2.
    \label{eq:perturb}
\end{equation} 
Throughout this paper, we let $r_{\y}^t >0$ be non-increasing in $t$.
This perturbation renders the objective strongly concave in $\y$ at each iteration, thereby enabling the use of standard techniques for NC-SC minimax problems.

Intuitively, if we choose $r_{\y}^t = \mathcal{O}(\epsilon)$ for all $t \ge 0$,
the perturbation term alters the dual gradient by at most $\mathcal{O}(\epsilon)$.
As a result, any $\epsilon$-GS point of~\cref{eq:prob} remains an $\mathcal{O}(\epsilon)$-GS point of~\cref{eq:perturb},
so that convergence guarantees established for the perturbed problem~\cref{eq:perturb}
translate to the original problem~\cref{eq:prob} up to an $\mathcal{O}(\epsilon)$ error.
Moreover, if the perturbation parameter $r_{\y}^t$ gradually diminishes to zero,
the iterates are expected to asymptotically recover stationarity points for the original problem~\cref{eq:prob}.

Motivated by these observations, the algorithms in \citep{xu2023unified,lu2020hybrid}
essentially apply vanilla \emph{GDA} to~\cref{eq:perturb}, which we formalize below as the \emph{Perturbed GDA} algorithm.

\begin{algorithm}[H]
		\caption{Perturbed GDA}\label{alg:gda}
		\KwData{Initial $\x^0,\y^0$, and step sizes $c^t,\alpha^t>0$}
		\For{$t=0,\ldots,T$}{
	     $\x^{t+1}=\proj_{\mathcal{X}}(\x^{t}-c^t\nabla_{\x}f_t(\x^{t},\y^{t}))$;\\
		 $\y^{t+1}=\proj_{\mathcal{Y}}(\y^{t}+\alpha^t \nabla_{\y}f_{t}(\x^{t+1},\y^{t}))$.
		}
	\end{algorithm} 
    
Since the perturbed objective $f_t$ is strongly concave in $\y$ at each iteration,
one may expect \emph{Perturbed GDA} to improve upon the
$\cO(\epsilon^{-6})$ iteration complexity of vanilla \emph{TS-GDA}
established in \citep{lin2020gradient,boct2020alternating}.
This is indeed the case: \citet{xu2023unified,lu2020hybrid} show that \emph{Perturbed GDA} finds an
$\epsilon$-GS of~\cref{eq:prob} in 
$\cO(\epsilon^{-4})$ iterations, demonstrating a clear improvement
in terms of GS.  
However, we note that such an improvement is only for $\epsilon$-GS, and the convergence rate of \emph{Perturbed GDA} with respect to
$\epsilon$-OS remains unexplored in the literature.

\subsection{Smoothing}\label{sec:prelim:smooth}
Unlike dual perturbation, the smoothing technique modifies the optimization landscape through a fundamentally different mechanism. 
It is based on the Moreau envelope.
For a given function $\Phi$ and any $r<\tfrac{1}{\ell}$, its Moreau envelope $\Phi_r$ is defined as
\[
  \Phi_{r}(\x)
  \coloneqq \min_{\z\in\R^n}\Bigl\{\Phi(\z) + \tfrac1{2r}\|\x - \z\|^2\Bigr\}.
\]
The Moreau envelope $\Phi_r(\x)$ is not merely a smooth approximation of $\Phi$, but also a surrogate of the original problem: Its minimizers coincide with those of $\Phi$, and its gradient provides a natural measure of stationarity for $\Phi$~\citep{davis2019stochastic}. The smoothness of $\Phi_r(\x)$ further makes it a convenient and well-structured surrogate for algorithm design.
Using this idea, \citet{zhang2020single} introduce the smoothed surrogate function
\[
  F(\x,\y,\z) := f(\x,\y) + \frac{r_{\x}}{2}\|\x - \z\|^2,
\]
where $r_{\x}>0$ is a smoothing parameter. 
In their algorithm, they update $(\x,\z)$ by gradient descent and $\y$ by gradient ascent with
respect to $F$.  
This algorithm is formally presented in the following \emph{Smoothed GDA.}

\begin{algorithm}[H]
		\caption{Smoothed GDA} \label{alg:smoothed}
		\KwData{Initial point $(\x^0,\y^0,\z^0)$, and parameters $r_{\x}>\ell$, $c^t>0$, $\alpha^t>0$, $\beta^t\in(0,1)$}
		\For{$t=0,\ldots,T$}{
	     $\x^{t+1}=\proj_{\mathcal{X}}(\x^{t}-c^t\nabla_{\x}F(\x^{t},\y^{t},\z^{t}))$;\\
		 $\y^{t+1}=\proj_{\mathcal{Y}}(\y^{t}+\alpha^t \nabla_{\y}F(\x^{t+1},\y^{t},\z^{t}))$;\\
        $\z^{t+1}=\z^{t}+\beta^t(\x^{t+1}-\z^{t})$;
		}
\end{algorithm} 
    
Recently, \citet{li2025nonsmooth} further show that \emph{Smoothed GDA} achieves both $\epsilon$-GS and $\epsilon$-OS within $\mathcal{O}(\epsilon^{-4})$ iterations. The intuition for such an improvement is as follows:
Note that the definition of {Moreau envelope} dovetails naturally with the definition of $\epsilon$-OS in \cref{defi:primal-dual}.  
This structure enables \emph{Smoothed GDA} to converge to an $\epsilon$-OS faster. 
In other words, the acceleration due to smoothing for $\epsilon$-OS is ``direct". On the other hand, 
 because any $\epsilon$-OS can be converted into an
$\Theta(\epsilon)$-GS in $\mathcal{O}(\epsilon^{-2})$ iterations
(cf. \citep[Proposition~4.13]{lin2020gradient}), the acceleration induced by smoothing on $\epsilon$-OS also carries over to $\epsilon$-GS.
In other words, smoothing improves the convergence rate to an $\epsilon$-GS in an ``indirect" manner.

\section{Main Results}\label{sec:main}
This section presents our main algorithmic and theoretical results. We first propose a new method, \emph{Perturbed Smoothed GDA}, which combines dual perturbation and smoothing in a single-loop first-order algorithm. We then establish iteration complexity bounds for computing $\epsilon$-GS and $\epsilon$-OS solutions across several representative single-loop methods, and provide matching lower bounds showing that these rates are tight. 
Our analysis makes explicit how the effect of smoothing and perturbation interact and explains why their combination leads to an improved convergence guarantee to $\epsilon$-GS for NC–C minimax optimization. 
Finally, we establish asymptotic convergence to 0-GS for both \emph{Perturbed GDA} and \emph{Perturbed Smoothed GDA}, a guarantee that is not available for vanilla \emph{Smoothed GDA} under existing analyses.

\subsection{Perturbed Smoothed GDA}\label{subsec:ps-gda}
Similar to previous methods, our algorithm also starts with a surrogate function.
Specifically, at each iteration $t$, we combine dual perturbation and smoothing and construct the following surrogate function:
\begin{equation}\label{eq:tilde-F}
F_t(\x,\y,\z) := f(\x,\y) + \underbrace{\frac{r_{\x}}{2}\|\x-\z\|^2}_{\text{Smoothing}}\underbrace{-\frac{r_{\y}^t}{2}\|\y\|^2}_{\text{Dual perturbation}}.
\end{equation}
We apply gradient descent on variables $\bx$ and $\z$, and gradient ascent on variable $\y$ in each iteration. This update rule leads to the following \emph{Perturbed Smoothed GDA} algorithm. 

\begin{algorithm}[H]
		\caption{Perturbed Smoothed GDA}\label{alg:sgda}
		\KwData{Initial point $(\x^0,\y^0,\z^0)$, and parameters $r_{\x}>\ell,r_{\y}^t>0$, $c^t> 0$, $\alpha^t>0$, $\beta^t\in(0,1)$}
		\For{$t=0,\ldots,T$}{
	     $\x^{t+1}=\proj_{\mathcal{X}}(\x^{t}-c^t\nabla_{\x}F_t(\x^{t},\y^{t},\z^{t}))$;\\
		 $\y^{t+1}=\proj_{\mathcal{Y}}(\y^{t}+\alpha^t \nabla_{\y}F_t(\x^{t+1},\y^{t},\z^{t}))$;\\
        $\z^{t+1}=\z^{t}+\beta^t(\x^{t+1}-\z^{t})$;
		}
\end{algorithm}

\subsection{Convergence Results and their Tight Analysis}
\label{subsec:iter-complex}

Before presenting our main iteration‐complexity theorems, we introduce two preliminaries that facilitate our subsequent analysis. First, in \cref{def:initial_gap} we formalize the initial gap for each algorithmic family, namely, \emph{Perturbed GDA}, \emph{Smoothed GDA} and \emph{Perturbed Smoothed GDA}. Second, we specify the step‐size conditions which ensure basic descent estimates (See \cref{sec:framework} for details). These technical bounds are provided for completeness and can be safely skipped in a first reading.

\begin{definition}[Initial gaps]
\label{def:initial_gap}
We define two initial gaps, i.e., $\Delta_{\Psi_1}$ and $\Delta_{\Psi_2}$ as below: 
\begin{enumerate}[label=(\roman*)] 
    \item 
    The first gap is defined as
    \[
    \Delta_{\Psi_1}:={\Psi_1^0}(\x^0,\y^0)-\min_{\x\in\cX}\max_{\y\in\cY}f_0(\x,\y),\]
    where ${\Psi_1^0}(\x,\y):=2\max_{\y\in\cY}f_0(\x,\y)-f_0(\x,\y)$, and $f_0(\cdot,\cdot)$ is defined in \cref{eq:perturb}.
    \item The second gap is defined as \[\Delta_{\Psi_2}:={\Psi_2^0}(\x^0,\y^0,\z^0)-\min_{\x\in\cX}\max_{\y\in\cY}f_0(\x,\y),\] where ${\Psi_2^0}(\x,\y,\z) := F_0(\x,\y,\z) - 2\min_{\x\in\cX}F_0(\x,\y,\z) + 2\min_{\x\in\cX}\max_{\y\in\cY}F_0(\x,\y,\z)$, and $F_0(\cdot,\cdot,\cdot)$ is defined in \cref{eq:tilde-F}.
\end{enumerate} 
\end{definition}
\begin{remark}
By construction, $\Delta_{\Psi_1}$ measures the initial gap associated with
\emph{Perturbed GDA}, and $\Delta_{\Psi_2}$ measures the initial gap for both
\emph{Smoothed GDA} and \emph{Perturbed Smoothed GDA}.  
For \emph{Smoothed GDA}, $\Delta_{\Psi_2}$ is defined by setting $r_{\y}^0 = 0$ in $F_0$.
\end{remark}
Next, we summarize the step‐size conditions for different algorithms: 

\begin{condition}[Step-size conditions for Perturbed GDA]\label{con-pgda}
Let $r_{\y}^t > 0$ for all $t \ge 0$. The step sizes $\{\alpha^t\}_{t\ge 0}$ and $\{c^t\}_{t\ge 0}$ are chosen such that, for all $t$, we have
\begin{align*}
       & \alpha^t=\Theta\left(\frac{1}{\ell}\right) \quad ~\mbox{and}~ \quad0<\alpha^t \leq \frac{1}{4(\ell+r_{\y}^t)}; \\
       & c^t=\Theta\left(\frac{(r_{\y}^t)^2}{\ell^3}\right)  \quad~\mbox{and}~ \quad0 <c^t\leq \min\left\{\frac{(r_{\y}^t)^2\alpha^t}{16 \ell^2},\ \frac{r_{\y}^t}{\ell(3r_{\y}^t+2\ell)},\ \frac{3r_{\y}^t}{128\ell^2}\right\}.
\end{align*}

\end{condition}

\begin{condition}[Step-size conditions for (Perturbed) Smoothed GDA]
\label{con-psgda}
    Let $r_{\x} = \Theta(\ell)$ with $r_{\x} > 3\ell$. 
The step sizes $\{\alpha^t\}_{t\ge0}$, $\{c^t\}_{t\ge0}$, and $\{\beta^t\}_{t\ge0}$ are chosen such that, for all $t \ge 0$, 
    \begin{align*}
    & \alpha^t=\Theta\left(\frac{1}{\ell}\right)\quad ~\mbox{and}~ \quad0<\alpha^t \leq \min\left\{\frac{1}{11(\ell+r_{\y}^t)},\ \frac{(c^t)^2(r_{\x}-\ell-r_{\y}^t)^2}{4(\ell+r_{\y}^t)(1+c^t(r_{\x}-\ell-r_{\y}^t))^2} \right\};\\
    & c^t=\Theta\left(\frac{1}{\ell}\right)\quad ~\mbox{and}~ \quad0<c^t<\frac{1}{r_{\x}+\ell+r_{\y}^t}; \\
    &0<\beta^t  \leq \min \left\{\frac{1}{36},\ \frac{(r_{\x}-\ell-r_{\y}^t)^2}{384r_{\x}(r_{\x}+\ell+r_{\y}^t)^2}\right\}.
    \end{align*}
Moreover, the parameter $\beta^t$ and $r_{\y}^t$  are chosen differently for \emph{Smoothed GDA} and \emph{Perturbed Smoothed GDA}:
    \begin{enumerate}[label=(\roman*)] 
\setlength{\itemsep}{5pt} 
\item \emph{Smoothed GDA}: For all $t\ge0$,\[
r_{\y}^t=0\quad \text{and} \quad \beta^t=\Theta\left(\frac{\epsilon^2}{\ell^2 D_{\cY}^2}\right).\]
\item \emph{Perturbed Smoothed GDA}: Let $r_{\y}^t > 0$ for all $t \ge 0$, and define
\[
\omega^t := \frac{1}{\sqrt{r_{\y}^t}} \cdot 
\frac{(r_{\x}-\ell-r_{\y}^t)+\alpha^t(\ell+r_{\y}^t)(3r_{\x}-2\ell-2r_{\y}^t)}
{\alpha^t (r_{\x}-\ell-r_{\y}^t)^{3/2}}.
\]
Then
\[
\beta^t = \Theta\!\left(\frac{r_{\y}^t}{\ell}\right),
\quad \text{and} \quad
0 < \beta^t \le \frac{1}{384\,r_{\x}\alpha^t(\omega^t)^2}.
\]
\end{enumerate}

\end{condition}

With these step-size conditions in place, we present our main iteration-complexity results under a constant choice $r_{\y}^t$.
We choose this setting for two reasons: First, our lower bounds (\cref{thm:tight-alter,thm:tight-psgda}) are established under a constant $r_{\y}^t$. Stating the upper bounds under the same setting yields a direct tightness guarantee. 
Second, the core ideas of our analytical framework for iteration complexity analysis are most cleanly shown when $r_{\y}^t$ is constant.

\begin{theorem}[Iteration complexity of Perturbed GDA]    
\label{thm:pgda-nc-c-epi} 
Let the sequence $\{(\x^{t},\y^{t})\}_{t\ge 0}$ be generated by \emph{Perturbed GDA} with step sizes satisfying \cref{con-pgda}.
Given any $\epsilon>0$, we have 
\begin{enumerate}[label=(\roman*)]
\item (Game Stationarity)
Let $r_{\y}:= \Theta\left(\tfrac{\epsilon}{D_{\cY}}\right)$ and set $r_{\y}^t=r_{\y}$ for all $t \ge 0$,
then there exists an index $t \leq \cO\left(\tfrac{\ell^3D_{\cY}^2\Delta_{\Psi_1}}{\epsilon^4}\right)$ such that $(\x^{t+1},\y^{t+1})$ is an $\epsilon$-GS of problem \cref{eq:prob}. 
 \item
 (Optimization Stationarity) 
Let $r_{\y}:= \Theta\left(\tfrac{\epsilon^2}{\ell D_{\cY}^2}\right)$ and set $r_{\y}^t=r_{\y}$ for all $t \ge 0$,
then there exists an index $t\leq \cO\left(\tfrac{\ell^5D_{\cY}^4\Delta_{\Psi_1}}{\epsilon^6}\right)$ such that $\x^{t+1}$ is an $\epsilon$-OS of problem \cref{eq:prob}. 
\end{enumerate}
\end{theorem}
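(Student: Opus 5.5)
The plan is to run a Lyapunov analysis of \emph{Perturbed GDA}, viewed as GDA applied to the NC-SC problem \cref{eq:perturb} with constant $r_{\y}$. Define $\Psi_1(\x,\y):=2\Phi_{r_{\y}}(\x)-f_{r_{\y}}(\x,\y)$, where $\Phi_{r_{\y}}(\x):=\max_{\y\in\cY}f_{r_{\y}}(\x,\y)$ and $f_{r_{\y}}:=f-\frac{r_{\y}}{2}\|\y\|^2$. This matches $\Psi_1^0$ in \cref{def:initial_gap}, and $\Psi_1$ is bounded below by $\min_{\x}\Phi_{r_{\y}}(\x)$.

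The first step collects the NC-SC facts. Since $f_{r_{\y}}(\x,\cdot)$ is $r_{\y}$-strongly concave, the maximizer $\y^*(\x)$ is unique and $\frac{\ell}{r_{\y}}$-Lipschitz. By Danskin's theorem, $\Phi_{r_{\y}}$ is differentiable with $\nabla\Phi_{r_{\y}}(\x)=\nabla_{\x}f(\x,\y^*(\x))$, which is $L_\Phi:=\ell(1+\ell/r_{\y})=\cO(\ell^2/r_{\y})$-Lipschitz. I then establish a one-step descent estimate of the form
\[
\Psi_1(\x^t,\y^t)-\Psi_1(\x^{t+1},\y^{t+1})\ \ge\ a\,\frac{\|\x^{t+1}-\x^t\|^2}{c}+b\,\frac{\|\y^{t+1}-\y^t\|^2}{\alpha}.
\]
For this, combine:
\begin{itemize}
\item the primal projected-gradient descent inequality on $f_{r_{\y}}(\cdot,\y^t)$;
\item the dual ascent inequality on $f_{r_{\y}}(\x^{t+1},\cdot)$;
\item the upper quadratic bound of $\Phi_{r_{\y}}$ at step $\x^{t+1}-\x^t$, with the cross term controlled through strong concavity.
\end{itemize}
The dual-error term $\|\y^t-\y^*(\x^t)\|^2$ is handled by the error bound for strongly concave projected ascent, $\|\y^t-\y^*(\x^t)\|\le \cO(1/(\alpha r_{\y}))\|\y^{t+1}-\y^t\|+\cO(\ell/r_{\y})\|\x^{t+1}-\x^t\|$. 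The conditions $c\le r_{\y}^2\alpha/(16\ell^2)$ and $c\lesssim r_{\y}/\ell^2$ in \cref{con-pgda} are exactly what make these terms absorb. Telescoping over $T$ iterations then gives
\[
\min_t\Big(\tfrac{1}{c^2}\|\x^{t+1}-\x^t\|^2+\tfrac{1}{\alpha^2}\|\y^{t+1}-\y^t\|^2\Big)\le \frac{\cO(\Delta_{\Psi_1})}{cT}.
\]

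\textbf{Part (i).} Projected-gradient optimality conditions convert the step residuals into stationarity:
\[
\dist(\bz,\nabla_{\x}f_{r_{\y}}+\partial\iota_\cX)(\x^{t+1},\y^{t+1})\lesssim \|\x^{t+1}-\x^t\|/c+\ell\|\y^{t+1}-\y^t\|,
\]
and similarly for $\y$. Passing from $f_{r_{\y}}$ to $f$ costs $r_{\y}\|\y\|\le r_{\y}D_\cY=\cO(\epsilon)$, using $\bz\in\cY$. Requiring the residual to be at most $\epsilon$ gives $T\gtrsim \Delta_{\Psi_1}/(c\epsilon^2)$. With $c=\Theta(r_{\y}^2/\ell^3)$ and $r_{\y}=\Theta(\epsilon/D_\cY)$, this becomes $T=\cO(\ell^3D_\cY^2\Delta_{\Psi_1}/\epsilon^4)$.

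\textbf{Part (ii).} The same residual gives $\|\nabla\Phi_{r_{\y}}(\x^{t+1})+\text{normal}\|\lesssim\delta$ in the projected-gradient sense, at the cost of an extra $\ell\|\y^t-\y^*(\x^t)\|$ term that is again bounded by the residuals. Hence $\x^{t+1}$ is a $\delta$-stationary point of the smooth function $\Phi_{r_{\y}}+\iota_\cX$, and therefore $\|\prox_{\frac{1}{2\ell}(\Phi_{r_{\y}}+\iota_\cX)}(\x)-\x\|\lesssim\delta/\ell$. To pass to the original $\Phi$, note that $0\le\Phi-\Phi_{r_{\y}}\le r_{\y}D_\cY^2/2$ and that both functions are $\ell$-weakly convex, so the proximal subproblems at weight $2\ell$ are $\ell$-strongly convex. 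Comparing the two strongly convex subproblems, whose objectives differ uniformly by at most $r_{\y}D_\cY^2/2$, yields
\[
\|\prox_{\frac{1}{2\ell}\Phi}(\x)-\prox_{\frac{1}{2\ell}(\Phi_{r_{\y}}+\iota_\cX)}(\x)\|\le \sqrt{2r_{\y}D_\cY^2/\ell}.
\]
To make this $\le\epsilon/(4\ell)$ I need $r_{\y}=\Theta(\epsilon^2/(\ell D_\cY^2))$, and I take $\delta=\Theta(\epsilon)$. Then $T=\cO(\Delta_{\Psi_1}/(c\epsilon^2))$ with $c=\Theta(r_{\y}^2/\ell^3)=\Theta(\epsilon^4/(\ell^5D_\cY^4))$, which gives $T=\cO(\ell^5D_\cY^4\Delta_{\Psi_1}/\epsilon^6)$.

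\textbf{Main obstacle.} The hardest step is the descent inequality for $\Psi_1$: the dual tracking error has to be absorbed with constants explicit in $r_{\y}$, and this requires the condition $c=\Theta(r_{\y}^2/\ell^3)$. A secondary point is that $\Delta_{\Psi_1}$ must be bounded uniformly in $r_{\y}$ (it is, since the perturbation is at most $r_{\y}D_\cY^2$). The prox-comparison in part (ii) is routine but fixes the scaling of $r_{\y}$.
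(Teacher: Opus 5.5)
Your argument is correct. The Lyapunov function $\Psi_1$, the descent and dual-error-bound analysis, and part (i) coincide with the paper's. Part (ii) is where you take a different route.

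In (ii) the paper never passes through $\Phi_{r_{\y}}$. It applies \cref{lemma2}, the GS-to-OS inequality $\|\prox_{\frac{1}{2\ell}\Phi}(\x)-\x\|^2\le\frac{2D_{\cY}}{\ell}\dist(\bz,-\nabla_{\y}f(\x,\y)+\partial\iota_{\cY}(\y))+\frac{1}{\ell^2}\dist^2(\bz,\nabla_{\x}f(\x,\y)+\partial\iota_{\cX}(\x))$, for the original $f$ at $(\x^{t+1},\y^{t+1})$. It then plugs in the GS residual bounds from part (i). The scaling $r_{\y}=\Theta(\epsilon^2/(\ell D_{\cY}^2))$ is dictated by the term $\frac{2D_{\cY}}{\ell}\cdot r_{\y}D_{\cY}$.

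You instead certify approximate stationarity of the smooth perturbed envelope, using Danskin plus a second use of the dual error bound. You then transfer to $\Phi$ by prox stability under the uniform bias $0\le\Phi-\Phi_{r_{\y}}\le r_{\y}D_{\cY}^2/2$. That comparison is valid and even gives $\sqrt{r_{\y}D_{\cY}^2/(2\ell)}$.

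The paper's lemma is algorithm-agnostic and needs no differentiability of the value function. That is why it is reused verbatim for \emph{Smoothed GDA} ($r_{\y}=0$), \emph{Perturbed Smoothed GDA}, and \emph{Perturbed Smoothed FOAM}. Your route relies on the strongly concave structure, but it makes the source of the $\epsilon^{-6}$ rate more transparent. The envelope bias can move the proximal point by up to $\cO(\sqrt{r_{\y}D_{\cY}^2/\ell})$, which forces $r_{\y}\sim\epsilon^2/(\ell D_{\cY}^2)$ and hence $c\sim\epsilon^4/(\ell^5D_{\cY}^4)$.

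One bookkeeping point must be repaired for (ii). Your displayed telescoped bound $\min_t(\|\x^{t+1}-\x^t\|^2/c^2+\|\y^{t+1}-\y^t\|^2/\alpha^2)\le\cO(\Delta_{\Psi_1})/(cT)$ is too weak on the dual increment. It loses a factor $\alpha/c=\Theta(\ell^2/r_{\y}^2)$ in $\|\y^{t+1}-\y^t\|^2$. This is harmless in (i), but not in (ii).

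In (ii), by \cref{lemma:perturb-dualerror}, the dual-tracking term satisfies $\ell\|\y_t^\star(\x^{t+1})-\y^t\|\le\Theta(\ell^2/r_{\y})\|\y^{t+1}-\y^t\|$. Your displayed bound only controls this by $\Theta(\ell/r_{\y})\sqrt{\Delta_{\Psi_1}/(cT)}$, which would destroy the claimed rate.

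Use instead the unweakened consequence of your descent inequality at a single common index, $\|\x^{t+1}-\x^t\|^2/c+\|\y^{t+1}-\y^t\|^2/\alpha\lesssim\Delta_{\Psi_1}/T$, as the paper does. Then $\frac{\ell^2}{r_{\y}}\|\y^{t+1}-\y^t\|\lesssim\frac{\ell^{3/2}}{r_{\y}}\sqrt{\Delta_{\Psi_1}/T}\asymp\sqrt{\Delta_{\Psi_1}/(cT)}$, precisely because $c=\Theta(r_{\y}^2/\ell^3)$. So $\delta=\Theta(\epsilon)$ at $T=\cO(\ell^5D_{\cY}^4\Delta_{\Psi_1}/\epsilon^6)$, as you claim. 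The paper's route needs the same per-index bound for its $\frac{2D_{\cY}}{\ell}\dist(\bz,-\nabla_{\y}f+\partial\iota_{\cY})$ term.
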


\begin{theorem}[Tightness analysis of  Perturbed GDA] \label{thm:tight-alter}
Consider \emph{Perturbed GDA} run with the step-size conditions in
\cref{thm:pgda-nc-c-epi}.  Let $(\x^{T},\y^{T})$ be the output after $T$ iterations.
Then there exist an $\ell$-smooth function and an initialization $(\x^0,\y^0)$
for which the following hold:
\begin{enumerate}[label=(\roman*)] 
\item (Game Stationarity)  \emph{Perturbed GDA} 
requires at least $T= \Omega\left(\tfrac{\ell^3D_{\cY}^2\Delta_{\Psi_1}}{\epsilon^4}\right)$ 
iterations to guarantee that the point $\left(\x^{T},\y^{T}\right)$ is an $\epsilon$-GS of  problem \cref{eq:prob}. 
\item (Optimization Stationarity) \emph{Perturbed GDA} 
requires at least $T = \Omega\left(\tfrac{\ell^5D_{\cY}^4\Delta_{\Psi_1}}{\epsilon^6}\right)$ 
iterations to guarantee that the point $\x^{T}$ is an $\epsilon$-OS of  problem \cref{eq:prob}.
\end{enumerate}
\end{theorem}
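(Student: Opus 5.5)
The lower bounds will come from one explicit hard instance in which the dual variable plays no role. The primal iterates of \emph{Perturbed GDA} then reduce to projected gradient descent on a linear function with the tiny step size $c^t=\Theta\bigl(r_{\y}^2/\ell^3\bigr)$ forced by \cref{con-pgda}. Both parts of the theorem should follow from counting how many such steps are needed to cross a region in which the iterate is not stationary.

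\emph{Instance.} Take $n=d=1$, $\cX=[0,+\infty)$ and $\cY=[-D_{\cY}/2,D_{\cY}/2]$, so that \cref{ass:bound} holds and $0\in\cY$. Set $f(x,y)=2\epsilon x$. This $f$ is trivially $\ell$-smooth and concave in $y$. Initialize with $y^0=0$ and $x^0=a>0$, where $a$ is fixed below.

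\emph{Computing $\Delta_{\Psi_1}$.} We have $f_0(x,y)=2\epsilon x-\tfrac{r_{\y}}{2}y^2$. Hence $\max_{y\in\cY}f_0(x,y)=2\epsilon x$, attained at $y=0$, and $\min_{x\in\cX}\max_{y\in\cY}f_0=0$. This gives $\Psi_1^0(x^0,y^0)=2\epsilon a$, so $\Delta_{\Psi_1}=2\epsilon a$. We may therefore take $a=\Delta_{\Psi_1}/(2\epsilon)$ for any prescribed $\Delta_{\Psi_1}$.

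\emph{Dynamics.} Since $\nabla_y f_t(x,0)=0$, an induction gives $y^t\equiv 0$. The $x$-update is
\[
x^{t+1}=\max\{0,\;x^t-2\epsilon c\}.
\]
This yields the closed form
\[
x^T=\max\{0,\;a-2\epsilon cT\}.
\]

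\emph{Game stationarity.} At an interior point $x>0$, the GS residual in $x$ equals $2\epsilon>\epsilon$. So $(x^T,y^T)$ can be an $\epsilon$-GS only once $x^T=0$, which requires $T\ge a/(2\epsilon c)=\Delta_{\Psi_1}/(4\epsilon^2c)$. With $r_{\y}=\Theta(\epsilon/D_{\cY})$ we have $c=\Theta\bigl(\epsilon^2/(\ell^3D_{\cY}^2)\bigr)$. This gives $T=\Omega\bigl(\ell^3D_{\cY}^2\Delta_{\Psi_1}/\epsilon^4\bigr)$, which is part (i).

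\emph{Optimization stationarity.} Here $\Phi(x)=2\epsilon x+\iota_{[0,\infty)}(x)$. Its proximal map is $\prox_{\frac{1}{2\ell}\Phi}(x)=\max\{0,\,x-\epsilon/\ell\}$, so
\[
\bigl|\prox_{\frac{1}{2\ell}\Phi}(x)-x\bigr|=\min\{x,\;\epsilon/\ell\}.
\]
This exceeds $\epsilon/(2\ell)$ whenever $x>\epsilon/(2\ell)$. Hence $\epsilon$-OS requires $x^T\le \epsilon/(2\ell)$. Provided $a\ge \epsilon/\ell$, i.e.\ $\Delta_{\Psi_1}\gtrsim\epsilon^2/\ell$ (the nontrivial regime), this forces
\[
T\ge \frac{a-\epsilon/(2\ell)}{2\epsilon c}\ge \frac{a}{4\epsilon c}=\Omega\bigl(\Delta_{\Psi_1}/(\epsilon^2 c)\bigr).
\]
With $r_{\y}=\Theta\bigl(\epsilon^2/(\ell D_{\cY}^2)\bigr)$ we have $c=\Theta\bigl(\epsilon^4/(\ell^5D_{\cY}^4)\bigr)$. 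This gives $T=\Omega\bigl(\ell^5D_{\cY}^4\Delta_{\Psi_1}/\epsilon^6\bigr)$, which is part (ii). The analogous requirement $a\ge 2\epsilon c$ in part (i) is equally harmless.

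\emph{Main obstacle.} The delicate point is making sure the lower bound applies to every admissible step size, not just one convenient choice. \cref{con-pgda} imposes $c^t=\Theta(r_{\y}^2/\ell^3)$, so the upper side $c\le C\,r_{\y}^2/\ell^3$ is exactly what drives the bounds above. I will also verify that every term in the $\min$ of \cref{con-pgda} is of order at least $r_{\y}^2/\ell^3$ when $r_{\y}\ll\ell$. Otherwise the condition would force an even smaller $c$, which only strengthens the lower bound.

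If a referee objects that a $y$-independent instance is degenerate, I would replace it with $f(x,y)=2\epsilon x+\delta xy$ for a small $\delta$. The same counting argument should go through, with $y^t$ remaining $\cO(\delta)$-close to $0$.
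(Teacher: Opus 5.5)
Your proof is correct, and it takes a different and more elementary route than the paper.

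\textbf{The paper's route.} It uses two separate instances, each giving a geometric contraction rate.
\begin{itemize}
\item For (i): a piecewise function whose middle branch is $-\tfrac12\ell x^2+bxy$, with $b=\sqrt{3\ell r_y}$ on $\cY=[0,D_{\cY}]$. It is initialized along an eigenvector of the $2\times2$ iteration matrix $\bm{I}+\bm{M_1}$, so that $x^T=(1+\lambda_1)^Tx^0$ with $|\lambda_1|=\Theta(\epsilon^2/(\ell^2D_{\cY}^2))$.
\item For (ii): $f=h(x)y$ with $h$ quadratic near the origin, where $|x^t|\ge(1-\tfrac{\ell cD_{\cY}}{D_{\cY}+1})^t|x^0|$.
\end{itemize}
In each case the paper gets $T=\Omega(1/|\lambda|)$. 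It then converts to the $\Delta_{\Psi_1}$ form via an upper bound $\Delta_{\Psi_1}=\cO(\epsilon^2/\ell)$, resp.\ $\cO((D_{\cY}+1)\epsilon^2/(\ell D_{\cY}))$, tied to its particular initialization.

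\textbf{Same mechanism.} Since $\lambda_1\approx-2\ell c$, the paper's bounds are driven by exactly what you isolate: the forced primal step $c\le r_y^2/(64\ell^3)$. This already follows from $c\le r_y^2\alpha/(16\ell^2)$ and $\alpha\le 1/(4(\ell+r_y))$. So your check of the other terms in the $\min$ of the step-size condition is unnecessary.

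\textbf{What each approach buys.} Your linear instance with the constraint $x\ge0$ makes the mechanism transparent. One instance handles both parts, with no spectral computation or eigenvector initialization. $\Delta_{\Psi_1}=2\epsilon a$ is exact, and $T\ge\Delta_{\Psi_1}/(4\epsilon^2c)$ holds for every prescribed initial gap, not only at the scale $\epsilon^2/\ell$. In exchange, the paper's construction has genuine primal--dual coupling. Moreover, the GS instance is reused for Perturbed Smoothed GDA, where $|\lambda_2|=\Theta(\epsilon/(\ell D_{\cY}))$ exhibits the $\epsilon^{-4}$ versus $\epsilon^{-3}$ separation on a common problem.

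\textbf{On your fallback.} The $\delta xy$ variant is unnecessary, because the decoupled instance satisfies all the standing assumptions. If you did use it, note that $y^t$ drifts toward $\proj_{\cY}(\delta x^t/r_y)$ and need not stay $\cO(\delta)$. The condition to impose is $\delta D_{\cY}\le\epsilon$. That keeps the $x$-gradient in $[\tfrac32\epsilon,\tfrac52\epsilon]$, which preserves both the interior GS violation and your counting argument.
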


\begin{remark}
 (i) A result analogous to \cref{thm:pgda-nc-c-epi}~(i) has been obtained in \citep{lu2020hybrid,xu2023unified}. However, our proof relies on a different analytical framework (see \cref{sec:framework});
 (ii) As noted in the introduction, prior works do not provide iteration complexity guarantees for $\eps$-OS. In \cref{thm:pgda-nc-c-epi}~(ii), we establish the first iteration complexity bound in terms of $\eps$-OS, which matches the iteration complexity of vanilla \emph{TS-GDA}. 
 Together with our tight analysis in \cref{thm:tight-alter}~(ii), these results show that while dual perturbation accelerates the convergence to $\epsilon$-GS, it does not improve the convergence to $\epsilon$-OS. 
In other words, \cref{thm:pgda-nc-c-epi,thm:tight-alter} show that the convergence performance of \emph{Perturbed GDA} on $\epsilon$-GS and $\epsilon$-OS is totally different. 
This observation clarifies that the benefits of dual perturbation are limited to GS and do not extend to OS in single-loop first-order methods for NC–C minimax optimization.
\end{remark}

\begin{theorem}[Iteration complexity of {\it \bf{Perturbed Smoothed GDA}}]
\label{thm:psgda-epi}

Let the sequence $\{(\x^{t},\y^{t},\z^{t})\}_{t\ge 0}$ be generated by \emph{Perturbed Smoothed GDA}. Given any $\epsilon>0$, we let step sizes satisfy \cref{con-psgda}. Then, we have 
\begin{enumerate}[label=(\roman*)] 

\item (Game Stationarity)  
Let $r_{\y}:= \Theta\left(\tfrac{\epsilon}{D_{\cY}}\right)$ and set $r_{\y}^t=r_{\y}$ for all $t \ge 0$,
then there exists an index $t \leq \cO\left(\tfrac{\ell^2D_{\cY}\Delta_{\Psi_2}}{\epsilon^3}\right)$ such that $(\x^{t},\y^{t})$ is an $\epsilon$-GS of problem \cref{eq:prob}. 
\item (Optimization Stationarity) 
Let $r_{\y}:= \Theta\left(\tfrac{\epsilon^2}{\ell D_{\cY}^2}\right)$ and set $r_{\y}^t=r_{\y}$ for all $t \ge 0$,  then there exists an index $t\leq \cO\left(\tfrac{\ell^3D_{\cY}^2\Delta_{\Psi_2}}{ \epsilon^4}\right)$ such that $\x^{t}$ is an $\epsilon$-OS of problem \cref{eq:prob}.
\end{enumerate}
\end{theorem}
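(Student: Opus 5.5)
The plan is a Lyapunov-function argument for the perturbed surrogate problem, followed by a translation of stationarity back to the original problem.

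\textbf{Potential and its descent.} Fix $r_{\y}^t\equiv r_{\y}$, so that $F_t\equiv F$ with $F(\x,\y,\z)=f(\x,\y)+\frac{r_{\x}}{2}\|\x-\z\|^2-\frac{r_{\y}}{2}\|\y\|^2$. Define
\[
d(\y,\z):=\min_{\x\in\cX}F(\x,\y,\z),\qquad p(\z):=\max_{\y\in\cY}d(\y,\z),
\]
and the potential
\[
\Psi_2(\x,\y,\z)=F(\x,\y,\z)-2d(\y,\z)+2p(\z),
\]
which matches \cref{def:initial_gap}. Since $F-d\ge0$, $p-d\ge0$ and $p\ge\min\max f_0$, we get $\Psi_2\ge\min\max f_0$.

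The standard Smoothed GDA bookkeeping has three pieces:
\begin{itemize}
\item primal descent of $F$ in $\x$, using $c<1/(r_{\x}+\ell+r_{\y})$;
\item dual ascent of $d$ in $\y$, using the $(\ell+r_{\y})$-smoothness of $d$ in $\y$;
\item an averaging step in $\z$.
\end{itemize}
Combining them should give a basic descent estimate of the form
\[
\Psi_2^t-\Psi_2^{t+1}\ge \kappa_1\|\x^t-\x^{t+1}\|^2+\kappa_2\|\y^t-\y^{t+1}_+(\z^t)\|^2+\kappa_3\beta\|\z^t-\x^{t+1}\|^2-\kappa_4\beta\|\x^{t+1}-\x^*(\y^*(\z^t),\z^t)\|^2 .
\]
Here $\y^*(\z)=\argmax_{\y} d(\y,\z)$ is \emph{unique}, because $d(\cdot,\z)$ is $r_{\y}$-strongly concave. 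This uniqueness is where perturbation enters. Without perturbation, the negative term has to be controlled by a two-stage argument using the diameter $D_{\cY}$, which produces $\beta=\Theta(\epsilon^2)$. Instead, I will use a primal error bound. Strong concavity of $d(\cdot,\z)$ combined with the Lipschitzness of $\x^*(\cdot,\z)$ (constant $\ell/(r_{\x}-\ell)$) gives
\[
\|\x^*(\y,\z)-\x^*(\y^*(\z),\z)\|\le \omega\,\|\y-\y_+(\z)\|,
\]
with $\omega=O(1/\sqrt{r_{\y}})$ times $\ell$-dependent constants. This matches the $\omega^t$ in \cref{con-psgda}. The negative term is then bounded by $\beta\omega^2$ times the dual residual plus the primal residual, and $\beta\le 1/(384 r_{\x}\alpha\omega^2)$ lets the positive terms absorb it.

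\textbf{Rate.} The choice $\beta=\Theta(r_{\y}/\ell)$ is allowed, and the descent estimate becomes a clean sufficient decrease. Telescoping over $T$ iterations gives
\[
\min_{t\le T}\Bigl\{\|\x^t-\x^{t+1}\|^2+\|\y^t-\y^{t+1}_+\|^2+\beta\|\z^t-\x^{t+1}\|^2\Bigr\}\le \frac{C\Delta_{\Psi_2}}{T},
\]
where $C$ is of order $\ell$ relative to the step sizes $c,\alpha=\Theta(1/\ell)$.

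\textbf{Translating to stationarity.}
\begin{itemize}
\item \emph{(i) GS.} The GS residual of the original problem at $(\x^t,\y^t)$ is controlled by three quantities: $\frac1c\|\x^t-\x^{t+1}\|$, $\frac1\alpha\|\y^t-\y^{t+1}\|$, and $r_{\x}\|\x-\z\|$ from the smoothing term. The perturbation adds at most $r_{\y}D_{\cY}=O(\epsilon)$ to the dual residual. We therefore need each squared residual to be $\lesssim\epsilon^2/\ell^2$. The $\z$-term carries the factor $\beta$, so it requires $T\gtrsim \ell\Delta_{\Psi_2}/(\beta\epsilon^2)\cdot \ell/\ell$. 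With $\beta=\Theta(\epsilon/(\ell D_{\cY}))$ this gives $T=O(\ell^2D_{\cY}\Delta_{\Psi_2}/\epsilon^3)$.
\item \emph{(ii) OS.} Bound $\|\prox_{\Phi/(2\ell)}(\x)-\x\|$ by $\|\x-\z\|$ plus the distance from $\z$ to the proximal point of the \emph{unperturbed} value function. The perturbed and unperturbed Moreau envelopes differ in value by at most $r_{\y}D_{\cY}^2/2$. Strong convexity of the prox subproblem then turns this into a proximal-point displacement of order $\sqrt{r_{\y}D_{\cY}^2/\ell}$, which must be $\le\epsilon/\ell$. This forces $r_{\y}=\Theta(\epsilon^2/(\ell D_{\cY}^2))$, hence $\beta=\Theta(\epsilon^2/(\ell^2D_{\cY}^2))$ and $T=O(\ell^3D_{\cY}^2\Delta_{\Psi_2}/\epsilon^4)$.
\end{itemize}

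\textbf{Main obstacle.} I expect the hardest step to be the error bound: bounding $\|\x^{t+1}-\x^*(\y^*(\z^t),\z^t)\|$ by the dual residual with the sharp $1/\sqrt{r_{\y}}$ constant, and making all the constants fit the step-size conditions. My first attempt will use the quadratic growth of $d(\cdot,\z)$ around $\y^*(\z)$ together with the gradient-mapping inequality.
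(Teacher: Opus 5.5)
You have the paper's skeleton: the potential $\Psi_2=F-2d+2p$, the basic descent estimate, absorption of its negative term through a dual error bound with $\omega=\Theta(\sqrt{\ell/r_{\y}})$ so that $\beta=\Theta(r_{\y}/\ell)$ is admissible, telescoping, and reading off the GS residuals. For that last step you also need $\|\y^{t+1}-\y_+^t(\z^t)\|\le\alpha\ell\sigma_3^t\|\x^{t+1}-\x^t\|$ to pass from the telescoped $\|\y^t-\y^t_+(\z^t)\|$ to $\|\y^{t+1}-\y^t\|$.

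\textbf{The gap is the error bound itself, and the whole $\epsilon^{-3}$ improvement rests on it.} The mechanism you describe does not give $1/\sqrt{r_{\y}}$. Strong concavity of $d(\cdot,\z)$ yields $\|\y-\y^*(\z)\|\le\frac{1+\alpha L_d}{\alpha r_{\y}}\|\y-\y_+(\z)\|$, where $L_d=\Theta(\ell)$ is the gradient-Lipschitz constant of $d(\cdot,\z)$. Composing with the $\frac{\ell}{r_{\x}-\ell}$-Lipschitz map $\x(\cdot,\z)$ gives $\omega=\Theta(\ell/r_{\y})$, which is too large by a factor $\sqrt{\ell/r_{\y}}\gg1$. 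Going through quadratic growth of $d(\cdot,\z)$ and then $\|\y-\y^*(\z)\|$ has the same defect. With $\omega=\Theta(\ell/r_{\y})$, the absorption condition $\beta\le 1/(384r_{\x}\alpha\omega^2)$ forces $\beta=\cO(r_{\y}^2/\ell^2)$. Your count $T\gtrsim\ell\Delta_{\Psi_2}/(\beta\epsilon^2)$ then becomes $\cO(\ell^3D_{\cY}^2\Delta_{\Psi_2}/\epsilon^4)$ for GS, which is no gain over Smoothed GDA, and the OS rate degrades to $\epsilon^{-6}$.

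\textbf{The missing idea is to bound the \emph{squared} primal distance by the dual \emph{value} gap, never passing through $\|\y-\y^*(\z)\|$.} Let $(\x^\star(\z),\y^*(\z))$ be the saddle point, so $p(\z)=\max_{\y'\in\cY}F(\x^\star(\z),\y',\z)$. Since $F(\cdot,\y,\z)$ is $(r_{\x}-\ell)$-strongly convex on $\cX$ with minimizer $\x(\y,\z)$,
\[
\tfrac{r_{\x}-\ell}{2}\|\x^\star(\z)-\x(\y,\z)\|^2\le F(\x^\star(\z),\y,\z)-d(\y,\z)\le p(\z)-d(\y,\z).
\]
Next apply the PL-type inequality of the $r_{\y}$-strongly concave $d(\cdot,\z)$ at $\y_+(\z)$. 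The optimality condition of the projection produces an element of $\partial\bigl(-d(\cdot,\z)+\iota_{\cY}\bigr)(\y_+(\z))$ of norm at most $\frac{1+\alpha L_d}{\alpha}\|\y-\y_+(\z)\|$. Hence $p(\z)-d(\y_+(\z),\z)\le\frac{(1+\alpha L_d)^2}{2r_{\y}\alpha^2}\|\y-\y_+(\z)\|^2$. Taking square roots gives $\omega=\frac{1+\alpha L_d}{\alpha\sqrt{r_{\y}(r_{\x}-\ell)}}=\Theta(\sqrt{\ell/r_{\y}})$, which is the $\omega^t$ of \cref{con-psgda}. The paper imports exactly this from the homogeneous dual error bound (\cref{lemma:dualerror}).

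\textbf{Once this is fixed, your OS step is valid but takes a different route from the paper.} The paper never looks at proximal points. It plugs the GS residuals into \cref{lemma2}, where the perturbation costs $\frac{2D_{\cY}}{\ell}\cdot r_{\y}D_{\cY}$. You instead use $\Phi_t\le\Phi\le\Phi_t+\frac{r_{\y}}{2}D_{\cY}^2$, with $\Phi_t$ the perturbed value function, together with stability of the $(r_{\x}-\ell)$-strongly convex prox subproblem. This pays the same $\Theta(r_{\y}D_{\cY}^2/\ell)$ in the squared residual and balances it against $\|\z^t-\x^\star(\z^t)\|^2=\cO(\Delta_{\Psi_2}/(r_{\y}T))$ at the same $r_{\y}$. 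Your route is closer to the ``smoothing targets OS directly'' intuition, but it needs two steps you left implicit:
\begin{enumerate}
\item Bounding $\|\x^{t+1}-\x^\star(\z^t)\|$, again via the $1/\sqrt{r_{\y}}$ bound.
\item Converting the algorithm's prox parameter $1/r_{\x}$ into the $\frac{1}{2\ell}$ of \cref{defi:primal-dual}. By weak convexity of $\Phi$ the two prox residuals at $\z^t$ agree up to a factor $r_{\x}/\ell$, and $\prox_{\frac{1}{2\ell}\Phi}$ is $2$-Lipschitz, which moves you from $\z^t$ to $\x^{t+1}$.
\end{enumerate}
The paper's route avoids both by reusing the GS bounds, and it is algorithm-agnostic.
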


\begin{remark}
(i) When $r_{\y}^t=0$ for all $t$, \emph{Perturbed Smoothed GDA} reduces to \emph{Smoothed GDA}. 
For completeness, we state the iteration complexity of \emph{Smoothed GDA} in \cref{thm:sgda-epi}. 
For the OS guarantee, our proof differs from that of \citep{li2025nonsmooth}. 
Moreover, due to the additional quadratic perturbation, the analysis of \emph{Perturbed Smoothed GDA} is substantially different from that of \emph{Smoothed GDA}. 
In particular, by exploiting the condition $r_{\y}^t>0$, we avoid the two-stage analysis used in \citep{zhang2020single,li2025nonsmooth};
(ii) In terms of $\epsilon$-OS, \emph{Smoothed GDA} and \emph{Perturbed Smoothed GDA} 
achieve the same iteration complexity; 
(iii) In terms of $\epsilon$-GS, the two methods differ in the magnitude of the
step size~$\beta^t$.  
For \emph{Smoothed GDA}, one needs $\beta^t = \Theta\left(\tfrac{\epsilon^{2}}{\ell D_{\cY}^{2}}\right)$, whereas for \emph{Perturbed Smoothed GDA}, it suffices to take
$\beta^t = \Theta\left(\tfrac{\epsilon}{\ell D_{\cY}}\right)$.
This more relaxed requirement on~$\beta^t$ enables \emph{Perturbed Smoothed GDA} to
achieve a faster convergence to
game stationarity compared with \emph{Smoothed GDA}. 

\end{remark}

\begin{theorem}[Tightness analysis of  Perturbed Smoothed GDA] \label{thm:tight-psgda}
Consider the \emph{Perturbed Smoothed GDA}  with  step‐size conditions in \cref{thm:psgda-epi}. 
Let $(\x^{T},\y^{T},\z^{T})$ denote the output after $T$ iterations.
Then there exists an $\ell$-smooth function and an initialization $(\x^0,\y^0,\z^0)$ such that 
\begin{enumerate}[label=(\roman*)] 
\item (Game Stationarity) \emph{Perturbed Smoothed GDA}  
requires at least $T = \Omega\left(\tfrac{\ell^2D_{\cY}\Delta_{\Psi_2}}{\epsilon^3}\right)$
iterations to guarantee that the point $(\x^{T},\y^{T})$ is an $\epsilon$-GS of problem \cref{eq:prob}. 
\item (Optimization Stationarity) \emph{Perturbed Smoothed GDA} 
requires at least $T = \Omega\left(\tfrac{ \ell^3D_{\cY}^2\Delta_{\Psi_2}}{\epsilon^4}\right)$ iterations to guarantee that the point $\x^{T}$ is an $\epsilon$-OS of  problem \cref{eq:prob}.
\end{enumerate}
\end{theorem}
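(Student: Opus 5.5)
We will prove \cref{thm:tight-psgda} with explicit hard instances. On each instance the iterates of \emph{Perturbed Smoothed GDA} follow a linear (or piecewise linear) recursion that we can solve in closed form. The two parts call for different mechanisms. For part (ii), the limiting quantity is the perturbation bias: with $r_{\y}=\Theta(\epsilon^2/(\ell D_{\cY}^2))$, the algorithm sees a modified landscape. For part (i), the limiting quantity is the slow averaging step $\beta=\Theta(r_{\y}/\ell)=\Theta(\epsilon/(\ell D_{\cY}))$ in the $\z$-update. In both cases the argument has the same structure. First, exhibit a region in which the stationarity measure stays above $\epsilon$. Second, show that the iterates can leave that region only at a speed governed by $\beta$ (for GS) or by an $\cO(\epsilon^2)$ effective descent (for OS). Third, compute $\Delta_{\Psi_2}$ for the instance, so that the number of iterations needed to leave the region is at least the claimed $\Omega(\cdot)$ expression.

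\textbf{Part (i).} We take a one-dimensional instance in which $\x$ is essentially slaved to $\z$. A concrete choice is $f(\x,\y)=\ell\,\x\y$ on $\cY=[-D_{\cY}/2,D_{\cY}/2]$, possibly plus a term $-\frac{\ell}{2}\x^2$, with $\cX$ a bounded interval of length $\Theta(\Delta_{\Psi_2}/\epsilon)$.

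1. With $r_{\x}=\Theta(\ell)$ and $c,\alpha=\Theta(1/\ell)$, the fast pair $(\x,\y)$ tracks the saddle point of $F(\cdot,\cdot,\z)$ up to a geometrically decaying error.
2. On this instance the saddle point has $\y$ pinned at the boundary of $\cY$ until $\z$ has moved. Consequently $\dist(\bz,\nabla_{\x}f+\partial\iota_{\cX})\approx \ell|\y|$, or $r_{\x}|\x-\z|$, and this stays $\gtrsim\epsilon$ as long as $|\z^t-\z^\star|\gtrsim \epsilon/\ell$.
3. The $\z$-recursion is $\z^{t+1}-\z^\star=(1-\Theta(\beta))(\z^t-\z^\star)$ plus a drift. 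Alternatively, with the indicator constraint, $\z$ moves by at most $\beta\cdot\cO(\epsilon/\ell)$ per step while the gradient equals $\Theta(\epsilon)$.
4. Covering a distance $d$ therefore takes $\Omega\bigl(d\,\ell/(\beta\epsilon)\bigr)=\Omega(\ell^2 D_{\cY} d/\epsilon^2)$ steps.
5. We choose $d$ so that $\Delta_{\Psi_2}=\Theta(\epsilon d)$, i.e. the function decreases at slope $\epsilon$ along the path. This gives $T=\Omega(\ell^2D_{\cY}\Delta_{\Psi_2}/\epsilon^3)$.

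\textbf{Part (ii).} Here we use the perturbation bias. We take an instance where $\Phi$ has a kink, e.g. $f(\x,\y)=\x\y$-type coupling giving $\Phi(\x)=\frac{D_{\cY}}{2}\ell|\x|$ near the origin, plus a linear tilt of slope $\Theta(\epsilon)$.

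1. The perturbed value function $\max_{\y}f-\frac{r_{\y}}{2}\y^2$ is a Huber-type smoothing. Its effective gradient inside the smoothed zone is reduced, so the iterates descend only with per-step progress $\cO(\beta\epsilon^2/\ell)=\cO(\epsilon^4/(\ell^2D_{\cY}^2))$ in function value.
2. Meanwhile the Moreau-envelope gradient at the iterates stays $\gtrsim\epsilon$ until the iterates reach a target point.
3. Dividing the total required decrease $\Delta_{\Psi_2}$ by this per-step progress yields $\Omega(\ell^3D_{\cY}^2\Delta_{\Psi_2}/\epsilon^4)$.

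An alternative that may be simpler is to reuse the Smoothed GDA hard instance with $\beta=\Theta(\epsilon^2/(\ell D_{\cY}^2))$. We would then check that the $\z$-dynamics, with $\beta=\Theta(r_{\y}/\ell)$ of the same order, are unaffected up to constants by the perturbation.

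\textbf{Main obstacle.} The hardest step is showing that the fast variables do not escape the bad region faster than $\beta$ allows. Concretely, we need an explicit invariant, such as $\y^t$ staying at the boundary of $\cY$ and $|\x^t-\z^t|=\Theta(\epsilon/\ell)$, that holds along the entire trajectory for the whole admissible range of step sizes in \cref{con-psgda}. We would first try linear instances in which the closed-form $2\times2$ or $3\times 3$ iteration matrix can be diagonalized. From its eigenvalues we would read off that the slowest mode contracts at rate $1-\Theta(\beta)$ and that the GS or OS residual is proportional to that mode. Finally, we must verify that $\Delta_{\Psi_2}$ computed from \cref{def:initial_gap} scales with the path length as claimed.
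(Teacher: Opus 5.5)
Your sketch identifies the right bottleneck for part (i), but it never exhibits an instance on which that bottleneck actually materializes. For part (ii) it names the wrong mechanism.

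In (i), the instance you propose, $f(x,y)=\ell xy$ on $\cY=[-D_{\cY}/2,D_{\cY}/2]$, has $\Phi(x)=\frac{\ell D_{\cY}}{2}|x|$, whose slope is $\Theta(\ell D_{\cY})$, not $\Theta(\epsilon)$. While $y$ is pinned at the boundary, $\nabla_x f=\pm\ell D_{\cY}/2$. So $x^{t+1}$ sits $\Theta(D_{\cY})$ away from $z^t$ (not $\Theta(\epsilon/\ell)$), and $z$ moves $\Theta(\beta D_{\cY})$ per step. This phase therefore certifies only about $d/(\beta D_{\cY})=\Theta(\Delta_{\Psi_2}/(\epsilon D_{\cY}))$ iterations, a factor $(\ell D_{\cY}/\epsilon)^2$ short of the target. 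Steps 2--5 presuppose an instance whose residual is held at $\Theta(\epsilon)$ along the whole path, plus a proven invariant $x^{t+1}-z^t=\cO(\epsilon/\ell)$. That is exactly the item you defer as the main obstacle.

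The drift idea itself is sound. In the paper, $|\lambda_2|=\Theta(\epsilon/(\ell D_{\cY}))=\Theta(\beta)$, so the $z$-averaging is the bottleneck there too. The idea closes with a slope-$2\epsilon$ instance: take $f(x,y)=-2\epsilon x$ on $\cX=[0,d]$, started at $(0,0,0)$. Induction gives $x^{t+1}-z^t\le 2\epsilon/r_x$, hence $x^t\le 2\epsilon((t-1)\beta+1)/r_x$. The GS residual equals $2\epsilon$ until $x^T=d$, and $\Delta_{\Psi_2}=2\epsilon d$. That would be a genuinely different route from the paper's. The paper instead keeps $\Delta_{\Psi_2}=\cO(\epsilon^2/\ell)$ and counts $\Omega(1/|\lambda_2|)$ contractions along an eigenvector of the $3\times 3$ linear recursion on \cref{eg:gs-alter-tight}.

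In (ii), the perturbation bias is not what limits the algorithm. With $r_y=\Theta(\epsilon^2/(\ell D_{\cY}^2))$, the Huber zone of $\max_{y}\{\ell xy-\frac{r_y}{2}y^2\}$ has half-width $r_yD_{\cY}/(2\ell)=\Theta(\epsilon^2/(\ell^2 D_{\cY}))$. This lies deep inside the set $|x|\le\epsilon/(2\ell)$, where $x$ is already an $\epsilon$-OS of the kink $\frac{\ell D_{\cY}}{2}|x|$. Any slowdown from the reduced gradient therefore happens only after $\epsilon$-OS is reached and cannot delay it. You also give no argument for the dynamics away from the zone.

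Your per-step progress $\beta\epsilon^2/\ell$ is really the $z$-drift again, and the arithmetic does not close. With $\beta=\Theta(r_y/\ell)=\Theta(\epsilon^2/(\ell^2D_{\cY}^2))$ it equals $\Theta(\epsilon^4/(\ell^3D_{\cY}^2))$, not $\epsilon^4/(\ell^2D_{\cY}^2)$, and only the former yields $\ell^3D_{\cY}^2\Delta_{\Psi_2}/\epsilon^4$. Moreover, dividing $\Delta_{\Psi_2}$ by a progress rate gives a lower bound only once you show the OS measure stays above $\epsilon$ until a decrease of order $\Delta_{\Psi_2}$ has occurred.

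The paper's proof of (ii) is likewise $\beta$-limited. On $f=h(x)y$, it compares with the sequence whose dual is frozen at $D_{\cY}$ (\cref{lemma:xyz-fix-y}). The $x$-component of that sequence contracts at rate $1+\lambda_3$ with $|\lambda_3|=\Theta(\gamma\beta)$, where $\gamma=D_{\cY}/(D_{\cY}+1)$. The slope-$2\epsilon$ instance above would also settle (ii) in your framework. There $\prox_{\frac{1}{2\ell}\Phi}(x)=\proj_{[0,d]}(x+\epsilon/\ell)$, so $x$ is an $\epsilon$-OS only once $x\ge d-\epsilon/(2\ell)$.
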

Using similar instances as in \cref{thm:tight-psgda}, we can also establish the following tight complexity result for \emph{Smoothed GDA}.

\begin{corollary}[Tightness analysis of Smoothed GDA]
    \label{thm:sgda-epi-tight}
Consider the \emph{Smoothed GDA} with the step‐size conditions from 
\cref{con-psgda}, and denote by $(\x^{T},\y^{T},\z^{T})$ the output after $T$ iterations.
Then, there exists an $\ell$-smooth function and an initialization $(\x^0,\y^0,\z^0)$ such that 
\begin{enumerate}[label=(\roman*)] 
\item (Game Stationarity)  \emph{Smoothed GDA} 
requires at least $T = \Omega\left(\tfrac{\ell^{3}D_{\cY}^{2}\Delta_{\Psi_2}}{\epsilon^{4}}\right)$ 
iterations to guarantee that the point $(\x^{T},\y^{T})$ is an $\epsilon$-GS of  problem \cref{eq:prob}.
\item (Optimization Stationarity) \emph{Smoothed GDA} 
requires at least $T = \Omega\left(\tfrac{\ell^3D_{\cY}^{2}\Delta_{\Psi_2}}{\epsilon^{4}}\right)$ 
iterations to guarantee that the point $\x^{T}$ is an $\epsilon$-OS of  problem \cref{eq:prob}. 
\end{enumerate}
\end{corollary}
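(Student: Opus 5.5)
We will prove \cref{thm:sgda-epi-tight} by reusing the hard instances from the proof of \cref{thm:tight-psgda}. The key observation is that \emph{Smoothed GDA} is exactly \emph{Perturbed Smoothed GDA} with $r_{\y}^t=0$. Hence the dynamics on any fixed instance are governed by the same recursion, but with $\beta=\Theta(\epsilon^2/(\ell D_{\cY}^2))$ in place of $\beta=\Theta(r_{\y}/\ell)$. By \cref{con-psgda}, $c^t$ and $\alpha^t$ are $\Theta(1/\ell)$ for both methods, so $\beta$ is the only parameter that changes. The lower bound should therefore come from the slow drift of the anchor $\z^t$, which moves at rate $\beta$.

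\textbf{Step 1: the instance.} We take a separable instance of the type used for \cref{thm:tight-psgda}(ii), for instance
\[
f(\x,\y)=\ell\,x_1 y \ \text{ on } \cY=[-D_{\cY}/2,D_{\cY}/2],
\]
possibly plus a small linear term in $\x$ with slope $\Theta(\epsilon)$, and $\cX$ unbounded, or a half-line, in the relevant direction. The initialization $(\x^0,\y^0,\z^0)$ is placed at distance $R$ from the stationary region, where $R$ is chosen so that $\Delta_{\Psi_2}=\Theta(\ell R\,\epsilon\text{-type scale})$; $R$ is then calibrated to match the stated bound. On this instance $\y$ saturates at a boundary of $\cY$ within $O(1)$ steps, because $\alpha=\Theta(1/\ell)$. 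After that, $\x^t$ tracks $\z^t$ up to an $O(\epsilon/\ell)$ offset, since $c=\Theta(1/\ell)$ and $r_{\x}=\Theta(\ell)$ make the $\x$-update a contraction toward $\z$. The anchor then evolves as $\z^{t+1}-\z^t=\beta(\x^{t+1}-\z^t)$.

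\textbf{Step 2: the OS and GS measures.} As long as $\x^t$ has not reached the stationary region, the proximal residual $\|\prox_{\frac{1}{2\ell}\Phi}(\x^t)-\x^t\|$ stays above $\epsilon/(2\ell)$, because $\Phi$ has slope $\Theta(\ell D_{\cY})$ there. Likewise the GS residual $\nabla_{\x}f+\partial\iota_{\cX}$ stays at least $\epsilon$, since $\y$ sits at the boundary. Each iteration moves $\z$ by at most $\beta\|\x^{t+1}-\z^t\|=O(\beta\,\epsilon/\ell)$ or $O(\beta D_{\cY})$, depending on the calibration. Covering a distance $R$ therefore takes $T\gtrsim R/(\beta\cdot \text{step})$. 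Substituting $\beta=\Theta(\epsilon^2/(\ell D_{\cY}^2))$ and relating $R$ to $\Delta_{\Psi_2}$ through the definition of $\Psi_2^0$ gives $T=\Omega(\ell^3D_{\cY}^2\Delta_{\Psi_2}/\epsilon^4)$ for both (i) and (ii).

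\textbf{Main obstacle.} The delicate part is (i). For \emph{Perturbed Smoothed GDA} the GS lower bound was only $\epsilon^{-3}$, so the GS criterion must not be met earlier than the OS criterion. We must exclude the possibility that the GS residual becomes small along the path through an intermediate $\y$ oscillating inside $\cY$. We would handle this by choosing the instance so that $\nabla_{\y}f$ keeps a fixed sign of magnitude at least $\epsilon$ until $\x$ reaches the target region. This pins $\y$ to the boundary, and then the $\x$-component of the GS residual is bounded below exactly as in the OS argument. With $r_{\y}=0$ there is no perturbation to pull $\y$ inward, which makes this easier than in \cref{thm:tight-psgda}. A remaining technical check is that $\Delta_{\Psi_2}$ is computed with $r_{\y}^0=0$, as in the remark after \cref{def:initial_gap}, and scales linearly in $R$ so the calibration closes.
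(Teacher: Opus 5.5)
\textbf{There is a genuine gap: the slope calibration in Steps 1--2 is wrong, and choosing $R$ cannot fix it.} The mechanism you isolate is right: the bound comes from the anchor $\z^t$ drifting only $\beta\|\x^{t+1}-\z^t\|$ per step. Once $\y$ is pinned, the $\x$-update relaxes toward the point where $r_{\x}(\x-\z)=-\nabla_{\x}f$. So the offset $\|\x^{t+1}-\z^t\|$ is about $s/r_{\x}$, where $s$ is the size of $\nabla_{\x}f$ (the slope of $\Phi$) along the path. The offset therefore cannot be $O(\epsilon/\ell)$ while $\Phi$ has slope $\Theta(\ell D_{\cY})$, as Step 2 asserts.

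For your instance $f=\ell x_1y$ one has $s=\ell D_{\cY}/2$, so $\z$ moves $\Theta(\beta D_{\cY})$ per step. Meanwhile $\Delta_{\Psi_2}\ge p_0(\z^0)\approx sR$, because $\Psi_2^0\ge p_0(\z^0)$ and $\min_{\x}\max_{\y}f=0$. Hence
\[
T\approx\frac{r_{\x}R}{\beta s}\approx\frac{r_{\x}\Delta_{\Psi_2}}{\beta s^2},
\]
and $R$ cancels. With $s=\Theta(\ell D_{\cY})$ and $\beta=\Theta(\epsilon^2/(\ell^2D_{\cY}^2))$ this gives only $T=\Theta(\ell\Delta_{\Psi_2}/\epsilon^2)$. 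That falls short of the claim by a factor $1/\beta=\Theta(\ell^2D_{\cY}^2/\epsilon^2)$. Your hedge ``or $O(\beta D_{\cY})$, depending on the calibration'' is exactly where the argument fails.

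\textbf{What is missing is to keep $s$ at $\Theta(\epsilon)$, just above the target accuracy.} With $s=2\epsilon$, the OS and GS residuals stay at $2\epsilon$ along the path. The display then gives $T\approx\ell\Delta_{\Psi_2}/(\beta\epsilon^2)=\Theta(\ell^3D_{\cY}^2\Delta_{\Psi_2}/\epsilon^4)$.

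The paper's instances sit precisely in this regime. It starts at $x^0=\Theta(\epsilon/\ell)$, where $|\nabla_x f|=\Theta(\epsilon)$ and $\Delta_{\Psi_2}=O(\epsilon^2/\ell)$ up to the factor $\gamma$. So it only needs $T=\Omega(1/\beta)$, which it obtains from an exact eigenvector initialization.
\begin{itemize}
\item For (ii), it reuses \cref{eg:os-gda} with $y^0=D_{\cY}$. Since $\nabla_yF=h(x)\ge0$, one has $y^t\equiv D_{\cY}$ exactly, with no saturation argument needed. The iterates then follow \cref{eq:new_update_rule}, and $|\lambda_3|=\Theta(\gamma\beta)$.
\item For (i), it builds $-\frac{r_y}{2}y^2$ with $r_y=\epsilon/D_{\cY}$ into the instance of \cref{eg:gs-alter-tight}. \emph{Smoothed GDA} then coincides with \emph{Perturbed Smoothed GDA} except for $\beta$, and the contracting eigenvalue becomes $\Theta(\beta)$ in magnitude.
\end{itemize}
Your $\y$-pinning idea for (ii) matches the paper; it is the slope calibration that has to change.
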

These results clearly delineate the roles of the two acceleration techniques: Smoothing alone improves convergence to both optimization stationarity and game stationarity, whereas dual perturbation alone accelerates only convergence to game stationarity. 
By combining the two techniques, \emph{Perturbed Smoothed GDA} attains the best-known rate for game stationarity while preserving the state-of-the-art rate for optimization stationarity among single-loop first-order methods. In other words, smoothing suffices to achieve the state-of-the-art rate for optimization stationarity, while dual perturbation contributes only to further accelerating game stationarity when used alongside smoothing. 
Moreover, this cooperative speed-up for game stationarity suggests that smoothing and dual perturbation enhance convergence to $\epsilon$-GS through distinct, yet complementary, mechanisms.

While the above results focus on a constant (non-adaptive) choice of $r_{\y}^t$ for finite-time guarantees, 
we next consider a diminishing sequence $r_{\y}^t \searrow 0$ for both \emph{Perturbed GDA} and \emph{Perturbed Smoothed GDA}. 
We show that the vanishing perturbation further ensures asymptotic convergence to a stationary point of~\cref{eq:prob}.
\begin{theorem}[Asymptotic Convergence of  {\it \bf{Perturbed  GDA}}]
\label{thm:pgda-asm}
 Let $\{(\x^t,\y^t)\}_{t\ge 0}$ be generated by \emph{Perturbed GDA} with step sizes satisfying \cref{con-pgda}. Define 
 \[
\cI^\star(T)
= \bigcup_{t=1}^T 
\argmin_{k \in \{\lfloor t/2 \rfloor,\dots,t\}} 
\left\{
\frac{1}{4\alpha^k}\|\y^{k+1}-\y^k\|^2
+
\frac{1}{16c^k}\|\x^{k+1}-\x^k\|^2
\right\}.
\]
When we choose $r_{\y}^t \searrow  0$ and $t\cdot (r_{\y}^t)^2 \to \infty$, any limit point of $\{(\x^t,\y^t)\}_{t\in \cI^\star(\infty)}$ is a $0$-GS of \cref{eq:prob}. 
\end{theorem}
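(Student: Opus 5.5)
The plan is to run the Lyapunov descent analysis for \emph{Perturbed GDA} with time-varying $r_{\y}^t$, sum it over windows $\{\lfloor t/2\rfloor,\dots,t\}$, and show that the selected iterates in $\cI^\star(T)$ have vanishing residuals. I would use the potential $\Psi_1^t(\x,\y):=2\max_{\y'\in\cY}f_t(\x,\y')-f_t(\x,\y)$.

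\textbf{Step 1 (one-step descent).} Under \cref{con-pgda}, the framework of \cref{sec:framework} should give a bound of the form
\[
\Psi_1^{t+1}(\x^{t+1},\y^{t+1})\le \Psi_1^{t}(\x^{t},\y^{t})-\frac{1}{4\alpha^t}\|\y^{t+1}-\y^t\|^2-\frac{1}{16c^t}\|\x^{t+1}-\x^t\|^2+\frac{r_{\y}^t-r_{\y}^{t+1}}{2}D_{\cY}^2 .
\]
The last term absorbs the change from $f_t$ to $f_{t+1}$. Since $\0\in\cY$, $\|\y\|\le D_{\cY}$, and $f_{t+1}-f_t=\frac{r_{\y}^t-r_{\y}^{t+1}}{2}\|\y\|^2\in[0,\tfrac{r_{\y}^t-r_{\y}^{t+1}}{2}D_{\cY}^2]$. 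The $2\max$ part and the $-f$ part therefore contribute at most a constant multiple of this telescoping quantity. Also, $\Psi_1^t\ge \max_{\y}f_t\ge \min_{\x}\max_{\y} f(\x,\y)-\tfrac{r_{\y}^0}{2}D_{\cY}^2$, so the potential is bounded below uniformly in $t$.

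\textbf{Step 2 (window argument).} Write $\delta_k$ for the residual in the definition of $\cI^\star$. Summing Step 1 from $\lfloor t/2\rfloor$ to $t$, and using boundedness of $\Psi_1^k$ together with $\sum_k(r_{\y}^k-r_{\y}^{k+1})\le r_{\y}^0$, gives $\sum_{k=\lfloor t/2\rfloor}^t\delta_k\le C$ with $C$ independent of $t$. The minimizer $k_t\in\cI^\star$ then satisfies $\delta_{k_t}\le 2C/t$. Because $c^k=\Theta((r_{\y}^k)^2/\ell^3)$ and $\alpha^k=\Theta(1/\ell)$, we get
\[
\frac{\|\x^{k_t+1}-\x^{k_t}\|}{c^{k_t}}\lesssim\sqrt{\frac{C}{t\,c^{k_t}}}\lesssim \frac{\ell^{3/2}}{\sqrt{t}\,r_{\y}^{k_t}}\le \frac{\ell^{3/2}}{\sqrt{t}\,r_{\y}^{t}}\cdot\frac{\sqrt{t}\,r_{\y}^t}{\sqrt{k_t}\,r_{\y}^{k_t}}\sqrt{\tfrac{k_t}{t}}\cdots
\]
More simply, monotonicity gives $r_{\y}^{k_t}\ge r_{\y}^t$, so this quantity is $\lesssim (t(r_{\y}^t)^2)^{-1/2}\to0$. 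Likewise $\|\y^{k_t+1}-\y^{k_t}\|/\alpha^{k_t}\to0$.

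\textbf{Step 3 (from residuals to 0-GS).} The projected-gradient optimality conditions give $\dist(\0,\nabla_{\x}f(\x^{k},\y^k)+\partial\iota_\cX(\x^{k+1}))\le\|\x^{k+1}-\x^k\|/c^k$ and a similar bound for $\y$. The $\y$-bound carries an extra $\ell\|\x^{k+1}-\x^k\|+r_{\y}^k D_{\cY}$, and both of these vanish. Now take a limit point $(\bar\x,\bar\y)$ along $k_t$; the remaining residuals go to zero since $\|\x^{k+1}-\x^k\|\le c^k\cdot o(1)\to0$. Continuity of $\nabla f$ and outer semicontinuity (closed graph) of the normal cones then give $\0\in\nabla_{\x}f(\bar\x,\bar\y)+\partial\iota_\cX(\bar\x)$ and $\0\in-\nabla_{\y}f(\bar\x,\bar\y)+\partial\iota_\cY(\bar\y)$. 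If the limit is taken along the full sequence $\cI^\star(\infty)$, every $k\in\cI^\star$ is some $k_t$ with $t\le 2k+1$, so the same bounds apply.

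\textbf{Main obstacle.} The hardest step is Step 1 with varying $r_{\y}^t$. The descent in the existing framework relies on the error bound $\|\y^{t}-\y^\star_t(\x^t)\|$ from $r_{\y}^t$-strong concavity. With time-varying $r_{\y}^t$, the maximizer $\y_t^\star(\x)$ also drifts in $t$. I would first try to keep that drift hidden inside the function-value telescoping term rather than bound $\|\y_{t+1}^\star-\y_t^\star\|$ directly. This works if the potential is written purely in terms of values of $f_t$, and monotone $r_{\y}^t$ gives the correct sign.
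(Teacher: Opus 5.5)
Your proposal is correct and follows essentially the same route as the paper. The paper combines its basic descent lemma with the strong-concavity dual error bound to obtain exactly your Step~1 inequality, with telescoping term $(r_{\y}^t-r_{\y}^{t+1})D_{\cY}^2$ rather than half of it, which is immaterial. There, as you suggest, the time variation of $r_{\y}^t$ is absorbed through function values, so no drift bound on $\y_t^\star$ is needed; the paper then uses the windowed argmin selection and the closed graph of $\partial\iota_{\cX},\partial\iota_{\cY}$ to pass the vanishing residuals to the limit. The only cosmetic differences are that the paper bounds residuals directly in the selected index $i_k$ (via $i_k(r_{\y}^{i_k})^2\to\infty$) and evaluates stationarity at $(\x^{i_k+1},\y^{i_k+1})$, whereas you go through the window endpoint $t$ and the monotonicity $r_{\y}^{k_t}\ge r_{\y}^t$.
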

\begin{theorem}[Asymptotic Convergence of {\it \bf{Perturbed Smoothed GDA}}]
\label{thm:psgda-asm}
Let the sequence $\{(\x^t,\y^t,\z^t)\}_{t\ge 0}$ be generated by \emph{Perturbed Smoothed GDA}. 
Define 
\[
\cI^\star(T)
= \bigcup_{t=1}^T 
\argmin_{k \in \{\lfloor t/2 \rfloor,\dots,t\}} 
\!\left\{
\frac{1}{8c^k}\|\x^{k+1}-\x^{k}\|^{2}\!+\!\frac{1}{16\alpha^k}\! \|\y^{k}-{\y}^{k}_+(\z^{k})\|^2\! +\frac{r_{\x}\beta^k}{8}\|\z^{k}-\x^{k+1}\|^{2}
\right\},
\]
where ${\y}^{k}_+(\z):=\proj_{\mathcal{Y}}\left(\y^k+\alpha^k \nabla_{\y} {F_k}({\x}_{k}(\y^k, \z), \y^k,\z)\right)$ and ${\x}_{k}(\y, \z):=\argmin\limits_{\x \in \mathcal{X}}{F_k}(\x,\y,\z)$. When we choose $r_{\y}^t \searrow  0$ and $t\cdot r_{\y}^t \to \infty$, any limit point of $\{(\x^t,\y^t)\}_{t\in \cI^\star(\infty)}$ is a $0$-GS of \cref{eq:prob}. 
\end{theorem}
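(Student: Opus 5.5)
The plan is to reuse the Lyapunov descent that underlies \cref{thm:psgda-epi}, now with a time-varying $r_{\y}^t$, and to extract asymptotic stationarity from a windowed averaging argument. Take the potential
\[
\Psi_2^t(\x,\y,\z):=F_t(\x,\y,\z)-2\min_{\x\in\cX}F_t(\x,\y,\z)+2\min_{\x\in\cX}\max_{\y\in\cY}F_t(\x,\y,\z).
\]
Under \cref{con-psgda}, the unified framework of \cref{sec:framework} should give a one-step decrease
\[
\Psi_2^{t}(\x^{t+1},\y^{t+1},\z^{t+1})\le \Psi_2^{t}(\x^t,\y^t,\z^t)-R_t+E_t .
\]
Here $R_t$ is the bracketed residual in the definition of $\cI^\star(T)$. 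The error term $E_t$ is controlled by the dual error bound of the strongly concave problem: $r_{\y}^t$-strong concavity makes $\|\y^\star_t(\z)-\y\|$ proportional to $\|\y-\y^t_+(\z)\|/(\alpha^t r_{\y}^t)$. This is the role of $\beta^t=\Theta(r_{\y}^t/\ell)$ and of the condition $\beta^t\le 1/(384r_{\x}\alpha^t(\omega^t)^2)$, which let $E_t$ be absorbed into $R_t$ without a two-stage argument.

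Next, pass from $\Psi_2^t$ to $\Psi_2^{t+1}$. Since $r_{\y}^{t+1}\le r_{\y}^t$ and only the term $-\frac{r_{\y}}{2}\|\y\|^2$ changes, each of the three components of $\Psi_2$ shifts by at most $\frac{r_{\y}^t-r_{\y}^{t+1}}{2}D_{\cY}^2$, using $\bz\in\cY$. The total shift is therefore at most $\frac{3}{2}(r_{\y}^t-r_{\y}^{t+1})D_{\cY}^2$. This telescopes to at most $\frac{3}{2}r_{\y}^0D_{\cY}^2$. Because $\Psi_2^t$ is bounded below by $\min\max F_t\ge\min_{\cX}\Phi-\frac{r_{\y}^0}{2}D_{\cY}^2$, which we treat as finite, summing over $t$ gives $\sum_t R_t<\infty$.

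Then use the window. Since $\sum_t R_t<\infty$, the tail sums $\sum_{k=\lfloor t/2\rfloor}^t R_k$ tend to $0$, and the minimizer $k\in\cI^\star$ satisfies $R_k\le \frac{2}{t}\sum_{k'\ge\lfloor t/2\rfloor}R_{k'}=o(1/t)$. Combining this with the convergence analysis of \cref{thm:psgda-epi}, the GS residual at $(\x^k,\y^k)$ is bounded by $\cO(\sqrt{R_k/\beta^k})+\cO(r_{\y}^kD_{\cY})$. The first term is controlled because $\beta^k\asymp r_{\y}^k$ and $k\,r_{\y}^k\to\infty$: since $k\ge t/2$ and $r_{\y}$ is nonincreasing, $R_k/\beta^k=o\bigl(1/(t\,r_{\y}^t)\bigr)\to 0$. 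The second term vanishes because $r_{\y}^t\searrow0$.

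Concretely, the residuals $\|\x^{k+1}-\x^k\|/c^k$, $\|\y^k-\y^k_+(\z^k)\|/\alpha^k$ and $\|\z^k-\x^{k+1}\|$ all tend to $0$ along $\cI^\star$. The $\x$-component of GS at $\x^{k}$ involves $\nabla_{\x}f+r_{\x}(\x^k-\z^k)$ and is bounded by these residuals. The $\y$-component is bounded through the projection optimality condition plus the perturbation gap $r_{\y}^kD_{\cY}$ and the Lipschitz gap between $\x_k(\y^k,\z^k)$ and $\x^{k+1}$.

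Finally, take a limit point $(\bar\x,\bar\y)$ along $\cI^\star(\infty)$. The point $\bar\x$ lies in the closed set $\cX$, and boundedness of $\y$ is automatic. Outer semicontinuity of the normal cones $\partial\iota_\cX,\partial\iota_\cY$, together with continuity of $\nabla f$, passes the vanishing residuals to the limit, so $(\bar\x,\bar\y)$ is a $0$-GS of \cref{eq:prob}.

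The main obstacle is the first step: establishing the one-step descent with $E_t$ absorbed into $R_t$ uniformly in $t$ while $r_{\y}^t$ varies. This requires the dual error bound constant, which scales like $1/r_{\y}^t$, to stay compatible with $\beta^t$. I would try first to verify that the inequality in \cref{con-psgda} defining $\omega^t$ is exactly what closes this step at each fixed $t$. I would then check that monotonicity of $r_{\y}^t$ introduces only the telescoping shift of the second step, and no extra cross terms.
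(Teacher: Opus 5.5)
Your proposal is correct and follows the argument the paper intends when it omits this proof as a repetition of \cref{thm:pgda-asm}: the one-step descent of \cref{prop:decrease}, with the $24r_{\x}\beta^t\|\cdot\|^2$ term absorbed via \cref{lemma:dualerror} and the $\omega^t$-condition (the paper already does this for time-varying $r_{\y}^t$ in the proof of \cref{thm:psgda-epi}), telescoping of the $\tfrac{3}{2}(r_{\y}^t-r_{\y}^{t+1})D_{\cY}^2$ shifts, the windowed minimum giving $R_k=\cO(1/k)$, the residual bounds \cref{ps-dual,ps-primal} vanishing because $k\,r_{\y}^k\to\infty$ and $r_{\y}^k\to 0$, and closedness of the graphs of $\partial\iota_{\cX},\partial\iota_{\cY}$. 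One correction to your heuristic for the step you flag: the absorption works because \cref{lemma:dualerror}, a function-value (quadratic-growth) type bound, gives $\omega^t=\Theta(\sqrt{\ell/r_{\y}^t})$, whereas composing the distance bound $\|\y_t(\z)-\y\|\lesssim\|\y-\y_+^t(\z)\|/(\alpha^t r_{\y}^t)$ with Lipschitz continuity of $\x_t(\cdot,\z)$ only gives a constant of order $\ell/r_{\y}^t$, which would force $\beta^t=\cO((r_{\y}^t)^2/\ell^2)$ and thus require $t\,(r_{\y}^t)^2\to\infty$ instead of $t\,r_{\y}^t\to\infty$.
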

\begin{remark}
(i) An interesting point is that the original \emph{smoothed GDA} does not admit an asymptotic convergence guarantee under the current analysis. 
As can be seen from the proof in \cite{zhang2020single}, the two-stage argument introduces a fundamental difficulty: we cannot rule out the possibility that the iterates enter the unfavorable case infinitely often. Consequently, the Lyapunov function may fail to satisfy a sufficient descent property not only along the whole sequence, but also along any subsequence extracted from it. (ii) Although our non-asymptotic convergence results are stated for a non-adaptive choice of $r_{\y}^t$, the analysis in \cref{sec:framework} extends directly to adaptive choices of $r_{\y}^t$. 
In particular, the same finite-time convergence rates continue to hold, provided that the adaptive sequence satisfies the parameter conditions required in the proof.
\end{remark}

\section{A Unified Convergence Analysis}\label{sec:framework}
In this section, we establish iteration complexity guarantees for both \emph{Perturbed GDA} 
(see \cref{thm:pgda-nc-c-epi}) and \emph{Perturbed Smoothed GDA} 
(see \cref{thm:psgda-epi}) under two stationarity notions: optimization stationarity and game stationarity. 
To this end, we develop a unified analytical framework that encompasses both algorithms and serves as the foundation for the subsequent convergence analyses.

Our framework contains the following two key components:
\begin{enumerate}[label=(\roman*)] 
    \item The construction of a \emph{Lyapunov function} that captures both the primal descent and dual ascent dynamics. 
    \item  Derive a \emph{primal-dual balancing inequality} that characterizes the interaction between the primal and dual updates, which ensures a \emph{descent property} of the Lyapunov function.
\end{enumerate}
Together, these two components establish the iteration complexity guarantees for single-loop algorithms. Furthermore, this analytical framework can apply to the double-loop algorithm (see \cref{sec:double}) as well.
Before proceeding to the formal proof, we summarize in \cref{tab:notation} the main notations used in this section and throughout the paper.

\begin{table}[h]
  \caption{Notation}
 \centering
\begin{tikzpicture}
 \node[drop shadow,fill=white,inner sep=0pt] 
 {
 \resizebox{0.98\textwidth}{!}{ 
\begin{tabular}{|c|c|c|}
 \hline
 \textbf{Notation}  & \textbf{Definition} & \textbf{Notes} \\ \hline 
${d}_t(\y,\z)$           & $\min\limits_{\x \in \mathcal{X}}{F_t}(\x,\y,\z)$ &   dual function      \\ \hline
${p_t}(\z)$           & $\min\limits_{\x \in \mathcal{X}}\max\limits_{\y \in \mathcal{Y}}{F_t}(\x,\y,\z)$ &  proximal function         \\ \hline
${\x}_{t}(\y,\z)$  & $\mathop{\argmin}\limits_{\x \in \mathcal{X}}{F_t}(\x,\y,\z)$  & ---\\ \hline
${\x}^\star_{t}(\z)$   & $\mathop{\argmin}\limits_{\x \in \mathcal{X}}\max\limits_{\y \in \mathcal{Y}}{F_t}(\x,\y,\z)$ & ---               \\ \hline
$\y_{t}^\star(\x)$ & $ \mathop{\argmax}\limits_{\y \in \mathcal{Y}} {f_t}(\x,\y) $& ---\\ \hline
${\y_t}(\z)$     & $\mathop{\argmax}\limits_{\y\in \mathcal{Y}}{d_t}(\y,\z)$ &   ---  \\ \hline
$\y_{+}^t(\z)$    & $\proj_{\mathcal{Y}}\left(\y^t+\alpha^t\nabla_{\y} {F_t}({\x}_{t}(\y^t, \z), \y^t,\z)\right)$  & one-step projected gradient ascent on the dual function\\ \hline
\end{tabular}}
};
\end{tikzpicture}
\label{tab:notation}
\end{table}

\subsection{Convergence Analysis of Perturbed GDA}\label{subsec:Perturb-ub}
In this subsection, we prove \cref{thm:pgda-nc-c-epi,thm:pgda-asm}.
Although $r_{\y}^t$ is held constant in \cref{thm:pgda-nc-c-epi}, we still keep the superscript $t$. 
This choice of presentation is to provide the most general form of our analytical framework.

Our proof is based on the following Lyapunov function at each iteration $t$:
\vspace{1mm}
\begin{equation}
\label{eq:potential}
{\Psi_1^t}(\x,\y) := 2 \Phi_t(\x) -f_t(\x,\y) = \underbrace{\Phi_t(\x)}_{\textrm{Primal descent}} + \underbrace{\Phi_t(\x) -f_t(\x,\y)}_{\textrm{Dual ascent}},
\end{equation}
where $\Phi_t(\x):=\max_{\y\in\mathcal{Y}} f_t(\x,\y)+\iota_{\cX}(\x)$.

Note that ${\Psi_1^t}(\x,\y)$ consists of two parts: $\Phi_t(\x)$ measures how well $\x$ is minimizing the objective function and $\Phi_t(\x) -f_t(\x,\y) $ measures how far $\y$ is from the maximizer of the inner maximization problem of \eqref{eq:perturb} given $\x$. Therefore, if the algorithm proceeds properly, the value of the function ${\Psi_1^t}(\x,\y) $ should steadily decrease. 
The crux of our proof lies in establishing the descent of this Lyapunov function. In the following lemma, we give a basic descent estimate on the Lyapunov function in \cref{eq:potential}. For simplicity, we write $\Psi_{1}^t:=\Psi_1^t(\x^{t},\y^{t})$.


\begin{lemma}[Basic descent estimate]\label{lemma:perturb-potentialfunc-decrease}
Suppose \cref{con-pgda} holds, and let $\{(\x^{t},\y^{t})\}_{t \ge 0}$ be the sequence generated by \emph{Perturbed GDA}. Then, 
for any $t\ge 0$, we have 
\begin{align*}
  & \, {\Psi_{1}^{t+1}}-{\Psi_{1}^t} \notag\\
  \leq & \, 2\ell\|{\y}_t^\star(\x^{t})-\y^{t}\|\!\cdot\!\|\x^{t+1}-\x^{t}\|\!-\!\frac{1}{2\alpha^t}\|\y^{t+1}-\y^{t}\|^2\!-\!\frac{1}{2c^{t}}\|\x^{t+1}-\x^{t}\|^2\!+\left(r_{\y}^t-r_{\y}^{t+1}\right)D_{\cY}^2. 
\end{align*}
\end{lemma}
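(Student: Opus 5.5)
Write $\Psi_1^{t+1}-\Psi_1^t$ as the sum of three differences and bound each with the standard NC-SC toolkit for $f_t$. The toolkit: $f_t(\x,\cdot)$ is $r_{\y}^t$-strongly concave, so $\y_t^\star(\x)$ is unique. By Danskin, $\Phi_t$ is differentiable on $\cX$ with $\nabla\Phi_t(\x)=\nabla_{\x}f(\x,\y_t^\star(\x))$. The maps $\y_t^\star$ and $\nabla \Phi_t$ are Lipschitz with constants $\kappa=\ell/r_{\y}^t$ and $\ell(1+\kappa)$. The three differences are:
\begin{align*}
\Psi_1^{t+1}-\Psi_1^t
={}& \big[\Psi_1^{t+1}(\x^{t+1},\y^{t+1})-\Psi_1^{t}(\x^{t+1},\y^{t+1})\big]\\
&+\big[\Psi_1^{t}(\x^{t+1},\y^{t+1})-\Psi_1^{t}(\x^{t+1},\y^{t})\big]
+\big[\Psi_1^{t}(\x^{t+1},\y^{t})-\Psi_1^{t}(\x^{t},\y^{t})\big].
\end{align*}
The first bracket is the \emph{parameter change}. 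Since $r_{\y}^{t+1}\le r_{\y}^t$, the map $s\mapsto -\frac{s}{2}\|\y\|^2$ is monotone. So $\Phi_{t+1}-\Phi_t\le \frac{r_{\y}^t-r_{\y}^{t+1}}{2}\max_{\y\in\cY}\|\y\|^2\le \frac{r_{\y}^t-r_{\y}^{t+1}}{2}D_{\cY}^2$, using $\bz\in\cY$. Also $-(f_{t+1}-f_t)=\frac{r_{\y}^{t+1}-r_{\y}^t}{2}\|\y\|^2\le 0$. Twice the first plus the second gives at most $(r_{\y}^t-r_{\y}^{t+1})D_{\cY}^2$, which is the last term of the claim.

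The second bracket is the \emph{dual ascent} step; it equals $-[f_t(\x^{t+1},\y^{t+1})-f_t(\x^{t+1},\y^t)]$. The function $f_t(\x^{t+1},\cdot)$ is $(\ell+r_{\y}^t)$-smooth and concave, and $\y^{t+1}$ is a projected gradient step with $\alpha^t\le 1/(4(\ell+r_{\y}^t))$. The projection inequality $\langle \nabla_{\y}f_t(\x^{t+1},\y^t),\y^{t+1}-\y^t\rangle\ge \frac1{\alpha^t}\|\y^{t+1}-\y^t\|^2$ combined with the descent lemma gives a decrease of at least $(\frac1{\alpha^t}-\frac{\ell+r_{\y}^t}{2})\|\y^{t+1}-\y^t\|^2$. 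This is at least $\frac{1}{2\alpha^t}\|\y^{t+1}-\y^t\|^2$, and the surplus $\frac{3}{8\alpha^t}\|\y^{t+1}-\y^t\|^2$ can be held in reserve.

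The third bracket is the \emph{primal step} and is the delicate part. It equals $2[\Phi_t(\x^{t+1})-\Phi_t(\x^t)]-[f_t(\x^{t+1},\y^t)-f_t(\x^t,\y^t)]$. Apply the descent lemma to $\Phi_t$ with constant $L_\Phi=\ell(1+\kappa)$, and the lower quadratic bound to $f_t(\cdot,\y^t)$ with constant $\ell$. This gives a bound of
\[
\langle 2\nabla\Phi_t(\x^t)-\nabla_{\x}f(\x^t,\y^t),\x^{t+1}-\x^t\rangle+(L_\Phi+\tfrac{\ell}{2})\|\x^{t+1}-\x^t\|^2 .
\]
Write $2\nabla\Phi_t(\x^t)-\nabla_{\x}f(\x^t,\y^t)=\nabla_{\x}f(\x^t,\y^t)+2(\nabla_{\x}f(\x^t,\y_t^\star(\x^t))-\nabla_{\x}f(\x^t,\y^t))$. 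The projection property of the $\x$-step gives $\langle\nabla_{\x}f(\x^t,\y^t),\x^{t+1}-\x^t\rangle\le -\frac1{c^t}\|\x^{t+1}-\x^t\|^2$. The remaining inner product is bounded by $2\ell\|\y_t^\star(\x^t)-\y^t\|\|\x^{t+1}-\x^t\|$, which is exactly the cross term in the claim.

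The main obstacle is absorbing $(L_\Phi+\ell/2)\|\x^{t+1}-\x^t\|^2$ into $\frac1{2c^t}\|\x^{t+1}-\x^t\|^2$. Here $L_\Phi\sim\ell^2/r_{\y}^t$, so this needs $c^t\lesssim r_{\y}^t/\ell^2$. That is guaranteed by \cref{con-pgda}, namely $c^t\le \frac{r_{\y}^t}{\ell(3r_{\y}^t+2\ell)}$ and $c^t\le\frac{3r_{\y}^t}{128\ell^2}$. These give $L_\Phi+\frac{\ell}{2}\le\frac{1}{2c^t}$, leaving $-\frac{1}{2c^t}\|\x^{t+1}-\x^t\|^2$. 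Summing the three brackets yields the stated estimate.
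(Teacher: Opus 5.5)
Your proof is correct and uses the same ingredients as the paper's proof:
\begin{itemize}
\item monotonicity of $r_{\y}^t$ together with $\bz\in\cY$ for the $(r_{\y}^t-r_{\y}^{t+1})D_{\cY}^2$ term;
\item the projected ascent step combined with $(\ell+r_{\y}^t)$-smoothness in $\y$ for the $-\frac{1}{2\alpha^t}$ term;
\item the $(\ell+\ell^2/r_{\y}^t)$-smoothness of $\Phi_t$, the lower quadratic bound on $f_t(\cdot,\y^t)$ and the $\x$-projection inequality for the cross term;
\item the condition $c^t\le\frac{r_{\y}^t}{\ell(3r_{\y}^t+2\ell)}$ to obtain $-\frac{1}{2c^t}$.
\end{itemize}
The only difference is bookkeeping. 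The paper bounds $\Phi_t$ and $\Phi_t-f_t$ separately and adds them, whereas you split by update (parameter change, $\y$-step, $\x$-step). Both arrive at the identical coefficient $\frac{1}{c^t}-\frac{3\ell}{2}-\frac{\ell^2}{r_{\y}^t}$.
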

\begin{proof}[Proof of \cref{lemma:perturb-potentialfunc-decrease}.] 
      
    First of all, we quantify the basic descent of ${\Phi}_t$ as follows: 
    \begin{equation}\label{eq:ll1-eq1}
        {\Phi}_{t+1}(\x^{t+1})-{\Phi}_t(\x^{t})=\left({\Phi}_{t+1}(\x^{t+1})-{\Phi}_t(\x^{t+1})\right)+\left({\Phi}_{t}(\x^{t+1})-{\Phi}_t(\x^{t})\right).
    \end{equation}
    For the first term, observe that 
    \[{f}_{t+1}(\x,\y)={f}_{t}(\x,\y)+\frac{r_{\y}^t-r_{\y}^{t+1}}{2}\|\y\|^2. \]
    Hence, 
    \begin{align}
{\Phi}_{t+1}(\x^{t+1})
&=\max_{\y\in\cY} f_{t+1}(\x^{t+1},\y)\notag\\
&\le \max_{\y\in\cY} f_t(\x^{t+1},\y)
+ \max_{\y\in\cY} \frac{r_{\y}^t-r_{\y}^{t+1}}{2}\|\y\|^2 \notag\\
&= {\Phi}_t(\x^{t+1}) + \frac{r_{\y}^t-r_{\y}^{t+1}}{2} D_{\cY}^2.
\label{eq:phi-t}
\end{align}
For the second term in \cref{eq:ll1-eq1}, we have
    \begin{align}
         & \, {\Phi}_{t}(\x^{t+1})-{\Phi}_t(\x^{t})  \notag\\ 
         \leq  & \,  \left\langle \nabla {\Phi}_t              (\x^{t}), \x^{t+1}-\x^{t}\right\rangle + \left(\frac{\ell}{2}+\frac{\ell^2}{2r_{\y}^t}\right)\|\x^{t+1}-\x^{t}\|^2 \label{eq:phi2}\\
         \leq & \, \left\langle \nabla {\Phi}_t(\x^{t}) \!-\!\nabla_{\!\x} {f}_t
         (\x^{t},\y^{t}), \x^{t+1}\!-\!\x^{t}\right\rangle \!+\!\left\langle \nabla_{\!\x}{f}_t
         (\x^{t},\y^{t}), \x^{t+1}\!-\!\x^{t}\right\rangle\! +\! \left(\!\frac{\ell}{2}\!+\!\frac{\ell^2}{2r_{\y}^t}\!\right)\! \|\x^{t+1}\!-\!\x^{t}\|^2 \notag \\
        \leq & \, \ell \|\y_{t}^\star(\x^{t}) - \y^{t}\|\cdot\|\x^{t+1}-\x^{t}\| -\frac{1}{c^{t}}\|\x^{t+1} -\x^{t}\|^2+ \left(\frac{\ell}{2}+\frac{\ell^2}{2r_{\y}^t}\right) \|\x^{t+1}-\x^{t}\|^2 \notag  \\
         \leq & \,\ell \|\y_{t}^\star(\x^{t}) - \y^{t}\|\cdot\|\x^{t+1}-\x^{t}\| -\left(\frac{1}{c^{t}}-\frac{\ell}{2}-\frac{\ell^2}{2r_{\y}^t}\right)\|\x^{t+1}-\x^{t}\|^2 \label{eq:phi3}, 
    \end{align}
where the first inequality is due to the $(\ell+\frac{\ell^2}{r_{\y}^t})$-smoothness property of ${\Phi}_t$~\citep[Lemma 4.3]{lin2020gradient}, and the third inequality follows from the $\ell$-Lipschitz continuity of $\nabla_{\x}{f}_t(\cdot,\y)$, the Cauchy-Schwarz inequality, and the characterization of projections onto a closed convex set, i.e., 
$
\langle\x^{t} -\x^{t+1}, \x^{t}-c^{t}\nabla_{\x} {f_t}(\x^{t},\y^{t}) - \x^{t+1}\rangle \leq  0. 
$


Now we focus on the dual ascent part. First note that
\begin{align*}
    &\,\left({\Phi}_{t+1}(\x^{t+1})-{f}_{t+1}(\x^{t+1},\y^{t+1})\right)-\left({\Phi}_t(\x^{t})-{f}_t(\x^{t},\y^{t})\right)\\
    =&\,\left({\Phi}_{t+1}(\x^{t+1})\!-\!{\Phi}_t(\x^{t})\!\right)\!+\!\left({f}_{t}(\x^{t+1}\!,\y^{t+1})\!-\!{f}_{t+1}(\x^{t+1}\!,\y^{t+1})\!\right)\!+\!\left({f}_{t}(\x^{t+1}\!,\y^{t})\!-\!{f}_{t}(\x^{t+1}\!,\y^{t+1})\!\right)\!\\
    &\,+\left({f}_t(\x^{t},\y^{t})-{f}_t(\x^{t+1},\y^{t})\right).
\end{align*}
Because the decrease in ${\Phi}_t$ 
 has been quantified, we study the other three terms ${f}_{t}(\x^{t+1},\y^{t+1})-{f}_{t+1}(\x^{t+1},\y^{t+1})$, ${f}_{t}(\x^{t+1},\y^{t})-{f}_{t}(\x^{t+1},\y^{t+1})$ and ${f}_t(\x^{t},\y^{t})-{f}_t(\x^{t+1},\y^{t})$ in the following.
 
Since $r_{\y}^t$ is non-increasing, we have
\begin{align}
     {f}_{t}(\x^{t+1},\y^{t+1})-{f}_{t+1}(\x^{t+1},\y^{t+1})=\frac{r_{\y}^{t+1}-r_{\y}^t}{2}\|\y^{t+1}\|^2
     &\le0.\label{eq:def-per}
 \end{align}
Moreover, we have
\begin{align}
   & \, {f}_{t}(\x^{t+1},\y^{t})-{f}_{t}(\x^{t+1},\y^{t+1}) \notag\\ 
 \leq & \,   \left\langle  \nabla_{\y}{f}_t(\x^{t+1},\y^{t}), \y^{t}-\y^{t+1}\right\rangle + \frac{\ell+r_{\y}^t}{2}\|\y^{t+1}-\y^{t}\|^2 \notag\\ 
 \leq & \,  -\frac{1}{\alpha^t}\|\y^{t+1}-\y^{t}\|^2+\frac{\ell+r_{\y}^t}{2}\|\y^{t+1}-\y^{t}\|^2 \leq -\frac{1}{2\alpha^t}\|\y^{t+1}-\y^{t}\|^2, \label{eq:phi4}
\end{align}
where the first inequality stems from the $(\ell+r_{\y}^t)$-Lipschitz continuity of $\nabla_{\y}{f}_t(\x,\cdot)$, the second inequality is due to  the characterization of projections onto a closed convex set $\cY$, and the last one arises from \cref{con-pgda}, i.e., $0<\alpha^t< \frac{1}{4(\ell+r_{\y}^t)}$. 
Similarly, we obtain
\begin{align}
    & \, {f}_t(\x^{t},\y^{t})-{f}_t(\x^{t+1},\y^{t})\le \left\langle \nabla_{\x} {f}_t(\x^{t},\y^{t}), \x^{t}-\x^{t+1}\right\rangle+ \frac{\ell}{2}\|\x^{t+1}-\x^{t}\|^2 \label{eq:phi5}, 
\end{align}
where the inequality is by the $\ell$-Lipschitz continuity of $\nabla_{\x}{f}_t(\cdot,\y)$.

By summing \cref{eq:phi-t}, \cref{eq:phi2}, \cref{eq:def-per}, \cref{eq:phi4}, and \cref{eq:phi5}, we obtain a bound that controls the dual-ascent component in the Lyapunov function \cref{eq:potential}: 
\begin{align}
   & \,\left({\Phi}_{t+1}(\x^{t+1})-{f}_{t+1}(\x^{t+1},\y^{t+1})\right) -\left({\Phi}_t(\x^{t})-{f}_t(\x^{t},\y^{t})\right)   \notag \\
   \leq & \, \left\langle \nabla {\Phi}_t(\x^{t}) -\nabla_{\x} {f}_t
         (\x^{t},\y^{t}), \x^{t+1}-\x^{t}\right\rangle+ \left(\ell+\frac{\ell^2}{2r_{\y}^t}\right) \|\x^{t+1}-\x^{t}\|^2 
          -\frac{1}{2\alpha^t}\|\y^{t+1}-\y^{t}\|^2\notag\\
          &\,+\frac{r_{\y}^t-r_{\y}^{t+1}}{2}D_{\cY}^2\notag \\
   \leq & \, \ell\|\y^\star_t(\x^{t})\!-\!\y^{t}\|\!\cdot\!\|\x^{t+1}\!-\!\x^{t}\| \!+\! \left(\!\ell\!+\!\frac{\ell^2}{2r_{\y}^t}\!\right) \|\x^{t+1}\!-\!\x^{t}\|^2 
          \!-\!\frac{1}{2\alpha^t}\|\y^{t+1}\!-\!\y^{t}\|^2\!+\!\frac{r_{\y}^t\!-\!r_{\y}^{t+1}}{2}D_{\cY}^2, \label{eq:phi6}
\end{align}
where the second inequality follows from the Cauchy-Schwarz inequality and the $\ell$-Lipschitz continuity of $\nabla_{\x}{f}_t(\x,\cdot)$.

Putting \cref{eq:phi3}  and \cref{eq:phi6} together yields 
\begin{align*}
  & \, {\Psi_{1}^{t+1}}-{\Psi_{1}^{t}} \notag\\
  \leq & \, 2\ell\|\y^\star_t(\x^{t})-\y^{t}\|\cdot\|\x^{t+1}-\x^{t}\|-\frac{1}{2\alpha^t}\|\y^{t+1}-\y^{t}\|^2\notag-\left(\frac{1}{c^{t}}-\frac{3\ell}{2}-\frac{\ell^2}{r_{\y}^t}\right)\|\x^{t+1}-\x^{t}\|^2 \notag\\
  &\,+(r_{\y}^t-r_{\y}^{t+1})D_{\cY}^2\notag\\
  \leq & \, 2\ell\|\y^\star_t(\x^{t})-\y^{t}\|\cdot\|\x^{t+1}-\x^{t}\|-\frac{1}{2\alpha^t}\|\y^{t+1}-\y^{t}\|^2-\frac{1}{2c^{t}}\|\x^{t+1}-\x^{t}\|^2 +(r_{\y}^t-r_{\y}^{t+1})D_{\cY}^2,
\end{align*}
where the final inequality is due to \cref{con-pgda}, i.e., $0<c^t\leq\frac{r_{\y}^t}{\ell(3r_{\y}^t+2\ell)}$. 
This completes the proof.

\end{proof}

In \cref{lemma:perturb-potentialfunc-decrease}, the bound includes a positive term $2\ell\|\y^\star_t(\x^{t})-\y^{t}\|\cdot\|\x^{t+1}-\x^{t}\|$, which measures the error introduced by $\y^{t}$ not being optimal for a given $\x^{t}$. For the Lyapunov function to decrease, this term needs to be controlled.
We leverage the strong concavity of $ f_t(\x,\cdot)$ to apply a dual error bound in~\citep[Theorem~3.1]{pang1987posteriori}, which upper bounds the dual gap $\|\y_t^\star(\x^{t+1})-\y^t\|$ by the iterate difference $\|\y^{t+1}-\y^t\|$.
The resulting bound is stated below.

\begin{lemma}[Dual error bound; cf. \citep{pang1987posteriori} Theorem~3.1]\label{lemma:perturb-dualerror}
Suppose that the sequence $\{(\x^{t},\y^{t})\}_{t \ge 1}$ is generated by \emph{Perturbed GDA}. Then, for any $t \ge 0$, we have 
\begin{equation*}
\|\y_{t}^\star(\x^{t+1}) - \y^{t}\|\leq \frac{1+\alpha^t(\ell+r_{\y}^t)}{\alpha^t  r_{\y}^t } \|\y^{t+1}-\y^{t}\|.
\end{equation*}
\end{lemma}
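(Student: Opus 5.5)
We prove the bound via the fixed-point characterization of the maximizer together with nonexpansiveness of the projection and strong concavity of $f_t(\x^{t+1},\cdot)$. Write $\y^\star := \y_t^\star(\x^{t+1})$ and $g(\y) := \nabla_{\y} f_t(\x^{t+1},\y)$. Since $f_t(\x^{t+1},\cdot)$ is $r_{\y}^t$-strongly concave and $\cY$ is compact and convex, $\y^\star$ exists and is unique. It satisfies the fixed-point relation $\y^\star = \proj_{\cY}(\y^\star + \alpha^t g(\y^\star))$. The update in \emph{Perturbed GDA} reads $\y^{t+1} = \proj_{\cY}(\y^t + \alpha^t g(\y^t))$, so both points are projections with the same step size.

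The cleanest route uses the variational inequalities rather than nonexpansiveness alone. The projection characterization at $\y^{t+1}$, tested against $\y^\star$, gives
\[
\langle \y^t + \alpha^t g(\y^t) - \y^{t+1}, \y^\star - \y^{t+1}\rangle \le 0 .
\]
Optimality of $\y^\star$, tested against $\y^{t+1}$, gives
\[
\langle g(\y^\star), \y^{t+1}-\y^\star\rangle \le 0 .
\]
Multiply the second inequality by $\alpha^t$ and add it to the first. Then use strong monotonicity of $-g$, namely $\langle g(\y^{t+1}) - g(\y^\star), \y^{t+1}-\y^\star\rangle \le -r_{\y}^t\|\y^{t+1}-\y^\star\|^2$. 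After inserting $\pm g(\y^{t+1})$ and rearranging, we obtain
\[
\alpha^t r_{\y}^t\|\y^{t+1}-\y^\star\|^2 \le \langle (\y^t-\y^{t+1}) + \alpha^t(g(\y^t)-g(\y^{t+1})), \y^{t+1}-\y^\star\rangle .
\]

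By Cauchy--Schwarz and the $(\ell+r_{\y}^t)$-Lipschitz continuity of $g$,
\[
\|\y^{t+1}-\y^\star\| \le \frac{1+\alpha^t(\ell+r_{\y}^t)}{\alpha^t r_{\y}^t}\,\|\y^{t+1}-\y^t\| .
\]
The triangle inequality then yields
\[
\|\y^\star-\y^t\| \le \Bigl(1+\frac{1+\alpha^t(\ell+r_{\y}^t)}{\alpha^t r_{\y}^t}\Bigr)\|\y^{t+1}-\y^t\| .
\]
This has an extra additive $1$ compared with the claimed constant.

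The main obstacle is removing this extra $+1$. I would try to compare $\y^t$ with $\y^\star$ directly, so that no triangle step is needed. Test the projection inequality at $\y^{t+1}$ against $\y^\star$, and the optimality of $\y^\star$ against $\y^t$ instead of $\y^{t+1}$. Then collect terms in $\y^t-\y^\star$, using $\langle g(\y^t)-g(\y^\star),\y^t-\y^\star\rangle\le -r_{\y}^t\|\y^t-\y^\star\|^2$. The terms coming from $\y^{t+1}-\y^t$ combine into $\langle (\y^t-\y^{t+1}) + \alpha^t(g(\y^\star)-g(\y^t)), \cdot\rangle$-type expressions. Bounding these with Lipschitzness should give exactly the factor $(1+\alpha^t(\ell+r_{\y}^t))/(\alpha^t r_{\y}^t)$, as in the residual-based error bound of \citet[Theorem~3.1]{pang1987posteriori}. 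Alternatively, one can simply invoke that theorem: for a strongly monotone, Lipschitz variational inequality, the distance to the solution is bounded by $(1+\alpha L)/(\alpha\mu)$ times the natural residual $\|\y-\proj_{\cY}(\y+\alpha g(\y))\|$. Here $\mu=r_{\y}^t$ and $L=\ell+r_{\y}^t$, and the natural residual at $\y^t$ is exactly $\|\y^t-\y^{t+1}\|$. This produces the stated inequality.
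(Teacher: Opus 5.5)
Your fallback of invoking \citet[Theorem~3.1]{pang1987posteriori} is correct, and it is exactly what the paper does; the paper gives no argument beyond that citation. The instantiation is right: $-\nabla_{\y}f_t(\x^{t+1},\cdot)$ is $r_{\y}^t$-strongly monotone and $(\ell+r_{\y}^t)$-Lipschitz. Because the $\y$-update uses $\x^{t+1}$, its natural residual at $\y^t$ with step $\alpha^t$ is exactly $\y^t-\y^{t+1}$.

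The self-contained route you sketch for removing the extra $+1$ would not work as stated. Suppose you test the optimality of $\y^\star$ against $\y^t$ instead of $\y^{t+1}$. You are then left with $\alpha^t\langle g(\y^t),\y^{t+1}-\y^t\rangle$, which is not a gradient difference. It is generally first order in $\|\y^{t+1}-\y^t\|$ when the constraint is active, so it cannot be bounded by $(\ell+r_{\y}^t)\|\y^t-\y^\star\|\,\|\y^{t+1}-\y^t\|$.

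The fix is to keep exactly the two inequalities of your first attempt, both tested at $\y^{t+1}$. Instead of inserting $\pm g(\y^{t+1})$, write $\y^\star-\y^{t+1}=(\y^t-\y^{t+1})-(\y^t-\y^\star)$ and apply strong monotonicity to the pair $(\y^t,\y^\star)$. This gives
\[
\|\y^{t+1}-\y^t\|^2+\alpha^t r_{\y}^t\|\y^t-\y^\star\|^2\le\bigl(1+\alpha^t(\ell+r_{\y}^t)\bigr)\|\y^{t+1}-\y^t\|\,\|\y^t-\y^\star\|.
\]
Dropping the first term yields the stated constant directly, with no triangle inequality. This is the standard proof of Pang's bound.
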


With \cref{lemma:perturb-potentialfunc-decrease,lemma:perturb-dualerror},
we now outline the main idea of the proof of \cref{thm:pgda-nc-c-epi}.
We will use \cref{lemma:perturb-potentialfunc-decrease} to bound the decrease of the Lyapunov function and a key step is to plug the dual gap bound in
\cref{lemma:perturb-dualerror} to the descent inequality in
\cref{lemma:perturb-potentialfunc-decrease}.
It yields a refined descent
estimate for the Lyapunov function $\Psi_1^t$.
Together with the step-size conditions in \cref{con-pgda}, the claimed $\epsilon$-GS and $\epsilon$-OS guarantees can be directly derived from this descent estimate.

\begin{proof}[Proof of  \cref{thm:pgda-nc-c-epi}.]
First, by combining \cref{lemma:perturb-potentialfunc-decrease,lemma:perturb-dualerror}, we obtain a descent property for $\Psi_1$. For $t\ge0$,
\begin{align}
  & \, \Psi_1^{t+1}-\Psi_1^t\notag\\
  \leq & \, 2\ell\|\y_{t}^\star(\x_t)-\y^{t}\|\!\cdot\!\|\x^{t+1}-\x^{t}\|\!-\!\frac{1}{2\alpha^t}\|\y^{t+1}-\y^{t}\|^2\!-\!\frac{1}{2c^t}\|\x^{t+1}-\x^{t}\|^2\!+\!(r_{\y}^t-r_{\y}^{t+1})D_{\cY}^2 \notag\\
  \leq & \, 2\ell\|\y_{t}^\star(\x^{t+1})-\y^{t}\|\cdot\|\x^{t+1}-\x^{t}\|+2\ell\|\y_{t}^\star(\x^{t+1})-\y_{t}^\star(\x^{t})\|\cdot\|\x^{t+1}-\x^{t}\|\notag\\
  &\,-\frac{1}{2\alpha^t}\|\y^{t+1}-\y^{t}\|^2-\frac{1}{2c^t}\|\x^{t+1}-\x^{t}\|^2 +(r_{\y}^t-r_{\y}^{t+1})D_{\cY}^2\notag\\
  \leq & \, \frac{2\ell(1+\alpha^t(\ell+r_{\y}^t))}{\alpha^t r_{\y}^t}\|\y^{t+1}-\y^{t}\|\cdot\|\x^{t+1}-\x^{t}\|+\frac{2\ell^2}{r_{\y}^t}\|\x^{t+1}-\x^{t}\|^2\notag\\
  &\,-\frac{1}{2\alpha^t}\|\y^{t+1}-\y^{t}\|^2-\frac{1}{2c^t}\|\x^{t+1}-\x^{t}\|^2+(r_{\y}^t-r_{\y}^{t+1})D_{\cY}^2\notag\\
  \leq & \, \frac{1}{4\alpha^t}\|\y^{t+1}-\y^{t}\|^2+\frac{4\ell^2(1+\alpha^t(\ell+r_{\y}^t))^2}{\alpha^t(r_{\y}^t)^2}\|\x^{t+1}-\x^{t}\| ^2+\frac{2\ell^2}{r_{\y}^t}\|\x^{t+1}-\x^{t}\|^2\notag\\
   & \,  -\frac{1}{2\alpha^t}\|\y^{t+1}-\y^{t}\|^2-\frac{1}{2c^t}\|\x^{t+1}-\x^{t}\|^2 +(r_{\y}^t-r_{\y}^{t+1})D_{\cY}^2\notag\\
 \leq & \,-\frac{1}{4\alpha^t}\|\y^{t+1}-\y^{t}\|^2 -\frac{1}{16c^t}\|\x^{t+1}-\x^{t}\| ^2+(r_{\y}^t-r_{\y}^{t+1})D_{\cY}^2,\label{eq:t>0}
\end{align}
 where the first inequality is by \cref{lemma:perturb-potentialfunc-decrease} and
 the third inequality is due to \cref{lemma:perturb-dualerror} and  \cite[Lemma 4.3]{lin2020gradient},
 the fourth one follows because $ab\leq \frac{1}{2}(a^2+b^2)$ for any $a,b\in\R$, and the last one is due to \cref{con-pgda}, i.e., $1+\alpha(\ell+r_{\y}^t)\leq\frac{5}{4}$ and $c^t\leq \min\{\frac{3r_{\y}^t}{128\ell^2},\frac{(r_{\y}^t)^2\alpha^t}{16\ell^2}\}$. 
 Summing \cref{eq:t>0} for $t=0,\cdots,T-1$, we have
\[\Psi_1^T-\Psi_1^0 \leq -\sum_{t=0}^{T-1} \left(\frac{1}{4\alpha^t}\|\y^{t+1}-\y^{t}\|^2 +\frac{1}{16c^t}\|\x^{t+1}-\x^{t}\| ^2\right)+(r_{\y}^0-r_{\y}^T)D_{\cY}^2. \]
Thus, because we choose $r_{\y}^t=r_{\y}^0$ for all $t$, then there exists a $t\in[T]$ satisfying $t\ge\lfloor\frac{T}{2}\rfloor$ such that
	\begin{align}
\|\y^{t+1}-\y^{t}\| =  \mathcal{O}\left(\sqrt{\frac{\alpha^t\Delta_{\Psi_1}}{T}}\right) \quad \text{and}\quad \|\x^{t+1}-\x^{t}\|=  \mathcal{O}\left(\sqrt{\frac{c^t\Delta_{\Psi_1}}{T}}\right). \notag
	\end{align}
 This conclusion follows from the fact that  $\Psi_1^t(\x^{t},\y^{t})\geq\Phi_t(\x^{t})\geq\min_{\x\in\cX}\Phi_t(\x)$.\\ 
\noindent{(i)}
For the GS case,  we choose  $r_{\y}^t$ of order $\Theta\left(\sqrt[4]{\frac{\Delta_{\Psi_1}\ell^3}{D_{\cY}^2T}}\right)$.  
The  optimality condition of $\y$-update yields that
\begin{align}
\bz
\in &\,r_{\y}^t\y^{t}+\frac{1}{\alpha^t}(\y^{t+1}-\y^{t}) - \nabla_{\y} f(\x^{t+1}, \y^{t})+\partial \iota_{\mathcal{Y}}(\y^{t+1}) \notag \\
 \in &\,-\nabla_{\y} f(\x^{t+1}, \y^{t+1})+\partial \iota_{\mathcal{Y}}(\y^{t+1})+r_{\y}^t\y^{t}+ \frac{1}{\alpha^t}(\y^{t+1}-\y^{t})\notag\\
 &\,+ (\nabla_{\y} f(\x^{t+1}, \y^{t+1})- \nabla_{\y} f(\x^{t+1}, \y^{t})). \notag
 \end{align}
Then, we have 
\begin{align}
     &\,{\rm dist}(\bz,-\nabla_{\y} f(\x^{t+1}, \y^{t+1})+\partial\iota_{\mathcal{Y}}(\y^{t+1}))\notag\\
     \leq &\, \|r_{\y}^t\y^{t}\|+\frac{1}{\alpha^t}\|\y^{t+1}-\y^{t}\|+\|\nabla_{\y} f(\x^{t+1}, \y^{t+1})- \nabla_{\y} f(\x^{t+1}, \y^{t})\|\notag\\
     \leq &\, r_{\y}^tD_{\cY} + \left(\frac{1}{\alpha^t}+\ell\right)\|\y^{t+1}-\y^{t}\| \label{opt-y-up} \\
     = &\, \mathcal{O}\left(\max\left(r_{\y}^tD_{\cY},\sqrt{\frac{\ell\Delta_{\Psi_1}}{T}}\right)\right)\label{eq:Per-GS-y}\\ 
     = &\, \mathcal{O}\left(\sqrt[4]{\frac{ \ell^3D_{\cY}^2\Delta_{\Psi_1}}{T}}\right)\notag,
\end{align}
where the second inequality follows from the $\ell$-Lipschitz continuity of  $\nabla_{\y}f(\x^{t+1},\cdot)$ and the first equality is due to \cref{con-pgda}, i.e., $\alpha^t=\Theta(\tfrac{1}{\ell})$.

Next, we turn to the primal part. Similarly, the optimality condition of the $\x$-update yields that
\begin{align*}
\bz  & \in \frac{1}{c^t} (\x^{t+1}-\x^{t}) + \nabla_{\x} f(\x^{t},\y^{t}) +\partial\iota_{\mathcal{X}}(\x^{t+1}) \\
& \in  \nabla_{\x} f(\x^{t+1}, \y^{t+1}) +\partial\iota_{\mathcal{X}}(\x^{t+1})  +\frac{1}{c^t} (\x^{t+1}-\x^{t}) 
+(\nabla_{\x} f(\x^{t}, \y^{t})-\nabla_{\x} f(\x^{t+1}, \y^{t+1})).
\end{align*}
It follows that  
\begin{align}
&{\rm dist}(\bz,\nabla_{\x} f(\x^{t+1}, \y^{t+1}) +\partial\iota_{\mathcal{X}}(\x^{t+1})) \notag\\
\leq &\,\frac{1}{c^t} \|\x^{t+1}-\x^{t}\|+ \|\nabla_{\x} f(\x^{t+1}, \y^{t+1}) - \nabla_{\x} f(\x^{t}, \y^{t})\|\notag\\
\leq &\, \left(\frac{1}{c^t}+\ell\right) \|\x^{t+1}-\x^{t}\| + \ell  \|\y^{t+1}-\y^{t}\| \notag\\
= &\, \mathcal{O}\left(\max\left(\sqrt{\frac{\Delta_{\Psi_1}}{c^tT}},\sqrt{\frac{\ell\Delta_{\Psi_1}}{T}}\right)\right)\label{eq:Per-GS-x}\\
= &\, \mathcal{O}\left(\sqrt[4]{\frac{ \ell^3D_{\cY}^2\Delta_{\Psi_1}}{T}}\right)\notag,
\end{align}
where the second inequality follows from the $\ell$-smoothness of the function $f$, and the last equality is a consequence of  \cref{con-pgda}, i.e. $c^t=\Theta\left(\frac{(r_{\y}^t)^2}{\ell^3}\right)=\Theta\left(\sqrt{\frac{\Delta_{\Psi_1}}{\ell^3D_{\cY}^2T}}\right)$.

For the GS case, our analysis gives 
$\epsilon \leq \cO\left(\sqrt[4]{\tfrac{\ell^{3} D_{\cY}^{2} \Delta_{\Psi_1}}{T}}\right)$, 
which is equivalent to 
$T  \leq \cO\left(\tfrac{\ell^{3} D_{\cY}^{2} \Delta_{\Psi_1}}{\epsilon^{4}} \right)$. 
Thus $\cO\left(\tfrac{\ell^{3} D_{\cY}^{2} \Delta_{\Psi_1}}{\epsilon^{4}} \right)$ iterations suffice to reach an $\epsilon$-GS, and the corresponding choice of $r_{\y}^t$ is 
$r_{\y}^t = \Theta(\tfrac{\epsilon}{D_{\cY}})$.

\noindent(ii)
We now turn to the OS case. We choose  $r_{\y}^t$ as $\Theta\left({\sqrt[3]{\frac{\ell^2\Delta_{\Psi_1}}{D_{\cY}^2T}}}\right)$. 
To proceed, we rely on the following lemma, which establishes a connection between $\epsilon$-GS and $\epsilon$-OS for general smooth NC-C minimax problems. The proof of \cref{lemma2} is provided in \cref{sec:GS-OS}.

\begin{lemma} 
\label{lemma2}
Suppose that \cref{con-pgda} holds. We have 
\begin{align}
&\, \left\|\prox_{\frac{1}{2\ell} \Phi}(\x)-\x\right\|^2 \notag\\
\leq &\,   \frac{2 D_{\cY}}{\ell} {\rm dist}(\bz,-\nabla_{\y} f(\x, \y)+\partial\iota_{\mathcal{Y}}(\y)) +\frac{1}{\ell^2}  {\rm dist}^2(\bz,\nabla_{\x} f(\x, \y)+\partial\iota_{\mathcal{X}}(\x)). \notag
\end{align}
\end{lemma}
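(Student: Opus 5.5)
Let $\hat\x:=\prox_{\frac{1}{2\ell}\Phi}(\x)$, i.e., the minimizer over $\cX$ of $\Phi(\cdot)+\ell\|\cdot-\x\|^2$. This objective is $\ell$-strongly convex, since $\Phi$ is $\ell$-weakly convex as a max of $\ell$-smooth functions. The plan is to compare objective values at $\hat\x$ and at $\x$, and to use the two GS residuals to bound the gap. Abbreviate the residuals as
\[
e_{\y}:={\rm dist}(\bz,-\nabla_{\y} f(\x,\y)+\partial\iota_{\cY}(\y)),\qquad e_{\x}:={\rm dist}(\bz,\nabla_{\x} f(\x,\y)+\partial\iota_{\cX}(\x)).
\]
We will show $\Phi(\x)-f(\x,\y)\le D_{\cY}e_{\y}$, and then combine this with a primal argument driven by $e_{\x}$.

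\textbf{Step 1 (dual gap).} By concavity of $f(\x,\cdot)$, for any $\y'\in\cY$ we have $f(\x,\y')-f(\x,\y)\le\langle\nabla_{\y}f(\x,\y),\y'-\y\rangle$. Pick $\mathbf{v}\in\partial\iota_{\cY}(\y)$ with $\|\mathbf{v}-\nabla_{\y}f(\x,\y)\|=e_{\y}$. Since $\langle \mathbf{v},\y'-\y\rangle\le0$, this gives
\[
\langle\nabla_{\y}f(\x,\y),\y'-\y\rangle\le e_{\y}D_{\cY}.
\]
Maximizing over $\y'$ yields $\Phi(\x)\le f(\x,\y)+D_{\cY}e_{\y}$.

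\textbf{Step 2 (primal side).} Pick $\mathbf{u}\in\partial\iota_{\cX}(\x)$ with $\mathbf{g}:=\nabla_{\x}f(\x,\y)+\mathbf{u}$ satisfying $\|\mathbf{g}\|=e_{\x}$. The function $h(\cdot):=f(\cdot,\y)+\ell\|\cdot-\x\|^2+\iota_{\cX}$ is $\ell$-strongly convex, and $\mathbf{g}\in\partial h(\x)$ because the quadratic has zero gradient at $\x$. Hence for $\hat\x\in\cX$,
\[
f(\hat\x,\y)+\ell\|\hat\x-\x\|^2\ge f(\x,\y)+\langle \mathbf{g},\hat\x-\x\rangle+\tfrac{\ell}{2}\|\hat\x-\x\|^2.
\]
On the other hand, optimality of $\hat\x$ together with $\ell$-strong convexity of $\Phi+\ell\|\cdot-\x\|^2$ gives
\[
\Phi(\hat\x)+\ell\|\hat\x-\x\|^2+\tfrac{\ell}{2}\|\hat\x-\x\|^2\le\Phi(\x).
\]
Chain these using $f(\hat\x,\y)\le\Phi(\hat\x)$ and Step 1:
\[
f(\x,\y)+\langle \mathbf{g},\hat\x-\x\rangle+\tfrac{\ell}{2}\|\hat\x-\x\|^2+\tfrac{\ell}{2}\|\hat\x-\x\|^2\le f(\x,\y)+D_{\cY}e_{\y}.
\]
So $\ell\|\hat\x-\x\|^2\le D_{\cY}e_{\y}+e_{\x}\|\hat\x-\x\|$.

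\textbf{Step 3 (solve the quadratic).} Apply Young's inequality in the form $e_{\x}\|\hat\x-\x\|\le\frac{\ell}{2}\|\hat\x-\x\|^2+\frac{1}{2\ell}e_{\x}^2$. Rearranging gives
\[
\|\hat\x-\x\|^2\le\frac{2D_{\cY}}{\ell}e_{\y}+\frac{1}{\ell^2}e_{\x}^2,
\]
which is exactly the claim.

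The only delicate point is the strong-convexity bookkeeping in Step 2. The weak-convexity modulus $\ell$ of $\Phi$ must leave a strong-convexity modulus exactly $\ell$ with the $\ell\|\cdot-\x\|^2$ term, so that the constants match $2/\ell$ and $1/\ell^2$. I expect this to work as stated; if the constants come out slightly off, that only affects absolute factors.
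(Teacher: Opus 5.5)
Your proof is correct and follows essentially the same argument as the paper. Both use the quadratic growth of the $\ell$-strongly convex function $\Phi(\cdot)+\ell\|\cdot-\x\|^2$ at its minimizer, the dual-gap bound $\Phi(\x)-f(\x,\y)\le D_{\cY}\,{\rm dist}(\bz,-\nabla_{\y} f(\x,\y)+\partial\iota_{\cY}(\y))$, the inequality $f(\hat{\x},\y)\le\Phi(\hat{\x})$, strong convexity of $f(\cdot,\y)+\ell\|\cdot-\x\|^2+\iota_{\cX}$, and Young's inequality. The only differences are cosmetic: the paper splits the gap into three terms and applies Young's inequality to the primal term before summing, while you chain the inequalities first. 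The constants $2D_{\cY}/\ell$ and $1/\ell^2$ come out exactly, so your closing caveat about absolute factors is unnecessary.
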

\noindent Now, we apply \cref{lemma2} to connect GS and OS:
\begin{align*}
&\, \left\|\prox_{\frac{1}{2\ell} \Phi}(\x^{t+1})-\x^{t+1}\right\|^2 \\
\leq &\,  \frac{2 D_{\cY} }{\ell}{\rm dist}(\bz,\!-\!\nabla_{\!\y} f(\x^{t+1}, \!\y^{t+1})\!+\!\partial\iota_{\!\mathcal{Y}}(\y^{t+1}))\!+\!\!\frac{1}{\ell^2}{\rm dist}^2(\bz,\!\nabla_{\!\x} f(\x^{t+1},\! \y^{t+1})\!+\!\partial\iota_{\!\mathcal{X}}(\x^{t+1})) \\
= &\, \cO\left(\frac{D_{\cY}}{\ell}\cdot\max\left(\sqrt{\frac{\ell\Delta_{\Psi_1}}{T}},r_{\y}^tD_{\cY}\right)+\frac{1}{\ell^2}\max\left(\sqrt{\frac{\Delta_{\Psi_1}}{c^tT}},\sqrt{\frac{\ell\Delta_{\Psi_1}}{T}}\right)^2\right)\\
= & \,\cO\left(\sqrt[3]{\frac{D_{\cY}^4 \Delta_{\Psi_1}}{\ell T}}\right),
\end{align*}
where the first equality comes from \cref{eq:Per-GS-y,eq:Per-GS-x}, and the last equality is due to \cref{con-pgda}, i.e., $c^t=\Theta\left(\frac{(r_{\y}^t)^2}{\ell^3}\right)=\Theta\left(\sqrt[3]{\frac{\Delta_{\Psi_1}^2 }{\ell^5 D_{\cY}^4T^2}}\right)$.

Putting everything together yields 
\[2\ell\left\|\prox_{\frac{1}{2\ell} \Phi}(\x^{t+1})-\x^{t+1}\right\|= \cO\left(\sqrt[6]{\frac{\ell^5D_{\cY}^4 \Delta_{\Psi_1}}{ T}}\right). \]

Following the similar argument, we need at least $T=\cO\left(\frac{\ell^5D_{\cY}^4 \Delta_{\Psi_1}}{\epsilon^6}\right)$ to reach an $\epsilon$-OS if we choose $r_{\y}^t=\Theta\left(\frac{\epsilon^2}{\ell D^2_{\cY}}\right)$. This completes the proof.
\end{proof}

 With \cref{thm:pgda-nc-c-epi} proved, we next prove \cref{thm:pgda-asm}. We show that the proof of \cref{thm:pgda-asm} follows directly from the preceding analysis, even though it uses a diminishing sequence of $r^t_{\y}$.
\begin{proof}[Proof of \cref{thm:pgda-asm}.]
It holds that $|\cI^\star(\infty)|=+\infty$, since for each $T$, the set
\[
\argmin_{k\in\{\lfloor T/2\rfloor,\dots,T\}}
\left\{
\frac{1}{4\alpha^k}\|\y^{k+1}-\y^k\|^2
+
\frac{1}{16c^k}\|\x^{k+1}-\x^k\|^2
\right\}
\]
is nonempty and contained in $\cI^\star(T)$. Hence,
$
\max_{k\in\cI^\star(T)} k \ge \Big\lfloor \frac{T}{2}\Big\rfloor
$
for every $T$, which implies that $\cI^\star(\infty)$ is infinite.

Let $(\x^\infty,\y^\infty)$ be any limit point of the sequence $\{(\x^t,\y^t)\}_{t\in \cI^\star(\infty)}$. Then there exists a subsequence $\{(\x^{i_k},\y^{i_k})\}_{k\ge1}$ such that
\begin{equation}
    \lim_{k\to\infty}  (\x^{i_k},\y^{i_k})  = (\x^\infty,\y^\infty). 
    \label{eq:conver-pgda}
\end{equation}

Note that \cref{eq:Per-GS-y,eq:Per-GS-x} holds for every $t=i_k$. In particular, \cref{eq:Per-GS-y} implies
\begin{align}
    &\,\lim_{k\to\infty}{\rm dist}(\bz,-\nabla_{\y} f(\x^{{i_k}+1}, \y^{{i_k}+1})+\partial\iota_{\mathcal{Y}}(\y^{{i_k}+1}))\notag\\
    =&\, \mathcal{O}\left(\lim_{k\to\infty}\max\left(\sqrt{\frac{\ell\left(\Delta_{\Psi_1}+r_{\y}^0D_{\cY}^2\right)}{i_k}},r_{\y}^{i_k}D_{\cY}\right)\right) =0,\label{eq:per-y-0}
\end{align}
where the last equality follows from $r_{\y}^{i_k}\searrow 0$. Moreover, since
$
\|\x^{i_k+1}-\x^{i_k}\|\to 0
$ and  $
\|\y^{i_k+1}-\y^{i_k}\|\to 0,
$
\cref{eq:conver-pgda} yields
$
(\x^{i_k+1},\y^{i_k+1})\to (\x^\infty,\y^\infty).
$ 
Because $\cY$ is nonempty, closed, and convex, the operator $\partial\iota_{\cY}$ is {\it maximal monotone}~\citep[Theorem~A]{rockafellar1970maximal}, and hence its graph is closed~\citep[Exercise~12.8]{rockafellar2009variational}. Therefore, combining the above with \cref{eq:per-y-0}, we obtain
\begin{equation}
    \bz\in -\nabla_{\y} f(\x^{\infty}, \y^{\infty})+\partial\iota_{\mathcal{Y}}(\y^{\infty}).
    \label{eq:p-11}
\end{equation}

Similarly, for the primal part, \cref{eq:Per-GS-x} implies 
\begin{align*}
    &\,\lim_{k\to\infty}{\rm dist}(\bz,\nabla_{\x} f(\x^{{i_k}+1}, \y^{{i_k}+1})+\partial\iota_{\mathcal{X}}(\x^{{i_k}+1}))\notag\\
    =&\, \mathcal{O}\left(\lim_{k\to\infty}\max\left(\sqrt{\frac{\Delta_{\Psi_1}+r_{\y}^0D_{\cY}^2}{c^{i_k}{i_k}}},\sqrt{\frac{\ell\left(\Delta_{\Psi_1}+r_{\y}^0D_{\cY}^2\right)}{{i_k}}}\right)\right) =0,
\end{align*}
where the last equality follows from $i_k(r_{\y}^{i_k})^2\to\infty$ and \cref{con-pgda}, i.e., $c^{i_k}=\Theta((r_{\y}^{i_k})^2)$. By the same argument as above, we obtain
\begin{equation}
    \bz\in \nabla_{\x} f(\x^{\infty}, \y^{\infty})+\partial\iota_{\mathcal{X}}(\x^{\infty}).\label{eq:p-2}
\end{equation}
 Therefore, combining \cref{eq:p-11,eq:p-2} yields that $(\x^\infty,\y^\infty)$ is a $0$-GS of problem \cref{eq:prob}. This completes the proof. 
\end{proof}

\subsection{Convergence Analysis of Perturbed Smoothed GDA}\label{subsec:PSGDA-ub}
In this subsection, we establish the iteration complexity of \emph{Perturbed Smoothed GDA}  stated in \cref{thm:psgda-epi}.
 Similar to \cref{subsec:ps-gda}, though \cref{thm:psgda-epi} is stated for constant choice of $r^t_{\y}$, we prove a stronger result where $r^t_{\y}$ is adaptive to provide the most general form of our analytical framework.

Following our unified analysis framework, we begin by introducing the Lyapunov function:
\begin{equation}
\Psi_2^t(\x,\y,\z)
:= 
\underbrace{F_t(\x,\y,\z)-d_t(\y,\z)}_{\text{Primal descent}}
+
\underbrace{\left(p_t(\z)-d_t(\y,\z)\right)}_{\text{Dual ascent}}
+
\underbrace{p_t(\z)}_{\text{Proximal descent}}. \label{Lya-fcn}
\end{equation}
Similar as before, we first establish a basic descent property of $\Psi_2^t(\x,\y,\z)$, which will be used to prove \cref{thm:psgda-epi}. The proof is essentially the same as that of \citep[Proposition~4.1]{zhang2020single}, except that an additional error term
$
(r_{\y}^t-r_{\y}^{t+1})D_{\cY}^2
$
appears.
To deal with the additional error term, the argument follows a similar technical route as \cref{lemma:perturb-potentialfunc-decrease}.
For simplicity, we write $\Psi_{2}^t:=\Psi_2^t(\x^{t},\y^{t},\z^{t})$. 


\begin{lemma}[Basic descent estimate]
\label{prop:decrease}
 Suppose  \cref{con-psgda} holds, and let $\{(\x^{t},\y^{t},\z^t)\}_{t \ge 0}$ be the sequence generated by  \emph{Perturbed Smoothed GDA}. 
 Then, for any $t\ge0$, we have
 \begin{equation*}
    \begin{aligned}
    \Psi_{2}^{t+1}-\Psi_{2}^{t}\le& - \frac{1}{8c^t}\|\x^{t+1}-\x^{t}\|^{2}-\frac{1}{8\alpha^t}\left\|\y^{t}-\y_{+}^t(\z^{t})\right\|^2 -\frac{r_{\x}\beta^t}{8}\|\z^{t}-\x^{t+1}\|^{2} \\
    & + 24r_{\x}\beta^t\left\|\x_{t}^\star(\z^{t})-\x_{t}(\y_{+}^t(\z^{t}), \z^{t})\right\|^{2}+\frac{3(r_{\y}^t-r_{\y}^{t+1})}{2}D_{\cY}^2.
    \end{aligned}
\end{equation*}
\end{lemma}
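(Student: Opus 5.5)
We follow the template of \citep[Proposition~4.1]{zhang2020single}, tracking the three components of $\Psi_2^t$ in \cref{Lya-fcn} separately. Each component also changes because $F_t$ is replaced by $F_{t+1}$, and these changes are absorbed into the term $\frac{3(r_{\y}^t-r_{\y}^{t+1})}{2}D_{\cY}^2$, exactly as in \cref{lemma:perturb-potentialfunc-decrease}. The first step is to split $\Psi_2^{t+1}-\Psi_2^t$ into a \emph{parameter-change part}, where $(\x^{t+1},\y^{t+1},\z^{t+1})$ is held fixed and $F_t$ is replaced by $F_{t+1}$, and an \emph{iterate-change part}, computed entirely with $F_t$.

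\textbf{Parameter-change part.} We have $F_{t+1}=F_t+\frac{r_{\y}^t-r_{\y}^{t+1}}{2}\|\y\|^2$, and this increment lies in $[0,\frac{r_{\y}^t-r_{\y}^{t+1}}{2}D_{\cY}^2]$ on $\cY$ (with $\bz\in\cY$). Consequently $d$ and $p$ each change by a nonnegative amount at most $\frac{r_{\y}^t-r_{\y}^{t+1}}{2}D_{\cY}^2$; for $p$ we use the same max-splitting argument as in \cref{eq:phi-t}. In $\Psi_2=F-2d+2p$ the term $F-d$ changes by exactly $0$, since the $\y$-dependent increment cancels at the same $\y$. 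The term $-2d$ changes by a nonpositive amount, and $2p$ changes by at most $(r_{\y}^t-r_{\y}^{t+1})D_{\cY}^2$. This already gives a bound of $(r_{\y}^t-r_{\y}^{t+1})D_{\cY}^2$. The stated $\tfrac32$ leaves slack; we take it as a safe constant and would tighten the bound only if needed.

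\textbf{Iterate-change part with fixed $t$.} Here we reproduce Zhang et al.'s three estimates, with $F_t$ now being $(\ell+r_{\y}^t)$-smooth in $\y$ and the effective primal strong convexity modulus $r_{\x}-\ell$ unchanged. The three estimates are as follows.
\begin{enumerate}[label=(\alph*)]
\item \emph{Primal descent.} Apply the projected gradient step in $\x$ with $c^t<1/(r_{\x}+\ell+r_{\y}^t)$, together with smoothness in $\y$ and $\z$. This gives
\[
F_t(\x^{t+1},\y^{t+1},\z^{t+1})-F_t(\x^t,\y^t,\z^t)\le -\tfrac{1}{2c^t}\|\x^{t+1}-\x^t\|^2+\langle\nabla_{\y}F_t,\y^{t+1}-\y^t\rangle+\tfrac{\ell+r_{\y}^t}{2}\|\y^{t+1}-\y^t\|^2-\tfrac{r_{\x}}{2\beta^t}\|\z^{t+1}-\z^t\|^2\cdot(\text{sign-adjusted}).
\]
\item \emph{Dual ascent of $d_t$.} By Danskin's theorem $\nabla_{\y}d_t(\y,\z)=\nabla_{\y}F_t(\x_t(\y,\z),\y,\z)$. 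Using Lipschitz continuity of $\x_t(\cdot,\z)$ and $\x_t(\y,\cdot)$, with constants depending on $r_{\x}-\ell-r_{\y}^t$ as in \citep{zhang2020single}, we lower bound $d_t(\y^{t+1},\z^{t+1})-d_t(\y^t,\z^t)$.
\item \emph{Proximal descent of $p_t$.} $p_t$ is smooth in $\z$ with $\nabla p_t(\z)=r_{\x}(\z-\x_t^\star(\z))$, so $p_t(\z^{t+1})-p_t(\z^t)\le \langle r_{\x}(\z^t-\x_t^\star(\z^t)),\z^{t+1}-\z^t\rangle+O(r_{\x})\|\z^{t+1}-\z^t\|^2$.
\end{enumerate}

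We then sum $(a)-2(b)+2(c)$. The cross term $2r_{\x}\beta^t\langle \x_t(\y^{t+1},\z^t)-\x_t^\star(\z^t),\x^{t+1}-\z^t\rangle$ is handled by Young's inequality with the $\z$-step $\z^{t+1}-\z^t=\beta^t(\x^{t+1}-\z^t)$. We replace $\y^{t+1}$ by $\y_+^t(\z^t)$ at the price of $\|\x^{t+1}-\x_t(\y^t,\z^t)\|$-type errors, which are controlled by $\|\x^{t+1}-\x^t\|$ via the primal error bound for the strongly convex $F_t(\cdot,\y,\z)$. This produces the term $24r_{\x}\beta^t\|\x_t^\star(\z^t)-\x_t(\y_+^t(\z^t),\z^t)\|^2$ and the three negative quadratic terms.

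\textbf{Main obstacle.} The hard part is the bookkeeping in (b). There we must convert $\|\y^{t+1}-\y^t\|$ into $\|\y^t-\y_+^t(\z^t)\|$ using the nonexpansiveness of the projection and the Lipschitz bound $\|\x^{t+1}-\x_t(\y^t,\z^t)\|\le \frac{1+c^t(r_{\x}-\ell-r_{\y}^t)}{c^t(r_{\x}-\ell-r_{\y}^t)}\|\x^{t+1}-\x^t\|$. The conditions on $\alpha^t$ in \cref{con-psgda}, namely the bound involving $(c^t)^2(r_{\x}-\ell-r_{\y}^t)^2$, and on $\beta^t\le (r_{\x}-\ell-r_{\y}^t)^2/(384r_{\x}(r_{\x}+\ell+r_{\y}^t)^2)$, are exactly what make the resulting coefficients yield $-\frac{1}{8c^t}$, $-\frac{1}{8\alpha^t}$ and $-\frac{r_{\x}\beta^t}{8}$. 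We would verify these constants last, following Zhang et al.'s computation line by line with $\ell$ replaced by $\ell+r_{\y}^t$ wherever the $\y$-smoothness enters.
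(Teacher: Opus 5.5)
Your proposal is correct and takes the same route the paper indicates. You split off the $F_t\to F_{t+1}$ change and handle it as in \cref{lemma:perturb-potentialfunc-decrease}, then obtain the iterate-change part by running \citep[Proposition~4.1]{zhang2020single} on $f_t$ with smoothness constant $\ell+r_{\y}^t$. Your cancellation in $F-d$ even gives $(r_{\y}^t-r_{\y}^{t+1})D_{\cY}^2$ instead of the stated $\tfrac32$ factor, which is what one gets by bounding the changes of $F$, $-2d$, $2p$ separately. One small bookkeeping fix: once you group $F-d$, only a single $-d$ remains (not $-2d$), and its change is nonpositive, so your conclusion is unaffected.
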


To establish the descent property of $\Psi_2^t$, it suffices to control the term $\|\x_{t}^\star(\z^{t})-\x_{t}(\y_{+}^t(\z^{t}),\z^{t})\|^{2}$. 
By construction, applying \emph{Perturbed Smoothed GDA}  on an NC-C minimax problem can be viewed as applying the vanilla \emph{Smoothed GDA} to an equivalent NC-SC minimax problem, where the strong concavity on the dual side is induced by the perturbation parameter $r_{\y}^t$; hence, using the \emph{homogeneous dual error bound} from \citep[Corollary~5.1]{li2025nonsmooth}, we bound it by the positive term $\|\y^{t}-\y_{+}^t(\z^{t})\|^{2}$ in the basic descent estimate. 
We formalize this homogeneous dual error bound in the following lemma and its proof is in \cref{sec:proof-dualerror}.
\begin{lemma}[Dual error bound for NC-SC]
\label{lemma:dualerror}
 For any $t \ge 0$, any $\y\in\mathcal{Y}$ and $\z \in \mathbb{R}^n$, we have
\vspace{1mm}
  \vspace{1mm}
    \[\left\|\x_{t}^\star(\z^{t})-\x_{t}(\y_{+}^t(\z^{t}), \z^{t})\right\|  \leq \omega^t\left\|\y^t-\y_{+}^t(\z^t)\right\|,\] 
      \vspace{2mm} 
     where $\omega^t:=\frac{1}{\sqrt{r_{\y}^t}} \cdot \frac{(r_{\x}-\ell-r^t_{\y})+\alpha^t(\ell+r_{\y}^t)(3r_{\x}-2\ell-2r_{\y}^t)}
           {\alpha^t (r_{\x}-\ell-r_{\y}^t)^{3/2}}$. 
\end{lemma}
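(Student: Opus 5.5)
The plan is to reduce the primal distance to a dual optimality gap using strong convexity in $\x$. That gap is then bounded by the projected-gradient residual $\|\y^t-\y_+^t(\z^t)\|$, using concavity plus the $r_{\y}^t$-strong concavity of the dual function. Fix $t$ and $\z=\z^t$, write $\y=\y^t$ and $\y_+=\y_+^t(\z)$, and drop subscripts where harmless. Since $r_{\x}>\ell$, $F_t(\cdot,\y,\z)$ is $\mu$-strongly convex on $\cX$ with $\mu\ge r_{\x}-\ell-r_{\y}^t>0$. I use the smaller modulus so the constants match the form of $\omega^t$. Also $F_t(\x,\cdot,\z)$ is $r_{\y}^t$-strongly concave, and $\cY$ is compact.

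\textbf{Step 1: dual function and saddle point.} I first record three properties of $d_t(\cdot,\z)=\min_{\x\in\cX}F_t(\x,\cdot,\z)$.
\begin{enumerate}[label=(\alph*)]
\item By Danskin, $\nabla_{\y} d_t(\y',\z)=\nabla_{\y}F_t(\x_t(\y',\z),\y',\z)$.
\item $d_t(\cdot,\z)$ is $r_{\y}^t$-strongly concave, being a pointwise minimum of $r_{\y}^t$-strongly concave functions.
\item $\nabla_{\y}d_t(\cdot,\z)$ is $L_d$-Lipschitz with $L_d=(\ell+r_{\y}^t)+\ell^2/\mu$. This follows because $\x_t(\cdot,\z)$ is $(\ell/\mu)$-Lipschitz, by the standard strong-monotonicity argument on the first-order optimality conditions.
\end{enumerate}
By Sion's theorem, $p_t(\z)=\max_{\y'}d_t(\y',\z)=d_t(\y_t(\z),\z)$, and the saddle point is unique, so $\x_t^\star(\z)=\x_t(\y_t(\z),\z)$. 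Note that $\y_+$ is exactly one projected gradient-ascent step on $d_t(\cdot,\z)$ from $\y$.

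\textbf{Step 2: strong convexity in $\x$.} Since $\x_t(\y_+,\z)$ minimizes the $\mu$-strongly convex function $F_t(\cdot,\y_+,\z)$,
\[
\tfrac{\mu}{2}\|\x_t^\star(\z)-\x_t(\y_+,\z)\|^2\le F_t(\x_t^\star(\z),\y_+,\z)-d_t(\y_+,\z)\le p_t(\z)-d_t(\y_+,\z).
\]
The last inequality holds because $\x_t^\star(\z)$ solves the min-max problem, so $F_t(\x_t^\star(\z),\y',\z)\le p_t(\z)$ for every $\y'\in\cY$.

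\textbf{Step 3: bound the dual gap $p_t(\z)-d_t(\y_+,\z)$ by the residual.} From the optimality condition of the projection, $\bm{g}:=\tfrac{1}{\alpha^t}(\y_+-\y)-\nabla d_t(\y,\z)+\nabla d_t(\y_+,\z)$ satisfies $\bm{g}+\nabla d_t(\y_+,\z)\in$ (normal-cone-shifted) position, so that $\bm{g}\in\nabla d_t(\y_+,\z)-\partial\iota_{\cY}(\y_+)$ up to sign conventions. Hence $\bm{g}$ is a supergradient of $d_t(\cdot,\z)-\iota_{\cY}$ at $\y_+$, with
\[
\|\bm{g}\|\le\Bigl(\tfrac1{\alpha^t}+L_d\Bigr)\|\y-\y_+\|.
\]
Strong concavity of $d_t-\iota_{\cY}$ then gives the Polyak--Łojasiewicz-type bound
\[
p_t(\z)-d_t(\y_+,\z)\le\tfrac{1}{2r_{\y}^t}\|\bm{g}\|^2.
\]
Combining with Step 2 yields
\[
\|\x_t^\star(\z)-\x_t(\y_+,\z)\|\le\frac{1+\alpha^t L_d}{\alpha^t\sqrt{\mu\, r_{\y}^t}}\,\|\y-\y_+\|.
\]

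\textbf{Step 4: match constants.} With $\mu=r_{\x}-\ell-r_{\y}^t$, multiply numerator and denominator by $\mu$. The numerator becomes $\mu+\alpha^t\bigl(\mu(\ell+r_{\y}^t)+\ell^2\bigr)$, and one checks
\[
\mu(\ell+r_{\y}^t)+\ell^2\le(\ell+r_{\y}^t)(3r_{\x}-2\ell-2r_{\y}^t),
\]
since the right side minus $\mu(\ell+r_{\y}^t)$ equals $(\ell+r_{\y}^t)(2r_{\x}-\ell-r_{\y}^t)$, which exceeds $\ell^2$ because $r_{\x}>3\ell$. This gives exactly the stated $\omega^t$ with $\mu^{3/2}$ in the denominator. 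The factor $\tfrac12$ from the PL bound only helps.

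I expect the main obstacle to be Step 3. Getting a \emph{homogeneous} bound (no additive $D_{\cY}$ term) requires carefully identifying the supergradient at $\y_+$ rather than at $\y$, and using strong concavity of the constrained dual, which I will do via the first-order characterization of the projection. A secondary point is the saddle-point identity $\x_t^\star(\z)=\x_t(\y_t(\z),\z)$, which needs strong convexity–strong concavity and compactness of $\cY$. The bound holds for arbitrary $\y\in\cY$, $\z$, as stated.
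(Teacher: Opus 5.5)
Your proof is correct, but it takes a different route from the paper. The paper does not argue from scratch. It invokes the homogeneous dual error bound of \citep[Corollary~5.1]{li2025nonsmooth}, which holds under a Kurdyka--Lojasiewicz (KL) condition on the dual. It notes that $r_{\y}^t$-strong concavity is a KL property with exponent $\tfrac12$ and modulus $\sqrt{2r_{\y}^t}$. It then substitutes these values, together with $\sigma_2^t=\frac{2r_{\x}-\ell-r_{\y}^t}{r_{\x}-\ell-r_{\y}^t}$, into the constant of that corollary.

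You instead derive the bound directly, in two steps. First, strong convexity in $\x$ turns the primal distance into the dual gap $p_t(\z)-d_t(\y_+,\z)$, using only $\max_{\y'}F_t(\x_t^\star(\z),\y',\z)=p_t(\z)$. Second, the projection optimality condition gives a supergradient $\bm g$ of $d_t(\cdot,\z)-\iota_{\cY}$ at $\y_+$ with norm at most $(1/\alpha^t+L_d)\|\y-\y_+\|$. Strong concavity then makes the gap at most $\|\bm g\|^2/(2r_{\y}^t)$. Sion's theorem is needed only to identify $p_t(\z)=\max_{\y'\in\cY}d_t(\y',\z)$; the identity $\x_t^\star(\z)=\x_t(\y_t(\z),\z)$ is never used.

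What your route buys is a self-contained argument that shows why the bound is homogeneous: exponent $\tfrac12$ makes the dual gap quadratic in the residual. It also gives a constant
\[
\frac{(r_{\x}-\ell-r_{\y}^t)+\alpha^t\bigl((r_{\x}-\ell-r_{\y}^t)(\ell+r_{\y}^t)+\ell^2\bigr)}{\alpha^t\sqrt{r_{\y}^t}\,(r_{\x}-\ell-r_{\y}^t)^{3/2}}\le\omega^t .
\]
Your Step~4 comparison needs only $r_{\x}>\ell+r_{\y}^t$, not $r_{\x}>3\ell$. The paper's route buys brevity and places the lemma inside the general KL framework, where other exponents give non-homogeneous bounds.

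Two cosmetic fixes:
\begin{itemize}
\item In Step~3, state the projection condition cleanly as $\frac{1}{\alpha^t}(\y-\y_+)+\nabla_{\y}d_t(\y,\z)\in\partial\iota_{\cY}(\y_+)$. This gives $\bm g\in\nabla_{\y}d_t(\y_+,\z)-\partial\iota_{\cY}(\y_+)$ exactly, with no sign ambiguity.
\item The $\tfrac12$ in your PL bound does not ``only help.'' It cancels the $\tfrac12$ from Step~2, and your combined constant already reflects this correctly.
\end{itemize}
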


With \cref{prop:decrease} and \cref{lemma:dualerror}, we are now ready to present the detailed proof of \cref{thm:psgda-epi}. Similar to the previous subsection, the key of the proof is still using \cref{prop:decrease,lemma:dualerror}, and \cref{con-psgda} to establish the decreasing property of the Lyapunov function.  

\begin{proof}[Proof of \cref{thm:psgda-epi}.]
First, under the step-size condition in \cref{con-psgda} that
$\beta^t < \tfrac{1}{384\, r_{\x}\alpha^t(\omega^t)^{2}}$, we obtain
\begin{equation*}
\begin{aligned}
&\,\Psi_{2}^t-\Psi_{2}^{t+1}\\
\ge\, &  \frac{1}{8c^t}\!\|\x^{t+1}\!-\!\x^{t}\|^{2}\!+\!\!\left(\!\!\frac{1}{8\alpha^t} \!-\!24r_{\x}\beta^t (\omega^t)^2\!\!\right)\! \|\y^{t}\!-\!\y_{+}^t(\z^{t})\|^2 \!+\!\frac{r_{\x}\beta^t}{8}\!\|\z^{t}\!-\!\x^{t+1}\|^{2}\!+\!\frac{3(r_{\y}^t\!\!-\!r_{\y}^{t+1})}{2}\!D_{\!\cY}^2 \\
 \ge \, & \frac{1}{8c^t}\|\x^{t+1}-\x^{t}\|^{2}+\frac{1}{16\alpha^t} \|\y^{t}-\y_{+}^t(\z^{t})\|^2 +\frac{r_{\x}\beta^t}{8}\|\z^{t}-\x^{t+1}\|^{2}+\frac{3(r_{\y}^t-r_{\y}^{t+1})}{2}D_{\cY}^2 .
\end{aligned}
\end{equation*}
By \citep[Lemma~B.1]{zhang2020single}, under the definition of $\Delta_{\Psi_2}$ and the parameter choice $r_{\y}^t = r_{\y}^0$ for all $t$, for any integer $T > 0$, there exists an index $t \in [T]$ such that $t\ge\lfloor\frac{T}{2}\rfloor$ such that
\[
\|\x^{t+1}-\x^{t}\| \leq \sqrt{\frac{8c^t\Delta_{\Psi_2}}{T}}, \, \|\y^{t}-\y_{+}^t(\z^{t})\| \leq  \sqrt{\frac{16\alpha^t\Delta_{\Psi_2}}{T}},\, \text{and}\, \|\z^{t}-\x^{t+1}\| \leq\sqrt{\frac{8\Delta_{\Psi_2}}{r_{\x}\beta^t T}}.
\]
Therefore, substituting the parameter choices from \cref{con-psgda} into the above estimate yields
  \begin{equation}
  \label{ps-xyz}
  \max\Bigl\{\|\x^{t+1}-\x^{t}\|,\ \|\y^{t}-\y_{+}^t(\z^{t})\|\Bigr\}
  = \cO\left(\sqrt{\frac{\Delta_{\Psi_2}}{\ell T}}\right),\quad 
  \|\z^{t}-\x^{t+1}\|
  = \cO\left(\sqrt{\frac{\Delta_{\Psi_2}}{r_{\y}^t T}}\right).
  \end{equation}

\noindent (i)
For the GS case, we choose $r_{\y}^t$ as $\Theta\left(\sqrt[3]{\frac{\ell^2\Delta_{\Psi_2}}{D_{\cY}^2T}}\right)$.\\
From \cref{opt-y-up}, we have
\begin{align}
     &\,{\rm dist}(\bz,-\nabla_{\y} f(\x^{t+1}, \y^{t+1})+\partial\iota_{\mathcal{Y}}(\y^{t+1}))\notag\\
     \leq &\, \left(\frac{1}{\alpha^t}+\ell\right)\|\y^{t+1}-\y^{t}\| +r_{\y}^tD_{\cY}\notag\\
      \leq &\, \left(\frac{1}{\alpha^t}+\ell\right) \left(\|\y^{t}-\y_{+}^t(\z^{t})\|+\alpha^t\ell \sigma_3^t\|\x^{t+1}-\x^{t}\|\right)+r_{\y}^tD_{\cY} \label{smooth-y}\\
       = &\, \mathcal{O}\left(\max\left(\sqrt{\frac{\ell\Delta_{\Psi_2}}{T}},r_{\y}^tD_{\cY}\right)\right)\label{ps-dual}\\
=&\, \mathcal{O}\left(\sqrt[3]{\frac{\ell^2 D_{\cY}\Delta_{\Psi_2}}{T}}\right),\notag
 \end{align}
where the second inequality follows from the triangle inequality and \citep[Lemma~B.9]{zhang2020single}, which states that $\|\y^{t+1}-\y_{+}^t(\z^{t})\|\le \alpha^t\ell\sigma_3^t\|\x^{t+1}-\x^{t}\|$, where $\sigma_3^t:=\frac{1+c^t(r_{\x}-\ell)}{c^t(r_{\x}-\ell)}=\Theta(1)$, and the first equality comes from \cref{ps-xyz}, the choice of $r^t_{\y}$ and \cref{con-psgda}, i.e., $\alpha^t=\Theta(\frac{1}{\ell})$.

Next, we examine the primal part. The optimality condition of the $\x$–update gives
\begin{align*}
\bz  \in &\, \nabla_{\x} f(\x^{t},\y^{t}) +\partial\iota_{\mathcal{X}}(\x^{t+1})+\frac{1}{c^t} (\x^{t+1}-\x^{t}) + r_{\x} (\x^{t+1}-\z^{t}) \\
\in &\, \nabla_{\x} f(\x^{t+1}, \y^{t+1}) +\partial\iota_{\mathcal{X}}(\x^{t+1}) +(\nabla_{\x} f(\x^{t}, \y^{t})-\nabla_{\x} f(\x^{t+1}, \y^{t+1})) \\
&\,+\frac{1}{c^t} (\x^{t+1}-\x^{t}) 
 + r_{\x} (\x^{t+1}-\z^{t}). 
\end{align*}
It follows that  
\begin{align}
&\,{\rm dist}(\bz,\nabla_{\x} f(\x^{t+1}, \y^{t+1}) +\partial\iota_{\mathcal{X}}(\x^{t+1})) \notag\\
 \leq& \, \|\nabla_{\x} f(\x^{t}, \y^{t})-\nabla_{\x} f(\x^{t+1}, \y^{t+1})\| +\frac{1}{c^t}\|\x^{t+1}-\x^{t}\| + r_{\x}\|\x^{t+1}-\z^{t}\|\notag\\ 
 \leq& \, \left(\frac{1}{c^t}+\ell\right) \|\x^{t+1}-\x^{t}\| + \ell  \|\y^{t+1}-\y^{t}\| + r_{\x}\|\x^{t+1}-\z^{t}\| \notag\\ 
 \leq& \, \left(\frac{1}{c^t}+\ell\right) \|\x^{t+1}-\x^{t}\| + \ell \left(\|\y^{t}-\y_{+}^t(\z^{t})\|+\alpha^t\ell \sigma_3^t\|\x^{t+1}-\x^{t}\|\right) + r_{\x}\|\x^{t+1}-\z^{t}\|\label{opt-x-up} \\
 = &\, \mathcal{O}\left(\max\left(\sqrt{\frac{\ell\Delta_{\Psi_2}}{T}},\sqrt{\frac{\ell^2\Delta_{\Psi_2}}{r_{\y}^tT}}\right)\right)\label{ps-primal}\\
=&\, \mathcal{O}\left(\sqrt[3]{\frac{\ell^2 D_{\cY}\Delta_{\Psi_2}}{T}}\right),\notag
\end{align}
where the second inequality follows from the $\ell$-Lipschitz continuity of $\nabla_{\x}f(\cdot,\cdot)$, the third inequality follow from \citep[Lemma B.9]{zhang2020single}, and the first equality is due to \cref{ps-xyz} and \cref{con-psgda}, i.e., $c^t, \alpha^t=\Theta(\frac{1}{\ell})$ and $r_{\x}=\Theta(\ell)$. 

Recall that for the GS case, we choose $r_{\y}^t=\Theta \left(\sqrt[3]{\frac{\ell^2\Delta_{\Psi_2}}{D_{\cY}^2T}}\right)$  so that the two terms in \cref{ps-dual,ps-primal} are of the same order. With this choice, the resulting bound scales as $\mathcal{O}\left(\sqrt[3]{\frac{\ell^2 D_{\cY}\Delta_{\Psi_2}}{T}}\right)$.
Thus, to reach an $\epsilon$-GS it suffices to take 
$T = \cO\left(\frac{\ell^{2} D_{\cY}\, \Delta_{\Psi_2}}{\epsilon^{3}}\right)$,
and substituting this $T$ back into the above choice yields the equivalent parameterization $r_{\y}^t = \Theta\left(\tfrac{\epsilon}{D_{\cY}}\right)$.

\noindent(ii)
For the OS case, we choose  $r_{\y}^t$ as $\Theta\left(\sqrt{\frac{\ell\Delta_{\Psi_2}}{D_{\cY}^2T}}\right)$. Then we obtain
\begin{align*}
&\, \left\|\prox_{\frac{1}{2\ell} \Phi}(\x^{t+1})-\x^{t+1}\right\|^2 \\
\leq &\,  \frac{2 D_{\cY} }{\ell}{\rm dist}(\bz,\!-\!\nabla_{\!\y} f(\x^{t+1}, \!\y^{t+1})\!+\!\partial\iota_{\!\mathcal{Y}}(\y^{t+1}))\!+\!\!\frac{1}{\ell^2}{\rm dist}^2(\bz,\!\nabla_{\!\x} f(\x^{t+1},\! \y^{t+1})\!+\!\partial\iota_{\!\mathcal{X}}(\x^{t+1})) \\
= &\,\cO\left(\frac{D_{\cY}}{\ell}\cdot\max\left(\sqrt{\frac{\ell\Delta_{\Psi_2}}{T}},r_{\y}^tD_{\cY}\right)+\frac{1}{\ell^2}\cdot\max\left(\sqrt{\frac{\ell\Delta_{\Psi_2}}{T}},\sqrt[3]{\frac{\ell^2\Delta_{\Psi_2} }{r_{\y}^tT}}\right)\right)\\
= &\,\cO\left(\sqrt{\frac{D_{\cY}^2\Delta_{\Psi_2}}{\ell T}}\right),
\end{align*}
where the first inequality arises from \cref{lemma2}, and the first equality follows from \cref{ps-primal,ps-dual}.
Putting everything together yields
\[2\ell\left\|\prox_{\frac{1}{2\ell} \Phi}(\x^{t+1})-\x^{t+1}\right\|= \cO\left(\sqrt[4]{\frac{\ell^3D_{\cY}^2\Delta_{\Psi_2}}{T}}\right). \]
Following the similar argument as the case of GS, we need at least $T=\cO\left(\frac{\ell^3D_{\cY}^2\Delta_{\Psi_2}}{\epsilon^4}\right)$ iterations to reach an $\epsilon$-OS when we choose $r_{\y}^t=\Theta\left(\frac{\epsilon^2}{ \ell D_{\cY}^2}\right)$.
This completes the proof.
\end{proof}
The proof of \cref{thm:psgda-asm} follows the same steps as the proof of \cref{thm:pgda-asm}, and is therefore omitted.
\begin{remark}
We prove \cref{thm:psgda-epi} (i) within our unified framework, illustrating its versatility and generality. 
We note, however, that the same iteration complexity bound can also be recovered through a simple reduction. 
Indeed, by \citep[Corollary 4.1]{yang2022faster}, the iteration complexity of \emph{Smoothed GDA} for the NC-SC minimax problem \eqref{eq:perturb} is 
$\cO(\kappa\ell\Delta\epsilon^{-2})$. 
If we choose $r_{\y}^t=r_{\y}=\Theta(\epsilon/D_{\cY})$ for all $t\ge 0$, then 
$\kappa=\ell/r_{\y}=\Theta(\ell D_{\cY}/\epsilon)$. 
Substituting this relation into the above bound gives
\[
    \cO(\kappa\ell\Delta\epsilon^{-2})
    =
    \cO\left(\frac{\ell^2 D_{\cY}\Delta_{\Psi_2}}{\epsilon^3}\right).
\]
A similar observation also appears in \citep[Remark 11]{Aybat2025ARM}. 
Nevertheless, such a reduction only recovers the same finite-time rate for a constant choice of $r_{\y}^t$. 
Since a constant perturbation parameter does not vanish, this reduction does not lead to an asymptotic convergence guarantee for the original NC-C problem. 
Our unified framework goes beyond this reduction by accommodating adaptive or diminishing perturbation sequences; this flexibility is essential for establishing the asymptotic convergence of \emph{Perturbed Smoothed GDA}; see \cref{thm:psgda-asm}.
\end{remark}

\section{Tight Analysis}\label{sec:analysis}
In \cref{sec:framework}, we established the iteration complexity of \emph{Perturbed GDA} and \emph{Perturbed Smoothed GDA}  for both game stationarity and optimization stationarity. In this section, we show that these bounds are tight. 
Specifically, we construct hard instances and derive lower bounds on the number of iterations required by each algorithm to reach a stationary point.
Comparing these lower bounds with the corresponding upper bounds from \cref{sec:framework} establishes the tightness claims.
For lower bound analysis in this section, we consider algorithms with constant choice of $r^t_{\y}$ for simplicity. Thus, we drop iteration superscripts and write $r_{y}^t = r_{y}$, $c^t = c$, $\alpha^t = \alpha$, and $\beta^t = \beta$, as these parameters are fixed across all iterations.

\subsection{Game Stationarity}\label{subsec:gs-tightness}
In this subsection, we prove \cref{thm:tight-alter}~(i) and \cref{thm:tight-psgda}~(i), establishing the tightness of the iteration complexities stated in \cref{thm:pgda-nc-c-epi}~(i) and \cref{thm:psgda-epi}~(i) for $\epsilon$-GS.
To this end, we construct a hard instance and show that the \emph{Perturbed GDA} and \emph{Perturbed Smoothed GDA}  require at least the corresponding number of iterations to reach an $\epsilon$-GS on this instance.
The hard instance is given below.
\begin{example}\label{eg:gs-alter-tight}
   Let the function $f:\mathbb{R}\times[0,D_{\cY}]\rightarrow\mathbb{R}$ and be defined by
    \begin{gather}\label{instance-gs}
		f(x,y)=
		\begin{cases}
            0,&  ~\mbox{if}~ x<0,\\
			-\frac{1}{2}\ell x^2+bxy, &  ~\mbox{if}~ 0\le x  \leq \frac{r_y D_{\cY}  }{b}, \\
			-\frac{\ell r_y^2 D_{\cY}^2}{2b^2}+r_yD_{\cY}y, &~\mbox{if}~  x >\frac{r_y D_{\cY} }{b},
		\end{cases}
	\end{gather}
where $b=\sqrt{3\ell r_{y}}$. It can be verified that
$f(x,y)$ is concave in $y$, and $\ell$-smooth.
\end{example}

\begin{remark}
The piecewise-smooth construction in \cref{eg:gs-alter-tight} is introduced to adapt the hard instance to the bounded-dual-domain setting. 
In particular, unlike standard constructions that rely on strong concavity and quadratic structure, our example only assumes concavity in the dual variable and requires a piecewise modification to ensure the normalization
\[
\min_{x\in\mathbb R}\max_{y\in[0,D_{\cY}]} f_0(x,y)
=
\min_{x\in\mathbb R}\max_{y\in[0,D_{\cY}]}
\left\{ f(x,y)-\frac{r_y}{2}y^2 \right\}
=0.
\]
This normalization ensures that the initial gaps $\Delta_{\Psi_1}$ and $\Delta_{\Psi_2}$ are well defined and explicitly computable, which is essential for matching the upper bounds in \cref{thm:pgda-nc-c-epi}~(i) and \cref{thm:psgda-epi}~(i). 
\end{remark}

    

\subsubsection{Perturbed GDA}\label{subsubsec:pgda-tight-gs}


In this subsubsection, we prove \cref{thm:tight-alter}~(i) by showing that \emph{Perturbed GDA} needs at least the number of iterations stated in \cref{thm:pgda-nc-c-epi}~(i) to reach an $\epsilon$-GS on \cref{eg:gs-alter-tight}.
Specifically, for any given $\epsilon$, we choose step sizes $\alpha$ and $c$ to satisfy \cref{con-pgda}, and set $r_y=\Theta\left(\frac{\epsilon}{D_{\cY}}\right)$. We show that if an iterate $(x^T,y^T)$ generated by \emph{Perturbed GDA} is an $\epsilon$-GS, then necessarily
$
T=\Omega\left(\frac{\ell^3 D_{\cY}^2 \Delta_{\Psi_1}}{\epsilon^4}\right).
$

To establish this lower bound, we analyze the dynamics of \emph{Perturbed GDA} on \cref{eg:gs-alter-tight}. 
In the regime $x^t\in\bigl[0,\frac{r_yD_{\cY}}{b}\bigr]$, the update rule can be written in the following linear vector form:
\begin{equation*}
    \left(
\begin{array}{c}
 x^{t+1}  \\
 y^{t+1}
 
\end{array}
\right)=(\bm{I}+\bm{M_1})\left(
\begin{array}{c}
 x^{t}  \\
 y^{t}
\end{array}
\right),~\mbox{where}~
    \bm{M_1} \coloneqq \left(
\begin{array}{cc}
  \ell c&-cb \\
 \alpha b(1+\ell c) & -\alpha r_{y}-cb^2\alpha
 
\end{array}
\right).
\end{equation*}
The convergence behavior of {\it Perturbed GDA} is determined by the spectral properties of $\bm{I}+\bm{M_1}$. 
To quantify its contraction factor, we study the eigenvalues of $\bm{M_1}$ (equivalently, those of $\bm{I}+\bm{M_1}$).
In particular, a direct calculation shows that $\bm{M_1}$ has an eigenvalue
\[
\lambda_1=\frac12\Big(A_1+\sqrt{A_1^2-4B_1}\Big),
\]
where
$
A_1:=\ell c-\alpha r_y-\alpha b^2c$ and 
$
B_1:=\alpha c\,(b^2-\ell r_y).
$
Using $b^2=3\ell r_y$, we have $A_1=\ell c-\alpha r_y(1+3\ell c)$ and $B_1=2\ell c\alpha r_y$.

Next, we show that $\lambda_1<0$ and characterize its magnitude. 
Because $\lambda_1<0$ and $|\lambda_1|$ is sufficiently small,  the recursion contracts with factor $(1+\lambda_1)$ along the corresponding eigendirection. 
This contraction factor determines the convergence speed along that direction. The proof of \cref{lemma:lamb-order-1} is deferred to \cref{sec:lem:lam}.
\begin{lemma}\label{lemma:lamb-order-1}
The eigenvalue $\lambda_1$ satisfies
$
\lambda_1 < 0$
and $
|\lambda_1| = \Theta\left(\frac{\epsilon^2}{\ell^2 D_{\mathcal Y}^2}\right). 
$ 
\end{lemma}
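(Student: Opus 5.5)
The plan is to work directly with the closed form $\lambda_1=\tfrac12\bigl(A_1+\sqrt{A_1^2-4B_1}\bigr)$. I would use $B_1=2\ell c\alpha r_y>0$, and the parameter scalings forced by \cref{con-pgda} together with $r_y=\Theta(\epsilon/D_{\cY})$. These give $\alpha=\Theta(1/\ell)$ and $c=\Theta(r_y^2/\ell^3)$, so that $\ell c=\Theta(r_y^2/\ell^2)$ and $\alpha r_y=\Theta(r_y/\ell)$. Since $r_y\ll\ell$ (as $\epsilon$ is small), the term $\ell c$ is of strictly smaller order than $\alpha r_y$; quantitatively, $c\le \frac{r_y^2\alpha}{16\ell^2}$ gives $\ell c\le \frac{r_y}{16\ell}\cdot\alpha r_y<\alpha r_y$. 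Hence
\[
A_1=\ell c-\alpha r_y(1+3\ell c)<0 \quad\text{and}\quad |A_1|=\Theta(\alpha r_y)=\Theta(r_y/\ell).
\]

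For the sign, I would first check that the discriminant is positive. This holds because
\[
\frac{B_1}{A_1^2}=\Theta\!\left(\frac{\ell c\,\alpha r_y}{(\alpha r_y)^2}\right)=\Theta\!\left(\frac{\ell c}{\alpha r_y}\right)=\Theta\!\left(\frac{r_y}{\ell}\right)\ll 1 .
\]
In particular $4B_1\le A_1^2$ under the explicit constants of \cref{con-pgda} (e.g.\ $4\cdot 2\ell c\alpha r_y\le \tfrac{r_y}{2\ell}(\alpha r_y)^2$ versus $A_1^2\ge (\tfrac{15}{16}\alpha r_y)^2$, with $r_y\le \ell$), so $\lambda_1$ is real. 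Since $B_1>0$, we have $\sqrt{A_1^2-4B_1}<|A_1|=-A_1$, which gives $\lambda_1<0$.

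For the magnitude, I would rationalize:
\[
\lambda_1=\frac{A_1^2-(A_1^2-4B_1)}{2\bigl(A_1-\sqrt{A_1^2-4B_1}\bigr)}=\frac{-2B_1}{|A_1|+\sqrt{A_1^2-4B_1}} .
\]
The denominator lies in $[|A_1|,2|A_1|]$, so $|\lambda_1|=\Theta(B_1/|A_1|)$. Substituting the orders above gives
\[
|\lambda_1|=\Theta\!\left(\frac{\ell c\,\alpha r_y}{\alpha r_y}\right)=\Theta(\ell c)=\Theta\!\left(\frac{r_y^2}{\ell^2}\right)=\Theta\!\left(\frac{\epsilon^2}{\ell^2D_{\cY}^2}\right).
\]

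The only delicate step is keeping the constants explicit enough to guarantee $A_1<0$ and $A_1^2>4B_1$ uniformly. This requires both the upper bound $c\le r_y^2\alpha/(16\ell^2)$ and the $\Theta$-lower bounds on $c$ and $\alpha$, so that the $\Theta$ in the conclusion is two-sided. I would handle it by bounding $\ell c/(\alpha r_y)\le r_y/(16\ell)\le 1/16$, using that $\epsilon$ is small.
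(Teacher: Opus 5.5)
Your proposal is correct and follows essentially the same route as the paper's proof: you use the orders $|A_1|=\Theta(\alpha r_y)$ and $B_1=2\ell c\alpha r_y$, and the rationalized form $\lambda_1=-2B_1/\bigl(|A_1|+\sqrt{A_1^2-4B_1}\bigr)$, which gives both $\lambda_1<0$ and $|\lambda_1|=\Theta(\ell c)=\Theta(r_y^2/\ell^2)$. The difference is that you make explicit, via $c\le r_y^2\alpha/(16\ell^2)$ and $r_y\le\ell$, the facts $A_1<0$ and $A_1^2>4B_1$; the paper only asserts these through order estimates.
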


With the sign and magnitude of $\lambda_1$ known, we choose $(x^0,y^0)$ so that its direction is aligned with an eigenvector of $\bm{I}+\bm{M_1}$, which reduces the two-dimensional recursion to a one-dimensional multiplicative dynamics along the eigendirection and yields an explicit expression for the iterates.
\begin{lemma}\label{gs-alter-tight}
Let \emph{Perturbed GDA} be initialized at
$x^0=\frac{2\epsilon}{\ell}$ and $y^0=\frac{2b(1+\ell c)\alpha}{\sqrt{A_1^2-4B_1}+2\ell c-A_1}\cdot x^0$. Then, for all $t\in\mathbb N$, the iterates satisfy  $x^t\in[0,\frac{r_yD_{\cY}}{b}]$ and $y^t\in [0, D_{\cY}]$. Moreover, we have
    \begin{equation}\label{x_T-gs}
        x^t=(1+\lambda_1)^t x^0, \quad \mbox{and} \quad y^t= (1+\lambda_1)^t y^0.
    \end{equation}
\end{lemma}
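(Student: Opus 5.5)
The proof goes by induction on $t$. The idea is that while $(x^t,y^t)$ stays in the region where the update is linear, the iterate is a multiple of an eigenvector of $\bm{I}+\bm{M_1}$, so it only gets rescaled by $1+\lambda_1$.

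\textbf{Eigenvector.} First I check that $\bm v:=(1,\,y^0/x^0)^\top$ is an eigenvector of $\bm{M_1}$ for $\lambda_1$. The first row of $(\bm{M_1}-\lambda_1\bm I)\bm v=\bz$ reads $\ell c-\lambda_1-cb\,(y^0/x^0)=0$. This forces
\[
\frac{y^0}{x^0}=\frac{\ell c-\lambda_1}{cb}.
\]
I then show this equals the stated ratio $\frac{2b(1+\ell c)\alpha}{\sqrt{A_1^2-4B_1}+2\ell c-A_1}$. Note that $2(\ell c-\lambda_1)=2\ell c-A_1-\sqrt{A_1^2-4B_1}$. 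Multiplying numerator and denominator by the conjugate and using
\[
(2\ell c-A_1)^2-(A_1^2-4B_1)=4\ell c(\ell c-A_1)+4B_1,
\]
one checks, with $b^2=3\ell r_y$, that $\ell c-A_1=\alpha r_y(1+3\ell c)$ and $B_1=2\ell c\alpha r_y$. The conjugate expression should then reduce to $4cb^2\alpha(1+\ell c)$, which gives the identity. Because $\lambda_1$ is a root of the characteristic polynomial, the second row of $(\bm{M_1}-\lambda_1\bm I)\bm v=\bz$ holds automatically.

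\textbf{Induction.} By Lemma~\ref{lemma:lamb-order-1}, $\lambda_1<0$ and $|\lambda_1|$ is tiny, so $0<1+\lambda_1<1$. The induction hypothesis is $(x^t,y^t)=(1+\lambda_1)^t(x^0,y^0)$ together with $x^t\in[0,r_yD_{\cY}/b]$ and $y^t\in[0,D_{\cY}]$. Inside this region the $x$-gradient is $-\ell x+by$ and the $y$-gradient is $bx-r_yy$. The step proceeds as follows.
\begin{itemize}
\item The unprojected step of Perturbed GDA equals $(\bm I+\bm{M_1})(x^t,y^t)^\top=(1+\lambda_1)(x^t,y^t)^\top$.
\item This point again lies in $[0,r_yD_{\cY}/b]\times[0,D_{\cY}]$, because the scaling factor $1+\lambda_1$ is in $(0,1)$ and both coordinates are nonnegative.
\item Hence both projections are inactive: the $x$-projection is onto $\mathbb R$, and the $y$-target lies inside $[0,D_{\cY}]$.
\end{itemize}
One subtlety needs care. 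The $y$-update uses $x^{t+1}$, and $x^{t+1}$ stays in the linear region by the same scaling argument. This is exactly why the matrix $\bm{M_1}$ contains the factor $(1+\ell c)$.

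\textbf{Base case (main obstacle).} It remains to verify $x^0=2\epsilon/\ell\le r_yD_{\cY}/b=\sqrt{r_y/(3\ell)}\,D_{\cY}$ and $0\le y^0\le D_{\cY}$.
\begin{itemize}
\item \emph{The $x$-bound.} With $r_y=\Theta(\epsilon/D_{\cY})$, the right-hand side is $\Theta(\sqrt{\epsilon D_{\cY}/\ell})$. This dominates $2\epsilon/\ell$ for small $\epsilon$, using $\epsilon\le \ell D_{\cY}$, which I would take as implicit in the small-$\epsilon$ regime, or enforce by choosing the constant in $r_y$ suitably.
\item \emph{Sign of $y^0$.} We have $y^0\ge0$ since $A_1^2-4B_1\ge0$ and $2\ell c-A_1=\ell c+\alpha r_y(1+3\ell c)>0$.
\item \emph{The $y$-bound.} Bounding the denominator below by $2\ell c-A_1\ge \alpha r_y$ gives $y^0\lesssim \frac{b}{r_y}x^0=\sqrt{3\ell/r_y}\cdot\frac{2\epsilon}{\ell}=\Theta(\sqrt{\epsilon D_{\cY}/\ell})$. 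This is at most $D_{\cY}$ under the same small-$\epsilon$ condition.
\end{itemize}
I expect this bookkeeping of constants to be the only delicate point.
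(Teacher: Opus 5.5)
Your proposal is correct and follows the paper's proof. Both argue by induction on $t$: the base case compares $x^0=2\epsilon/\ell$ and $y^0$ against $r_yD_{\cY}/b=\Theta(\sqrt{\epsilon D_{\cY}/\ell})$ and $D_{\cY}$, and the inductive step uses that $(x^0,y^0)$ is an eigenvector of $\bm{I}+\bm{M_1}$ with eigenvalue $1+\lambda_1\in(0,1)$. Your explicit verification of the eigenvector ratio, your observation that $x^{t+1}$ (used in the $y$-update) stays in the middle branch, and your stated small-$\epsilon$ condition fill in details the paper leaves as ``by construction'' or implicit.
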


The proof of \cref{gs-alter-tight} is in \cref{pf-of-eigen}.
With \cref{lemma:lamb-order-1,gs-alter-tight}, we are now ready to prove
\cref{thm:tight-alter}~(i).
\begin{proof}[Proof of \cref{thm:tight-alter}~(i).]
We initialize \emph{Perturbed GDA} at $x^0=\frac{2\epsilon}{\ell}\in\R$ and $y^0=\frac{2b(1+\ell c)\alpha}{\sqrt{A_1^2-4B_1}+2\ell c-A_1}\cdot\frac{2\epsilon}{\ell}\in[0,D_{\cY}]$. If $(x^T,y^T)$ is an $\epsilon$-GS, then by definition,
\begin{align*}
\epsilon
&\ge  |\nabla_x f(x^T,y^T)| = |b y^T-\ell x^T| \\
&= \left|
\left(
\frac{2b^2   (1+\ell c)\alpha}{\sqrt{A_1^2-4B_1}+2\ell c-A_1}-\ell
\right)x^T
\right| \\
&\ge \ell |x^T|
= \ell (1+\lambda_1)^T |x^0|
= 2\epsilon (1+\lambda_1)^T ,
\end{align*}
where the second equality uses~\eqref{x_T-gs} and the definition of $y^0$,
and the second inequality follows from
\[
\frac{2b^2(1+\ell c)\alpha}{\sqrt{A_1^2-4B_1}+2\ell c-A_1}
>
\frac{2b^2(1+\ell c)\alpha}{2(\ell c-A_1)}
=
\frac{3+3\ell c}{1+3\ell c}\cdot\ell
> 2\ell ,
\]
using the fact that $3\ell c=\Theta\left(\frac{\epsilon^2}{\ell^2 D_{\cY}^2}\right)<1$.

Therefore, we have 
\[
(1+\lambda_1)^T \le \frac12
\quad\Longrightarrow\quad
T=\Omega\left(\frac{1}{|\lambda_1|}\right)
=\Omega\left(\frac{\ell^2 D_{\cY}^2}{\epsilon^2}\right),
\]
where the last equality follows from \cref{lemma:lamb-order-1}. Finally, \cref{lemma:Delta_1} gives $\Delta_{\Psi_1}=\cO\left(\frac{\epsilon^2}{\ell}\right)$ under our initialization, and thus
\begin{align*}
    T&=\Omega\left(\frac{\ell^2D_{\cY}^2}{\epsilon^2}\right)=\Omega\left(\frac{\ell^2D_{\cY}^2}{\epsilon^2}\cdot\frac{\Delta_{\Psi_1}}{\epsilon^2/\ell}\right)=\Omega\left(\frac{\ell^3D_{\cY}^2\Delta_{\Psi_1}}{\epsilon^4}\right).
\end{align*}
This completes the proof.
\end{proof}
\subsubsection{Perturbed Smoothed GDA}\label{subsubsec:tight-psgda}
In this subsection, we prove \cref{thm:tight-psgda}~(i) with a similar approach as that for \emph{Perturbed GDA}.
We analyze the performance of \emph{Perturbed Smoothed GDA} on the hard instance \cref{eg:gs-alter-tight} and derive a lower bound on the number of iterations it needs to reach an $\epsilon$-GS.
The main technical difficulty here is that \emph{Perturbed Smoothed GDA}  introduces an additional auxiliary variable $z$ (via smoothing), so the middle-branch (when $0\le x\le \frac{r_yD_{\mathcal{Y}}}{b}$) updates of \cref{eg:gs-alter-tight} are governed by a three-dimensional linear recursion. 
Consequently, unlike \cref{subsubsec:pgda-tight-gs} where one only needs to control the spectrum of a $2\times2$ matrix, we must analyze the eigenvalues of a $3\times3$ matrix that couples $(x^t,y^t,z^t)$. 
Analyzing the contraction factor of this $3\times3$ matrix is the crux of the proof.

In the following,  we choose the step sizes $\alpha,c,\beta$ to satisfy \cref{con-psgda} and set $r_y = \Theta(\frac{\epsilon}{D_{\mathcal{Y}}})$.
We show that any iterate $(x^T,y^T,z^T)$ generated by \emph{Perturbed Smoothed GDA}  that is an $\epsilon$-GS must satisfy $
T = \Omega\left(\frac{\ell^2 D_{\mathcal{Y}} \Delta_{\Psi_2}}{\epsilon^3}\right).$ 

We analyze the dynamics of \emph{Perturbed Smoothed GDA}  on \cref{eg:gs-alter-tight} in the middle branch $x^t\in\bigl[0,\frac{r_yD_{\cY}}{b}\bigr]$, where the update takes the linear form:
\[
\left(\!
\begin{array}{c}
 x_{t+1}  \\
 y_{t+1}  \\
 z_{t+1}
\end{array}
\!\right)\!=\!(\bm{I}+\bm{M_2})\!\left(\!
\begin{array}{c}
 x_{t}  \\
 y_{t}  \\
 z_{t}
\end{array}
\!\right),~\mbox{where}~
    \bm{M_2}\!=\!\left(\!
\begin{array}{ccc}
 c\ell-cr_x & -{cb} & {c r_x} \\
 \alpha b(1+c\ell-cr_x) & -r_y \alpha -{c\alpha b}^2 & {c r_x \alpha b} \\
 \beta (1+c\ell-cr_x) & -cb\beta  & -\beta +\beta c r_x \\
\end{array}
\!\right).
\]
Similar to the previous subsection, we study the eigenvalues of $\bm{M_2}$ (equivalently, those of $\bm{I}+\bm{M_2}$) to quantify the contraction rate along a specific eigendirection. 
In particular, one eigenvalue admits the following closed form,\footnote{The expression and the subsequent algebraic simplifications were obtained using symbolic computations in \textsc{Mathematica}. The corresponding code is available at 
\url{https://github.com/Smoothing-Meets-Perturbation/A-Unified-and-Tight-Analysis-for-NC-C.git}.}
\begin{align}
\lambda_2=& \frac{1}{3} B_2+ \frac{\left( 1 + i \sqrt{3} \right) \left( B_2^2+3C_2\right)}{ 3 \cdot 2^{\frac{2}{3}} D_2^\frac{1}{3}}+\frac{1}{6\cdot2^\frac{1}{3}} (1 - i \sqrt{3})D_2^\frac{1}{3},\label{eq:lamb-2}
\end{align}
where
\begin{align*}
    A_2:= & -2 c^3 \ell^3 + 6 c^3 \ell^2 r_x - 6 c^3 \ell r_x^2 + 2 c^3 r_x^3 + 9 b^2 c^2 \ell \alpha \\
& \left. \left. + 6 b^2 c^3 \ell^2 \alpha - 9 b^2 c^2 r_x \alpha + 12 b^2 c^3 \ell r_x \alpha - 6 b^2 c^3 r_x^2 \alpha \right. \right. \\
& \left. \left. - 9 b^4 c^2 \alpha^2 - 6 b^4 c^3 \ell \alpha^2 + 6 b^4 c^3 r_x \alpha^2 + 2 b^6 c^3 \alpha^3 - 3 c^2 \ell^2 \beta \right. \right. \\
& \left. \left. - 3 c^2 \ell r_x \beta - 6 c^3 \ell^2 r_x \beta + 6 c^2 r_x^2 \beta + 12 c^3 \ell r_x^2 \beta - 6 c^3 r_x^3 \beta \right. \right. \\
& \left. \left. + 18 b^2 c \alpha \beta + 6 b^2 c^2 \ell \alpha \beta + 12 b^2 c^2 r_x \alpha \beta + 12 b^2 c^3 \ell r_x \alpha \beta \right. \right. \\
& \left. \left. - 12 b^2 c^3 r_x^2 \alpha \beta - 3 b^4 c^2 \alpha^2 \beta - 6 b^4 c^3 r_x \alpha^2 \beta + 3 c \ell \beta^2 \right. \right. \\
& \left. \left. + 6 c r_x \beta^2 + 3 c^2 \ell r_x \beta^2 - 12 c^2 r_x^2 \beta^2 - 6 c^3 \ell r_x^2 \beta^2 + 6 c^3 r_x^3 \beta^2 \right. \right.\\
& \left. \left. - 3 b^2 c \alpha \beta^2 - 3 b^2 c^2 r_x \alpha \beta^2 + 6 b^2 c^3 r_x^2 \alpha \beta^2 + 2 \beta^3 \right. \right. \\
& \left. \left. - 6 c r_x \beta^3 + 6 c^2 r_x^2 \beta^3 - 2 c^3 r_x^3 \beta^3 - 3 c^2 \ell^2 \alpha r_y \right. \right. \\
& \left. \left. + 6 c^2 \ell r_x \alpha r_y - 3 c^2 r_x^2 \alpha r_y - 9 b^2 c \alpha^2 r_y - 3 b^2 c^2 \ell \alpha^2 r_y \right. \right. \\
& \left. \left. + 3 b^2 c^2 r_x \alpha^2 r_y + 6 b^4 c^2 \alpha^3 r_y - 12 c \ell \alpha \beta r_y - 6 c r_x \alpha \beta r_y \right. \right. 
\end{align*}
\begin{align*}
& \left. \left. - 6 c^2 \ell r_x \alpha \beta r_y + 6 c^2 r_x^2 \alpha \beta r_y - 6 b^2 c \alpha^2 \beta r_y - 3 b^2 c^2 r_x \alpha^2 \beta r_y \right. \right. \\
& \left. \left. - 3 \alpha \beta^2 r_y + 6 c r_x \alpha \beta^2 r_y - 3 c^2 r_x^2 \alpha \beta^2 r_y\right. \right.\\
& \left. \left. + 3 c \ell \alpha^2 r_y^2 - 3 c r_x \alpha^2 r_y^2 + 6 b^2 c \alpha^3 r_y^2 - 3 \alpha^2 \beta r_y^2 \right. \right. \\
& \left. \left. + 3 c r_x \alpha^2 \beta r_y^2 + 2 \alpha^3 r_y^3, \right. \right.\\
B_2:=&\,c \ell - c r_x - b^2 c \alpha - \beta + c r_x \beta - \alpha r_y ,\\
C_2:=&-b^2 c \alpha + c \ell \beta - b^2 c \alpha \beta + c \ell \alpha r_y - c r_x \alpha r_y - \alpha \beta r_y + c r_x \alpha \beta r_y,~\mbox{and}\\
D_2:=&\,A_2+\sqrt{A_2^2-4(B_2^2+3C_2)^3}.
\end{align*}
Similar to \cref{subsubsec:pgda-tight-gs}, we summarize the sign and order of the eigenvalue $\lambda_2$ in the next lemma and its proof is deferred to \cref{sec:lam_2}.
\begin{lemma}\label{lemma:lamb-order-2}
    The eigenvalue $\lambda_2$ satisfies that $\lambda_2<0$ and $|\lambda_2|=\Theta\left(\frac{\epsilon}{\ell D_{\cY}}\right)$.
\end{lemma}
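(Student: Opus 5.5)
The plan is to avoid the Cardano expression \cref{eq:lamb-2} as far as possible. I will work with the characteristic polynomial of $\bm{M_2}$ instead:
\[
p(\lambda)=\det(\lambda\bm I-\bm{M_2})=\lambda^3-B_2\lambda^2-C_2\lambda-E_2,\qquad E_2:=\det\bm{M_2}.
\]
The coefficients $B_2$ and $C_2$ are exactly the quantities defined above, and this is a routine check by expanding the trace and the principal $2\times2$ minors. $E_2$ is computed by direct cofactor expansion. Row operations help here: subtracting $\beta/c$ times row one from row three, and $\alpha b$ times row one from row two, should reduce $\det\bm{M_2}$ to a short expression in $c,\alpha,\beta,b,r_x,r_y,\ell$.

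\textbf{Step 1: orders of the coefficients.} Under \cref{con-psgda} with $r_y=\Theta(\epsilon/D_{\cY})$ we have $c,\alpha=\Theta(1/\ell)$, $r_x=\Theta(\ell)$ with $r_x>3\ell$, $\beta=\Theta(r_y/\ell)$ and $b^2=3\ell r_y$. Set $\delta:=r_y/\ell=\Theta(\epsilon/(\ell D_{\cY}))$, which is small. Then:
\begin{itemize}
\item $B_2=-c(r_x-\ell)+\cO(\delta)$, which is $-\Theta(1)$.
\item Every monomial of $C_2$ is $\cO(\delta)$. The dominant part is $-b^2c\alpha+c\ell\beta-c(r_x-\ell)\alpha r_y$, so $C_2=-\Theta(\delta)$ provided the step-size constants make this combination strictly negative. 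I will check this by inserting the constraint $\beta\le (r_x-\ell-r_y)^2/(384 r_x(r_x+\ell+r_y)^2)$: it forces $c\ell\beta$ to be a small fraction of $c(r_x-\ell)\alpha r_y$, given that $\alpha=\Theta(1/\ell)$ with the implied constants fixed.
\item $E_2=\Theta(\delta^2)$, with a definite sign. I expect $E_2<0$, since the product of the three eigenvalues should be negative when all are negative.
\end{itemize}

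\textbf{Step 2: rescale and use continuity.} Substitute $\lambda=\delta\mu$ and divide by $\delta^2$:
\[
\delta^{-2}p(\delta\mu)=-B_2\mu^2-(C_2/\delta)\mu-E_2/\delta^2+\delta\mu^3 .
\]
As $\delta\to0$ with the $\Theta$-constants fixed, this converges uniformly on compact $\mu$-sets to a quadratic $q(\mu)=\bar b\mu^2+\bar c\mu+\bar e$ with $\bar b>0$, $\bar c>0$, $\bar e>0$ (signs from Step 1). I then want to show that $q$ has a negative root $\mu^\star$ strictly bounded away from $0$ and $-\infty$. The two things to exhibit are:
\begin{itemize}
\item $q$ changes sign on some interval $[-K,-\kappa]$ with $0<\kappa<K$, which by the sign pattern means the discriminant $\bar c^2-4\bar b\bar e$ is positive;
\item equivalently, two points $-\kappa$ and $-K$ with $q(-\kappa)\,q(-K)<0$.
\end{itemize}
By uniform convergence, $p$ then has a real root in $[-K\delta,-\kappa\delta]$ for all small $\delta$. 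This root is negative and of magnitude $\Theta(\delta)=\Theta(\epsilon/(\ell D_{\cY}))$. The remaining root is $\approx B_2=-\Theta(1)$, by the relation sum of roots $=B_2$.

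\textbf{Step 3: identify $\lambda_2$.} Finally, I will argue that the particular Cardano branch \cref{eq:lamb-2} selects this small root. The plan is to expand $D_2$ for small $\delta$ and check that $\lambda_2\to0$ as $\delta\to0$, whereas the other branches tend to $B_2\neq0$. This can be cross-checked against the symbolic computation referenced in the footnote. If the small roots turn out to be a complex pair, the same rescaling gives real part $-\bar c/(2\bar b)\cdot\delta$, and the argument adapts to $\mathrm{Re}\,\lambda_2$.

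The main obstacle is Step 2: certifying the sign of the discriminant of the limiting quadratic, together with the signs of $C_2$ and $E_2$, uniformly over the admissible step-size constants. This requires the explicit inequalities in \cref{con-psgda} rather than just orders of magnitude. I would first try to prove $\bar c^2>4\bar b\bar e$ using $\beta\le (r_x-\ell)^2/(384 r_x(r_x+\ell)^2)$, since the factor $384$ looks designed to make $\bar e$ small relative to $\bar c^2/\bar b$.
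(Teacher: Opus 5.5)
Your route is genuinely different from the paper's, which Maclaurin-expands the Cardano formula \cref{eq:lamb-2} to first order in $r_y$. It can be made to work, but the crux fails as you plan to execute it.

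\textbf{The inequality you invoke is the wrong one.} The constraint $\beta\le (r_x-\ell-r_y)^2/(384r_x(r_x+\ell+r_y)^2)$ does not scale with $r_y$. After writing $r_y=\delta\ell$ and $\beta=\bar\beta\delta$, it becomes vacuous as $\delta\to0$ and says nothing about $\bar\beta$. Yet both signs you need are statements about $\bar\beta$ relative to $\alpha\ell$. The row operations are $R_3-\beta R_1$ and $R_2-\alpha bR_1$; the multiplier is $\beta$, not $\beta/c$. They give exactly $\det\bm{M_2}=c\alpha\beta(\ell r_y-b^2)=-2\ell c\alpha\beta r_y$, so your limiting quadratic is
\[
q(\mu)=c(r_x-\ell)\,\mu^2+c\ell\bigl(\alpha(r_x+2\ell)-\bar\beta\bigr)\mu+2\ell^2c\alpha\bar\beta,
\]
with discriminant
\[
\bar c^2-4\bar b\bar e=c^2\ell^2\Bigl[\bigl(\alpha(r_x+2\ell)-\bar\beta\bigr)^2-8\alpha\bar\beta(r_x-\ell)\Bigr].
\]
Knowing only $\beta=\Theta(r_y/\ell)$, this bracket can be negative. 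For example, $r_x=4\ell$ and $\bar\beta=2\alpha\ell$ give $-32\alpha^2\ell^2$ while $\bar c>0$. Then the two small eigenvalues form a complex pair and $\lambda_2<0$ is meaningless; your fallback to $\mathrm{Re}\,\lambda_2$ does not prove the lemma as stated.

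\textbf{The constraint that does the work.} It is the other one in \cref{con-psgda}, $\beta\le 1/(384r_x\alpha(\omega)^2)$. Insert the formula for $\omega$ and drop the nonnegative $\alpha$-term in its numerator. This gives $\beta\le\alpha r_y(r_x-\ell-r_y)/(384r_x)\le\alpha r_y/384$, i.e.\ $\bar\beta\le\alpha\ell/384$. Then $\bar c\ge c\ell\alpha(r_x+2\ell-\ell/384)>0$, and the bracket is at least $\alpha^2r_x^2-10r_x\alpha\bar\beta\ge\alpha^2r_x(r_x-10\ell/384)>0$. Both bounds hold uniformly over the admissible constants, which is what your continuity argument needs. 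Note that $r_y^2$ times this bracket is exactly the quantity the paper calls $E_2$ (not your $\det\bm{M_2}$), and the paper proves it positive with the same bound on $\beta$. So both proofs hinge on the same inequality.

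\textbf{Step 3.} The claim that the other Cardano branches tend to $B_2$ is wrong. Once the bracket is positive, $q$ has two negative roots $\mu_-<\mu_+<0$, so two eigenvalues of $\bm{M_2}$ are $\Theta(\delta)$ and only one is near $B_2$. This is harmless for the lemma, since both small eigenvalues are negative and of order $\delta$. It suffices to check that \cref{eq:lamb-2} is $o(1)$. This follows from $D_2\to2c^3(r_x-\ell)^3$, $B_2^2+3C_2\to c^2(r_x-\ell)^2$, and continuity of the principal cube root. The two cube-root terms then tend to $\tfrac16(1\pm i\sqrt3)\,c(r_x-\ell)$, which cancel $\tfrac13B_2\to-\tfrac13c(r_x-\ell)$.

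\textbf{Comparison.} With these corrections your argument is arguably cleaner than the paper's. It uses the exact trace, minor sum and determinant together with a uniform root-continuity argument, instead of expanding cube roots of complex numbers with $\cO(r_y^2)$ bookkeeping. It even reproduces the paper's leading term: $\delta\mu_+=-2\delta\bar e/(\bar c+\sqrt{\bar c^2-4\bar b\bar e})$ equals $-4\ell\alpha\beta r_y/(\sqrt{E_2}+r_x\alpha r_y+2\ell\alpha r_y-\ell\beta)$ with the paper's $E_2$. What the paper's route additionally delivers is the identification of $\lambda_2$ as the small root closest to $0$, which the lemma itself does not require.
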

\begin{remark}
In the analysis for \emph{Perturbed GDA}, we have $|\lambda_1|=\Theta\left(\frac{\epsilon^2}{\ell^2D_{\cY}^2}\right)$ (cf. \cref{lemma:lamb-order-1}), whereas for \emph{Perturbed Smoothed GDA}, we obtain $|\lambda_2|=\Theta\left(\frac{\epsilon}{\ell D_{\cY}}\right)$ (cf. \cref{lemma:lamb-order-2}). Note that the contraction factor is $1+\lambda_i=1-\Theta(|\lambda_i|)$ ($i=1,2$), and a larger $|\lambda_2|$ leads to faster contraction.
Thus these two algorithms have different dependence on $\epsilon$.
\end{remark}

 Next, to explicitly characterize the iterates, we choose $(x^0, y^0, z^0)$ so that its direction aligns with an eigenvector of $\bm{I}+\bm{M_2}$ corresponding to the eigenvalue $\lambda_2$.
\begin{lemma}\label{gs-psgda-tight}
Suppose that \emph{Perturbed Smoothed GDA}  is initialized at
\begin{equation}\label{eq:initial:psgda}
    x^0=\frac{2(\beta+\lambda_2)\epsilon}{r_x\lambda_2},~y^0=\frac{c \ell \beta + c \ell \lambda_2 - c r_x \lambda_2 - \beta \lambda_2 + c r_x \beta \lambda_2 - \lambda_2^2}{bc(\beta+\lambda_2)}\cdot x^0,~z^0=\frac{\beta(1+\lambda_2)}{\beta+\lambda_2}\cdot x^0.
\end{equation}
Then, for all $t\in\mathbb N$, the iterates satisfy  $x^t\in[0,\frac{r_yD_{\cY}}{b}]$ and $y^t\in [0, D_{\cY}]$. Moreover, we have
    \begin{equation*}
        x^t=(1+\lambda_2)^tx^0,\, y^t= (1+\lambda_2)^t y^0,~\mbox{and } z^t= (1+\lambda_2)^t z^0.
    \end{equation*}
\end{lemma}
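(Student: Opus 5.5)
The proof will be by induction on $t$, organized around the eigenvector structure of $\bm{I}+\bm{M_2}$. First I verify that the vector $\bm{v}:=(x^0,y^0,z^0)^\top$ in \eqref{eq:initial:psgda} is an eigenvector of $\bm{M_2}$ for the eigenvalue $\lambda_2$, i.e.\ $\bm{M_2}\bm{v}=\lambda_2\bm{v}$. Since $\lambda_2$ is a root of the characteristic polynomial $\det(\bm{M_2}-\lambda\bm{I})$, it suffices to check two of the three rows, and then the third follows from the singularity of $\bm{M_2}-\lambda_2\bm{I}$.

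\emph{Third row.} Writing $z^0=\kappa x^0$, the equation reads $\beta(1+c\ell-cr_x)x^0-cb\beta y^0+(\beta c r_x-\beta)z^0=\lambda_2 z^0$.

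\emph{First row.} The equation $(c\ell-cr_x)x^0-cby^0+cr_xz^0=\lambda_2x^0$ determines $y^0$ in terms of $x^0$ and $z^0$.

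Solving these two linear equations with $\kappa=\beta(1+\lambda_2)/(\beta+\lambda_2)$ should reproduce exactly the stated ratio for $y^0$. This is routine algebra. I also need $\beta+\lambda_2\neq 0$, which holds because by \cref{lemma:lamb-order-2} we have $|\lambda_2|=\Theta(\epsilon/(\ell D_{\cY}))$, whereas $\beta=\Theta(r_y/\ell)=\Theta(\epsilon/(\ell D_{\cY}))$. The constants must be compared more carefully, so I would extract from the proof of \cref{lemma:lamb-order-2} that $|\lambda_2|<\beta$ strictly, or at least $\neq$. With the eigenvector property in hand, as long as all iterates stay in the middle branch (where the dynamics are exactly linear and no projection is active), induction gives $(x^t,y^t,z^t)=(\bm{I}+\bm{M_2})^t\bm{v}=(1+\lambda_2)^t\bm{v}$.

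The substantive part is the invariance claim $x^t\in[0,r_yD_{\cY}/b]$ and $y^t\in[0,D_{\cY}]$, including that the projection onto $\cY=[0,D_{\cY}]$ in the $y$-update is inactive. Since $\lambda_2<0$ and $|\lambda_2|<1$, we have $0<1+\lambda_2<1$, so the iterates scale down monotonically along $\bm{v}$. It therefore suffices to show three things at $t=0$:
\begin{itemize}
\item[(a)] $x^0\ge0$, $y^0\ge0$, $z^0\ge 0$;
\item[(b)] $x^0\le r_yD_{\cY}/b$;
\item[(c)] $y^0\le D_{\cY}$.
\end{itemize}
The $x$-update has no projection since $\cX=\R$. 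For the $y$-update, the unprojected ascent step equals $(1+\lambda_2)y^t\in[0,D_{\cY}]$, so the projection is the identity.

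For (a), $x^0=2(\beta+\lambda_2)\epsilon/(r_x\lambda_2)$ is positive iff $\beta+\lambda_2<0$, i.e.\ $|\lambda_2|>\beta$. This sign question, i.e.\ the precise comparison of $|\lambda_2|$ with $\beta$ under \cref{con-psgda}, is the main obstacle. I would resolve it by a perturbation expansion of the characteristic polynomial in the small parameters $r_y$ and $\beta$, or more robustly by evaluating the cubic $\det(\bm{M_2}-\lambda\bm{I})$ at $\lambda=-\beta$ and at $\lambda=0$ and using a sign change to locate $\lambda_2$ below $-\beta$. Once $\beta+\lambda_2<0$ is known, $z^0$ has numerator $\beta(1+\lambda_2)>0$ over a negative denominator, giving $z^0/x^0<0$. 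Since this is suspicious, I would recheck the sign conventions: likely the correct regime is the other one, with $x^0$ and $z^0$ sharing the sign of $(\beta+\lambda_2)/\lambda_2$. I would then determine the sign of the $y^0$ numerator using $c=\Theta(1/\ell)$ and $\lambda_2^2=o(\beta)$.

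For the magnitudes, $x^0=\Theta(\epsilon/\ell)$ provided $|\beta+\lambda_2|=\Theta(|\lambda_2|)$, while $r_yD_{\cY}/b=\sqrt{r_y}D_{\cY}/\sqrt{3\ell}=\Theta(\sqrt{\epsilon D_{\cY}/\ell})$, which dominates $\epsilon/\ell$ for small $\epsilon$; this gives (b). Similarly $y^0/x^0=\Theta(\ell/b)$, so $y^0=\Theta(\epsilon/b)=\Theta(\sqrt{\epsilon D_{\cY}/\ell})\le D_{\cY}$ for small $\epsilon$, which gives (c). Adjusting absolute constants in $r_y$ and $\beta$ if needed finishes the proof.
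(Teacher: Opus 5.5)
Your skeleton is the paper's proof: take $(x^0,y^0,z^0)$ as an eigenvector of $\bm{I}+\bm{M_2}$, induct, and use $0<1+\lambda_2<1$ to keep the iterates in the middle branch with the projection inactive. Your row-1/row-3 eigenvector check and the magnitude estimates (b), (c) are fine.

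The gap is step (a), and it cannot be closed the way you plan, because in fact $-\beta<\lambda_2<0$. Your first instinct, $|\lambda_2|<\beta$, was the correct one. The sign test you propose shows this. Using $b^2=3\ell r_y$, one computes $\det(\lambda\bm{I}-\bm{M_2})=2c\ell\alpha\beta r_y>0$ at $\lambda=0$. At $\lambda=-\beta$ one gets $\det(\lambda\bm{I}-\bm{M_2})=-c r_x\beta(1-\beta)(\alpha r_y-\beta)<0$, recalling $\beta\le\alpha r_y/384$. This agrees with the expansion in the proof of \cref{lemma:lamb-order-2}, which for $\beta\ll\alpha r_y$ gives $\lambda_2\approx-2\ell\beta/(r_x+2\ell)$, so $|\lambda_2|<\tfrac25\beta$.

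Consequently the prescribed $x^0=2(\beta+\lambda_2)\epsilon/(r_x\lambda_2)\approx-\epsilon/\ell$ is negative. The third row gives $z^0/x^0=\beta(1+\lambda_2)/(\beta+\lambda_2)\approx(r_x+2\ell)/r_x>0$. The second row gives $y^0/x^0=\alpha b(1+\lambda_2)/(\lambda_2+\alpha r_y)\approx 3\ell/b>0$. So $y^0<0$ and $z^0<0$ as well. The initial point therefore lies in the branch $x<0$ and outside $\cY$, and the lemma as literally stated is false. The paper's proof asserts $0<x^0$ from orders of magnitude alone and overlooks this sign.

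So your closing hedge ("likely the correct regime is the other one") does not finish the argument. In the correct regime, no analysis of the given data can produce (a); the data themselves must change. The missing step is to replace $(x^0,y^0,z^0)$ by its negative, which is still an eigenvector for $\lambda_2$. That is, take $x^0=2(\beta+\lambda_2)\epsilon/(r_x|\lambda_2|)\approx\epsilon/\ell$, with $y^0$ and $z^0$ given by the same ratios as before. These are now all positive, e.g.\ $y^0\approx 3\epsilon/b=\Theta(\sqrt{\epsilon D_{\cY}/\ell})$. With this sign correction, your estimates (b) and (c) and the induction go through verbatim and give the stated invariance and the geometric decay $(1+\lambda_2)^t$.
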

The proof of \cref{gs-psgda-tight} is given in \cref{sec:ite}. 
With \cref{lemma:lamb-order-2,gs-psgda-tight}, we can prove
\cref{thm:tight-psgda}~(i). Because the argument is almost the same as the proof of \cref{thm:tight-alter}~(i), we defer the details to \cref{app:pf-tight-psgda-gs}. 

\subsection{Optimization Stationarity}\label{sec:analysis-OS}
With the tightness results for game stationarity established, we now turn to optimization stationarity.
In this subsection, we prove \cref{thm:tight-alter}~(ii) and \cref{thm:tight-psgda}~(ii), showing that the iteration complexities for finding an $\epsilon$-OS in \cref{thm:pgda-nc-c-epi}~(ii) and \cref{thm:psgda-epi}~(ii) are tight.
We first present a hard instance, and then show that \emph{Perturbed GDA} and \emph{Perturbed Smoothed GDA}  require at least the stated numbers of iterations to reach an $\epsilon$-OS on this instance.
\begin{example}\label{eg:os-gda}
Let $\cX=\R$ and $\cY=[0,D_{\cY}]$. Define $f:\cX\times\cY\to\R$ by
\begin{equation*}
    f(x,y)=h(x)\,y,
\end{equation*}
where
\begin{gather}\label{hfunc:altergda-tight-os}
    h(x)=
    \begin{cases}
        \frac{\ell}{2(D_{\cY}+1)}x^2, & ~\mbox{if}~\lvert x \rvert \le 1, \\
        \frac{\ell}{D_{\cY}+1}-\frac{\ell}{2(D_{\cY}+1)}(\lvert x \rvert-2)^2, & ~\mbox{if}~1 < \lvert x \rvert < 2, \\
        \frac{\ell}{D_{\cY}+1}, & ~\mbox{if}~  \lvert x \rvert \ge 2.
    \end{cases}
\end{gather}
\end{example}
\begin{remark}

The construction in \cref{eg:os-gda} is adapted from the hard instance in 
\citep[Appendix B.3.1]{mahdavinia2022tight}. 
We rescale the coefficients so that the resulting function satisfies the $\ell$-smoothness condition. 
The smoothness verification is provided in \cref{lem:smooth}.
\end{remark}
    \subsubsection{Perturbed GDA}\label{subsubsec:pgda-os-tight}
We prove \cref{thm:tight-alter}~(ii) in this subsection.
Based on \cref{eg:os-gda}, we show that \emph{Perturbed GDA} requires at least the stated number of iterations in \cref{thm:pgda-nc-c-epi}~(ii) to find an $\epsilon$-OS.
Our analysis relies on the fact established in \cref{lemma:x-lessthan-1}: Under a suitable initialization and step-size choice, the iterates satisfy $|x^t|\le 1$ and $y^t\ge 0$ for all $t$. 
This is useful because, by \cref{hfunc:altergda-tight-os}, $h$ is convex on $|x|\le 1$, and therefore $f(x,y^t)=y^t h(x)$ is convex in $x$ when $y^t\ge 0$.

\begin{lemma}\label{lemma:x-lessthan-1}
Let the \emph{Perturbed GDA} be initialized at $(x^0,y^0)$ with $|x^0|\le 1$ and $y^0\ge 0$.
We choose the step sizes to satisfy \cref{con-pgda}. 
Then, when \emph{the Perturbed GDA} is applied on \cref{eg:os-gda}, for all $t\ge 0$,
\begin{enumerate}[label=(\roman*)]
\item $\lvert x^t\rvert \leq 1$ and $y^t \geq 0$;
\item $\lvert x^t\rvert \geq \left(1-\frac{\ell c D_{\cY}}{D_{\cY}+1}\right)^t\lvert x^0\rvert$.
\end{enumerate}
\end{lemma}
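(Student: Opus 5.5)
We argue by induction on $t$, proving (i) and the one-step contraction $|x^{t+1}|\ge (1-\tfrac{\ell c D_{\cY}}{D_{\cY}+1})|x^t|$ together. Write $\kappa:=\tfrac{\ell}{D_{\cY}+1}$. Since $\cX=\R$, the primal step is unprojected:
\[
x^{t+1}=x^t-c\,h'(x^t)\,y^t .
\]
The dual step is the projected ascent
\[
y^{t+1}=\proj_{[0,D_{\cY}]}\bigl(y^t+\alpha(h(x^{t+1})-r_y y^t)\bigr).
\]
Because of the projection onto $[0,D_{\cY}]$, we get $0\le y^{t}\le D_{\cY}$ for every $t\ge1$. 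For $t=0$ we also need $y^0\le D_{\cY}$, which holds since $y^0\in\cY$. So the only nontrivial claims are $|x^{t+1}|\le1$ and the lower bound in (ii).

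For the induction step, assume $|x^t|\le1$ and $0\le y^t\le D_{\cY}$. On $|x|\le1$ the function $h$ is the quadratic $\tfrac{\kappa}{2}x^2$, so $h'(x^t)=\kappa x^t$. The update then becomes linear:
\[
x^{t+1}=(1-c\kappa y^t)\,x^t .
\]
We need $0\le c\kappa y^t\le c\kappa D_{\cY}\le \ell c<1$. This follows from \cref{con-pgda}, since $c\le \tfrac{r_y}{\ell(3r_y+2\ell)}<\tfrac{1}{2\ell}$. The factor $1-c\kappa y^t$ therefore lies in $[1-\tfrac{\ell cD_{\cY}}{D_{\cY}+1},\,1]\subset(0,1]$.

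Two consequences follow directly. First, $|x^{t+1}|\le|x^t|\le1$, which closes (i). Second, $|x^{t+1}|\ge(1-\tfrac{\ell cD_{\cY}}{D_{\cY}+1})|x^t|$, and unrolling this gives (ii). The factor is nonnegative, so $x^t$ also never changes sign, although this is not needed.

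The only delicate point is where the bound $y^t\le D_{\cY}$ enters. It is used to bound the contraction factor, which is exactly what produces the constant $\tfrac{\ell cD_{\cY}}{D_{\cY}+1}$. Apart from that, the argument reduces to checking $c\kappa D_{\cY}<1$ from the step-size condition, so I expect no real obstacle.
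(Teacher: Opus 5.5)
Your proof is correct and follows the paper's argument: induction on $t$, the linear recursion $x^{t+1}=\bigl(1-\tfrac{\ell c}{D_{\cY}+1}y^t\bigr)x^t$ on $|x^t|\le 1$, and bounding this factor in $\bigl[1-\tfrac{\ell cD_{\cY}}{D_{\cY}+1},\,1\bigr]\subset(0,1]$ using $0\le y^t\le D_{\cY}$. You also assume $y^0\le D_{\cY}$, as the paper does implicitly. The one difference is how you get $\ell c<1$: you use $c\le \tfrac{r_y}{\ell(3r_y+2\ell)}$, while the paper uses $c\le \tfrac{r_y^2\alpha}{16\ell^2}$ with $\alpha\le\tfrac{1}{4(\ell+r_y)}$ and the choice of $r_y$, so your check is slightly cleaner because it needs no assumption on the size of $r_y$.
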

\begin{proof}[Proof of \cref{lemma:x-lessthan-1}.]
We prove the two claims by induction using the update rule of \emph{Perturbed GDA} on  \cref{eg:os-gda}.  The claims hold for $t=0$ by the hypothesis.

Suppose for $t\ge0$, we have $|x^t|\le 1$. Then by \cref{hfunc:altergda-tight-os} we have
$h'(x^t)=\frac{\ell}{D_{\cY}+1}x^t$, and therefore
\begin{equation}\label{x-ite}
    x^{t+1}
    =x^t-c\nabla_{x}f_0(x^t,y^t)
    =x^t-c\,h'(x^t)\,y^t
    =\left(1-\frac{\ell c}{D_{\cY}+1}y^t\right)x^t.
\end{equation}
(i) First of all, we have $y^t\ge 0$  for all $t$ since $\cY=[0,D_{\cY}]$.
We next prove $|x^t|\le 1$ for all $t$ by induction. 
Assume $|x^t|\le 1$ for some $t\ge 0$. Using $0\le y^t\le D_{\cY}$ and the step-size condition
$0<c\le \frac{r_{y}^2\alpha}{16\ell^2}$ with $\alpha\le \frac{1}{4(\ell+r_y)}$, we have
\[
0\le \frac{\ell c}{D_{\cY}+1}y^t\le \frac{\ell c}{D_{\cY}+1}D_{\cY}
\le \ell c 
\le \frac{r_{y}^2\alpha}{16\ell}
\le \frac{r_{y}^2}{64\ell(\ell+r_y)}<1,\]
where the last inequality follows from the choice of $r_{y}=\Theta(\frac{\epsilon^2}{\ell D_{\cY}^2})$. 
Then, it implies $0\le 1-\frac{\ell c}{D_{\cY}+1}y_t\le 1$.
Combining this with \cref{x-ite} yields $|x_{t+1}|\le |x_t|\le 1$. This completes the induction. 

(ii) 
Since $0\le y^t\le D_{\cY}$, from \cref{x-ite} we have
\[
|x^{t+1}|
=\left(1-\frac{\ell c}{D_{\cY}+1}y^t\right)|x^t|
\ge \left(1-\frac{\ell c D_{\cY}}{D_{\cY}+1}\right)|x^t|.
\]
Iterating the above inequality for $t$ steps gives (ii). This completes the proof.
\end{proof}

With \cref{lemma:x-lessthan-1}, we are ready to prove \cref{thm:tight-alter}~(ii).

\begin{proof}[Proof of \cref{thm:tight-alter}~(ii).]
By \cref{defi:primal-dual} (i) , we have that $2\ell |\prox_{\frac{1}{2\ell} \Phi}(x)-x|\leq \epsilon$ when $x$ is an $\epsilon$-OS.
Moreover, by \citep[Lemma 2.2]{davis2019stochastic}, we have
\[
2\ell\left|\prox_{\frac{1}{2\ell}\Phi}(x)-x\right|=\left|\nabla \Phi_{\frac{1}{2\ell}}(x)\right|.
\]
When $\lvert x \rvert \leq 1$, a direct computation yields that 
\begin{equation}\label{eq:phix-ME-1}
    \Phi_\frac{1}{2\ell}(x)=\min_{z\in\R} \Phi(z)+\ell (z-x)^2 =\min_{z\in\R}\frac{\ell D_{\cY}}{2(D_{\cY}+1)}z^2+\ell (z-x)^2= \frac{\ell D_{\mathcal{Y}}}{3 D_{\mathcal{Y}}+ 2} x^2.
\end{equation}
We fix $\epsilon>0$ small enough so that $x^0:=\frac{3 D_{\mathcal{Y}}+ 2}{\ell D_{\mathcal{Y}}}\epsilon \le 1$, and
run \emph{Perturbed GDA} on \cref{eg:os-gda} initialized at $(x^0,y^0)$ with $y^0=D_{\cY}$. If $x^T$ is an $\epsilon$-OS, then
\begin{align*}
    \epsilon
    \ge &\, \left| \nabla \Phi_{\frac{1}{2\ell}}(x^T)\right|
    \overset{\cref{eq:phix-ME-1}}{=}\frac{2\ell D_{\mathcal{Y}}}{3 D_{\mathcal{Y}}+ 2} |x^T|\ge \frac{2\ell D_{\mathcal{Y}}}{3 D_{\mathcal{Y}}+ 2} \left(1-\frac{\ell c D_{\cY}}{D_{\cY}+1}\right)^T |x^0|
    = 2\epsilon  \left(1-\frac{\ell c D_{\cY}}{D_{\cY}+1}\right)^T,
\end{align*}
where the second inequality uses \cref{lemma:x-lessthan-1}   (ii).
Therefore, we have 
\begin{equation}\label{eq:T-lb-os-gda-14}
\left(1-\frac{\ell c D_{\cY}}{D_{\cY}+1}\right)^T \le \frac12
\quad\Longrightarrow\quad
T=\Omega\left(\frac{D_{\cY}+1}{\ell c D_{\cY}}\right),
\end{equation}
    
In \cref{lem:delta1}, we show that under the initialization
$(x^0,y^0)=(\tfrac{3 D_{\mathcal{Y}}+ 2}{\ell D_{\mathcal{Y}}}\epsilon,D_{\cY})$, the initial gap satisfies
$\Delta_{\Psi_1}=\cO\left(\frac{D_{\cY}+1}{\ell D_{\cY}}\epsilon^2\right)$.
Substituting this bound into \cref{eq:T-lb-os-gda-14} yields
    \[
    T=\Omega\left(\frac{D_{\cY}+1}{\ell c D_{\cY}}\right)=\Omega\left(\frac{D_{\cY}+1}{\ell c D_{\cY}}\cdot\frac{ \ell D_{\cY}{\Delta}_{\Psi_1}}{(D_{\cY}+1)\epsilon^2}\right)=\Omega\left(\frac{\ell^5D_{\cY}^4\Delta_{\Psi_1}}{\epsilon^6}\right),
    \]
where the last equality follows from $c=\Theta\left(\frac{r_{y}^2}{\ell^3}\right)=\Theta\left(\frac{\epsilon^4}{\ell^5 D_{\cY}^4}\right)$.
This completes the proof.
\end{proof}
 
\subsubsection{Perturbed Smoothed GDA} \label{subsubsec:psgda-tight}
In this subsection, we use \cref{eg:os-gda} to prove \cref{thm:tight-psgda} (ii), showing that the iteration complexity bound for \emph{Perturbed Smoothed GDA} in \cref{thm:psgda-epi} (ii) is tight.  

The key idea here differs from that in \cref{subsubsec:tight-psgda}. 
A direct analysis of the coupled three-variable dynamics $(x^t,y^t,z^t)$ and their spectral properties is technically cumbersome.
To address this issue, we restrict attention to trajectories for which $x^t$ remains in the region $|x^t|\le 1$. Moreover, we introduce an auxiliary (comparison) sequence in which the dual variable is frozen on the boundary, i.e., $y^t \equiv D_{\cY}$ for all $t$.  
This modification yields a simplified update rule that is no slower than \emph{Perturbed Smoothed GDA} in terms of reaching an $\epsilon$-OS solution. 
Consequently, a lower bound on the iteration complexity of this auxiliary process also applies to \emph{Perturbed Smoothed GDA}.

When $|x^t|\leq 1$, substituting $h'(x^t)=\frac{\ell}{D_{\cY}+1}x^t$ into the \emph{Perturbed Smoothed GDA} update yields the equivalent recursion:
    \begin{align*}
	x^{t+1}&=\left(1-cr_x-\frac{\ell c}{D_{\cY}+1}y^t\right)x^t+cr_xz^t,\\
	y^{t+1}&=\proj_{[0,D_{\cY}]}\left(y^t+\alpha (h(x^{t+1})-r_yy^t)\right),~\mbox{and}\\
	z^{t+1}&=\beta\left(1-cr_x-\frac{\ell c}{D_{\cY}+1}y^t\right)x^t+(1-\beta+\beta cr_x)z^t.
    \end{align*} 

To formalize the comparison argument, we introduce an auxiliary sequence
$\{\bar{x}^t,\bar{y}^t,\bar{z}^t)\}_{t\ge 0}$ that follows the same recursion as
\emph{Perturbed Smoothed GDA} except that the dual iterate is frozen on the boundary.
Specifically, we initialize $(\bar{x}^0, \bar{y}^0, \bar{z}^0)=(x^0, y^0, z^0)$ and for all $t\ge 0$, we define
\begin{equation}\label{eq:new_update_rule}
\begin{aligned}
\bar{x}^{t+1} &= \left(1-cr_x-\frac{\ell c}{D_{\cY}+1}D_{\cY}\right)\bar{x}^t + cr_x \bar{z}^t,\\
\bar{y}^{t+1} &= D_{\cY},\\
\bar{z}^{t+1} &= \beta\left(1-cr_x-\frac{\ell c}{D_{\cY}+1}D_{\cY}\right)\bar{x}^t
+ \left(1-\beta+\beta cr_x\right)\bar{z}^t.
\end{aligned}
\end{equation}
The following lemma shows the relationship between the auxiliary sequence $\{(\bar{x}^t,\bar{y}^t,\bar{z}^t)\}_{t\ge 0}$ and the original iterates $\{(x^t,y^t,z^t)\}_{t\ge 0}$.
\begin{lemma}\label{lemma:xyz-fix-y}
 Suppose that the initialization $(x^0,y^0,z^0)$ satisfies $0<x^0,z^0<1$ and $y^0=D_{\cY}$. 
Then, for all $t\ge 0$, it holds that
	 \begin{enumerate}[label=(\roman*)] 
	 	\item $0<\bar{x}^t\leq x^t\le1$,
	 	\item $0\le y^t\leq \bar{y}^{t}=D_{\cY}$,
            \item $0<\bar{z}^{t}\leq z^t\le1$.
	 \end{enumerate}
	\end{lemma}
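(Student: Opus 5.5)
The plan is to prove all three claims simultaneously by induction on $t$, using the explicit recursion derived above for the middle region $|x^t|\le 1$. The underlying idea is a monotonicity (comparison) argument: on this region every coefficient in the $x$- and $z$-updates is nonnegative. Freezing $y^t$ at its largest admissible value $D_{\cY}$ can only shrink the $x$-coefficient, so the auxiliary sequence is dominated from below.

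The base case holds by the hypothesis $(\bar x^0,\bar y^0,\bar z^0)=(x^0,y^0,z^0)$ with $0<x^0,z^0<1$ and $y^0=D_{\cY}$. Claim (ii) is immediate for every $t$. The update $y^{t+1}=\proj_{[0,D_{\cY}]}(\cdot)$ forces $0\le y^{t+1}\le D_{\cY}=\bar y^{t+1}$, regardless of the other variables.

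For the inductive step, assume (i)--(iii) hold at $t$. Since $|x^t|\le 1$, we have $h'(x^t)=\frac{\ell}{D_{\cY}+1}x^t$, so the displayed recursion is valid. Write $a_t:=1-cr_x-\frac{\ell c}{D_{\cY}+1}y^t$ and $\bar a:=1-cr_x-\frac{\ell cD_{\cY}}{D_{\cY}+1}$. Because $0\le y^t\le D_{\cY}$, we get $a_t\ge \bar a$. The step-size condition $c<\frac{1}{r_x+\ell+r_y}$ from \cref{con-psgda} gives $\bar a\ge 1-c(r_x+\ell)>0$.

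Lower bound on $x$. We have
\[
x^{t+1}=a_tx^t+cr_xz^t\ \ge\ \bar a\,\bar x^t+cr_x\bar z^t=\bar x^{t+1}>0,
\]
using $x^t\ge\bar x^t>0$, $z^t\ge \bar z^t>0$, $a_t\ge\bar a>0$ and $cr_x>0$.

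Upper bound on $x$. Since $y^t\ge 0$, we have $a_t\le 1-cr_x$, and therefore
\[
x^{t+1}\le (1-cr_x)x^t+cr_xz^t\le 1.
\]
This is a convex combination of numbers at most $1$, because $cr_x\in(0,1)$.

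Bounds on $z$. I will rewrite the $z$-update in its original form, $z^{t+1}=\beta x^{t+1}+(1-\beta)z^t$, and likewise $\bar z^{t+1}=\beta\bar x^{t+1}+(1-\beta)\bar z^t$. One checks that this agrees with the displayed recursion by substituting the $x$-update. With $\beta\in(0,1)$, the bounds just obtained for $x^{t+1}$ together with the induction hypothesis give $0<\bar z^{t+1}\le z^{t+1}\le 1$. This closes the induction.

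I do not expect a real obstacle. The only delicate point is keeping the iterates inside $|x|\le 1$, where $h$ is quadratic and the linear recursion applies; the upper bounds $x^t\le 1$ and $z^t\le1$ carried in the induction hypothesis are exactly what guarantees this. The other point to verify carefully is the positivity of $\bar a$ from \cref{con-psgda}, since this is what makes the update monotone in $(x,z)$.
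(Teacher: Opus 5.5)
Your proposal is correct and follows essentially the same induction and monotonicity argument as the paper: positivity of $1-cr_x-\frac{\ell c}{D_{\cY}+1}D_{\cY}$ from $c<\frac{1}{r_x+\ell+r_y}$, domination of $x$ using $y^t\le D_{\cY}$, the upper bound $x^{t+1}\le 1$ using $y^t\ge 0$, and the projection argument for (ii). Rewriting the $z$-update as $z^{t+1}=\beta x^{t+1}+(1-\beta)z^t$ is a tidy way to get (iii) directly from (i) at step $t+1$, and it supplies the detail the paper omits as ``the same argument as in part (i).''
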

	\begin{proof}[Proof of \cref{lemma:xyz-fix-y}.]
We prove the claim by induction. 
For the base case $t=0$, the claim holds by construction (the auxiliary sequence shares the same initialization as \emph{Perturbed Smoothed GDA}). 
Assume that the claim holds for some $t\ge 0$. We next show that it also holds for $t+1$.

        (i) We first show that $\bar{x}^{t+1}>0$. By the update rule of $\bar{x}^{t+1}$, it suffices to verify that $(1-cr_x-\tfrac{\ell c}{D_{\cY}+1}D_{\cY})>0$. Recall from \cref{con-psgda} that $c<\frac{1}{r_x+\ell+r_y}$. Therefore, 
        \[
1-cr_x-\frac{\ell c}{D_{\cY}+1}D_{\cY}
\;\ge\;
1-c(r_x+\ell)
\;>\;
0,
\]
which proves $\bar{x}^{t+1}>0$. Next, we prove that $\bar{x}^{t+1}\le x^{t+1}$. Using the update rule of $x^{t+1}$, we have 
\begin{align*}
x^{t+1}
&=\left(1-cr_x-\frac{\ell c}{D_{\cY}+1}y^t\right)x^t+cr_x z^t \\
&\ge \left(1-cr_x-\frac{\ell c}{D_{\cY}+1}D_{\cY}\right)\bar{x}^t+cr_x \bar{z}^{t} = \bar{x}^{t+1},
\end{align*}
where the inequality follows from $y^t\le D_{\cY}$, and the induction hypothesis $\bar{x}^t\le x^t$ and $\bar{z}^{t}\le z^t$.
We now show that $x^{t+1}\le 1$. Since $0<x^t,z^t\le1$ and $y^t\ge0$, it holds that
\begin{align*}
x^{t+1}
&=\left(1-cr_x-\frac{\ell c}{D_{\cY}+1}y^t\right)x^t+cr_x z^t \\
&\le \left(1-cr_x-\frac{\ell c}{D_{\cY}+1}y^t\right)\cdot 1 + cr_x \cdot 1 = 1-\frac{\ell c}{D_{\cY}+1}y^t \le 1. 
\end{align*}

(ii) Since $\bar{y}^{t}=D_{\cY}$ for all $t$, the bound $0\le y^t\le \bar{y}^{t}=D_{\cY}$ holds by the projection in the $y$-update rule. 
        
   (iii) The proof of $0<\bar{z}^{t}\le z^t\le 1$ follows the same argument as in part~(i), and is therefore omitted.

     This completes the proof.
	\end{proof}

Next, we analyze the auxiliary update rule and prove a lower bound on its iteration complexity, which in turn implies a lower bound on the iteration complexity of \emph{Perturbed Smoothed GDA} on \cref{eg:os-gda}. Note that $\bar{y}^{t}$ remains constant under the auxiliary update rule \cref{eq:new_update_rule}. 
Hence, it suffices to study the evolution of the pair $(\bar{x}^t,\bar{z}^{t})$. Define $\gamma:=\frac{D_{\cY}}{D_{\cY}+1}$ and introduce the matrix 
 \[
\bm{M_3}:=\begin{pmatrix}
-\ell c\gamma-cr_x & cr_x \\
\beta(1-\ell c\gamma-cr_x) & -\beta+\beta cr_x
\end{pmatrix}.
\]
Then the recursion in \cref{eq:new_update_rule} can be written as
         \[
\begin{pmatrix}
\bar{x}^{t+1}\\
\bar{z}^{t+1}
\end{pmatrix}
=
(\bm{I}+\bm{M_3})
\begin{pmatrix}
\bar{x}^t\\
\bar{z}^{t}
\end{pmatrix}.
\] 
	     
To lower bound the iteration complexity of the updates rule in \cref{eq:new_update_rule}, we study 
the eigenvalues of $\bm{M_3}$, which governs the growth rate of the iterates. One can verify that $\bm{M_3}$ admits an eigenvalue 
\begin{equation}\label{lambda_OS}
	\lambda_3=-\frac{A_3+B_3\beta}{2}+ \frac{1}{2}\sqrt{(A_3+B_3\beta)^2-4\ell c\beta\gamma},
	\end{equation}
    where
    \[A_3:=\ell c \gamma+cr_x,
\quad \text{and} \quad B_3:=1-cr_x.\]
The following \cref{lemma:eigen-value-M,x_0withz_0} summarize the magnitude of $\lambda_3$ and the structure of its associated eigenvector. Their proofs are deferred to \cref{sec:lem:eig,sec:lem:x0}, respectively.
\begin{lemma}\label{lemma:eigen-value-M}
The eigenvalue $\lambda_3$ satisfies that $\lambda_3<0$ and $|\lambda_3|=\Theta\left(\frac{\gamma\epsilon^2}{\ell^2 D_{\cY}^2}\right)$.
	\end{lemma}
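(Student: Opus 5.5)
Write $S:=A_3+B_3\beta>0$ and $P:=\ell c\beta\gamma>0$. Then $\lambda_3=\tfrac12\bigl(-S+\sqrt{S^2-4P}\bigr)$. Since $\det \bm{M_3}=P$ and $\operatorname{tr}\bm{M_3}=-S$, $\lambda_3$ is the eigenvalue of $\bm{M_3}$ closest to zero. The plan is to show that the discriminant is positive, so $\lambda_3$ is real, and then to rationalize.

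\textbf{Step 1: the discriminant is positive.} I will show $S^2>4P$, and in fact $P\ll S^2$. By \cref{con-psgda} we have $c=\Theta(1/\ell)$ with $cr_x<1$, so $A_3\ge cr_x=\Theta(1)$ and $S\ge A_3=\Theta(1)$. On the other side, $P\le \ell c\beta\le\beta$, and $\beta=\Theta(r_y/\ell)$. With $r_y=\Theta(\epsilon^2/(\ell D_{\cY}^2))$ this gives $\beta=\Theta(\epsilon^2/(\ell^2D_{\cY}^2))$, which is small. Hence $4P/S^2=\cO(\beta)<1$ for small $\epsilon$. (If $\beta\le 1/36$ alone does not suffice, I will use $\beta=\Theta(\epsilon^2/(\ell^2D_{\cY}^2))$.)

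\textbf{Step 2: sign and magnitude by rationalizing.}
\[
\lambda_3=\frac{-4P}{2\bigl(S+\sqrt{S^2-4P}\bigr)}=\frac{-2P}{S+\sqrt{S^2-4P}}.
\]
This is negative, so $\lambda_3<0$. For the magnitude, Step 1 gives $S\le S+\sqrt{S^2-4P}\le 2S$, so
\[
|\lambda_3|\in\Bigl[\tfrac{P}{S},\tfrac{2P}{S}\Bigr].
\]
It remains to show $S=\Theta(1)$. We have $S\le \ell c\gamma+cr_x+1\le 3$, using $\gamma<1$, $c\ell<1$, $cr_x<1$ and $B_3\beta\le\beta<1$. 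Together with $S\ge cr_x=\Theta(1)$ (because $r_x=\Theta(\ell)$ and $c=\Theta(1/\ell)$), this gives $|\lambda_3|=\Theta(P)=\Theta(\ell c\beta\gamma)=\Theta(\gamma\beta)$.

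\textbf{Step 3: substitute the parameters.} Using $\beta=\Theta(r_y/\ell)$ and $r_y=\Theta(\epsilon^2/(\ell D_{\cY}^2))$ from \cref{thm:psgda-epi}(ii),
\[
|\lambda_3|=\Theta\!\left(\frac{\gamma\epsilon^2}{\ell^2D_{\cY}^2}\right),
\]
which is the claim.

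\textbf{Main obstacle.} The only delicate point is keeping all the hidden constants uniform. In particular, $S=\Theta(1)$ needs $cr_x$ bounded below by a constant. This follows from $c=\Theta(1/\ell)$ and $r_x=\Theta(\ell)$ in \cref{con-psgda}. Everything else is elementary.
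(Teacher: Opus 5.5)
Your proposal is correct and follows essentially the paper's argument: rationalize to $\lambda_3=-2P/(S+\sqrt{S^2-4P})$, use $S=\Theta(1)$ and $\ell c=\Theta(1)$, then substitute $\beta=\Theta(\epsilon^2/(\ell^2D_{\cY}^2))$. Your explicit two-sided bound $P/S\le|\lambda_3|\le 2P/S$ is slightly cleaner than the paper's, which only records $\cO(\gamma\beta)$; also, Step 1 does not need $\beta$ small, since $S^2-4P=(A_3-B_3\beta)^2+4cr_x\beta(1-A_3)>0$ whenever $c(r_x+\ell)<1$.
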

\begin{lemma}\label{x_0withz_0}
Let $\bm{v}=(v_1,1)^\top$ be an eigenvector of $\bm{M_3}$ associated with the eigenvalue $\lambda_3$. 
Then
\begin{equation}\label{eq:v1-expression}
v_1=\frac{r_x}{r_x+\gamma \ell}+\cO(\beta).
\end{equation}
\end{lemma}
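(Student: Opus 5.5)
We need to prove that the eigenvector $\bm v=(v_1,1)^\top$ of $\bm{M_3}$ for $\lambda_3$ has $v_1=\frac{r_x}{r_x+\gamma\ell}+\cO(\beta)$. The plan is to read $v_1$ off the first row of $(\bm{M_3}-\lambda_3\bm I)\bm v=\bz$ and then expand $\lambda_3$ to first order in $\beta$.

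The first row gives $(-\ell c\gamma-cr_x-\lambda_3)v_1+cr_x=0$, so
\[
v_1=\frac{cr_x}{A_3+\lambda_3},\qquad A_3=\ell c\gamma+cr_x .
\]
This is well defined as long as $A_3+\lambda_3\neq 0$. By \cref{lemma:eigen-value-M}, $\lambda_3<0$ and $|\lambda_3|=\Theta(\gamma\epsilon^2/(\ell^2D_{\cY}^2))$. Since $c=\Theta(1/\ell)$ under \cref{con-psgda}, $A_3=\Theta(1)$ while $\lambda_3=o(1)$, so the denominator is bounded away from zero.

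Writing $v_1=\frac{r_x}{r_x+\gamma\ell+\lambda_3/c}$, it suffices to show $|\lambda_3|/c=\cO(\beta)$. For this we bound $\lambda_3$ directly from \cref{lambda_OS}. Set $S:=A_3+B_3\beta>0$, which is positive because $B_3=1-cr_x>0$ by $c<1/(r_x+\ell+r_y)$. Rationalizing,
\[
\lambda_3=\frac{-S+\sqrt{S^2-4\ell c\beta\gamma}}{2}=\frac{-2\ell c\beta\gamma}{S+\sqrt{S^2-4\ell c\beta\gamma}},
\]
hence $|\lambda_3|\le \frac{2\ell c\beta\gamma}{S}\le \frac{2\ell c\beta\gamma}{A_3}$. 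Since $A_3\ge cr_x$, this gives $|\lambda_3|/c\le 2\ell\gamma\beta/r_x=\cO(\beta)$, using $r_x=\Theta(\ell)$ and $\gamma<1$. Then
\[
\Bigl|v_1-\tfrac{r_x}{r_x+\gamma\ell}\Bigr|=\frac{r_x\,|\lambda_3|/c}{(r_x+\gamma\ell)(r_x+\gamma\ell+\lambda_3/c)}.
\]
For small $\beta$ (guaranteed by \cref{con-psgda}), $r_x+\gamma\ell+\lambda_3/c\ge (r_x+\gamma\ell)/2$, so the whole expression is $\cO(|\lambda_3|/c)=\cO(\beta)$, with a constant depending only on ratios that are $\Theta(1)$.

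The main obstacle is checking that the discriminant is nonnegative, so that $\lambda_3$ is real and the rationalization is valid. We need $S^2\ge 4\ell c\beta\gamma$. This follows from $S\ge A_3\ge c(r_x+\gamma\ell)\ge 2c\sqrt{r_x\gamma\ell}$, so $S^2\ge 4c^2r_x\gamma\ell$, which is at least $4\ell c\beta\gamma$ provided $\beta\le cr_x$. That holds because $cr_x=\Theta(1)$ while $\beta=\Theta(r_y/\ell)$ is small. Alternatively, we can take this from \cref{lemma:eigen-value-M}, which already treats $\lambda_3$ as a real negative number.
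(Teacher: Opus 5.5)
Your argument is correct and is essentially the paper's: the paper reads $v_1=\frac{B_3\beta+\lambda_3}{(1-A_3)\beta}$ off the second row and rationalizes it to $\frac{2cr_x}{A_3-B_3\beta+\sqrt{(A_3-B_3\beta)^2+4cr_x\beta(1-A_3)}}$, which is exactly your first-row expression $\frac{cr_x}{A_3+\lambda_3}$, and then expands the denominator as $2A_3+\cO(\beta)$. The identity $(A_3+B_3\beta)^2-4\ell c\beta\gamma=(A_3-B_3\beta)^2+4cr_x\beta(1-A_3)$ used there also makes your discriminant check unconditional, since $A_3<1$.

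One small slip: from $|\lambda_3|\le 2\ell c\beta\gamma/A_3$ and $A_3\ge cr_x$ you get $|\lambda_3|/c\le 2\ell\gamma\beta/(cr_x)=\cO(\ell\beta)$, not $2\ell\gamma\beta/r_x$. That extra factor of $\ell$ is cancelled when you divide by $r_x+\gamma\ell=\Theta(\ell)$ in your final display, so $v_1=\frac{r_x}{r_x+\gamma\ell}+\cO(\beta)$ still follows.
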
 

Equipped with these lemmas, we are now ready to prove \cref{thm:tight-psgda} (ii).

\begin{proof}[Proof of \cref{thm:tight-psgda} (ii).]
We consider \emph{Perturbed Smoothed GDA} initialized at $(x^0,y^0,z^0)$, where the pair $(x^0,z^0)$ is chosen as an eigenvector associated with the eigenvalue $\lambda_3$, and $y^0 = D_{\cY}$. In the following analysis, only the $x$-component of this eigenvector is needed. Specifically,  $x^0=\frac{3 D_{\mathcal{Y}}+ 2}{\ell D_{\mathcal{Y}}}\epsilon$. 
Let $T$ denote the first iteration such that $x^T$ is an $\epsilon$-OS point. 
By definition of an $\epsilon$-OS point, we have $\epsilon \geq \lvert \nabla\Phi_{\frac{1}{2\ell}}(x^T)\rvert$. Therefore, 
\begin{align*}
\epsilon 
&\ge \left\lvert \nabla \Phi_{\frac{1}{2\ell}}(x^T)\right\rvert \\
&= \frac{2\ell D_{\cY}}{3D_{\cY}+2}\,|x^T| \\
&\ge \frac{2\ell D_{\cY}}{3D_{\cY}+2}\,|\bar{x}^T| \ge \frac{2\ell D_{\cY}}{3D_{\cY}+2}\,(1+\lambda_3)^T |\bar{x}^0| = 2\epsilon\,(1+\lambda_3)^T, 
\end{align*} 
where the first equality follows from \eqref{eq:phix-ME-1}, 
the second inequality follows from \cref{lemma:xyz-fix-y}, 
and the third inequality follows from the fact that $(\bar{x}_0,\bar{z}_0)$ is an eigenvector corresponding to $\lambda_3$.
It implies that 
\[
(1+\lambda_3)^T \le \frac12
\quad\Longrightarrow\quad
T=\Omega\left(\frac{1}{|\lambda_3|}\right),
\]
Moreover, by \cref{lemma:Delta-Psi2}, under the initialization $y^0=D_{\cY}$ and $(x^0,z^0)$ chosen as an eigenvector associated with $\lambda_3$ with $x^0=\frac{3 D_{\mathcal{Y}}+ 2}{\ell D_{\mathcal{Y}}}\epsilon$, we have $\Delta_{\Psi_2}=\cO\left(\frac{\epsilon^2}{\ell\gamma}\right).$ Combining this estimate with \cref{lemma:eigen-value-M}, we further obtain  
	\[T=\Omega\left(\frac{1}{|\lambda_3|}\right)=\Omega\left(\frac{\ell^2D_{\cY}^2}{\gamma\epsilon^2}\cdot\frac{\ell D_{\cY}}{(D_{\cY}+1)\epsilon^2}\cdot \Delta_{\Psi_2}\right)=\Omega\left(\frac{\ell^3D_{\cY}^2\Delta_{\Psi_2}}{\epsilon^4}\right).\]
 This completes the proof.
    \end{proof}	

\section{Double-loop Algorithm: Perturbed Smoothed FOAM}
\label{sec:double}
Our analysis for the iteration complexity of \emph{Perturbed Smoothed GDA} on $\epsilon$-GS shows the synergistic effect between smoothing and perturbation.
In this section, we show that this synergy between smoothing and perturbation is actually structural to the NC-C minimax problem and can extend naturally to a double-loop algorithmic framework as well. 
Specifically, we develop a double-loop \emph{Perturbed Smoothed FOAM} algorithm that uses both smoothing and perturbation techniques. 
We prove that its iteration complexity of finding an $\epsilon$-GS and an $\epsilon$-OS point is
$\cO\left(\epsilon^{-2.5}\log(\tfrac{1}{\epsilon})\right)$ and $\cO\left(\epsilon^{-3}\log(\tfrac{1}{\epsilon})\right)$, respectively. 
To the best of our knowledge, these rates are state-of-the-art among all first-order methods  for NC-C minimax problems.


The idea of \emph{Perturbed Smoothed FOAM} is as follows. 
We employ the same surrogate function $F_t$ defined in \cref{eq:tilde-F} as in \emph{Perturbed Smoothed GDA}. 
Importantly, the combination of smoothing and dual perturbation transforms the original NC-C problem into a sequence of strongly convex-strongly concave (SC-SC) subproblems. 
In particular, for any fixed $\z$, the function $F_t(\cdot,\cdot,\z)$ is strongly convex in $\x$ and strongly concave in $\y$. Unlike \emph{Perturbed Smoothed GDA}, which performs gradient updates on $(\x^t,\y^t,\z^t)$ in each iteration, 
\emph{Perturbed Smoothed FOAM} adopts a double-loop scheme. 
Specifically, at each iteration $t$, we first fix $\z^t$ and compute $(\x^{t+1},\y^{t+1})$ as an approximate solution to the SC-SC subproblem
\begin{equation}\label{eq:tilde-p}
    p_t(\z^t)=\min_{\bx\in\cX}\max_{\by\in\cY} F_t(\bx,\by,\z^t),
\end{equation}
and then perform a gradient step on $\z^t$.
To solve the inner SC-SC problem on $\x$ and $\y$, we use the single-loop algorithm \emph{FOAM} proposed by \citep{kovalev2022first}. 
Specifically, we let $\mbox{FOAM}(h,\x^0,\y^0,\mu_{\x},\mu_{\y},\delta)$ denote the output of \emph{FOAM} algorithm applied to a SC-SC function $h$. Here,  $(\x^0,\y^0)$ is the initial point, and $\mu_{\x}$ and $\mu_{\y}$ denote the strong convexity modulus of $h(\x,\y)$ in $\x$ and the strong concavity modulus of $h(\x,\y)$ in $\y$, respectively.
\emph{FOAM} returns a $\delta$-accurate solution (cf.~\citep[Definition~1]{kovalev2022first}) in $\cO\left(\sqrt{\kappa_{\x}\kappa_{\y}}\log(\delta^{-1})\right)$ iterations, where $\kappa_{\x}$ and $\kappa_{\y}$ are the condition numbers of $h$ with respect to $\x$ and $\y$, respectively.
We formally present \emph{Perturbed Smoothed FOAM} in \cref{alg:Smoothed-FOAM}.
\begin{algorithm}[hbt!]
		\caption{Perturbed Smoothed FOAM}\label{alg:Smoothed-FOAM}
		\KwData{Initial point $(\x^0,\y^0,\z^0)$, and parameters $r_{\x}>\ell,r_{\y}^t>0$, $\beta^t\in(0,1)$, and $\delta^t>0$}
		\For{$t=0,\ldots,T$}{
        define $g_t(\x,\y):=F_t(\x,\y,\z^{t})+\iota_\cX(\x)-\iota_\cY(\y)$;\\
	    $(\x^{t+1},\y^{t+1}):=\FOAM\left(g_t,\x^{t},\y^{t},r_{\x}-\ell,r_{\y}^t,\delta^t\right)$;\\
        $\z^{t+1}=\z^{t}+\beta^t(\x^{t+1}-\z^{t})$;
		}
\end{algorithm}

Next, we establish the iteration complexity of \emph{Perturbed Smoothed FOAM}. We begin by specifying the step-size conditions for the \emph{Perturbed Smoothed FOAM} algorithm in \cref{con-FOAM}.

\begin{condition}[Parameter conditions for Perturbed Smoothed FOAM]\label{con-FOAM}
Let $r_{\y}^t > 0$ for all $t \ge 0$. The step sizes are chosen to satisfy
\[
\delta^t = \cO\left(\frac{(r_{\y}^t)^{4} D_{\mathcal{Y}}^{2}}{\ell^{4}}\right),
\quad
r_{\x} = \Theta(\ell) > 3\ell,
\quad
\beta^t = \Theta(1), \quad \text{and} \quad 1-\beta^t = \Theta(1). 
\]

\end{condition}
We prove the iteration complexity of \cref{alg:Smoothed-FOAM} using the unified analysis framework developed in \cref{sec:framework}. 
In this setting, we use $p_t(\z)$ defined in \cref{eq:tilde-p} as the Lyapunov function and establish its basic descent property in the following lemma.



\begin{lemma}[Basic descent estimate]\label{lem:p-est}
 Suppose  \cref{con-FOAM} holds, and let $\{(\x^{t},\y^{t},\z^t)\}_{t \ge 0}$ be the sequence generated by  \emph{Perturbed Smoothed FOAM}. 
 Then, for any $t\ge0$, we have
    \begin{align*}
        p_{t+1}(\z^{t+1})-p_t(\z^{t}) \le&\,-\frac{\beta^t(1-\beta^t)}{r_{\x}}\|\nabla p_t(\z^{t})\|^2+\beta^t\|\nabla p_t(\z^{t})\|\cdot\left\|\x^{t+1}-\x_{t}^\star(\z^{t})\right\|\\
        &\,+r_{\x}(\beta^t)^2\left\|\x^{t+1}-\x_{t}^\star(\z^{t})\right\|^2+\frac{r_{\y}^t-r_{\y}^{t+1}}{2}D_{\cY}^2.
    \end{align*}
\end{lemma}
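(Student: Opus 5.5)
The plan is to split the change in the Lyapunov function into a perturbation-change part and a fixed-$t$ proximal-descent part:
\[
p_{t+1}(\z^{t+1})-p_t(\z^t)=\bigl(p_{t+1}(\z^{t+1})-p_t(\z^{t+1})\bigr)+\bigl(p_t(\z^{t+1})-p_t(\z^t)\bigr).
\]
The first bracket is handled exactly as in \cref{eq:phi-t}. Since $F_{t+1}=F_t+\frac{r_{\y}^t-r_{\y}^{t+1}}{2}\|\y\|^2$, we get $\max_{\y}F_{t+1}\le\max_{\y}F_t+\frac{r_{\y}^t-r_{\y}^{t+1}}{2}D_{\cY}^2$ for each fixed $\x$. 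We used $\bz\in\cY$, so $\|\y\|\le D_{\cY}$, and that $r_{\y}^t$ is non-increasing. Taking $\min_{\x}$ gives $p_{t+1}(\z)-p_t(\z)\le\frac{r_{\y}^t-r_{\y}^{t+1}}{2}D_{\cY}^2$, which produces the last term of the claim.

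For the second bracket, I will use the structure of $p_t$ as a Moreau-envelope-type function. Since $r_{\x}>\ell$, $F_t(\cdot,\cdot,\z)$ is SC-SC and $p_t(\z)=\min_{\x}\{\max_{\y}F_t(\x,\y,\z)\}$. Here $\max_{\y}f_t(\x,\y)+\iota_{\cX}$ is $\ell$-weakly convex, so $p_t$ is the Moreau envelope of a weakly convex function with parameter $r_{\x}$. By Danskin's theorem, $\nabla p_t(\z)=r_{\x}(\z-\x_t^\star(\z))$. Moreover, $p_t$ is $L_p$-smooth with $L_p=\frac{r_{\x}^2}{r_{\x}-\ell}$; this uses the Lipschitz constant $\frac{r_{\x}}{r_{\x}-\ell}$ of $\x_t^\star$ from strong convexity, cf.\ \citep[Lemma~B.2]{zhang2020single}. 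With $r_{\x}>3\ell$, we have $L_p\le\frac{3}{2}r_{\x}$.

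The descent lemma then gives
\[
p_t(\z^{t+1})-p_t(\z^t)\le\langle\nabla p_t(\z^t),\z^{t+1}-\z^t\rangle+\frac{L_p}{2}\|\z^{t+1}-\z^t\|^2 .
\]
Write $\z^{t+1}-\z^t=\beta^t(\x^{t+1}-\z^t)=\beta^t(\x_t^\star(\z^t)-\z^t)+\beta^t e^t$, where $e^t:=\x^{t+1}-\x_t^\star(\z^t)$. The first piece equals $-\frac{\beta^t}{r_{\x}}\nabla p_t(\z^t)$. The inner product therefore becomes $-\frac{\beta^t}{r_{\x}}\|\nabla p_t\|^2+\beta^t\langle\nabla p_t,e^t\rangle$, and Cauchy--Schwarz gives the cross term $\beta^t\|\nabla p_t\|\,\|e^t\|$.

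For the quadratic term, I will use $\|a+b\|^2\le2\|a\|^2+2\|b\|^2$, which gives
\[
\frac{L_p(\beta^t)^2}{2}\|\z^{t+1}-\z^t\|^2\le L_p(\beta^t)^2\Bigl(\frac{\|\nabla p_t\|^2}{r_{\x}^2}+\|e^t\|^2\Bigr).
\]
If $L_p\le r_{\x}$ held, this would yield exactly $-\frac{\beta^t(1-\beta^t)}{r_{\x}}\|\nabla p_t\|^2+r_{\x}(\beta^t)^2\|e^t\|^2$.

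The main obstacle is these constants, because $L_p=\frac{r_{\x}^2}{r_{\x}-\ell}$ slightly exceeds $r_{\x}$. I will first try the sharper fact that the Moreau envelope of an $\ell$-weakly convex function with parameter $r_{\x}$ has a gradient that is $r_{\x}$-Lipschitz in the upper-curvature sense. Concretely, $p_t(\z')\le p_t(\z)+\langle\nabla p_t(\z),\z'-\z\rangle+\frac{r_{\x}}{2}\|\z'-\z\|^2$. This follows directly by plugging $\x=\x_t^\star(\z)$ into the definition of $p_t(\z')$ and using $\max_{\y}F_t(\x,\y,\z')=\max_{\y}F_t(\x,\y,\z)+\frac{r_{\x}}{2}(\|\x-\z'\|^2-\|\x-\z\|^2)$. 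Expanding gives exactly a quadratic with constant $\frac{r_{\x}}{2}$, so one-sided smoothness with constant $r_{\x}$ holds and no weak-convexity loss occurs. With this, the estimate closes with the stated constants.

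Note that the $\delta^t$-accuracy of FOAM does not enter this lemma; it only enters later, through bounding $\|e^t\|$.
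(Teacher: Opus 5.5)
Your proof is correct and follows the paper's proof step for step. It uses the same split into the perturbation-change term (bounded via $\|\y\|\le D_{\cY}$ and $r_{\y}^{t+1}\le r_{\y}^t$) and a fixed-$t$ descent step on $p_t$. It also uses the same decomposition $\x^{t+1}-\z^t=(\x_t^\star(\z^t)-\z^t)+(\x^{t+1}-\x_t^\star(\z^t))$, then Cauchy--Schwarz and $\|a+b\|^2\le 2\|a\|^2+2\|b\|^2$.

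The only difference is how the constant $r_{\x}/2$ in the descent inequality is justified. The paper cites $r_{\x}$-smoothness of $p_t$ from Bauschke--Combettes (Prop.~12.29), which is stated for envelopes of convex functions, whereas $\max_{\y}f_t(\cdot,\y)+\iota_{\cX}$ is only $\ell$-weakly convex. The claim is still true here, since the two-sided constant $\max\{r_{\x},\ell r_{\x}/(r_{\x}-\ell)\}$ equals $r_{\x}$ once $r_{\x}\ge 2\ell$. Your one-sided bound, obtained by plugging $\x_t^\star(\z)$ into the definition of $p_t(\z')$, is self-contained, holds for every $r_{\x}>\ell$, and makes your detour through $L_p$ unnecessary.
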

\begin{proof}[Proof of \cref{lem:p-est}.]
We bound the descent of $p_t$ as follows:
\begin{equation}
    p_{t+1}(\z^{t+1})-p_t(\z^{t})=(p_{t+1}(\z^{t+1})-p_t(\z^{t+1}))+(p_{t}(\z^{t+1})-p_t(\z^{t})).\label{eq:des:p}
\end{equation}
We first bound the first term on the right-hand side (RHS). 
Since
\[
    F_{t+1}(\x,\y,\z)
    =
    F_t(\x,\y,\z)
    +
    \frac{r_{\y}^t-r_{\y}^{t+1}}{2}\|\y\|^2,
\]
and \(r_{\y}^t\ge r_{\y}^{t+1}\), for any fixed \(\z\) we have
\[
\begin{aligned}
p_{t+1}(\z)
&=
\min_{\x\in\cX}\max_{\y\in\cY}
\left\{
F_t(\x,\y,\z)
+
\frac{r_{\y}^t-r_{\y}^{t+1}}{2}\|\y\|^2
\right\} \\
&\le
\min_{\x\in\cX}
\left\{
\max_{\y\in\cY} F_t(\x,\y,\z)
+
\frac{r_{\y}^t-r_{\y}^{t+1}}{2}D_{\cY}^2
\right\}\\
&=
p_t(\z)
+
\frac{r_{\y}^t-r_{\y}^{t+1}}{2}D_{\cY}^2 .
\end{aligned}
\]
Applying this bound with \(\z=\z^{t+1}\) gives
\[
    p_{t+1}(\z^{t+1})-p_t(\z^{t+1})
    \le
    \frac{r_{\y}^t-r_{\y}^{t+1}}{2}D_{\cY}^2.
\]
For the second term on the RHS of \cref{eq:des:p}, by \citep[Proposition 12.29]{bauschke2011convex}, the function $p_t$ is $r_{\x}$-smooth. Consequently, 
 \begin{align*}
        p_t(\z^{t+1})&\leq p_t(\z^{t})+\underbrace{\langle\nabla p_t(\z^{t}),\z^{t+1}-\z^{t}\rangle}_{\text{\O1}}+\underbrace{\frac{r_{\x}}{2}\|\z^{t+1}-\z^{t}\|^2}_{\text{\O2}}.
\end{align*} 
We next bound the two terms $\text{\O1}$ and $\text{\O2}$ separately. For term $\text{\O1}$, we use the update rule of $\z^{t+1}$, 
 \begin{align*}
        \,\langle\nabla p_t(\z^{t}),\z^{t+1}-\z^{t}\rangle =&\,\beta^t\langle\nabla p_t(\z^{t}),\x^{t+1}-\z^{t}\rangle\notag\\
        =&\,\beta^t\left\langle\nabla p_t(\z^{t}),\x_{t}^\star(\z^{t})-\z^{t}\right\rangle+\beta^t\left\langle\nabla p_t(\z^{t}),\x^{t+1}-\x_{t}^\star(\z^{t})\right\rangle\notag\\
        \leq&\,-\frac{\beta^t}{r_{\x}}\|\nabla p_t(\z^{t})\|^2+\beta^t\|\nabla p_t(\z^{t})\|\cdot\left\|\x^{t+1}-{\x}_{t}^\star(\z^{t})\right\|,
    \end{align*}
    where the inequality follows the fact that  $ \nabla p_t(\z)=r_{\x}(\z-{\x}_{t}^\star(\z))$ and the Cauchy-Schwarz inequality.

    We now bound term $\text{\O2}$, 
    \begin{align*}
        \frac{r_{\x}}{2}\|\z^{t+1}-\z^{t}\|^2&=\frac{r_{\x}(\beta^t)^2}{2}\|\z^{t}-\x^{t+1}\|^2\notag\\
        &\le r_{\x}(\beta^t)^2\left(\left\|\x^{t+1}-{\x}_{t}^\star(\z^{t})\right\|^2+\left\|{\x}_{t}^\star(\z^{t})-\z^{t}\right\|^2\right)\notag\\
        &\le r_{\x}(\beta^t)^2\left\|\x^{t+1}-{\x}_{t}^\star(\z^{t})\right\|^2+\frac{(\beta^t)^2}{r_{\x}}\|\nabla p_t(\z^{t})\|^2, 
    \end{align*}
where the first equality follows from the $\z$-update rule, the first inequality follows from the elementary bound $(a+b)^2 \le 2(a^2+b^2)$ for any $a,b \in \mathbb{R}$, and the last inequality follows from the identity $ \nabla p_t(\z)=r_{\x}(\z-{\x}_{t}^\star(\z))$.

Putting everything together yields 
     \begin{align*}
        p_{t+1}(\z^{t+1})-p_t(\z^{t}) \le&\,-\frac{\beta^t(1-\beta^t)}{r_{\x}}\|\nabla p_t(\z^{t})\|^2+\beta^t\|\nabla p_t(\z^{t})\|\cdot\left\|\x^{t+1}-\x_{t}^\star(\z^{t})\right\|\\
        &\,+r_{\x}(\beta^t)^2\left\|\x^{t+1}-\x_{t}^\star(\z^{t})\right\|^2+\frac{r_{\y}^t-r_{\y}^{t+1}}{2}D_{\cY}^2.
    \end{align*}
    This completes the proof.
    
\end{proof}
Following  the analysis framework in \cref{sec:framework}, after establishing the basic descent estimate in \cref{lem:p-est}, it remains to control the positive term involving $\|\x^{t+1}-{\x}_{t}^\star(\z^{t})\|$ in \cref{lem:p-est}.
In single-loop algorithms, such error terms are typically controlled indirectly via primal–dual error bounds, e.g., see \cref{lemma:perturb-dualerror} and \cref{lemma:dualerror}.
In contrast, for the double-loop scheme, the quantity $\|\x^{t+1}-{\x}_{t}^\star(\z^{t})\|$ directly corresponds to the inaccuracy in solving the inner SC-SC subproblem. 
In other words, by the definition of $\delta$-accurate solution, we directly have $\|\x^{t+1}-{\x}_{t}^\star(\z^{t})\|\le \sqrt{\delta^t}$.

In the next theorem, we derive the iteration complexity of \emph{Perturbed Smoothed FOAM}. 
\begin{theorem}[Iteration complexity of Perturbed Smoothed FOAM]\label{thm:Smoothed-FOAM-epi} 
Let $\{(\x^{t},\y^{t},\z^{t})\}_{t\ge 0}$ be the sequence generated by \emph{Perturbed Smoothed FOAM}. For any $\epsilon>0$, if the parameters satisfy \cref{con-FOAM} and $\Delta_{p_0}:=p_0(\z^0)-\min_{\x\in\cX}\max_{\y\in\cY}f_0(\x,\y)$ denote the initial gap. Then
\begin{enumerate}[label=(\roman*)] 
\item (Game Stationarity) Let $r_{\y}= \Theta\left(\tfrac{\epsilon}{D_{\cY}}\right)$ and set $r_{\y}^t=r_{\y}$ for all $t \ge 0$, 
then we can obtain an $\epsilon$-GS of problem \cref{eq:prob} after $\cO\left(\frac{\ell^{1.5}D_{\cY}^{0.5}\Delta_{p_0}}{\epsilon^{2.5}}\log\frac{1}{\epsilon}\right)$ gradient steps.
\item (Optimization Stationarity) 
Let $r_{\y}= \Theta\left(\frac{\epsilon^2}{\ell D_{\cY}^2}\right)$ and set $r_{\y}^t=r_{\y}$ for all $t \ge 0$, 
then we can obtain an $\epsilon$-OS of problem \cref{eq:prob} after $\cO\left(\frac{\ell^2 D_{ \cY}\Delta_{p_0}}{\epsilon^{3}}\log\frac{1}{\epsilon}\right)$ gradient steps.
\end{enumerate}
\end{theorem}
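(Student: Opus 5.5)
Starting from \cref{lem:p-est}, I will use the inexactness bound $\|\x^{t+1}-\x_t^\star(\z^t)\|\le\sqrt{\delta^t}$ together with Young's inequality. The cross term $\beta^t\|\nabla p_t(\z^t)\|\sqrt{\delta^t}$ is absorbed into half of the negative term $\frac{\beta^t(1-\beta^t)}{r_{\x}}\|\nabla p_t(\z^t)\|^2$. Since $\beta^t,1-\beta^t=\Theta(1)$ and $r_{\x}=\Theta(\ell)$, this leaves
\[
p_{t+1}(\z^{t+1})-p_t(\z^t)\le -\frac{c_1}{\ell}\|\nabla p_t(\z^t)\|^2 + c_2\,\ell\,\delta^t + \frac{r_{\y}^t-r_{\y}^{t+1}}{2}D_{\cY}^2 .
\]
With $r_{\y}^t\equiv r_{\y}$ the last term vanishes. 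I will telescope over $t=0,\dots,T-1$ and use $p_T(\z^T)\ge \min_{\x}\max_{\y}f_0(\x,\y)$, which holds because $F_0\ge f_0$ and $\min_{\z}p_0(\z)=\min\max f_0$. This gives an index $t$ with
\[
\|\nabla p_t(\z^t)\|^2\le \mathcal{O}\bigl(\ell\Delta_{p_0}/T+\ell^2\delta\bigr).
\]
The choice $\delta=\mathcal{O}(r_{\y}^4D_{\cY}^2/\ell^4)$ makes $\ell^2\delta$ negligible compared with the targets below. Hence $\|\z^t-\x_t^\star(\z^t)\|=\|\nabla p_t(\z^t)\|/r_{\x}$ is small, and so is $\|\z^t-\x^{t+1}\|$, up to an extra $\sqrt\delta$.

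Next I convert this into the two stationarity notions.

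\textbf{GS.} The pair $(\x^{t+1},\y^{t+1})$ is a $\delta$-accurate saddle point of the SC-SC function $g_t$. From the $\delta$-accuracy of FOAM, I extract a point (possibly after one extra projected gradient step, which costs $\mathcal{O}(1)$ gradients) whose residual in the inner problem is $\mathcal{O}(\ell\sqrt{\delta})$. Removing the smoothing term costs $r_{\x}\|\x^{t+1}-\z^t\|$ in the $\x$-residual. Removing the perturbation costs $r_{\y}D_{\cY}$ in the $\y$-residual. Thus both GS residuals are $\mathcal{O}\bigl(\sqrt{\ell\Delta_{p_0}/T}+r_{\y}D_{\cY}+\ell\sqrt\delta\bigr)$. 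With $r_{\y}=\Theta(\epsilon/D_{\cY})$ this gives $T=\mathcal{O}(\ell\Delta_{p_0}/\epsilon^2)$ outer iterations.

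\textbf{OS.} I apply \cref{lemma2} at $(\x^{t+1},\y^{t+1})$. The $\y$-residual enters linearly, multiplied by $D_{\cY}/\ell$. To get $\|\prox_{\frac{1}{2\ell}\Phi}(\x^{t+1})-\x^{t+1}\|\le\epsilon/(2\ell)$, I therefore need $r_{\y}=\Theta(\epsilon^2/(\ell D_{\cY}^2))$ and an $\x$-residual of order $\epsilon$. This again gives $T=\mathcal{O}(\ell\Delta_{p_0}/\epsilon^2)$ outer iterations.

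The inner cost per outer iteration is
\[
\mathcal{O}\bigl(\sqrt{\kappa_{\x}\kappa_{\y}}\log(1/\delta)\bigr),\qquad \kappa_{\x}=\Theta(1),\quad \kappa_{\y}=\Theta(\ell/r_{\y}).
\]
It therefore equals $\mathcal{O}(\sqrt{\ell D_{\cY}/\epsilon}\,\log\frac1\epsilon)$ in case (i) and $\mathcal{O}(\ell D_{\cY}/\epsilon\,\log\frac1\epsilon)$ in case (ii). One check is needed here: the warm start $(\x^t,\y^t)$ has distance at most $\mathcal{O}(\mathrm{poly}(1/\epsilon))$ to the inner saddle point. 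This holds because $\cY$ is bounded and the $\x$-iterates remain in a bounded neighborhood of the $\z$-path, so it only changes constants inside the logarithm. Multiplying the outer count by the inner cost gives $\mathcal{O}(\ell^{1.5}D_{\cY}^{0.5}\Delta_{p_0}\epsilon^{-2.5}\log\frac1\epsilon)$ for GS and $\mathcal{O}(\ell^2D_{\cY}\Delta_{p_0}\epsilon^{-3}\log\frac1\epsilon)$ for OS.

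The main obstacle is the GS step: turning FOAM's $\delta$-accuracy notion into a bound on the actual game-stationarity residuals at the returned point. Distance to the saddle point alone does not directly bound $\dist(\bz,\nabla_{\x}f+\partial\iota_{\cX})$. My first attempt is as follows. If the accuracy notion controls distance to the saddle $(\x_t^\star,\y_t^\star)$ in both variables, I will use the $\mathcal{O}(\ell)$-Lipschitz gradients of $F_t$ to bound the residual at an extragradient-type projected step from the returned point. This yields residual $\mathcal{O}(\ell\sqrt\delta)$, which is why $\delta$ carries the $(r_{\y}/\ell)^4$ scaling.
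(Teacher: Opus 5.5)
Your proposal is correct and follows essentially the same route as the paper's proof: the descent estimate of \cref{lem:p-est} with Young's inequality, telescoping to an index $t^\star$ with small $\|\nabla p_{t^\star}(\z^{t^\star})\|$, one extra projected gradient step from the FOAM output to convert $\delta$-accuracy into game-stationarity residuals, \cref{lemma2} for OS, and multiplying the $\cO(\ell\Delta_{p_0}/\epsilon^2)$ outer iterations by FOAM's $\cO(\sqrt{\ell/r_{\y}}\log(1/\delta))$ inner cost. The differences are minor: using the inner saddle point as a fixed point of the joint projected gradient map gives you $\|\y^{t^\star+1}-\hat{\y}\|=\cO(\sqrt{\delta^{t^\star}})$, which is sharper than the paper's $\cO(\frac{\ell}{r_{\y}}\sqrt{\delta^{t^\star}})$ in \cref{lem:closeness}. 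It is that looser paper bound, not your $\cO(\ell\sqrt{\delta})$ residual, that forces the $(r_{\y}/\ell)^4$ scaling of $\delta^t$; with your bound, $\delta^t=\cO(r_{\y}^2D_{\cY}^2/\ell^2)$ would already suffice. Also, for OS you should apply \cref{lemma2} at the extracted point $(\hat{\x},\hat{\y})$, where the residuals are actually controlled, rather than at $(\x^{t+1},\y^{t+1})$, exactly as the paper does.
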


\begin{proof}[Proof of \cref{thm:Smoothed-FOAM-epi}.]

By \cref{alg:Smoothed-FOAM}, for each iteration $t \ge 0$, the pair $(\x^{t+1}, \y^{t+1})$ is a $\delta^t$-accurate solution of $F_t(\cdot,\cdot,\z^{t})$. By definition, this implies that
    \begin{equation}
        \left\|\x^{t+1}-{\x}_{t}^\star(\z^{t})\right\|^2+\|\y^{t+1}-{\y}_t(\z^{t})\|^2\leq\delta^t.\label{FOAM}
    \end{equation}
Together with \cref{lem:p-est}, it yields that 
\begin{align}
        &\, p_{t+1}(\z^{t+1})-p_t(\z^{t})\notag\\
        \le&\,-\frac{\beta^t(1-\beta^t)}{r_{\x}}\|\nabla p_t(\z^{t})\|^2+\beta^t\sqrt{\delta^t}\|\nabla p_t(\z^{t})\|+r_{\x}(\beta^t)^2\delta^t+\frac{r_{\y}^t-r_{\y}^{t+1}}{2}D_{\cY}^2\notag\\
        \le &\,-\frac{\beta^t(1-\beta^t)}{2r_{\x}}\|\nabla p_t(\z^{t})\|^2
    +\left(\frac{\beta^t}{2(1-\beta^t)}+(\beta^t)^2\right) r_{\x}\delta^t+\frac{r_{\y}^t-r_{\y}^{t+1}}{2}D_{\cY}^2,
    \label{eq:p-1}
\end{align}
where the second inequality follows from $ab\le\frac{1}{2} (a^2+b^2)$ for all $a,b\in \R$. Under \cref{con-FOAM}, we choose $\beta^t=\Theta(1)$ and $1-\beta^t=\Theta(1)$, and set
$\delta^t=\cO\left(\frac{(r_{\y}^t)^4 D_{\cY}^2}{\ell^4}\right)$.

If we further choose $r_{\y}^t=\cO(\tfrac{1}{\sqrt{T}})$, then the second term in \cref{eq:p-1} is of order 
$\cO(r_{\x}\delta^t)=\cO(\tfrac{1}{T^2})$, 
and hence its cumulative contribution over $T$ iterations is $\cO(\tfrac{1}{T})$.

Using the facts that $r_{\y}^t=r_{\y}^0$ for all $t\ge0$ and $p_0(\z^{t})=\min_{\x\in\cX}\max_{\y\in\cY}F_0(\bx,\by,\z^{t})\ge\min_{\x\in\cX}\max_{\y\in\cY}f_0(\x,\y)$, we sum \cref{eq:p-1} over $t=0,\ldots,T-1$ to obtain
    \begin{equation}\label{eq:p} 
        \min_{t\in \{\lfloor\frac{T}{2}\rfloor,\cdots,T-1\}}\|\nabla p_t(\z^{t})\|=\cO\left(\sqrt{\frac{r_{\x}\Delta_{p_0} }{\beta^t(1-\beta^t)T}}\right)=\cO\left(\sqrt{\frac{\ell\Delta_{p_0}}{T}}\right).  
    \end{equation}

    Let $t^\star \in \arg\min_{t\in\{\lfloor\frac{T}{2}\rfloor,\ldots,T-1\}} \|\nabla p_t(\z^{t})\|$.
Then \cref{eq:p} implies that 
\begin{align}
     \|\x^{t^\star+1}-\z^{t^\star}\|  & \leq   \left\|\x^{t^\star+1}-{\x}_{t^\star}^\star(\z^{t^\star})\right\|+\left\|{\x}_{{t^\star}}^\star(\z^{t^\star})-\z^{t^\star}\right\|\notag\\
    &  \leq \sqrt{\delta^{t^\star}} +\frac{\|\nabla p_{t^\star}(\z^{t^\star}) \|}{r_{\x}} = \cO\left(\sqrt{\frac{\Delta_{p_0}}{\ell T}}\right) \label{norm:x-z},
\end{align}
where the second inequality follows from \eqref{FOAM}, and the last equality is due to \cref{eq:p,con-FOAM}. 

Then, we consider one projected gradient update
starting from $(\x^{t^\star+1}, \y^{t^\star+1},\z^{t^\star})$, defined by
\begin{equation} \label{eq:x-step}
\hat{\x} := \proj_{\cX}\!\left(\x^{t^\star+1} - \alpha \nabla_{\x} F_{t^\star}(\x^{t^\star+1}, \y^{t^\star+1}, \z^{t^\star})\right),
\end{equation}
\begin{equation}\label{eq:y-step}
  \,\hat{\y} := \proj_{\cY}\!\left(\y^{t^\star+1} + c \nabla_{\y} F_{t^\star}(\x^{t^\star+1}, \y^{t^\star+1}, \z^{t^\star})\right),  
\end{equation}
where $\alpha=\Theta(\frac{1}{\ell}),~c=\Theta(\frac{1}{\ell})$ with $\,0<\alpha \leq \frac{1}{r_{\x}+\ell},\,0<c \leq \frac{1}{r_{\y}^{t^\star}+\ell}$. 
Using \cref{lem:closeness}, we have that $(\hat \x, \hat \y)$ is close to
$(\x^{t^\star+1}, \y^{t^\star+1})$, namely, 
\begin{equation}\label{eq:eb_proxy}
\|\x^{t^\star+1}-\hat{\x}\|=\cO\left(\sqrt{\delta^{t^\star}}\right),\quad \text{and} \quad  \|\y^{t^\star+1}-\hat{\y}\|=\cO\left(\frac{\ell}{r_{\y}^{t^\star}}\sqrt{\delta^{t^\star}}\right).    
\end{equation}
We therefore analyze GS and OS stationarity at $(\hat \x,\hat \y)$.

(i) For the GS case, we choose $r_{\y}^t=\Theta\left(\sqrt{\frac{\ell\Delta_{p_0}}{D_{\cY}^2T}}\right)$.
For the primal stationarity measure, we have 
\begin{align}
    &\,\dist(\bz,\nabla_{\x} f(\hat{\x},\hat{\y})+\partial\iota_\cX(\hat{\x}))\notag\\
    =&\,\dist\left(\bz,\nabla_{\x} F_{t^\star}(\hat{\x},\hat{\y},\z^{t^\star})-r_{\x}(\hat{\x}-\z^{t^\star})+\partial\iota_\cX(\hat{\x})\right)\notag\\
    \leq&\,\dist\left(\bz,\nabla_{\x} F_{t^\star}(\hat{\x},\hat{\y},\z^{t^\star})+\partial\iota_\cX(\hat{\x})\right)+r_{\x}\|\hat{\x}-\z^{t^\star}\|\notag\\
    \leq&\,\dist\left(\bz,\nabla_{\x} F_{t^\star}(\x^{t^\star+1},\y^{t^\star+1},\z^{t^\star})+\partial\iota_\cX(\hat{\x})\right)+r_{\x}\|\hat{\x}-\z^{t^\star}\|\notag\\
    &\,+\left\|\nabla_{\x} F_{t^\star}(\hat{\x},\hat{\y},\z^{t^\star})-\nabla_{\x} F_{t^\star}(\x^{t^\star+1},\y^{t^\star+1},\z^{t^\star})\right\|\notag\\
    \leq&\,\!\frac{1}{\alpha}\|\x^{t^\star+1}\!-\!\hat{\x}\|\! +\!r_{\x}\|\x^{t^\star+1}\!-\!\hat{\x}\|\!+ \!r_{\x}\|\x^{t^\star+1}\!-\!\z^{t^\star}\|\!+\!(r_{\x}+\ell)(\|\x^{t^\star+1}-\hat{\x}\|\!+\!\|\y^{t^\star+1}-\hat{\y}\|)\notag\\
    =&\,\cO\left(\max\left(\sqrt{\frac{\ell\Delta_{p_0}}{T}},\frac{\ell^2}{r_{\y}^{t^\star}}\sqrt{\delta^{t^\star}}\right)\right)=\cO\left(\sqrt{\frac{\ell\Delta_{p_0}}{T}}\right),\label{gradient:x}
\end{align}
where the third inequality is due to \eqref{eq:x-step} and $(r_{\x}+\ell)$-Lipschitz continuity of $\nabla_{\x}F_{t^\star}(\cdot,\cdot,\z)$, the second equality follows from \cref{eq:eb_proxy,norm:x-z}, and the last equality is owing to the choice of $r_{\y}^{t^\star}$ and $\delta^{t^\star}=\cO\left(\frac{(r_{\y}^{t^\star})^4D^2_{\cY}}{\ell^4}\right)$.

We now turn to the dual part. By a similar argument, we have 
\begin{align}
    &\,\dist(\bz,-\nabla_{\y} f(\hat{\x},\hat{\y})+\partial\iota_\cY(\hat{\y}))\notag\\
    =&\,\dist\left(\bz,-\nabla_{\y} F_{t^\star}(\hat{\x},\hat{\y},\z^{t^\star})-r_{\y}^{t^\star}\hat{\y}+\partial\iota_\cY(\hat{\y})\right)\notag\\
    &+\left\|\nabla_{\y} F_{t^\star}(\hat{\x},\hat{\y},\z^{t^\star})-\nabla_{\y} F_{t^\star}(\x^{t^\star+1},\y^{t^\star+1},\z^{t^\star})\right\|\notag\\
    \leq&\,\frac{1}{c}\|\y^{t^\star+1}-\hat{\y}\|+r_{\y}^{t^\star}\|\hat{\y}\|+(r_{\y}^{t^\star}+\ell)(\|\x^{t^\star+1}-\hat{\x}\|+\|\y^{t^\star+1}-\hat{\y}\|)\notag\\
    =&\,\cO\left(\max\left(\frac{\ell^2}{r_{\y}^{t^\star}}\sqrt{\delta^{t^\star}},r_{\y}^{t^\star}D_{\cY}\right)\right) =\cO\left(\sqrt{\frac{\ell\Delta_{p_0}}{T}}\right).\label{gradient:y}
\end{align}


Thus, to reach an $\epsilon$-GS solution, it suffices to take
$T=\cO\left(\frac{\ell\Delta_{p_0}}{\epsilon^2}\right)$
outer iterations.
Substituting this choice of $T$ into the parameter selection in
\cref{con-FOAM} yields the equivalent scaling
$r_{\y}^{t}=\Theta\left(\frac{\epsilon}{D_{\cY}}\right)$.
Moreover, the number of inner iterations required is
$
\cO\left(\sqrt{\frac{\ell^2}{(r_{\x}-\ell)r_{\y}^{t^\star}}}\log\frac{1}{\delta^{t^\star}}\right)=\cO\left(\sqrt{\frac{\ell D_{\cY}}{\epsilon}}\log\frac{1}{\epsilon}\right).
$ 
Consequently, the overall oracle complexity is
\[T=\cO\left(\frac{\ell\Delta_{p_0}}{\epsilon^2}\cdot\sqrt{\frac{\ell D_{\cY}}{\epsilon}}\log\frac{1}{\epsilon}\right)=\cO\left(\frac{\ell^{1.5}D_{\cY}^{0.5}\Delta_{p_0}}{\epsilon^{2.5}}\log\frac{1}{\epsilon}\right).\]


\noindent(ii) For the OS case, we choose $r_{\y}^t=\cO\left(\frac{\Delta_{p_0}}{D_{\cY}^2T}\right)$. Due to \cref{lemma2}, \cref{gradient:x,gradient:y}, we have 
\begin{align*}
&\, \|\prox_{\frac{1}{2\ell} \Phi}(\hat{\x})-\hat{\x}\|^2 \notag\\
\leq &\,   \frac{2D_{\cY}}{\ell}  {\rm dist}(\bz,-\nabla_{\y} f(\hat{\x}, \hat{\y})+\partial\iota_{\mathcal{Y}}(\hat{\y})) +\frac{1}{\ell^2}  {\rm dist}^2(\bz,\nabla_{\x} f(\hat{\x}, \hat{\y})+\partial\iota_{\mathcal{X}}(\hat{\x})) \notag\\
=&\, \cO\left(\frac{D_{\cY}}{\ell}\cdot r_{\y}^{t^\star}D_{\cY}+\frac{\ell D_{\cY}}{r_{\y}^{t^\star}}\sqrt{\delta^{t^\star}}+\frac{1}{\ell^2}\frac{\ell\Delta_{p_0}}{T}\right) 
=\cO\left(\frac{\Delta_{p_0}}{\ell T}\right).
\end{align*}
Therefore, to achieve an $\epsilon$-OS solution, it suffices to take
$T=\cO\left(\frac{\ell\Delta_{p_0}}{\epsilon^2}\right)$
outer iterations.
Substituting this choice of $T$ into the parameter selection in
\cref{con-FOAM} yields the equivalent scaling
$r_{\y}^t=\Theta\left(\frac{\epsilon^2}{\ell D_{\cY}^2}\right)$.
Additionally, the number of inner iterations required is
$
\cO\left(\sqrt{\frac{\ell^2}{(r_{\x}-\ell) r_{\y}^{t^\star}}}\log\frac{1}{\delta^{t^\star}}\right)=\cO\left(\frac{\ell D_{\cY}}{\epsilon}\log\frac{1}{\epsilon}\right). 
$ 
Consequently, the overall oracle complexity is
\[T=\cO\left(\frac{\ell\Delta_{p_0}}{\epsilon^2}\cdot\frac{\ell D_{\cY}}{\epsilon}\log\frac{1}{\epsilon}\right)=\cO\left(\frac{\ell^2 D_{\cY}\Delta_{p_0}}{\epsilon^3}\log\frac{1}{\epsilon}\right).\]
This completes the proof.

\end{proof}

\section{Closing Remarks}
\label{sec:close}
This paper provides a unified framework to analyze smoothing and perturbation in NC-C minimax optimization. Rather than viewing these techniques as interchangeable acceleration tools, our framework clarifies their distinct algorithmic roles and explains when and how they interact to yield sharper convergence guarantees. This separation leads to a more transparent analysis of existing first-order methods and 
provides deeper understanding to these acceleration tools, which enables us to unify
several seemingly disparate convergence results in the literature.
An interesting direction for future research is to investigate whether the structural insights developed here—particularly the decoupling of smoothing and perturbation and the associated primal–dual viewpoint—can be exploited to obtain stronger or more general convergence guarantees. Recent works such as \citep{zheng2024universal,zheng2025doubly} and \citep{lu2025first} establish convergence results under broad global or local Kurdyka–Łojasiewicz–type assumptions. It would be of interest to explore whether our framework can be used to streamline these analyses or to clarify the algorithmic roles of smoothing and perturbation under such general conditions.

\bibliography{ref}
\bibliographystyle{apalike}
\begin{appendices}
\section{Useful Lemmas}\label{app:aux}
To keep the presentation streamlined, we defer the proof details of technical lemmas to the appendix.
In particular, the appendix collects proofs of the lemmas stated in the paper, as well as auxiliary lemmas invoked in the main body.

\subsection{Proof of \cref{lemma2}}\label{sec:GS-OS}

\begin{proof}[Proof of \cref{lemma2}.]
Our proof builds upon the argument in \citep[Proposition~2.1]{yang2022faster}. 
In particular, we extend their analysis from the unconstrained setting to the constrained case.
We first compute 
\begin{align}
 &\, \frac{\ell}{2}\left\|\prox_{\frac{1}{2\ell} \Phi}(\x)-\x\right\|^2 \notag\\  \leq & \, \Phi(\x) - \Phi\left(\prox_{\frac{1}{2\ell} \Phi}(\x)\right)-\ell\left\|\prox_{\frac{1}{2\ell} \Phi}(\x)-\x\right\|^2 \notag \\
 = & \, \underbrace{\Phi(\x) -f(\x,\y)}_\text{\O1}+ f\left(\prox_{\frac{1}{2\ell} \Phi}(\x),\y\right)- \Phi\left(\prox_{\frac{1}{2\ell} \Phi}(\x)\right) \notag\\
 &\, +\underbrace{f(\x,\y) - f\left(\prox_{\frac{1}{2\ell} \Phi}(\x),\y\right)-\ell\left\|\prox_{\frac{1}{2\ell} \Phi}(\x)-\x\right\|^2 }_\text{\O2},\label{eq:equ1}
\end{align}
where the first inequality follows from the $\ell$-strong convexity of the function $\Phi(\cdot)+\ell\|\cdot-\x\|^2$. Moreover, because $f(\prox_{\frac{1}{2\ell} \Phi}(\x),\y)- \Phi(\prox_{\frac{1}{2\ell} \Phi}(\x))\leq 0$, we just focus on the two parts  \text{\O1} and \text{\O2}. We begin with term \text{\O1}. Note that
\begin{equation}
\label{eq:equ2}
   \Phi(\x) -f(\x,\y) = \max_{y'\in\mathcal{Y}} f(\x,\y') -f(\x,\y) 
\leq {\rm dist}(\bz,-\nabla_{\y} f(\x, \y)+\partial\iota_{\mathcal{Y}}(\y)) \cdot D_{\cY}, 
\end{equation}
where the inequality follows from the Cauchy–Schwarz inequality and the concavity of the function $f(\x,\cdot)+\iota_{\mathcal{Y}}(\cdot)$ for any $\x \in \mathcal{X}$. 
Then, we proceed to consider term \text{\O2} in \cref{eq:equ1},  
\begin{align}
   & \, f(\x,\y) - f\left(\prox_{\frac{1}{2\ell} \Phi}(\x),\y\right)-\ell\left\|\prox_{\frac{1}{2\ell} \Phi}(\x)-\x\right\|^2\notag\\
   \leq  &\, \left\|\x-\prox_{\frac{1}{2\ell} \Phi}(\x) \right\| \cdot {\rm dist}(\bz,\nabla_{\x} f(\x, \y)+\partial\iota_{\mathcal{X}}(\x)) -\frac{\ell}{2}\left\|\prox_{\frac{1}{2\ell} \Phi}(\x)-\x\right\|^2 \notag\\
   \leq & \,  \frac{1}{2\ell} {\rm dist}^2(\bz,\nabla_{\x} f(\x, \y)+\partial\iota_{\mathcal{X}}(\x))\label{eq:equ3},  
\end{align}
where the first inequality follows from the $\ell$-strong convexity of $f(\cdot,\y)+\ell\|\cdot-\x\|^2+\iota_{\mathcal{X}}(\cdot)$, and the second one is owing to $ab\leq \frac{1}{2}(a^2+b^2)$ for any $a,b\in\R$. 

Combining \cref{eq:equ1}, \cref{eq:equ2} and \cref{eq:equ3} yields
\begin{align}
&\, \left\|\prox_{\frac{1}{2\ell} \Phi}(\x)-\x\right\|^2 \notag\\
\leq &\,   \frac{2 D_{\cY}}{\ell}  {\rm dist}(\bz,-\nabla_{\y} f(\x, \y)+\partial\iota_{\mathcal{Y}}(\y)) +\frac{1}{\ell^2}  {\rm dist}^2(\bz,\nabla_{\x} f(\x, \y)+\partial\iota_{\mathcal{X}}(\x)). \notag
\end{align}
This completes the proof.
\end{proof}

\subsection{Proof of \cref{lemma:dualerror}}\label{sec:proof-dualerror} 
\begin{proof}[Proof of \cref{lemma:dualerror}.]
    The result follows directly from \citep[Corollary~5.1]{li2025nonsmooth}. 
    Since any $r_{\y}^t$-strongly concave function is also a K{\L}-function with 
    exponent $\theta=\tfrac{1}{2}$ and parameter $\mu=\sqrt{2r_{\y}^t}$ \citep[Assumption~3.2]{li2025nonsmooth}, we may
    substitute these values into the expression for $\omega^t=\omega_2$ in  
    \citep[Corollary~5.1]{li2025nonsmooth}.  
    This yields
    \[
    \omega^t
    = \frac{1}{\sqrt{r_{\y}^t}}
      \cdot
      \frac{1 + \alpha^t(\ell+r_{\y}^t)(1+\sigma_2^t)}
           {\alpha^t\sqrt{r_{\x}-(\ell+r_{\y}^t)}}
    = \frac{1}{\sqrt{r_{\y}^t}}
      \cdot
      \frac{(r_{\x}-\ell-r_{\y}^t)+\alpha^t(\ell+r_{\y}^t)(3r_{\x}-2\ell-2r_{\y}^t)}
           {\alpha^t (r_{\x}-\ell-r_{\y}^t)^{3/2}},
    \]
    where $\sigma_2^t = \frac{2r_{\x}-\ell-r_{\y}^t}{r_{\x}-\ell-r_{\y}^t}$ is given in 
    \citep[Lemma~A.1]{li2025nonsmooth}. This completes the proof.
\end{proof}
\subsection{Proof of \cref{lemma:lamb-order-1}}\label{sec:lem:lam}
\begin{proof}[Proof of \cref{lemma:lamb-order-1}.]
Since we choose $r_y=\Theta\left(\frac{\epsilon}{D_{\mathcal{Y}}}\right),\, ~\mbox{and}~c=\Theta\left(\frac{\epsilon^2}{\ell^3D_{\cY}^2}\right)$, it follows
that $A_1<0$ and
\[ |A_1| =\Theta\left( \frac{\epsilon}{\ell D_{\cY}}\right),\,B_1=\Theta\left( \frac{\epsilon^3}{\ell^3 D_{\cY}^3}\right),\,\mbox{and}~\sqrt{A_1^2-4B_1} =\Theta\left( \frac{\epsilon}{\ell D_{\cY}}\right).\]
First, we establish that $\lambda_1$ is negative. Indeed,
\begin{equation*}
    \lambda_1=\frac{1}{2}\left(A_1+\sqrt{A_1^2-4B_1}\right)=-\frac{2B_1}{\sqrt{A_1^2-4B_1}-A_1}<0.
\end{equation*}
Then, direct computation yields $|\lambda_1| =\Theta\left( \frac{\epsilon^2}{\ell^2 D_{\cY}^2}\right)$. 
This completes the proof. 
\end{proof}

\subsection{Proof of \cref{gs-alter-tight}}\label{pf-of-eigen}


\begin{proof}[Proof of \cref{gs-alter-tight}.]
We prove by induction that for all $t\in\mathbb N$,
\begin{equation}\label{eq:ind-claim}
x^t\in \left[0,\frac{r_yD_{\cY}}{b}\right],\qquad
y^t\in[0,D_{\mathcal Y}],\quad \mbox{and} \quad
(x^t,y^t)^\top=(1+\lambda_1)^t (x^0,y^0)^\top.
\end{equation}
\textbf{Base case ($t=0$):} By the choice of the initial point $x^0=\frac{2\epsilon}{\ell}$ and noting that
$
\tfrac{r_y D_{\mathcal Y}}{b}
= \Theta\left(\sqrt{\frac{\epsilon D_{\mathcal Y}}{\ell}}\right),
$
we have
$
0 < x^0 < \frac{r_y D_{\mathcal Y}}{b}.
$
Similarly, since $b=\Theta\left(\sqrt{\frac{\epsilon \ell}{D_{\mathcal Y}}}\right)$, it follows that
\begin{equation}
0 < y^0 = \frac{2b(1+\ell c)\alpha}{\sqrt{A_1^2-4B_1}+2\ell c-A_1} \cdot \frac{2\epsilon}{\ell}
= \Theta\left(\sqrt{\frac{\epsilon D_{\mathcal Y}}{\ell}}\right), \label{eq:y_0-GS-pgda}
\end{equation}
which implies $y^0 \in [0, D_{\mathcal Y}]$. Therefore, \cref{eq:ind-claim} holds for $t=0$.

\textbf{Inductive step:} Assume that \cref{eq:ind-claim} holds for some $t\ge 0$. In particular, $x^t\in\bigl[0,\frac{r_yD_{\cY}}{b}\bigr]$, so by the definition of $f$ in \cref{eg:gs-alter-tight} (the middle branch), the \emph{Perturbed GDA} update admits the linear form
\[
(x^{t+1},y^{t+1})^\top=(\bm{I}+\bm{M_1})(x^t,y^t)^\top.
\]
By construction, the initialization $(x^0,y^0)^\top$ is an eigenvector of $\bm{I}+\bm{M_1}$
associated with the eigenvalue $1+\lambda_1$, which implies that $(\bm{I}+\bm{M_1})(x^{0},y^{0})^\top=(1+\lambda_1)(x^{0},y^{0})^\top$. Hence 
\begin{equation}
\label{eq:multi}
(x^{t+1},y^{t+1})^\top\!=\!(\bm{I}+\bm{M_1})(x^{t},y^{t})^\top\!=\!(\bm{I}+\bm{M_1})\left((1+\lambda_1)^t(x^{0},y^{0})^\top\right)\!=\!(1+\lambda_1)^{t+1}(x^{0},y^{0})^\top.
\end{equation}
where the second equality uses the induction hypothesis $(x^t,y^t)^\top=(1+\lambda_1)^t (x^0,y^0)^\top$. Since $0<1+\lambda_1<1$ by \cref{lemma:lamb-order-1}, we have $(1+\lambda_1)^{t+1}\in(0,1)$, and thus
\begin{equation}
    \label{eq:bound}
0\le x^{t+1}\le x^0\le \frac{r_y D_{\cY}}{b},
\qquad
0\le y^{t+1}\le y^0\le D_{\cY}.
\end{equation}
Combining \cref{eq:multi,eq:bound}, we conclude that \cref{eq:ind-claim} holds for $t+1$, which completes the induction.

Consequently, \eqref{eq:ind-claim} holds for all $t\in\mathbb N$. This completes the proof.
\end{proof}

\subsection{Initial Gap Bound for \emph{Perturbed GDA} on \cref{eg:gs-alter-tight}}\label{sec:lem:Delta}
\begin{lemma}\label{lemma:Delta_1}
Suppose that  \emph{Perturbed GDA} is applied to \cref{eg:gs-alter-tight} and initialized as in \cref{gs-alter-tight}. Then the initial gap satisfies
\[
\Delta_{\Psi_1}=\cO\left(\frac{\epsilon^2}{\ell}\right). 
\]
\end{lemma}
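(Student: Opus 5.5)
The plan is to compute $\Delta_{\Psi_1}=\Psi_1^0(x^0,y^0)-\min_{x}\max_{y\in[0,D_{\cY}]}f_0(x,y)$ directly on \cref{eg:gs-alter-tight}, with $f_0(x,y)=f(x,y)-\frac{r_y}{2}y^2$, using the explicit initialization of \cref{gs-alter-tight}. First I will verify that the subtracted min–max value equals $0$; this is the normalization highlighted in the remark after \cref{eg:gs-alter-tight}. For $x<0$ we have $\max_y f_0(x,y)=\max_y\{-\frac{r_y}{2}y^2\}=0$, attained at $y=0$. For $0\le x\le \frac{r_yD_{\cY}}{b}$, the unconstrained maximizer of $bxy-\frac{r_y}{2}y^2$ is $y=bx/r_y\le D_{\cY}$, so
\[
\Phi_0(x)=-\tfrac{\ell}{2}x^2+\tfrac{b^2x^2}{2r_y}=\tfrac{\ell}{2}x^2\ge 0,
\]
using $b^2=3\ell r_y$. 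In the last branch $\Phi_0$ is constant, equal to its value at the breakpoint, hence also nonnegative. Consequently $\min_x\Phi_0(x)=0$, and $\Delta_{\Psi_1}=2\Phi_0(x^0)-f_0(x^0,y^0)$.

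Next I will bound the two pieces at $x^0=2\epsilon/\ell$, which lies in the middle branch by \cref{gs-alter-tight}. For the first piece,
\[
2\Phi_0(x^0)=\ell (x^0)^2=\frac{4\epsilon^2}{\ell}.
\]
For the second piece, $-f_0(x^0,y^0)=\frac{\ell}{2}(x^0)^2-bx^0y^0+\frac{r_y}{2}(y^0)^2$. Since $x^0,y^0\ge 0$ and $b>0$, the cross term can be dropped, giving
\[
-f_0(x^0,y^0)\le \frac{2\epsilon^2}{\ell}+\frac{r_y}{2}(y^0)^2.
\]

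The remaining work, and the only step needing care, is controlling $(y^0)^2 r_y$. The crude bound \cref{eq:y_0-GS-pgda}, $y^0=\Theta(\sqrt{\epsilon D_{\cY}/\ell})$, only yields $r_y(y^0)^2=\Theta(\epsilon^2/\ell)$ once $r_y=\Theta(\epsilon/D_{\cY})$ is used. That is already the desired order, but I will make it explicit rather than rely on the loose estimate. Write $y^0=\kappa x^0$ with $\kappa=\frac{2b(1+\ell c)\alpha}{\sqrt{A_1^2-4B_1}+2\ell c-A_1}$. From the proof of \cref{lemma:lamb-order-1}, $-A_1=\alpha r_y(1+3\ell c)-\ell c>0$ is of order $\alpha r_y$, since $\ell c\ll \alpha r_y$ under the chosen scalings. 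The denominator of $\kappa$ is therefore at least $-A_1+2\ell c\gtrsim \alpha r_y$, which gives $\kappa=\mathcal O(b/r_y)$. Then
\[
r_y(y^0)^2=\mathcal O\Big(\tfrac{b^2}{r_y}(x^0)^2\Big)=\mathcal O(\ell (x^0)^2)=\mathcal O\Big(\tfrac{\epsilon^2}{\ell}\Big).
\]

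Summing the pieces gives $\Delta_{\Psi_1}=\mathcal O(\epsilon^2/\ell)$, which is the claim. The main obstacle is the sign and size bookkeeping for $A_1$ and the denominator of $\kappa$, in particular confirming $\ell c=o(\alpha r_y)$ from \cref{con-pgda}. This holds because $c=\Theta(r_y^2/\ell^3)$ and $\alpha=\Theta(1/\ell)$, so $\ell c/(\alpha r_y)=\Theta(r_y/\ell)\ll 1$.
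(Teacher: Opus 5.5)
Your proposal is correct and follows essentially the paper's route: show $\min_x\max_y f_0(x,y)=0$, evaluate $\Phi_0(x^0)$ on the middle branch, drop the nonpositive cross term $-bx^0y^0$, and bound $r_y(y^0)^2=\mathcal{O}(\epsilon^2/\ell)$. Your estimate $\kappa=\mathcal{O}(b/r_y)$ is the same bound as the paper's $y^0=\Theta(\sqrt{\epsilon D_{\cY}/\ell})$, so the paper's estimate is not looser. One harmless slip: with $b^2=3\ell r_y$ you get $\Phi_0(x)=-\frac{\ell}{2}x^2+\frac{3\ell}{2}x^2=\ell x^2$ on the middle branch, so $2\Phi_0(x^0)=8\epsilon^2/\ell$ rather than $4\epsilon^2/\ell$, which changes only constants.
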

\begin{proof}[Proof of \cref{lemma:Delta_1}.]
Recall in \cref{def:initial_gap}, we have that
\[
\begin{aligned}
&\Delta_{\Psi_1}
= {\Psi_1^0}(x^0,y^0)-
\min_{x\in\R}\max_{y\in[0,D_{\cY}]}f_0(x,y),\\
&{\Psi_1^0}(x,y)=2\max_{y'\in[0,D_{\cY}]}f_0(x,y')-f_0(x,y).
\end{aligned}
\]
By \cref{gs-alter-tight}, we have $x^0\in\bigl[0,\frac{r_yD_{\cY}}{b}\bigr]$ and $y^0\in[0,D_{\cY}]$.

Since $x^0\in\bigl[0,\frac{r_yD_{\cY}}{b}\bigr]$, by the definition of \cref{eg:gs-alter-tight} and $f_0(\cdot)$, we have
\[
f_0(x^0,y)= -\frac{1}{2}\ell (x^0)^2 + b x^0 y-\frac{r_y}{2}y^2,
\qquad \forall~ y\in[0,D_{\cY}].
\]
Direct computation yields that
\[
\max_{y\in[0,D_{\cY}]}f_0(x^0,y)
= -\frac{1}{2}\ell (x_0)^2+\frac{b^2}{2r_y}(x_0)^2
=\ell (x_0)^2,
\]
where the last equality uses $b^2=3\ell r_y$ in \cref{instance-gs}. Therefore,
\begin{align*}
{\Psi_1^0}(x^0,y^0)
&=2\ell (x^0)^2-\Bigl(-\frac{1}{2}\ell (x^0)^2+b x^0y^0-\frac{r_y}{2}(y^0)^2\Bigr)\\
&=\frac{5}{2}\ell (x^0)^2-bx^0y^0+\frac{r_y}{2}(y^0)^2
\le \frac{5}{2}\ell (x^0)^2+\frac{r_y}{2}(y^0)^2,
\end{align*}
where the inequality uses $x^0\ge 0$ and $y^0\ge 0$.
Using $x^0=\frac{2\epsilon}{\ell}$, $r_y=\Theta\left(\frac{\epsilon}{D_{\cY}}\right)$, and $y^0=\Theta\left(\sqrt{\frac{\epsilon D_{\cY}}{\ell}}\right)$ from \cref{eq:y_0-GS-pgda}, we obtain
\[
{\Psi_1^0}(x^0,y^0)=\cO\left(\ell (x^0)^2+r_y (y^0)^2\right)
=\cO\left(\frac{\epsilon^2}{\ell}\right).
\]

To proceed, one can verify that
\[
\Phi_0(x)=\max_{y\in[0,D_{\cY}]}f_0(x,y)=
\begin{cases}
0, & x<0,\\
\ell x^2, & 0\le x\le \frac{r_yD_{\cY}}{b},\\
\frac{r_yD_{\cY}^2}{3}, & x> \frac{r_yD_{\cY}}{b},
\end{cases}
\]
and hence
\begin{equation}\label{eq:minmax}
    \min_{x\in\R}\max_{y\in[0,D_{\cY}]}f_0(x,y)=0.
\end{equation}
Putting everything together, we obtained that $\Delta_{\Psi_1}=\cO\left(\frac{\epsilon^2}{\ell}\right)$. This completes the proof.
\end{proof}

\subsection{Proof of \cref{lemma:lamb-order-2}}\label{sec:lam_2}
\begin{proof}[Proof of \cref{lemma:lamb-order-2}.]
(i) First, we show that $\lambda_2$ is real.  To this end, we verify that $4(B_2^2+3C_2)^3-A_2^2\geq 0$. A direct expansion yields
\begin{align*}
& 4(B_2^2+3C_2)^3-A_2^2\\
=&27 c^4 (r_x - \ell)^2 \underbrace{\left(
\ell^2 \beta^2 - 
10 r_x \ell \alpha \beta r_y + 
4 \ell^2 \alpha \beta r_y + 
r_x^2 \alpha^2 r_y^2 + 
4 r_x \ell \alpha^2 r_y^2 + 
4 \ell^2 \alpha^2 r_y^2
\right)}_{=:E_2}+\cO(r_y^3).
\end{align*}
Since $c^4(r_x-\ell)^2>0$, it suffices to show that $E_2 > 0$. A straightforward calculation yields the lower bound
\begin{align}
E_2 &=(r_x \alpha r_y - 5\ell \beta)^2 - 25 \ell^2 \beta^2+4 \ell^2 \alpha \beta r_y + 
r_x^2 \alpha^2 r_y^2 + 
4 r_x \ell \alpha^2 r_y^2 + 
4 \ell^2 \alpha^2 r_y^2 \notag\\
&\geq (r_x \alpha r_y - 5\ell \beta)^2 - 25 \ell^2 \beta^2+ 4\ell^2 \alpha^2 r_y^2.
\label{eq:E-bound-2} 
\end{align}

Given our choice of $\beta$ by \cref{con-psgda}, we see that 
\begin{align*}
    \beta&\leq\frac{1}{384r_x\alpha}\cdot\frac{r_y\alpha^2(r_x-\ell-r_y)^3}{((r_{x}-\ell-r_y)+\alpha \ell(3r_{x}-2\ell-2r_y))^2}\\
    &\le\frac{1}{384r_x\alpha}\cdot\frac{r_y\alpha^2(r_x-\ell-r_y)^3}{(r_{x}-\ell-r_y)^2}
    =\frac{\alpha r_y}{384}\cdot\frac{r_x-\ell-r_y}{r_x}
    \leq\frac{\alpha r_y}{384},
\end{align*}
which implies
$
25\ell^2\beta^2\le \frac{25}{384^2}\ell^2\alpha^2 r_y^2.
$
Plugging this into \eqref{eq:E-bound-2} yields
\[
E_2\ge (r_x \alpha r_y - 5\ell \beta)^2+\left(4-\frac{25}{384^2}\right)\ell^2\alpha^2 r_y^2>0,
\]
and hence $4(B_2^2+3C_2)^3-A_2^2\geq0$.

Next, we examine the closed-form expression \cref{eq:lamb-2}, which involves the complex quantity
\[
D_2:=A_2+\sqrt{4(B_2^2+3C_2)^3-A_2^2}\,i.
\]
We show that the two cube-root terms in \cref{eq:lamb-2} are complex conjugates, which implies that $\lambda_2\in\R$. 
To this end, we rewrite \cref{eq:lamb-2} as
$
\lambda_2=\frac{1}{3}B_2+\Gamma_1+\Gamma_2,
$ 
where
\[
\Gamma_1\coloneqq \frac{\left( 1 + i \sqrt{3} \right) \left( B_2^2+3C_2\right)}{ 3 \cdot 2^{\frac{2}{3}} D_2^\frac{1}{3}}
\quad\text{and}\quad
\Gamma_2\coloneqq\frac{1}{6\cdot2^\frac{1}{3}} (1 - i \sqrt{3})D_2^\frac{1}{3}.
\]
To verify the conjugacy, we compute their product and magnitudes. First,
$
\Gamma_1\Gamma_2=\frac{B_2^2+3C_2}{9}>0.
$
Moreover, as $4(B_2^2+3C_2)^3-A_2^2\ge 0$, we have
\[
|D_2|=\sqrt{A_2^2+\left(4(B_2^2+3C_2)^3-A_2^2\right)}=2\left(\sqrt{B_2^2+3C_2}\right)^3.
\]
Therefore,
\[
|\Gamma_1|=\frac{2(B_2^2+3C_2)}{6\sqrt{B_2^2+3C_2}}=\frac{\sqrt{B_2^2+3C_2}}{3},~\mbox{and}~|\Gamma_2|=\frac{2\cdot2^\frac{1}{3}}{6\cdot2^{\frac{1}{3}}}\sqrt{B_2^2+3C_2}=\frac{\sqrt{B_2^2+3C_2}}{3}=|\Gamma_1|.
\]
Therefore, $\Gamma_1$ and $\Gamma_2$ are complex conjugates, and hence $\lambda_2\in\R$.



\vspace{1em}
(ii) Next, we establish that $\lambda_2<0$ and characterize its magnitude. Since $\Gamma_1$ and $\Gamma_2$ are complex conjugates, we can rewrite the eigenvalue as $\lambda_2=\frac{1}{3}B_2+2\textrm{Re}(\Gamma_2)$. To characterize the magnitude of $\textrm{Re}(\Gamma_2)$, we first analyze the order of $D_2$.
Recall $D_2=A_2+\sqrt{4(B_2^2+3C_2)^3-A_2^2}\,i$. Using the expansion of the discriminant term and the expression of $A_2$, we have
\[
\sqrt{4(B_2^2+3C_2)^3-A_2^2}=3\sqrt{3}\,c^2(r_x-\ell)\sqrt{E_2}+\cO(r_y^2).
\]
Therefore, we have 
\begin{align}
    D_2=&\,A_2+\sqrt{4(B^2+3C_2)^3-A_2^2}\,i \notag\\
    =&\,
    \underbrace{-3 c^2 (r_x \!-\! \ell) \Bigl(\!-2 r_x \beta \!+\! 2 c r_x^2 \beta \!-\! \ell \beta \!-\! 2 c r_x \ell \beta \!+\! r_x \alpha r_y \!+\! 8 \ell \alpha r_y \!-\! 6 c r_x \ell \alpha r_y \!+\! 6 c \ell^2 \alpha r_y \!\Bigr)}_{=:D_{2,1}} \notag\\
    &\,+i\underbrace{3\sqrt{3}\,c^2(r_x-\ell)\sqrt{E_2}}_{=:D_{2,2}}+2(c r_x-\ell c)^3+\cO(r_y^2). \label{eq:D-decom}
\end{align}
Now, with \cref{eq:D-decom}, we compute the order of $2\textrm{Re}(\Gamma_2)$. The direct computation yields 
\begin{align}
    2\textrm{Re}\left(\Gamma_2\right)&\overset{\cref{eq:D-decom}}{=}\textrm{Re}\left(\frac{(1-\sqrt{3}i)\left(2(c r_x-\ell c)^3+D_{2,1}+D_{2,2}i+\cO(r_y^2)\right)^\frac{1}{3}}{3*2^\frac{1}{3}}\right)\notag\\
    &=\frac{1}{3}(c r_x-\ell c)\cdot \textrm{Re}\left((1-\sqrt{3}i)\left(1+\frac{D_{2,1}+D_{2,2}i}{2(c r_x-\ell c)^3}+\cO(r_y^2)\right)^\frac{1}{3}\right)\notag\\
    &=\frac{1}{3}(c r_x-\ell c)\cdot \textrm{Re}\left((1-\sqrt{3}i)\left(1+\frac{D_{2,1}+D_{2,2}i}{6(c r_x-\ell c)^3}+\cO(r_y^2)\right)\right)\notag\\
    &=\frac{1}{3}(c r_x-\ell c)\left(1+\frac{D_{2,1}+\sqrt{3}D_{2,2}}{6(c r_x-\ell c)^3}+\cO(r_y^2)\right)\notag\\
    &=\frac{1}{3}(c r_x-\ell c)+\frac{D_{2,1}+\sqrt{3}D_{2,2}}{18(c r_x-\ell c)^2}+\cO(r_y^2),\label{eq:ReGamma}
\end{align}
where the third equality uses the Maclaurin expansion for $\left(1+\cdot\right)^\frac{1}{3}$.
Thus, $\lambda_2$ can be represented in the form of
\begin{align}
    \lambda_2&=\frac{1}{3}B_2+\frac{1}{3}(c r_x-\ell c)+\frac{D_{2,1}+\sqrt{3}D_{2,2}}{18(c r_x-\ell c)^2}+\cO(r_y^2)\notag\\
    &=\frac{-3\ell c\alpha r_y-\beta+c r_x\beta-\alpha r_y}{3}+\frac{1}{6(r_x-\ell)}\left(3\sqrt{E_2}-(-2 r_x \beta + 2 c r_x^2 \beta - \ell \beta \right. \notag\\
    &\left. \quad- 2 c r_x \ell \beta + r_x \alpha r_y + 8 \ell \alpha r_y - 6 c r_x \ell \alpha r_y + 6 c \ell^2 \alpha r_y)\right)+\cO(r_y^2)\notag\\
    &=\frac{\ell \beta - r_x \alpha r_y - 2 \ell \alpha r_y}{2(r_x-\ell)}+\frac{1}{2(r_x-\ell)}\sqrt{E_2}+\cO(r_y^2)\notag\\
    &=\frac{1}{2(r_x-\ell)}\left(\frac{E_2-(\ell \beta - r_x \alpha r_y - 2 \ell \alpha r_y)^2}{\sqrt{E_2}+ r_x \alpha r_y + 2 \ell \alpha r_y-\ell \beta }\right)+\cO(r_y^2)\notag\\
    &=-\frac{8 (r_x - \ell) \ell \alpha \beta r_y}{2(r_x-\ell)}\cdot\frac{1}{\sqrt{E_2}+ r_x \alpha r_y + 2 \ell \alpha r_y-\ell \beta}+\cO(r_y^2)\notag\\
    &=-\frac{4\ell\alpha\beta r_y}{\sqrt{E_2}+ r_x \alpha r_y + 2 \ell \alpha r_y-\ell \beta}+\cO(r_y^2),\label{result}
\end{align}
 where the first equality follows from \cref{eq:ReGamma}, the second equality follows from \cref{eq:D-decom}, the fourth equality follows from the definition of $E_2$.
In fact, \cref{result} implies that $\lambda_2<0$. Moreover, note that $\sqrt{E_2}=\Theta(r_y)$ and $r_x \alpha r_y + 2 \ell \alpha r_y-\ell \beta=\Theta(r_y)$. Combining \cref{con-psgda} with \eqref{result} yields $|\lambda_2|=\Theta\left(\frac{\epsilon}{\ell D_{\cY}}\right)$. This completes the proof.
\end{proof}

\subsection{Proof of \cref{gs-psgda-tight}}\label{sec:ite}
\begin{proof}[Proof of \cref{gs-psgda-tight}.]
We prove by induction that for all $t\in\mathbb N$,
\begin{equation}\label{eq:ind-claim-psgda}
x^t\in \left[0,\frac{r_yD_{\cY}}{b}\right],\qquad
y^t\in[0,D_{\mathcal Y}],\quad \mbox{and} \quad
(x^t,y^t,z^t)^\top=(1+\lambda_2)^t (x^0,y^0,z^0)^\top.
\end{equation}
\textbf{Base case ($t=0$):} By the initialization in \cref{eq:initial:psgda} and noting that
$
\tfrac{r_y D_{\mathcal Y}}{b}
= \Theta\left(\sqrt{\frac{\epsilon D_{\mathcal Y}}{\ell}}\right),
$
it follows that
$
0 < x^0 < \frac{r_y D_{\mathcal Y}}{b}.$ 
Similarly, using \cref{eq:initial:psgda} and $b=\Theta\left(\sqrt{\frac{\epsilon \ell}{D_{\mathcal Y}}}\right)$, one can verify that
\begin{equation*}
    0<y^0=\frac{c \ell \beta + c \ell \lambda_2 - c r_x 
    \lambda_2 - \beta \lambda_2 + bc r_x \beta \lambda_2 - 
    \lambda_2^2}{c(\beta+\lambda_2)}\cdot\frac{2(\beta
    +\lambda_2)\epsilon}{r_x \lambda_2}=\Theta\left(\sqrt{\frac{\epsilon D_{\cY}}{\ell}}\right),
\end{equation*}
which implies that $y^0\in[0,D_{\mathcal Y}]$. Therefore, \cref{eq:ind-claim-psgda} holds for $t=0$.

\textbf{Inductive step:} Assume that \cref{eq:ind-claim-psgda} holds for some $t\ge 0$. In particular,
$x^t\in[0,\frac{r_yD_{\cY}}{b}]$. By the definition of $f$ in \cref{eg:gs-alter-tight}, when $x^t$ lies in the middle branch of $f(\cdot,y)$, the \emph{Perturbed Smoothed GDA}  update admits the linear form
\[
(x^{t+1},y^{t+1},z^{t+1})^\top=(\bm{I}+\bm{M_2})(x^t,y^t,z^t)^\top.
\]
By construction, the initialization $(x^0,y^0,z^0)^\top$ is an eigenvector of $\bm{I}+\bm{M_2}$
associated with the eigenvalue $1+\lambda_2$, which implies that $(\bm{I}+\bm{M_2})(x^{0},y^{0},z^0)^\top=(1+\lambda_2)(x^{0},y^{0},z^0)^\top$. Hence 
\begin{align}
(x^{t+1},y^{t+1},z^{t+1})^\top=(\bm{I}+\bm{M_2})(x^{t},y^{t},z^{t})^\top&=(\bm{I}+\bm{M_2})\left((1+\lambda_2)^t(x^{0},y^{0},z^{0})^\top\right)\notag\\
&=(1+\lambda_2)^{t+1}(x^{0},y^{0},z^{0})^\top, 
\label{eq:multi-2}
\end{align}
where the second equality follows from the induction hypothesis $(x^t,y^t,z^t)^\top=(1+\lambda_2)^t (x^0,y^0,z^0)^\top$. Since $0<1+\lambda_2<1$ by \cref{lemma:lamb-order-2}, we have $(1+\lambda_2)^{t+1}\in(0,1)$, and thus
\begin{equation}
    \label{eq:bound-2}
0\le x^{t+1}\le x^0\le \frac{r_y D_{\mathcal Y}}{b},
\qquad
0\le y^{t+1}\le y^0\le D_{\mathcal Y}.
\end{equation}
Combining \cref{eq:multi-2,eq:bound-2}, we conclude that \cref{eq:ind-claim-psgda} holds for $t+1$, which completes the induction.

Consequently, \eqref{eq:ind-claim-psgda} holds for all $t\in\mathbb N$.  This completes the proof.
\end{proof}

\subsection{Initial Gap of \cref{eg:gs-alter-tight} under Perturbed Smoothed GDA}\label{sec:del22}
\begin{lemma}\label{lemma:Delta_22}Suppose that \emph{Perturbed Smoothed GDA}  on \cref{eg:gs-alter-tight} is initialized in the same way as  \cref{gs-psgda-tight}. Then the initial gap satisfies
    \[\Delta_{\Psi_2}=\cO\left(\frac{\epsilon^2}{\ell}\right).\]
\end{lemma}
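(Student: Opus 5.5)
We need to bound $\Delta_{\Psi_2}=\Psi_2^0(x^0,y^0,z^0)-\min_x\max_y f_0(x,y)$, where $\Psi_2^0=F_0-2d_0+2p_0$. By \cref{eq:minmax} the min-max value equals $0$. So it suffices to show $\Psi_2^0(x^0,y^0,z^0)=\cO(\epsilon^2/\ell)$. We write
\[
\Psi_2^0=\bigl(F_0(x^0,y^0,z^0)-d_0(y^0,z^0)\bigr)+\bigl(p_0(z^0)-d_0(y^0,z^0)\bigr)+p_0(z^0).
\]
The first two brackets are nonnegative, so we bound each of the three terms separately from above.

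\emph{Step 1: orders of the initial point.} From \cref{eq:initial:psgda}, \cref{lemma:lamb-order-2} ($|\lambda_2|=\Theta(\epsilon/(\ell D_{\cY}))$) and $\beta=\Theta(r_y/\ell)=\Theta(\epsilon/(\ell D_{\cY}))$ with $\beta\le \alpha r_y/384$, we have $\beta+\lambda_2$ comparable to $\lambda_2$ in magnitude. Hence:
\begin{itemize}
\item $x^0=\Theta(\epsilon/\ell)$;
\item $z^0=\frac{\beta(1+\lambda_2)}{\beta+\lambda_2}x^0=\cO(\epsilon/\ell)$;
\item $y^0=\Theta(\sqrt{\epsilon D_{\cY}/\ell})$, as verified in the proof of \cref{gs-psgda-tight};
\item $0\le x^0,z^0\le r_yD_{\cY}/b$, which keeps us in the middle branch.
\end{itemize}
A fully careful version would also check the sign of $z^0$; if it is negative, the left branch $f=0$ only makes bounds easier.

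\emph{Step 2: upper bound $F_0$ and $p_0$.} We have
\[
F_0(x^0,y^0,z^0)=-\tfrac{\ell}{2}(x^0)^2+bx^0y^0-\tfrac{r_y}{2}(y^0)^2+\tfrac{r_x}{2}(x^0-z^0)^2 .
\]
Each term is $\cO(\epsilon^2/\ell)$. In particular, $bx^0y^0=\cO\bigl(\sqrt{\epsilon\ell/D_{\cY}}\cdot\tfrac{\epsilon}{\ell}\cdot\sqrt{\epsilon D_{\cY}/\ell}\bigr)=\cO(\epsilon^2/\ell)$, and $r_x(x^0-z^0)^2=\cO(\ell\cdot\epsilon^2/\ell^2)$.

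For $p_0(z^0)$, take $x=z^0$ in the outer min and use the explicit $\Phi_0$ computed in \cref{lemma:Delta_1}. This gives $p_0(z^0)\le\Phi_0(z^0)\le \ell (z^0)^2=\cO(\epsilon^2/\ell)$.

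\emph{Step 3: lower bound $d_0(y^0,z^0)$.} This is the step I expect to be the main obstacle, since $d_0$ is a minimum over all $x\in\R$ of a piecewise function. Here
\[
d_0(y^0,z^0)=\min_x\Bigl\{f(x,y^0)+\tfrac{r_x}{2}(x-z^0)^2\Bigr\}-\tfrac{r_y}{2}(y^0)^2 .
\]
Since $y^0\ge0$, we have $f(x,y^0)\ge -\tfrac{\ell}{2}x^2\cdot\mathbb 1_{\{x\ge0\}}$ on all branches, up to the constant on the right branch, which is also $\ge -\ell r_y^2D_{\cY}^2/(2b^2)+r_yD_{\cY}y^0\ge -\tfrac{\ell}{2}x^2$ there. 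Because $r_x>3\ell$, this gives
\[
\min_x\Bigl\{-\tfrac{\ell}{2}x^2+\tfrac{r_x}{2}(x-z^0)^2\Bigr\}=-\frac{\ell r_x}{2(r_x-\ell)}(z^0)^2=-\cO(\ell (z^0)^2).
\]
Moreover, $r_y(y^0)^2=\cO\bigl(\tfrac{\epsilon}{D_{\cY}}\cdot\tfrac{\epsilon D_{\cY}}{\ell}\bigr)=\cO(\epsilon^2/\ell)$. Hence $-d_0(y^0,z^0)=\cO(\epsilon^2/\ell)$.

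Combining Steps 2 and 3, $\Psi_2^0=F_0-2d_0+2p_0=\cO(\epsilon^2/\ell)$, which gives the claim.
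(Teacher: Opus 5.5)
Your proof is correct, but it is organized differently from the paper's.

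\textbf{How the paper argues.} It evaluates $d_0(y^0,z^0)$ and $p_0(z^0)$ in closed form by solving the middle-branch quadratics. This gives $d_0(y^0,z^0)=-\frac{(by^0-r_xz^0)^2}{2(r_x-\ell)}+\frac{r_x}{2}(z^0)^2-\frac{r_y}{2}(y^0)^2$ and $p_0(z^0)=\frac{\ell r_x(z^0)^2}{r_x+2\ell}$. It then drops the negative terms of $F_0-2d_0+2p_0$. These are stated as equalities, which implicitly assumes the inner minimizers $\frac{r_xz^0-by^0}{r_x-\ell}$ and $\frac{r_xz^0}{r_x+2\ell}$ lie in $[0,r_yD_{\cY}/b]$. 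The paper does not check this.

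\textbf{How you argue.} You use only one-sided bounds:
\begin{itemize}
\item for $p_0$, the test point $x=z^0$ together with the global majorant $\Phi_0(x)\le \ell x^2$;
\item for $d_0$, the global minorant $f(x,y^0)\ge -\frac{\ell}{2}x^2$, which holds on all three branches once $y^0\ge 0$.
\end{itemize}
Only an upper bound on $\Delta_{\Psi_2}$ is needed, so this suffices. It also removes every branch-membership check except $x^0\in[0,r_yD_{\cY}/b]$ and $y^0\ge 0$, and both of those are supplied by \cref{gs-psgda-tight}. The paper's route produces exact values, which would matter if one also wanted a matching lower bound on the gap. Yours is shorter and does not depend on where the inner minimizers land.

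\textbf{A tightening for Step 1.} You assert that $\beta+\lambda_2$ is comparable to $\lambda_2$, but this is not derived. It does not follow from \cref{lemma:lamb-order-2} alone: that lemma only gives $|\lambda_2|=\Theta(\beta)$, which does not rule out cancellation. You would need the finer expansion of $\lambda_2$. You do not actually need the claim, though. From \eqref{eq:initial:psgda}:
\begin{itemize}
\item $|x^0|\le \frac{2(\beta+|\lambda_2|)\epsilon}{r_x|\lambda_2|}$;
\item the factor $\beta+\lambda_2$ cancels in $z^0$, so $|z^0|=\frac{2\beta(1+\lambda_2)\epsilon}{r_x|\lambda_2|}$.
\end{itemize}
Both are $\cO(\epsilon/\ell)$ using only $\beta=\cO(|\lambda_2|)$ and $r_x=\Theta(\ell)$, and these upper bounds are all your Steps 2 and 3 require.
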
 
\begin{proof}[Proof of \cref{lemma:Delta_22}.]
Recall from \cref{def:initial_gap} that
\[
\Delta_{\Psi_2}
:={\Psi_2^0}(x^0,y^0,z^0)-\min_{x\in\R}\max_{y\in[0,D_{\cY}]}f_0(x,y).
\]

Next, by \cref{eq:initial:psgda} we have $x^0=\Theta(\frac{\epsilon}{\ell})$, $z^0=\Theta(\frac{\epsilon}{\ell})$, and $y^0=\Theta(\frac{\epsilon}{b})$, and moreover $x^0\in[0,\frac{r_yD_{\cY}}{b}]$. Therefore,
 \begin{align*}
        &F_0(x^0,y^0,z^0)=-\frac{1}{2}\ell (x^0)^2+bx^0 y^0-\frac{1}{2}r_y (y^0)^2+\frac{r_x}{2}(x^0-z^0)^2,\\
        &d_0(y^0,z^0)=-\frac{(b y^0-r_x z^0)^2}{2(r_x-\ell)}+\frac{r_x}{2}(z^0)^2-\frac{r_y}{2}(y^0)^2,~\mbox{and}\\
        &p_0(z^0)=\frac{\ell r_x (z^0)^2}{r_x+2\ell}.
\end{align*}
Therefore, we can bound ${\Psi_2^0}(x^0,y^0,z^0)$ with
\begin{align*}
   & \, {\Psi_2^0}(x^0,y^0,z^0) \notag\\
    =
    &\, F_0(x^0,y^0,z^0)-2d_0(y^0,z^0)+2p_0(z^0)\notag\\
    =&\,-\frac{1}{2}\ell (x^0)^2+bx^0 y^0-\frac{1}{2}r_y (y^0)^2+\frac{r_x}{2}(x^0-z^0)^2\notag\\
    &\,-2\left(-\frac{(b y^0-r_x z^0)^2}{2(r_x-\ell)}+\frac{r_x}{2}(z^0)^2-\frac{r_y}{2}(y^0)^2\right)+\frac{2\ell r_x (z^0)^2}{r_x+2\ell}\notag\\
    \le&\,bx^0 y^0+\frac{r_x}{2}(x^0-z^0)^2+\frac{(b y^0-r_x z^0)^2}{r_x-\ell}+\frac{2\ell r_x (z^0)^2}{r_x+2\ell}=\cO\left(\frac{\epsilon^2}{\ell}\right),
\end{align*}
where the last equality follows from \cref{eq:initial:psgda,con-psgda}. Combining with \eqref{eq:minmax} yields the desired result. 
\end{proof}

\subsection{Proof of \cref{thm:tight-psgda}~(i)}\label{app:pf-tight-psgda-gs}
\begin{proof}[Proof of \cref{thm:tight-psgda}~(i).]
We initialize \emph{Perturbed Smoothed GDA}  as in \cref{gs-psgda-tight}, i.e., using \cref{eq:initial:psgda}.
If $(x^T,y^T)$ is an $\epsilon$-GS, then by definition,
\begin{align*}
\epsilon&\ge|\nabla_{x}f(x^T,y^T)|=|by^T-\ell x^T|\\
&=\left|\left(\frac{-c \ell \beta - c \ell \lambda_2 + c r_x \lambda_2 + \beta \lambda_2 - c r_x \beta \lambda_2 + \lambda_2^2}{c(-\beta-\lambda_2)}-\ell\right)x^T\right|\notag\\
&=\left|\frac{(c r_x (1 - \beta) + \beta + \lambda_2) \lambda_2}{c (\beta + \lambda_2)}x^T\right|\notag\\
&\ge\frac{r_x \lambda_2}{\beta+\lambda_2}|x^T|=\frac{r_x \lambda_2}{\beta+\lambda_2}(1+\lambda_2)^T|x^0|=2\epsilon(1+\lambda_2)^T.
\end{align*}
Therefore,
\[
(1+\lambda_2)^T \le \frac12
\quad\Longrightarrow\quad
T=\Omega\left(\frac{1}{|\lambda_2|}\right)
=\Omega\left(\frac{\ell D_{\cY}}{\epsilon}\right),
\]
where the last equality follows from \cref{lemma:lamb-order-2}. Finally, \cref{lemma:Delta_22} gives $\Delta_{\Psi_2}=\cO\left(\frac{\epsilon^2}{\ell}\right)$ under our initialization, and thus
\begin{align*}
    T&=\Omega\left(\frac{\ell D_{\cY}}{\epsilon}\right)=\Omega\left(\frac{\ell D_{\cY}}{\epsilon}\cdot\frac{\Delta_{\Psi_2}}{\epsilon^2/\ell}\right)=\Omega\left(\frac{\ell^2D_{\cY}\Delta_{\Psi_2}}{\epsilon^3}\right).
\end{align*}
This completes the proof.
\end{proof}

\subsection{$\ell$-Smoothness Property in \cref{eg:os-gda}}\label{sec:lem:smooth}
\begin{lemma}\label{lem:smooth}
   The function $f(x,y)$ in \cref{eg:os-gda} is $\ell$-smooth.
\end{lemma}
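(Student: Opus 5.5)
The plan is to verify \cref{l:smooth} directly from the explicit gradients of $f(x,y)=h(x)\,y$ on $\R\times[0,D_{\cY}]$. These gradients are $\nabla_x f(x,y)=h'(x)\,y$ and $\nabla_y f(x,y)=h(x)$. Write $k:=\frac{\ell}{D_{\cY}+1}$. Everything reduces to three scalar facts about $h$ in \cref{hfunc:altergda-tight-os}:
\begin{enumerate}[label=(\alph*)]
\item $h$ is continuously differentiable with $|h'|\le k$;
\item $h'$ is $k$-Lipschitz;
\item consequently $h$ is $k$-Lipschitz.
\end{enumerate}

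First I compute $h'$ piecewise, using that $h$ is even. On $|x|\le 1$ we have $h'(x)=kx$. On $1<|x|<2$ we have $h'(x)=k(2-|x|)\,\mathrm{sign}(x)$. On $|x|\ge 2$ we have $h'(x)=0$.

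I then check continuity of $h$ and $h'$ at the breakpoints $|x|=1$ and $|x|=2$. At $|x|=1$, both formulas give $h=k/2$ and $h'=\pm k$. At $|x|=2$, both give $h=k$ and $h'=0$. So $h'$ is a continuous piecewise-linear function whose slopes lie in $\{k,-k,0\}$. Hence $h'$ is $k$-Lipschitz, which is (b). Moreover $|h'|\le k$ on every piece, which is (a), and the mean value theorem then gives (c). The only point needing care is this breakpoint bookkeeping; I expect it to be the main, though mild, obstacle.

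Next I bound the $x$-gradient. For $x,x'\in\R$ and $y,y'\in[0,D_{\cY}]$, insert $h'(x')y$:
\[
|h'(x)y-h'(x')y'|\le |h'(x)-h'(x')|\,|y|+|h'(x')|\,|y-y'|\le kD_{\cY}|x-x'|+k|y-y'|.
\]
Since $kD_{\cY}\le \ell$ and $k\le\ell$, the right-hand side is at most $\ell(|x-x'|+|y-y'|)$. This is exactly why the coefficients were rescaled by $D_{\cY}+1$.

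For the $y$-gradient, (c) gives $|h(x)-h(x')|\le k|x-x'|\le \ell(|x-x'|+|y-y'|)$. Finally, $f$ is continuously differentiable because $h\in C^1$, and $f(x,\cdot)$ is linear, hence concave, so \cref{ass:bound} also holds. This completes the plan.
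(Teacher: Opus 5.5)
Your proposal is correct and follows essentially the paper's route. Both arguments bound $|h'|$ and the piecewise slope of $h'$ (i.e., $|h''|$) by $\ell/(D_{\cY}+1)$, use $y\in[0,D_{\cY}]$ to absorb the factor $D_{\cY}$, and pass from derivative bounds away from the breakpoints to Lipschitz continuity as in \cref{lemma-Lip}. If anything, you are more careful than the paper: you explicitly check continuity of $h$ and $h'$ at $|x|=1,2$ (which \cref{lemma-Lip} requires), and you carry out the triangle-inequality split that turns coordinatewise bounds into the joint bound of \cref{l:smooth}. The paper leaves both of these steps implicit.
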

Before establishing the smoothness of $f$, we first introduce a technical lemma that will be used in the proof of \cref{lem:smooth}. 
\begin{lemma}[Bounded derivative implies Lipschitz continuity]
    \label{lemma-Lip}
    Let $\phi:\mathbb{R}\to\mathbb{R}$ be continuous and piecewise $C^1$.
    If $|\phi'(t)|\le \ell$ wherever $\phi'$ exists, then $\phi$ is $\ell$-Lipschitz.
    \end{lemma}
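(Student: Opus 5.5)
The argument is the one-dimensional mean value inequality, applied piece by piece and then chained across the breakpoints. Since $\phi$ is piecewise $C^1$, there are finitely many (or locally finitely many) breakpoints $\cdots<t_{-1}<t_0<t_1<\cdots$. On each open interval $(t_i,t_{i+1})$ the function $\phi$ is $C^1$ with $|\phi'|\le \ell$, and $\phi$ is continuous on the closed interval $[t_i,t_{i+1}]$.

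First I will show that $\phi$ is $\ell$-Lipschitz on each closed piece. For $a\le b$ in $[t_i,t_{i+1}]$, the mean value theorem applies because $\phi$ is continuous on $[a,b]$ and differentiable on $(a,b)$. It gives some $\xi\in(a,b)$ with $\phi(b)-\phi(a)=\phi'(\xi)(b-a)$. Hence
\[
|\phi(b)-\phi(a)|\le \ell\,|b-a|.
\]

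Next I will chain the pieces together. Take arbitrary $s<u$ and list the breakpoints strictly between them as $s<\tau_1<\dots<\tau_m<u$; only finitely many occur, by local finiteness on the compact interval $[s,u]$. Each consecutive pair of points in $s,\tau_1,\dots,\tau_m,u$ lies in a single closed piece, so the previous step applies to it. The triangle inequality then gives
\[
|\phi(u)-\phi(s)|\le \sum_{j=0}^{m}|\phi(\tau_{j+1})-\phi(\tau_j)|\le \ell\sum_{j=0}^m(\tau_{j+1}-\tau_j)=\ell(u-s),
\]
where $\tau_0=s$ and $\tau_{m+1}=u$. The sum telescopes to $u-s$.

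There is no real obstacle here. The only point to make explicit is that continuity of $\phi$ at the breakpoints is what allows the mean value theorem to be used on closed pieces, even though $\phi'$ itself may fail to exist at the breakpoints.
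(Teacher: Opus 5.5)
Your proposal is correct and follows the paper's own argument: both apply the mean value theorem on each subinterval between consecutive non-differentiability points of $[s,u]$ and then sum with the triangle inequality so the lengths telescope to $u-s$. Your version additionally allows locally finite breakpoints and says explicitly that continuity at the breakpoints is what justifies the mean value theorem on the closed pieces. The paper uses this implicitly.
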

    \begin{proof}[Proof of \cref{lemma-Lip}.]
    Fix $t<t'$ and let $\{t_1,\dots,t_n\}$ be the (finite) set of points in $(t,t')$
    where $\phi$ is not differentiable. Define $t_0:=t$ and $t_{n+1}:=t'$.
    Then $\phi$ is differentiable on each interval $(t_i,t_{i+1})$.
    By the mean value theorem, for every $i=0,\dots,n$, there exists
    $\xi_i\in(t_i,t_{i+1})$ such that
    \[
    |\phi(t_{i+1})-\phi(t_i)| = |\phi'(\xi_i)|\,|t_{i+1}-t_i|
    \le \ell\,|t_{i+1}-t_i|.
    \]
    Summing over $i$ yields
    \[
    |\phi(t')-\phi(t)|
    \le \sum_{i=0}^n |\phi(t_{i+1})-\phi(t_i)|
    \le \ell \sum_{i=0}^n |t_{i+1}-t_i|
    = \ell|t'-t|,
    \]
    which proves that $\phi$ is $\ell$-Lipschitz. This completes the proof.
    \end{proof}
    With \cref{lemma-Lip} in hand, we are ready to prove \cref{lem:smooth}.

\begin{proof}[Proof of \cref{lem:smooth}.]
By \cref{lemma-Lip}, to prove \cref{lem:smooth}, it suffices to show that for all $x\in\R$ with $|x|\neq 1$ and $|x|\neq 2$,
    \begin{equation}
        \label{partialwithell}
         \max\{|\nabla_{xx}f(x,y)|,\,|\nabla_{xy}f(x,y)|,\,|\nabla_{yy}f(x,y)|\}\leq\ell.
    \end{equation}
   A direct calculation shows that the second-order derivatives are given by
   \begin{align*}
   |\nabla_{xx}f(x,y)|&=\frac{\ell}{(D_{\cY}+1)}|y|\leq\frac{\ell D_{\cY}}{D_{\cY}+1}\leq\ell, \,\,\,\,\,\forall \,x\in\R~\mbox{and}~|x|\neq 1\mbox{ or }2\\
   |\nabla_{xy}f(x,y)|&=
		\begin{cases}
			\frac{\ell}{(D_{\cY}+1)}|x|\leq\frac{\ell}{(D_{\cY}+1)} \leq \ell,&  \lvert x \rvert \le 1, \\
			\frac{\ell}{(D_{\cY}+1)}|2\textrm{sign}(x)-x| \leq\frac{\ell}{(D_{\cY}+1)} \leq \ell,& 1 < \lvert x \rvert \le 2, \\
			0\leq\ell, &  \lvert x \rvert >2,
		\end{cases}\\
    |\nabla_{yy}f(x,y)|&=0\leq\ell, \,\,\,\,\, \forall x\in\R. 
    \end{align*}
   Therefore, \cref{partialwithell} holds and we derive our desired property of $f$. This completes the proof.
   \end{proof}
\subsection{Initial Gap of \cref{eg:os-gda} under Perturbed GDA}\label{sec:lem:delta_11}
\begin{lemma}\label{lem:delta1}
Suppose that the initialization is chosen as $(x_0,y_0)=(\tfrac{3 D_{\mathcal{Y}}+ 2}{\ell D_{\mathcal{Y}}}\epsilon,D_{\cY})$. Then, it holds that 
\[\Delta_{\Psi_1}=\cO\left(\frac{D_{\cY}+1}{\ell D_{\cY}}\epsilon^2\right).\]
\end{lemma}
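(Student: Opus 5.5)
We need to bound $\Delta_{\Psi_1}=\Psi_1^0(x^0,y^0)-\min_x\max_{y\in[0,D_{\cY}]}f_0(x,y)$ for \cref{eg:os-gda} with $f_0(x,y)=h(x)y-\frac{r_y}{2}y^2$ and $(x^0,y^0)=(\tfrac{3D_{\cY}+2}{\ell D_{\cY}}\epsilon,D_{\cY})$. We proceed directly from \cref{def:initial_gap} with two separate computations: an upper bound on $\Psi_1^0(x^0,y^0)=2\max_{y}f_0(x^0,y)-f_0(x^0,y^0)$, and the exact value of the min-max term.

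\textbf{Min-max term.} Since $h\ge 0$ everywhere and $h(0)=0$, we have $\max_{y\in[0,D_{\cY}]}f_0(0,y)=\max_y\{-\tfrac{r_y}{2}y^2\}=0$, attained at $y=0$. For any $x$, choosing $y=0$ gives $\max_y f_0(x,y)\ge 0$. Therefore
\[
\min_{x}\max_{y\in[0,D_{\cY}]}f_0(x,y)=0 .
\]

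\textbf{Upper bound on $\Psi_1^0$.} Because $\epsilon$ is small enough that $|x^0|\le 1$ (as stipulated in the proof of \cref{thm:tight-alter}~(ii)), we have $h(x^0)=\frac{\ell}{2(D_{\cY}+1)}(x^0)^2$. Using the crude bound $\max_y\{h(x^0)y-\tfrac{r_y}{2}y^2\}\le h(x^0)D_{\cY}$, valid because $h(x^0)\ge0$ and $y\le D_{\cY}$, we get
\[
\Psi_1^0\le 2h(x^0)D_{\cY}-h(x^0)D_{\cY}+\tfrac{r_y}{2}D_{\cY}^2=h(x^0)D_{\cY}+\tfrac{r_y}{2}D_{\cY}^2 .
\]
The first term evaluates to
\[
\frac{\ell D_{\cY}}{2(D_{\cY}+1)}\cdot\frac{(3D_{\cY}+2)^2\epsilon^2}{\ell^2D_{\cY}^2}=\frac{(3D_{\cY}+2)^2}{2(D_{\cY}+1)D_{\cY}}\cdot\frac{\epsilon^2}{\ell}.
\]
Since $(3D_{\cY}+2)^2\le 9(D_{\cY}+1)^2$, this is $\cO\bigl(\frac{D_{\cY}+1}{\ell D_{\cY}}\epsilon^2\bigr)$.

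\textbf{The perturbation term (main obstacle).} It remains to handle $\tfrac{r_y}{2}D_{\cY}^2$, which is the only delicate point. With $r_y=\Theta\bigl(\frac{\epsilon^2}{\ell D_{\cY}^2}\bigr)$ from \cref{thm:pgda-nc-c-epi}~(ii), we get $\tfrac{r_y}{2}D_{\cY}^2=\Theta(\epsilon^2/\ell)$. Because $\frac{D_{\cY}+1}{D_{\cY}}\ge1$, this term is also $\cO\bigl(\frac{D_{\cY}+1}{\ell D_{\cY}}\epsilon^2\bigr)$, and it does not spoil the bound.

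Combining the two computations yields $\Delta_{\Psi_1}=\cO\bigl(\frac{D_{\cY}+1}{\ell D_{\cY}}\epsilon^2\bigr)$.
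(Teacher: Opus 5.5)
Your proof is correct and follows essentially the same route as the paper. You drop the perturbation to get $\Phi_0(x^0)\le h(x^0)D_{\cY}$, use $y^0=D_{\cY}$ to reach $\Psi_1^0\le h(x^0)D_{\cY}+\tfrac{r_y}{2}D_{\cY}^2$, and plug in $x^0$ and $r_y=\Theta(\epsilon^2/(\ell D_{\cY}^2))$. The only immaterial difference is that you show $\min_x\max_y f_0(x,y)=0$ exactly (via $y=0$ and $x=0$), whereas the paper only lower-bounds it by $-\tfrac{r_y}{2}D_{\cY}^2$; this leaves it an extra $\tfrac{r_y}{2}D_{\cY}^2$ of the same order.
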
  
\begin{proof}[Proof of \cref{lem:delta1}.]
Recall that ${\Delta}_{{\Psi_1}}={\Psi_1^0}(x^0,y^0)-\min_{x\in\R}\max_{y\in[0,D_{\cY}]}f_0(x,y)$, where $\Psi_1^0(x^0,y^0)=2\Phi_0(x^0)-f_0(x^0,y^0)$. 
    To bound $\Delta_{{\Psi_1}}$, we first compute $\Phi_0(x^0)$, which is given by
    \begin{align}
        \Phi_0(x^0)=&\,\max_{y\in[0,D_{\cY}]}f_0(x^0,y)\notag\\
        =&\,\max_{y\in[0,D_{\cY}]}\left(h(x^0)y-\frac{r_y}{2}y^2\right)\notag\\
        \leq&\,\max_{y\in[0,D_{\cY}]}h(x^0)y\notag\\
        =&\,\frac{\ell}{2(D_{\cY}+1)}(x^0)^2D_{\cY}.\label{eq:tilde-Phi-ub}
    \end{align}
    Thus, we bound the initial gap as follows:
    \begin{align*}
        {\Delta}_{{\Psi_1}}=&\,{\Psi_1^0}(x^0,y^0)-\min_{x\in\R}\max_{y\in[0,D_{\cY}]}f_0(x,y)\\
        =&\,2\Phi_0(x^0)-f_0(x^0,y^0)-\min_{x\in\R}\max_{y\in[0,D_{\cY}]}f_0(x,y)\\
         \le&\,\frac{\ell}{D_{\cY}+1}(x^0)^2D_{\cY}-\left(\frac{\ell}{2(D_{\cY}+1)}(x^0)^2y^0-\frac{r_y}{2}(y^0)^2\right)-\min_{x\in\R}\max_{y\in[0,D_{\cY}]}\left(h(x)y-\frac{r_y}{2}y^2\right)\\
        =&\,\left(\frac{\ell}{2(D_{\cY}+1)}(x^0)^2D_{\cY}+\frac{r_y}{2}D_{\cY}^2\right)-\min_{x\in\R}\max_{y\in[0,D_{\cY}]}\left(h(x)y-\frac{r_y}{2}y^2\right)\\
        \le&\,\left(\frac{\ell}{2(D_{\cY}+1)}(x^0)^2D_{\cY}+\frac{r_y}{2}D_{\cY}^2\right)-\min_{x\in\R}\max_{y\in[0,D_{\cY}]}h(x)y+\frac{r_y}{2}D_{\cY}^2\\
        =&\,\frac{\ell}{2(D_{\cY}+1)}(x^0)^2D_{\cY}+r_yD_{\cY}^2\\
        =&\,\Theta\left(\frac{2(3D_{\cY}+2)^2}{\ell(D_{\cY}+1)D_{\cY}}\epsilon^2+\frac{1}{\ell}\epsilon^2\right)\\
        =&\,\Theta\left(\frac{(D_{\cY}+1)}{\ell D_{\cY}}\epsilon^2\right).
    \end{align*}
    Here, the first inequality follows from \cref{eq:tilde-Phi-ub} and the equality $y^0=D_\cY$. Moreover, for any $x \in \R$, we have $\max_{y\in[0,D_{\cY}]}(h(x)y-\frac{r_y}{2}y^2)\ge \max_{y\in[0,D_{\cY}]}h(x)y-\frac{r_y}{2}D_\cY^2$.
    Therefore, taking the minimum over $x\in\R$ on both sides gives, 
    \[
        \min_{x\in\R}\max_{y\in[0,D_{\cY}]}\left(h(x)y-\frac{r_y}{2}y^2\right)\ge \min_{x\in\R}\max_{y\in[0,D_{\cY}]}h(x)y-\frac{r_y}{2}D_\cY^2.
    \] 
    Next, note that $h(x)\ge 0$ for all $x \in \R$, for each fixed $x$, the maximizer of the mapping $y\mapsto h(x)y$ over $[0,D_{\cY}]$ is $y^\star=D_\cY$, and hence $\min_{x\in\R}\max_{y\in[0,D_{\cY}]}h(x)y=0$, which leads to the fourth equality.  
    Finally, the fifth equality uses the expression of $x^0$ and $r_y$. This completes the proof. 
\end{proof}
\subsection{Proof of \cref{lemma:eigen-value-M}}\label{sec:lem:eig}
    \begin{proof}[Proof of \cref{lemma:eigen-value-M}.]
    First, we establish that $\lambda_3<0$. Starting from \eqref{lambda_OS}, we have   
   \begin{align}
       \lambda_3&=-\frac{A_3+B_3\beta}{2}+ \frac{1}{2}\sqrt{(A_3+B_3\beta)^2-4\ell c\beta\gamma}\notag\\
       &=-\frac{1}{2}\cdot\frac{4\ell c\beta\gamma}{A_3+B_3\beta+\sqrt{(A_3+B_3\beta)^2-4\ell c\beta\gamma}}\notag\\
       &=-\frac{2\ell c\gamma}{A_3+B_3\beta+\sqrt{(A_3+B_3\beta)^2-4\ell c\beta\gamma}}\beta.\label{eq:lamb-expression}
   \end{align}
To show that $\lambda_3<0$, because $A_3>0$ and $B_3>0$, it suffices to verify that 
\[(A_3+B_3\beta)^2-4\ell c\beta\gamma \ge 0.\] 
Indeed, we have
$
(A_3+B_3\beta)^2-4\ell c\beta\gamma
= A_3^2 + \cO(\beta),
$
and thus the above inequality holds for $\beta=\Theta\left(\frac{\epsilon^2}{\ell^2D_y^2}\right)$ sufficiently small.

Moreover, we estimate the magnitude of $\lambda_3$. By \eqref{eq:lamb-expression}, the numerator is of order $\Theta(\gamma)$, while the denominator is of order $\Theta(1)$ by \cref{con-psgda}. Therefore,
\[
|\lambda_3| = \cO(\gamma\beta)
= \cO\left(\frac{\gamma\epsilon^2}{\ell^2D_y^2}\right).
\]
This completes the proof. 
\end{proof}

    \subsection{Proof of \cref{x_0withz_0}}\label{sec:lem:x0}
    \begin{proof}[Proof of \cref{x_0withz_0}.]

One can verify that an eigenvector associated with $\lambda_3$ is given by 
\begin{align*}
\bm{v} 
  &= \left(
    \frac{1}{2(1 - \ell c\gamma - cr_x)\beta}
    \Bigl[
      -\ell c\gamma - cr_x + \beta - cr_x\beta 
    \Bigr. \right. \\
  &\quad \left. \left.
    + \sqrt{
        (\ell c\gamma + cr_x)^2
        - 2(\ell c\gamma + c^2\gamma\ell r_x + c^2r_x^2 - cr_x)\beta 
        + (cr_x - 1)^2\beta^2
      }
    \right],
    1
  \right) \\
  &= \left(
    \frac{1}{2(1 - A_3)\beta}
    \left[
      -A_3 + B_3\beta 
      + \sqrt{(A_3 - B_3\beta)^2 + 4cr_x\beta(1 - A_3)}
    \right],
    1
  \right).
\end{align*}
Let $v_1$ denote the first entry of $\bm{v}$ in the above expression. We next show that $v_1$ admits the representation in \cref{eq:v1-expression}. In particular, we have
	\begin{align*}
		v_1=\,&\frac{1}{2(1-A_3)\beta}\left(-A_3+B_3\beta+\sqrt{(A_3-B_3\beta)^2+4cr_x\beta(1-A_3)}\right)\\
        =\,&\frac{1}{2(1-A_3)\beta}\cdot\frac{4cr_x\beta(1-A_3)}{A_3-B_3\beta+\sqrt{(A_3-B_3\beta)^2+4cr_x\beta(1-A_3)}}\\
		=\,&\frac{2cr_x}{A_3-B_3\beta+\sqrt{(A_3-B_3\beta)^2+4cr_x\beta(1-A_3)}}\\
        =\,&\frac{2cr_x}{A_3+\cO(\beta)+A_3+\cO(\beta)}\\
        =\,&\frac{r_x}{r_x+\gamma\ell}+\cO(\beta),
	\end{align*}
where the fourth equality follows from $\beta$ is sufficiently small and the Maclaurin expansion of $(1+\cdot)^{-\frac{1}{2}}$.
The last equality uses the fact that $\beta$ is sufficiently small and the Maclaurin expansion of $(1+\cdot)^{-1}$. This completes the proof.
    \end{proof}
 \subsection{Initial Gap of \cref{eg:os-gda} under Perturbed Smoothed GDA}\label{sec:lem:del}
 \begin{lemma}\label{lemma:Delta-Psi2}
Suppose that \emph{Perturbed Smoothed GDA} applied to \cref{eg:os-gda} is initialized with $x^0=\frac{3 D_{\mathcal{Y}}+ 2}{\ell D_{\mathcal{Y}}}\epsilon, y^0=D_{\cY}~\mbox{and}~z^0=\frac{x^0}{v_1}$. Then the initial gap satisfies
    \[\Delta_{\Psi_2}=\cO\left(\frac{\epsilon^2}{\ell \gamma}\right).\]
\end{lemma}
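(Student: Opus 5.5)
We bound $\Delta_{\Psi_2}=\Psi_2^0(x^0,y^0,z^0)-\min_x\max_y f_0(x,y)$ by handling the two pieces separately, as in \cref{lem:delta1} and \cref{lemma:Delta_22}.

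\textbf{The min-max term.} We reuse the argument of \cref{lem:delta1}:
\[
\min_x\max_{y\in[0,D_{\cY}]}\bigl(h(x)y-\tfrac{r_y}{2}y^2\bigr)\ge \min_x\max_y h(x)y-\tfrac{r_y}{2}D_{\cY}^2=-\tfrac{r_y}{2}D_{\cY}^2 .
\]
Hence subtracting this term contributes at most $\tfrac{r_y}{2}D_{\cY}^2$. Since $r_y=\Theta(\epsilon^2/(\ell D_{\cY}^2))$, this is $\cO(\epsilon^2/\ell)\le\cO(\epsilon^2/(\ell\gamma))$, using $\gamma\le 1$.

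\textbf{Bounding $\Psi_2^0$.} Write
\[
\Psi_2^0(x^0,y^0,z^0)=F_0(x^0,y^0,z^0)-2d_0(y^0,z^0)+2p_0(z^0).
\]
We first record the size of the initial point. By \cref{x_0withz_0}, $v_1=\tfrac{r_x}{r_x+\gamma\ell}+\cO(\beta)\in[\tfrac12,1]$, because $r_x>3\ell$ and $\beta$ is small. So $z^0=x^0/v_1=\Theta(x^0)$, and the difference is
\[
|x^0-z^0|=\Theta\bigl(\tfrac{\gamma\ell}{r_x}\bigr)x^0=\cO(\gamma x^0).
\]
Also $x^0=\frac{3D_{\cY}+2}{\ell D_{\cY}}\epsilon=\Theta\bigl(\frac{\epsilon}{\ell\gamma}\bigr)$. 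To stay in the quadratic regime we need $x^0,z^0\le 1$; this holds for $\epsilon$ small enough, just as in \cref{lemma:xyz-fix-y}.

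On $|x|\le 1$ we have $h(x)=\frac{\ell}{2(D_{\cY}+1)}x^2$, so every quantity is an explicit quadratic:
\begin{itemize}
\item $F_0(x^0,D_{\cY},z^0)=\tfrac{\ell\gamma}{2}(x^0)^2-\tfrac{r_y}{2}D_{\cY}^2+\tfrac{r_x}{2}(x^0-z^0)^2$.
\item For fixed $y$, $d_0(y,z)=\min_x\bigl\{\tfrac{\ell y}{2(D_{\cY}+1)}x^2+\tfrac{r_x}{2}(x-z)^2\bigr\}-\tfrac{r_y}{2}y^2\ge -\tfrac{r_y}{2}D_{\cY}^2$, since $h\ge 0$ on $\R$ and $y\ge 0$.
\item Using $\max_y(h(x)y-\tfrac{r_y}{2}y^2)\le h(x)D_{\cY}$ and evaluating at $x=z^0$, we get $p_0(z^0)\le h(z^0)D_{\cY}=\tfrac{\ell\gamma}{2}(z^0)^2$.
\end{itemize}

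Combining these,
\[
\Psi_2^0\le \tfrac{\ell\gamma}{2}(x^0)^2+\tfrac{r_x}{2}(x^0-z^0)^2+\ell\gamma(z^0)^2+\tfrac{r_y}{2}D_{\cY}^2 .
\]
We now check each term.
\begin{itemize}
\item $\ell\gamma(x^0)^2=\ell\gamma\cdot\Theta(\epsilon^2/(\ell^2\gamma^2))=\Theta(\epsilon^2/(\ell\gamma))$, and the $z^0$ term is of the same order.
\item $r_x(x^0-z^0)^2=\cO(\ell\gamma^2(x^0)^2)=\cO(\epsilon^2/\ell)$.
\item $r_yD_{\cY}^2=\cO(\epsilon^2/\ell)$.
\end{itemize}
Summing, $\Delta_{\Psi_2}=\cO(\epsilon^2/(\ell\gamma))$.

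The main obstacle is ensuring the estimates use only one-sided bounds, so the exact minimizers in $d_0$ and $p_0$ never have to be computed. The lower bound $d_0\ge -\tfrac{r_y}{2}D_{\cY}^2$ and the choice $x=z^0$ in $p_0$ achieve this. A second point to handle is that $z^0$ may fall outside $[-1,1]$ if $v_1$ is small, which the estimate $v_1\ge\tfrac12$ rules out.
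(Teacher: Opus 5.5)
Your proof is correct and follows the paper's argument. You bound the min-max term below by $-\tfrac{r_y}{2}D_{\cY}^2$, expand $\Psi_2^0=F_0-2d_0+2p_0$ with $y^0=D_{\cY}$, and use \cref{x_0withz_0} to get $|x^0-z^0|=\cO(\gamma x^0)$. The one difference is in the $d_0,p_0$ terms. The paper notes that $\min_{x}\{h(x)D_{\cY}+\tfrac{r_x}{2}(x-z^0)^2\}$ appears both inside $d_0(y^0,z^0)$ and as an upper bound on $p_0(z^0)$, so it cancels and gives $-2d_0+2p_0\le r_yD_{\cY}^2$ directly. You bound the two pieces separately and pick up an extra $\ell\gamma(z^0)^2$, which is harmless because it has the same order $\Theta(\epsilon^2/(\ell\gamma))$ as the leading term; the cancellation would also make your checks $v_1\ge\tfrac12$ and $z^0\le 1$ unnecessary.
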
 
\begin{proof}[Proof of \cref{lemma:Delta-Psi2}.]
Recall that 
\begin{align*}
   &  \Delta_{\Psi_2}={\Psi_2^0}(x^0,y^0,z^0)-\min_{x\in\R}\max_{y\in[0,D_{\cY}]}f_0(x,y) \\
   & {\Psi_2^0}(x,y,z) = F_0(x,y,z) - 2\min_{x\in\R}F_0(x,y,z) + 2\min_{x\in\R}\max_{y\in[0,D_{\cY}]}F_0(x,y,z). 
\end{align*}
We first compute the explicit form of ${\Psi_2^0}(x,y,z) $. 
Recall that  $d_0(y^0,z^0)= \min_{x\in\R}F_0(x,y^0,z^0)$ and $p_0(z^0)= \min_{x\in\R}\max_{y\in[0,D_{\cY}]}F_0(x,y,z^0)$. 
Then, we have
    \begin{align*}
        &\,F_0(x^0,y^0,z^0)=\frac{\ell}{2(D_{\cY}+1)}(x^0)^2y^0+\frac{r_x}{2}(x^0-z^0)^2-\frac{r_y}{2}(y^0)^2,\\
        &\,d_0(y^0,z^0)=\min_{x\in\R}\left(h(x)y^0+\frac{r_x}{2}(x-z^0)^2-\frac{r_y}{2}(y^0)^2\right)\\
        &~~~~~~~~~~~~~~~=\min_{x\in\R}\left(h(x)D_{\cY}+\frac{r_x}{2}(x-z^0)^2\right)-\frac{r_y}{2}D_{\cY}^2,\\
        &\,p_0(z^0)=\min_{x\in\R}\left(\max_{y\in[0,D_{\cY}]}\left(h(x)y+\frac{r_x}{2}(x-z^0)^2-\frac{r_y}{2}y^2\right)\right)\\
        &~~~~~~~~~\,\leq\min_{x\in\R}\left(\max_{y\in[0,D_{\cY}]}\left(h(x)y+\frac{r_x}{2}(x-z^0)^2\right)\right)\\
        &~~~~~~~~~\,=\min_{x\in\R}\left(h(x)D_{\cY}+\frac{r_x}{2}(x-z^0)^2\right).
    \end{align*}
Consequently, we obtain
    \begin{align*}
    {\Psi_2^0}(x^0,y^0,z^0)&=F_0(x^0,y^0,z^0)-2d_0(y^0,z^0)+2p_0(z^0)\\
    &\le\frac{\ell}{2(D_{\cY}+1)}(x^0)^2y^0+\frac{r_x}{2}(x^0-z^0)^2+\frac{r_y}{2}(y^0)^2. 
    \end{align*}
Moreover, the initial gap $\Delta_{\Psi_2}$ satisfies
    \begin{align}
        \Delta_{\Psi_2}&={\Psi_2^0}(x^0,y^0,z^0)-\min_{x\in\R}\max_{y\in[0,D_{\cY}]}\left(f(x,y)-\frac{r_y}{2}y^2\right)\notag\\
        &\leq\frac{\ell}{2(D_{\cY}+1)}(x^0)^2y^0+\frac{r_x}{2}(x^0-z^0)^2+\frac{r_y}{2}(y^0)^2-\min_{x\in\R}\max_{y\in[0,D_{\cY}]}\left(f(x,y)-\frac{r_y}{2}D_{\cY}^2\right)\notag\\
        &=\frac{\ell}{2(D_{\cY}+1)}(x^0)^2D_{\cY}+\frac{r_x}{2}(x^0-z^0)^2+r_yD_{\cY}^2, \label{eq:Del_Psi_2}
    \end{align}
where the second equality uses that $\min_{x\in\R}\max_{y\in[0,D_{\cY}]}f(x,y)=0$.

It remains to bound the order of the terms above. By \cref{x_0withz_0}, we have $z^0-x^0=\Theta\left(\left(\frac{r_x+\gamma\ell}{r_x}-1\right)x^0\right)=\Theta\left(\gamma x^0\right)=\Theta\left(\frac{\epsilon}{\ell}\right)$. 
Substituting these estimates yields $\Delta_{\Psi_2}=\cO\left(\frac{D_{\cY}+1}{\ell D_{\cY}}\epsilon^2+\frac{\epsilon^2}{\ell}+\frac{\epsilon^2}{\ell}\right)=\cO\left(\frac{\epsilon^2}{\ell \gamma}\right)$.
This completes the proof. 
\end{proof}

\subsection{A Technical Lemma for \cref{thm:Smoothed-FOAM-epi}}
\begin{lemma} 
  \label{lem:closeness}
 For $\hat{\x}$ and $\hat{\y}$ defined in \cref{eq:x-step,eq:y-step},
  we have 
    \begin{equation*}
   \|\x^{t^\star+1}-\hat{\x}\|=\cO\left(\sqrt{\delta^{t^\star}}\right),\quad \text{and} \quad  \|\y^{t^\star+1}-\hat{\y}\|=\cO\left(\frac{\ell}{r_{\y}^{t^\star}}\sqrt{\delta^{t^\star}}\right). 
    \end{equation*}
\end{lemma}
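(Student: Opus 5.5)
We compare the single projected-gradient steps \cref{eq:x-step,eq:y-step} taken at $(\x^{t^\star+1},\y^{t^\star+1})$ with the same steps taken at the exact saddle point $(\x_{t^\star}^\star(\z^{t^\star}),\y_{t^\star}(\z^{t^\star}))$ of the SC-SC function $F_{t^\star}(\cdot,\cdot,\z^{t^\star})$. Write $(\x^\star,\y^\star)$ for this saddle point. Its first-order optimality conditions say it is a fixed point of both projected maps. That is,
\[
\x^\star=\proj_{\cX}\bigl(\x^\star-\alpha\nabla_{\x}F_{t^\star}(\x^\star,\y^\star,\z^{t^\star})\bigr),\qquad
\y^\star=\proj_{\cY}\bigl(\y^\star+c\nabla_{\y}F_{t^\star}(\x^\star,\y^\star,\z^{t^\star})\bigr).
\]
Set $\delta:=\delta^{t^\star}$. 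By \cref{FOAM}, $\|\x^{t^\star+1}-\x^\star\|\le\sqrt{\delta}$ and $\|\y^{t^\star+1}-\y^\star\|\le\sqrt{\delta}$.

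For the primal part, the projection is nonexpansive. The gradient $\nabla_{\x}F_{t^\star}(\cdot,\cdot,\z^{t^\star})$ is $(r_{\x}+\ell)$-Lipschitz in $\x$ and $\ell$-Lipschitz in $\y$. Hence
\[
\|\hat\x-\x^\star\|\le \|\x^{t^\star+1}-\x^\star\|+\alpha\bigl((r_{\x}+\ell)\|\x^{t^\star+1}-\x^\star\|+\ell\|\y^{t^\star+1}-\y^\star\|\bigr)=\cO(\sqrt{\delta}),
\]
because $\alpha=\Theta(1/\ell)$ and $r_{\x}=\Theta(\ell)$. The triangle inequality then gives $\|\x^{t^\star+1}-\hat\x\|\le\|\x^{t^\star+1}-\x^\star\|+\|\x^\star-\hat\x\|=\cO(\sqrt{\delta})$.

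The dual part uses the same argument. $\nabla_{\y}F_{t^\star}$ is $(\ell+r_{\y}^{t^\star})$-Lipschitz in $\y$ and $\ell$-Lipschitz in $\x$, and $c=\Theta(1/\ell)$. This gives $\|\hat\y-\y^\star\|=\cO(\sqrt\delta)$, and therefore
\[
\|\y^{t^\star+1}-\hat\y\|=\cO(\sqrt\delta)\le\cO\!\left(\frac{\ell}{r_{\y}^{t^\star}}\sqrt\delta\right),
\]
since $r_{\y}^{t^\star}\le\ell$ in the regime considered ($r_{\y}^{t^\star}=\cO(\epsilon)$).

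The main obstacle is interpreting ``$\delta$-accurate'' as a bound on the distance to the saddle point in both $\x$ and $\y$, which is exactly how \cref{FOAM} is stated. Suppose instead that \citep[Definition~1]{kovalev2022first} controls only a gradient-mapping or primal-type residual. Then one would first convert that residual into $\|\y^{t^\star+1}-\y^\star\|$ using the $r_{\y}^{t^\star}$-strong concavity of $F_{t^\star}(\x,\cdot,\z^{t^\star})$. This is an error bound for strongly monotone operators, and it costs a factor equal to the condition number, $(\ell+r_{\y}^{t^\star})/r_{\y}^{t^\star}=\cO(\ell/r_{\y}^{t^\star})$. That conversion is where the stated factor $\ell/r_{\y}^{t^\star}$ comes from. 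The rest of the argument is unchanged and still yields the claimed bounds.
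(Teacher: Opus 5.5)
Your argument is correct, and it follows a different and cleaner route than the paper. You compare both projected steps with the \emph{joint} saddle point $(\x^\star_{t^\star}(\z^{t^\star}),\y_{t^\star}(\z^{t^\star}))$. This point is a simultaneous fixed point of the two projected maps. Nonexpansiveness of the projections, together with the joint Lipschitz continuity of $\nabla_{\x}F_{t^\star}$ and $\nabla_{\y}F_{t^\star}$, then gives $\|\x^{t^\star+1}-\hat{\x}\|\le 4\sqrt{\delta^{t^\star}}$ and $\|\y^{t^\star+1}-\hat{\y}\|\le 4\sqrt{\delta^{t^\star}}$. This uses only the upper bounds $\alpha\le 1/(r_{\x}+\ell)$ and $c\le 1/(r_{\y}^{t^\star}+\ell)$.

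The paper instead compares $\hat{\x}$ with the partial minimizer $\x_{t^\star}(\y^{t^\star+1},\z^{t^\star})$, and $\hat{\y}$ with the partial maximizer $\y^\star_{t^\star}(\x^{t^\star+1})$. Each of these is a fixed point only when the other block is frozen at the current iterate. The paper then transports them back to the saddle point. For the primal part it uses the Lipschitz continuity of $\x_{t^\star}(\cdot,\z^{t^\star})$ \citep[Lemma~B.2]{zhang2020single}. For the dual part it uses the Lipschitz continuity of $\y^\star_{t^\star}(\cdot)$, whose constant is $\ell/r_{\y}^{t^\star}$ \citep[Lemma~4.3]{lin2020gradient}.

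That last step is exactly where the factor $\ell/r_{\y}^{t^\star}$ in the statement comes from. It is an artifact of the decomposition, not of a residual-to-distance conversion as your final paragraph speculates. Since a $\delta$-accurate solution is already a distance-to-saddle bound, as you used, that paragraph can be dropped.

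Your passage from $\cO(\sqrt{\delta^{t^\star}})$ to $\cO(\frac{\ell}{r_{\y}^{t^\star}}\sqrt{\delta^{t^\star}})$ needs $r_{\y}^{t^\star}\lesssim\ell$. The paper's bound is really $\cO((1+\ell/r_{\y}^{t^\star})\sqrt{\delta^{t^\star}})$, so it relies on the same assumption implicitly.

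What your route buys: it is self-contained, with no best-response Lipschitz lemmas needed, and it gives a dual bound independent of $r_{\y}^{t^\star}$. Downstream, this would remove the $\frac{\ell^2}{r_{\y}^{t^\star}}\sqrt{\delta^{t^\star}}$ terms in the stationarity estimates and permit a looser inner tolerance than the one in \cref{con-FOAM}. That only affects the logarithmic factor in the final complexity.
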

\begin{proof}[Proof of \cref{lem:closeness}.]
First of all, we have 
            \begin{align*}
            &\,\|\x^{t^\star+1}-\hat{\x}\|\\
            \le&\,\|\x^{t^\star+1}-{\x}_{t^\star}(\y^{t^\star+1},\z^{t^\star})\|+\|\hat{\x}-{\x}_{t^\star}(\y^{t^\star+1},\z^{t^\star})\|\\
            \leq&\, (2+\alpha(r_{\x}+\ell))\|\x^{t^\star+1}-{\x}_{t^\star}(\y^{t^\star+1},\z^{t^\star})\|\\
            \le&\,(2+\alpha(r_{\x}+\ell))\left(\|\x^{t^\star+1}-{\x}_{t^\star}({\y}_{t^\star}(\z^{t^\star}),\z^{t^\star})\|+\|{\x}_{t^\star}({\y}_{t^\star}(\z^{t^\star}),\z^{t^\star})-{\x}_{t^\star}(\y^{t^\star+1},\z^{t^\star})\|\right)\notag\\
            \le&\,(2+\alpha(r_{\x}+\ell))\!\left(\!\left\|\x^{t^\star+1}-\!{\x}_{t^\star }^\star(\z^{t^\star})\right\|+\frac{2(r_{\x}+\ell+r_{\y}^{t^\star})}{r_{\x}-\ell-r_{\y}^{t^\star}}\|\y^{t^\star+1}-{\y}_{t^\star}(\z^{t^\star})\|\right)
            =\cO\left(\sqrt{\delta^{t^\star}}\!\right),
            \end{align*}
            where the second inequality follows from the nonexpansiveness of $\proj_{\cX}(\cdot)$, the fact that ${\x}_{t^\star}(\y^{t^\star+1},\z^{t^\star}) = \proj_{\cX}({\x}_{t^\star}(\y^{t^\star+1},\z^{t^\star}) -\alpha\nabla_{\x} F_{t^\star}({\x}_{t^\star}(\y^{t^\star+1},\z^{t^\star}),\y^{t^\star+1},\z^{t^\star}) )$, and $(r_{\x}+\ell)$-Lipschitz continuity of $\nabla_{\x} F_{t^\star}(\cdot,\y^{t^\star+1},\z^{t^\star})$, the fourth inequality is from \citep[Lemma B.2]{zhang2020single}, and the last equality follows from \cref{FOAM,con-FOAM}, i.e., $r_{\x}>3\ell$, and $\alpha<\frac{1}{r_{\x}+\ell}$.
            
           We now turn to the dual part. By a similar argument, we have 
            \begin{align*}
            &\,\|\y^{t^\star+1}-\hat{\y}\|\\
            \leq&\,(2+c(r_{\y}^{t^\star}+\ell))\left(\left\|\y^{t^\star+1}-{\y}_{t^\star}^\star\!\left({\x}^\star_{t^\star}(\z^{t^\star})\right)\right\|+\left\|{\y}^\star_{t^\star}(\x^{t^\star+1})-{\y}^\star_{t^\star}\!\left({\x}_{t^\star}^\star(\z^{t^\star})\right)\right\|\right)\\
            \le&\,(2+c(r_{\y}^{t^\star}+\ell))\left(\|\y^{t^\star+1}\!-\!{\y}_{t^\star}(\z^{t^\star})\|\!+\!\frac{\ell}{r_{\y}^{t^\star}}\left\|\x^{t^\star+1}\!-\!{\x}_{t^\star}^\star(\z^{t^\star})\right\|\right)=\cO\left(\frac{\ell}{r_{\y}^{t^\star}}\sqrt{\delta^{t^\star}}\right),
            \end{align*}
            where the second inequality follows from that $\y_{t^\star}(\z)=\y^\star_{t^\star}(\x^\star_{t^\star}(\z))$ for all $\z$ and ${\y}^\star_{t^\star}(\cdot)$ is $\frac{\ell}{ r_{\y}^{t^\star}}$-smooth \citep[Lemma 4.3]{lin2020gradient}, and the last equality comes from \cref{FOAM} and $c<\frac{1}{r_{\y}^{t^\star}+\ell}$. 
            This completes the proof.
\end{proof}

\section{Convergence Analysis of Smoothed GDA}\label{app:smgda}
In this section, we refine the iteration–complexity result of \citep{zhang2020single} by making explicit its dependence on the smoothness constant $\ell$ and the diameter $D_\cY$, which  reveals  how these structural problem parameters dictate the convergence rate. Moreover, unlike \citep{zhang2020single}, which only establishes guarantees for $\epsilon$-GS, we further characterize how to attain an $\epsilon$-OS solution. The mechanism for achieving $\epsilon$-OS differs from the argument in \citep{li2025nonsmooth}.

Following our unified analysis framework, we adopt the same Lyapunov function as in \cref{Lya-fcn} under the restriction $r_{\y}^t=0$, i.e., $f_t=f$.  To proceed, we recall several parameter definitions originally introduced in \citep{zhang2020single}, which will repeatedly appear in our convergence analysis.


\begin{definition}[Parameters conditions for  Smoothed GDA]\label{def:sgda}
Under \cref{con-psgda},  we set the parameters as follows:
\begin{align}
&\sigma_2:=\frac{2(r_{\x}+\ell)}{r_{\x}-\ell}=\Theta(1),\notag\\
&\rho_1:=384r_{\x}D_{\cY}\frac{1+\alpha^t\ell+\alpha^t\ell\sigma_2}{r_{\x}-\ell}=\Theta(D_{\cY}),\notag\\
        &\rho_2:=384D_{\cY}\rho_1\frac{1+\alpha^t\ell+\alpha^t\ell\sigma_2}{\alpha^t(r_{\x}-\ell)}=\Theta\left(D_{\cY}^2\right),\notag\\
        &\rho_3:=384r_{\x}c^tD_{\cY}\rho_1\frac{1+\alpha^t\ell+\alpha^t\ell\sigma_2}{\alpha^t(r_{\x}-\ell)}=\Theta\left(D_{\cY}^2\right),\notag
\end{align}
\end{definition}
We now present the iteration–complexity result for \emph{Smoothed GDA}, together with its proof.
\begin{theorem}[Iteration complexity of {\it Smoothed GDA}]
\label{thm:sgda-epi}
Let 
$\{(\x^{t},\y^{t},\z^{t})\}_{t\ge 0}$ be generated by {\it Smoothed GDA}.
For any $\epsilon>0$, if the step sizes are chosen according to \cref{con-psgda},
then after at most 
$t=\cO\left(\tfrac{\ell^3 D_{\cY}^2 \Delta_{\Psi_2}}{\epsilon^4}\right)$ 
iterations, $(\x^{t},\y^{t})$ is an $\epsilon$-GS and $\x^{t}$ is an $\epsilon$-OS for 
problem \cref{eq:prob}.
\end{theorem}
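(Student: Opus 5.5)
Since $r_{\y}^t=0$, the perturbation error terms in \cref{prop:decrease} vanish and $f_t=f$. Applying \cref{prop:decrease} with $r_{\y}^t\equiv 0$ gives
\[
\Psi_2^{t+1}-\Psi_2^t\le -\tfrac{1}{8c}\|\x^{t+1}-\x^t\|^2-\tfrac{1}{8\alpha}\|\y^t-\y_+^t(\z^t)\|^2-\tfrac{r_{\x}\beta}{8}\|\z^t-\x^{t+1}\|^2+24r_{\x}\beta\,\|\x^\star(\z^t)-\x(\y_+^t(\z^t),\z^t)\|^2 .
\]
The main obstacle is the last term: \cref{lemma:dualerror} is unavailable because $\omega^t\propto (r_{\y}^t)^{-1/2}$ blows up. I would therefore follow the two-case argument of \citep{zhang2020single}, using the parameters $\rho_1,\rho_2,\rho_3$ of \cref{def:sgda}.

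For the error term I would use the non-homogeneous dual error bound of \citep[Lemma B.7 and B.8]{zhang2020single}. In our notation it gives
\[
\|\x^\star(\z)-\x(\y_+(\z),\z)\|^2\le \rho_1\|\y-\y_+(\z)\|
\]
together with a refined estimate in terms of $\|\x(\y_+,\z)-\z\|$. Two cases then arise. In the first case $\tfrac{1}{2}\max\{\cdots\}\le \|\x^{t+1}-\z^t\|^2$, and the error is absorbed by $-\tfrac{r_{\x}\beta}{8}\|\z^t-\x^{t+1}\|^2$, up to a $\beta$-small factor. In the second case the proximal residual is small, and the error is bounded via $\rho_2,\rho_3$ by a multiple of $\beta D_{\cY}$ times the dual residual. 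Young's inequality then reduces this to $\tfrac{1}{16\alpha}\|\y^t-\y_+^t\|^2$ plus an additive error of order $\beta^2\rho_2\ell$, possibly with a $\beta \rho_1\cdot$residual term. Summing over $t$ yields
\[
\frac1T\sum_t\Big(\tfrac{1}{16c}\|\Delta\x\|^2+\tfrac{1}{16\alpha}\|\y^t-\y_+^t\|^2+\tfrac{r_{\x}\beta}{16}\|\z^t-\x^{t+1}\|^2\Big)\le \frac{\Delta_{\Psi_2}}{T}+\cO(\ell\beta^2 D_{\cY}^2),
\]
using $\Psi_2^t\ge \min\max f$. The accounting here must be done carefully so that the additive error scales as $\beta\cdot\epsilon^2/\ell$.

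Next, with $\beta=\Theta(\epsilon^2/(\ell^2D_{\cY}^2))$ and $c,\alpha=\Theta(1/\ell)$, I pick an index $t\ge\lfloor T/2\rfloor$ attaining the average. This gives $\|\Delta\x\|,\|\y^t-\y_+^t\|=\cO(\epsilon/\ell)$ and $\|\z^t-\x^{t+1}\|^2=\cO(\Delta_{\Psi_2}/(\ell\beta T)+\epsilon^2/\ell^2)$, which is $\cO(\epsilon^2/\ell^2)$ once $T=\Theta(\ell^3D_{\cY}^2\Delta_{\Psi_2}/\epsilon^4)$. The GS bound follows exactly as in \cref{smooth-y} and \cref{opt-x-up}, now without the $r_{\y}D_{\cY}$ term, since \citep[Lemma~B.9]{zhang2020single} still applies.

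For OS I would not rely on \cref{lemma2}, because its $D_{\cY}\cdot$dist term loses a square root. Instead I would use $\nabla p(\z)=r_{\x}(\z-\x^\star(\z))$ and the relation between $p$ and the Moreau envelope $\Phi_{1/r_{\x}}$. Concretely, I bound $\|\x^{t+1}-\prox_{\Phi/r_{\x}}(\x^{t+1})\|$ by $\|\x^{t+1}-\z^t\|+\|\z^t-\x^\star(\z^t)\|$, using nonexpansiveness of the prox map. The term $\|\z^t-\x^\star(\z^t)\|$ is controlled by $\|\z^t-\x^{t+1}\|+\|\x^{t+1}-\x(\y_+^t,\z^t)\|+\|\x(\y_+^t,\z^t)-\x^\star(\z^t)\|$, where the last piece is again the error-bound term, now $\cO(\epsilon/\ell)$. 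Finally, I convert from parameter $1/r_{\x}$ to $1/(2\ell)$ using monotonicity of the prox residual in the parameter, with constant factors since $r_{\x}=\Theta(\ell)$. This yields $\epsilon$-OS at $\x^{t+1}$ within the same $T$.
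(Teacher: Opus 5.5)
Your route works and differs from the paper's in two places. Two steps need repair as written.

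\textbf{Error term.} The paper handles $24r_{\x}\beta\|\x^\star(\z^t)-\x(\y_+^t(\z^t),\z^t)\|^2$ with the two-case split of \citep{zhang2020single}. Either this term dominates at some $t$, and then (B.63) gives $\|\x^{t+1}-\x^t\|\le\sqrt{\rho_3}\beta$, $\|\y^t-\y_+^t(\z^t)\|\le\rho_1\beta$, $\|\z^t-\x^{t+1}\|\le\sqrt{\rho_2\beta}$ at that iterate. Or it never dominates, and one gets a clean $\frac{1}{16}$-descent.

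Your ``first case'' is not justified: nothing bounds $\|\x^\star(\z)-\x(\y_+(\z),\z)\|$ by $\|\z^t-\x^{t+1}\|$, so the error cannot be absorbed into $-\frac{r_{\x}\beta}{8}\|\z^t-\x^{t+1}\|^2$. It is also unnecessary. The non-homogeneous bound $\|\x^\star(\z)-\x(\y_+(\z),\z)\|^2\le\kappa\|\y-\y_+(\z)\|$, with $\kappa=\Theta(D_{\cY})$ (the bound behind $\rho_1$), holds for every $\y\in\cY$ and every $\z$, with no condition on the proximal residual. Set $a=\|\y^t-\y_+^t(\z^t)\|$. Young's inequality $24r_{\x}\beta\kappa\,a\le\frac{a^2}{16\alpha}+4\alpha(24r_{\x}\beta\kappa)^2$ then gives, at every $t$,
\[
\Psi_2^t-\Psi_2^{t+1}\ge\frac{1}{8c}\|\x^{t+1}-\x^t\|^2+\frac{1}{16\alpha}\|\y^t-\y_+^t(\z^t)\|^2+\frac{r_{\x}\beta}{8}\|\z^t-\x^{t+1}\|^2-4\alpha(24r_{\x}\kappa)^2\beta^2,
\]
where $4\alpha(24r_{\x}\kappa)^2=\Theta(\ell D_{\cY}^2)$. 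Telescoping with $\beta=\Theta(\sqrt{\Delta_{\Psi_2}/(\ell D_{\cY}^2T)})$ reproduces exactly the residual bounds \eqref{smooth-xyz}. This single per-iteration inequality is cleaner than the case split and fits the paper's Lyapunov framework. For the finite-time rate the two approaches are equivalent.

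\textbf{Optimization stationarity.} Your stated bounds $\|\x^{t+1}-\x^t\|,\|\y^t-\y_+^t\|=\cO(\epsilon/\ell)$ are too weak for your own OS step. With them, the error-bound piece is only $\cO(\sqrt{D_{\cY}\epsilon/\ell})$, because $\|\x^\star-\x(\y_+)\|^2\lesssim D_{\cY}\|\y-\y_+\|$ has the same square-root structure as the $\frac{2D_{\cY}}{\ell}\dist$ term in \cref{lemma2}. Your summed inequality actually gives $\cO(\epsilon^2/(\ell^2D_{\cY}))$ for both residuals, since $\Delta_{\Psi_2}/T$ and $\ell\beta^2D_{\cY}^2$ are both $\Theta(\epsilon^4/(\ell^3D_{\cY}^2))$. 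With that bound your Moreau-envelope argument closes, using three facts:
\begin{itemize}
\item $\x^\star(\z)=\prox_{\frac{1}{r_{\x}}\Phi}(\z)$;
\item this prox is $\frac{r_{\x}}{r_{\x}-\ell}$-Lipschitz, not nonexpansive, since $\Phi$ is only $\ell$-weakly convex;
\item $\|\x-\prox_{\frac{1}{2\ell}\Phi}(\x)\|\le\frac{r_{\x}-\ell}{\ell}\|\x-\prox_{\frac{1}{r_{\x}}\Phi}(\x)\|$, obtained by comparing the two prox subproblems through the $\ell$-strong convexity of $\Phi+\ell\|\cdot-\x\|^2$.
\end{itemize}

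By the same token, your reason for avoiding \cref{lemma2} is mistaken. The dual GS residual is $\cO(\sqrt{\ell\Delta_{\Psi_2}/T})=\cO(\epsilon^2/(\ell D_{\cY}))$, so $\frac{2D_{\cY}}{\ell}\dist=\cO(\epsilon^2/\ell^2)$. The paper simply plugs its GS residuals into \cref{lemma2}. Your route exploits the identity $p=\Phi_{1/r_{\x}}$ specific to this algorithm, while the paper's is a generic GS-to-OS conversion it reuses for all its algorithms.
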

\begin{proof}[Proof of \cref{thm:sgda-epi}.]
In the analysis below, we set 
$\beta^t = \Theta\left(\sqrt{\tfrac{\Delta_{\Psi_2}}{\ell D_{\cY}^2 T}}\right)$.
Recall from \citep[Proposition~4.1]{zhang2020single} that
$\Psi_2^t$ satisfies the basic descent bound
\begin{align}
\Psi_{2}^t-\Psi_{2}^{t+1}\ge\ &  \frac{1}{8c^t}\|\x^{t+1}-\x^{t}\|^{2}+\frac{1}{8\alpha^t}\|\y^{t}-\y_{+}^t(\z^{t})\|^2 +\frac{r_{\x}\beta^t}{8}\|\z^{t}-\x^{t+1}\|^{2} \notag\\
& - 24r_{\x}\beta^t\left\|\x_{t}^\star(\z^{t})-\x_{t}(\y_{+}^t(\z^{t}), \z^{t})\right\|^{2}.\label{bas-dec-psi}
\end{align}
This descent inequality motivates a case-based argument for bounding the
progress of each component.
There are two complementary situations, which we treat separately.

(i): There exists some $t\in\{0,1,\dots,T-1\}$ such that
\begin{align*}
    &\,\frac{1}{2}\max\left\{\frac{1}{8c^t}\|\x^{t+1}-\x^{t}\|^2,\frac{1}{8\alpha^t}\|\y^{t}-\y_{+}^t(\z^{t})\|^2,\frac{r_{\x}\beta^t}{8}\|\z^{t}-\x^{t+1}\|^2\right\}\\
    \leq&\,24r_{\x}\beta^t\left\|\x^\star_t(\z^{t})-\x_{t}(\y_{+}^t(\z^{t}),\z^{t})\right\|^2.
\end{align*}
Thanks to \citep[Theorem 3.4 (B.63)]{zhang2020single}, we have 
\begin{equation}
\label{smooth-xyz-1}
    \,\|\x^{t+1}-\x^{t}\|\leq\sqrt{\rho_3}\beta^t,\,\|\y^{t}-\y_{+}^t(\z^{t})\|\leq\rho_1\beta^t,\,\text{and}\,\|\z^{t}-\x^{t+1}\|\leq\sqrt{\rho_2\beta^t}. 
\end{equation}
(ii): For any $t\in\{0,1,\cdots,T-1\}$, we have
\begin{align}
    &\,\frac{1}{2}\max\left\{\frac{1}{8c^t}\|\x^{t+1}-\x^{t}\|^2,\frac{1}{8\alpha^t}\|\y^{t}-\y_{+}^t(\z^{t})\|^2,\frac{r_{\x}\beta^t}{8}\|\z^{t}-\x^{t+1}\|^2\right\}\notag\\
    \geq&\,24r_{\x}\beta^t\left\|\x^\star_t(\z^{t})-\x_{t}(\y_{+}^t(\z^{t}),\z^{t})\right\|^2.\label{case2}
\end{align}
By combining \cref{bas-dec-psi,case2}, we obtain that for any $t\in\{0,1,\cdots,T-1\}$ it holds
\begin{align*}
    {\Psi}_{2}^t-{\Psi}_{2}^{t+1}
    &\ge \frac{1}{16c^t}\|\x^{t+1}-\x^{t}\|^2+\frac{1}{16\alpha^t}\|\y^{t}-\y_{+}^t(\z^{t})\|^2+\frac{r_{\x}\beta^t}{16}\|\z^{t}-\x^{t+1}\|^2.
\end{align*}
 Thus, by \citep[Lemma~B.1]{zhang2020single}, for any integer $T>0$, there exists an index $t\in T$ such that
\begin{equation}
\label{smooth-xyz-2}
    \!\!\|\x^{t+1}\!-\!\x^{t}\| \!=\! \mathcal{O}\!\left(\!\sqrt{\frac{c^t\Delta_{\Psi_2}}{T}}\right)\!,
        \|\y^{t}\!-\!\y^{t}_+(\z^{t})\| \!=\!  \mathcal{O}\!\left(\!\sqrt{\frac{\alpha^t\Delta_{\Psi_2}}{T}}\right)\!, \|\z^{t}\!-\!\x^{t+1}\| \!=\!
    \mathcal{O}\!\left(\!\sqrt{\frac{\Delta_{\Psi_2}}{r_{\x}\beta^t T}}\right)\!.
    \end{equation}
   Therefore, combining the two cases and substituting the parameter choices from \cref{def:sgda,con-psgda} into \cref{smooth-xyz-1,smooth-xyz-2}, we obtain that there exists some $t\in[T]$ such that
\begin{equation}
\label{smooth-xyz}
    \|\x^{t+1}\!-\!\x^{t}\| \!=\! \mathcal{O}\!\left(\!\sqrt{\frac{\Delta_{\Psi_2}}{\ell T}}\right)\!,
        \|\y^{t}\!-\!\y^{t}_+(\z^{t})\| \!=\!  \mathcal{O}\!\left(\!\sqrt{\frac{\Delta_{\Psi_2}}{\ell T}}\right)\!, \|\z^{t}\!-\!\x^{t+1}\| \!=\!
    \mathcal{O}\!\left(\!\sqrt[4]{\frac{D_{\cY}^2\Delta_{\Psi_2}}{\ell T}}\right)\!.
    \end{equation}
Then, the necessary optimality condition of $\y$-update yields
\begin{align}
     &\,{\rm dist}(\bz,-\nabla_{\y} f(\x^{t+1}, \y^{t+1})+\partial\iota_{\mathcal{Y}}(\y^{t+1}))\notag\\
      \leq &\, \left(\frac{1}{\alpha^t}+\ell\right) \left(\|\y^{t}-\y_{+}^t(\z^{t})\|+\alpha^t\ell \frac{1+c^t(r_{\x}-\ell)}{c^t(r_{\x}-\ell)}\|\x^{t+1}-\x^{t}\|\right)\label{y-gap}\\
      =  &\,\cO\left(\sqrt{\frac{\ell\Delta_{\Psi_2}}{T}}\right),\label{sm-gs-1}
 \end{align}
where the first inequality is obtained by applying \cref{smooth-y} with \(r_{\y}^t=0\), 
and the last inequality follows from \cref{smooth-xyz,con-psgda}, namely 
$\alpha^t = \Theta(\frac{1}{\ell})$. Moreover, from \cref{opt-x-up}, we have 
\begin{align}
&{\rm dist}(\bz,\nabla_{\x} f(\x^{t+1}, \y^{t+1}) +\partial\iota_{\mathcal{X}}(\x^{t+1})) \notag\\
 \leq& \, \left(\frac{1}{c^t}\!+\!\ell\right) \|\x^{t+1}\!-\!\x^{t}\| \!+\! \ell\!  \left(\|\y^{t}\!-\!\y_{+}^t(\z^{t})\|\!+\!\alpha^t\ell \frac{1\!+\!c^t(r_{\x}\!-\!\ell)}{c^t(r_{\x}\!-\!\ell)}\|\x^{t+1}\!-\!\x^{t}\|\right) \!+\! r_{\x}\|\x^{t+1}\!-\!\z^{t}\| \notag\\
 =&\,\cO\left(\sqrt[4]{\frac{\ell^3 D_{\cY}^2 \Delta_{\Psi_2}}{T}}\right),\label{sm-gs-2}
\end{align}
where the last inequality comes from \cref{smooth-xyz,con-psgda}, i.e, $c^t=\Theta(\tfrac{1}{\ell}),~r_{\x}=\Theta(\ell)$.
Combining \cref{sm-gs-1,sm-gs-2}, we conclude that $(\x^{t+1},\y^{t+1})$ is an $\epsilon$-GS.  
Rewriting the bound in terms of~$\epsilon$ shows that achieving an $\epsilon$-GS requires
$
T=\cO\left(\frac{\ell^3 D_{\cY}^2 \Delta_{\Psi_2}}{\epsilon^4}\right),
$
which corresponds to choosing $\beta^t=\Theta\left(\tfrac{\epsilon^{2}}{\ell^2 D_{\cY}^{2}}\right)$.

For the OS analysis, \cref{lemma2} allows us to replicate the argument in the proof of \cref{thm:pgda-nc-c-epi}~(ii), with \cref{sm-gs-1,sm-gs-2} plugged in accordingly, which yields, 
\[2\ell\left\|\prox_{\frac{1}{2\ell} \Phi}(\x^{t+1})-\x^{t+1}\right\|= \cO\left(\sqrt[4]{\frac{\ell^3D_{\cY}^2 \Delta_{\Psi_2}}{ T}}\right). \]
Following the similar argument, we will need at least $T=\cO\left(\frac{\ell^3D_{\cY}^2\Delta_{\Psi_2}}{\epsilon^4}\right)$ to reach an $\epsilon$-OS if we choose $\beta^t = \Theta\left(\tfrac{\epsilon^2}{\ell^2 D^2_{\mathcal{Y}}}\right)$.
This completes the proof.
\end{proof}

\begin{proof}[Proof of \cref{thm:sgda-epi-tight}.]
Similar to \cref{sec:analysis}, we consider \emph{Smoothed GDA} with constant choice of $r^t_{\y}$ for simplicity and drop iteration superscripts. We write $r_{y}^t = r_{y}$, and $\beta^t = \beta$.

(i) We consider the following hard  instance: 
    \begin{gather*}
		f(x,y)=
		\begin{cases}
            -\frac{r_y}{2}y^2,&  ~\mbox{if}~ x<0,\\
			-\frac{1}{2}\ell x^2+bxy-\frac{r_y}{2}y^2, &  ~\mbox{if}~ 0\le x  \leq \frac{r_y D_{\cY}  }{b}, \\
			-\frac{\ell r_y^2 D_{\cY}^2}{2b^2}+r_yD_{\cY}y-\frac{r_y}{2}y^2, &~\mbox{if}~  x >\frac{r_y D_{\cY} }{b},
		\end{cases}
	\end{gather*}
where $x\in\R$ and $y\in[0,D_{\cY}]$, $r_y=\frac{\epsilon}{D_{\cY}}$, and $b=\sqrt{3\ell r_y}$.
Applying \emph{Smoothed GDA} to $f(x,y)$ is essentially identical to applying \emph{Perturbed Smoothed GDA}
to \cref{eg:gs-alter-tight}, except for the choice of $\beta$. 
Thus we apply the analysis of \cref{thm:tight-psgda} (i) to this problem. 
We initialize the algorithm at the same point
$(x^0,y^0,z^0)$ specified in \cref{gs-psgda-tight}. Since the lower bound proof follows the same argument as in
\cref{subsubsec:tight-psgda}, we omit the details.
As a result, the iteration complexity satisfies 
\begin{align*}
    T&=\Omega\left(\frac{\Delta_{\Psi_2}}{\ell\beta\epsilon^2}\right)=\Omega\left(\frac{\ell^3 D_{\cY}^2 \Delta_{\Psi_2}}{\epsilon^4}\right),
\end{align*}
which matches \cref{thm:sgda-epi}~(i).

(ii) We consider the same hard instance as in \cref{eg:os-gda}. 
We initialize \emph{Smoothed GDA} by selecting $(x^0,z^0)$ as an eigenvector associated with the eigenvalue $\lambda_3$. 
Specifically,
$x^0=\frac{3 D_{\mathcal{Y}}+ 2}{\ell D_{\mathcal{Y}}}\epsilon$ and $y^0=D_{\cY}$. 
Under this initialization, the resulting recursion coincides with \cref{eq:new_update_rule}.


Indeed, for all $x\in\cX$, we have $\nabla_y F(x,y,z)\ge 0$, and therefore the
$y$-iterate generated by \emph{Smoothed GDA} is nondecreasing. Since $y^0=D_{\cY}$, it follows that
$y^t=D_{\cY}$ for all $t\ge 0$. Consequently, the updates reduce to \cref{eq:new_update_rule}. By \cref{subsubsec:pgda-os-tight}, the recursion \cref{eq:new_update_rule} requires
$\Omega\left(\frac{\ell^3D_{\cY}^2\Delta_{\Psi_2}}{\epsilon^4}\right)$ iterations to reach an $\epsilon$-OS point on
\cref{eg:os-gda}. Hence, \emph{Smoothed GDA} also needs
$\Omega\left(\frac{\ell^3D_{\cY}^2\Delta_{\Psi_2}}{\epsilon^4}\right)$ iterations to find an $\epsilon$-OS point. This completes the proof.
\end{proof}
\end{appendices}
\end{document}